\tikzset{
	MyPersp/.style={scale=2,x={(0.8cm,0cm)},y={(0cm,0.25cm)},
    z={(0cm,1cm)}},
	MyPoints/.style={fill=white,draw=black,thick}
		}
 \definecolor{darkgreen}{HTML}{336633}
 \definecolor{darkred}{HTML}{993333}
\definecolor{myred}{rgb}{0.75,0,0}
\definecolor{mygreen}{rgb}{0,0.5,0}
\definecolor{myblue}{rgb}{0,0,0.65}
\newcommand{\C}{\mathbb{C}}
\newcommand{\R}{\mathbb{R}}
\newcommand{\F}{\mathbb{F}}
\newcommand{\bk}{\Bbbk}
\newcommand{\Z}{\mathbb{Z}}
\DeclareMathOperator{\Spec}{Spec}
\newcommand{\pt}{\mathrm{pt}}
\mathchardef\mhyphen="2D
\newcommand{\scA}{\mathscr{A}}
\newcommand{\Mod}{\mhyphen\mathsf{Mod}}
\newcommand{\Mof}{\mhyphen\mathsf{Mof}}
\newcommand{\perf}{\mhyphen\mathsf{perf}}
\newcommand{\Tilt}{\mathsf{Tilt}}
\newcommand{\Db}{D^{\mathrm{b}}}
\newcommand{\Kb}{K^{\mathrm{b}}}
\newcommand{\Parity}{\mathsf{Parity}}
\newcommand{\bX}{\mathbf{X}}
\newcommand{\Gr}{{\EuScript Gr}}
\newcommand{\Fl}{{\EuScript Fl}}
\newcommand{\Perv}{\mathsf{Perv}}
\newcommand{\IC}{\mathscr{I}\hspace{-1pt}\mathscr{C}}
\newcommand{\fR}{\mathfrak{R}}
\newcommand{\simto}{\xrightarrow{\sim}}
\newcommand{\End}{\mathrm{End}}
\def\lotimes{\@ifnextchar_{\@lotimessub}{\@lotimesnosub}}
\def\@lotimessub_#1{\mathchoice{\mathbin{\mathop{\otimes}^L}_{#1}}%
  {\otimes^L_{#1}}{\otimes^L_{#1}}{\otimes^L_{#1}}}
\def\@lotimesnosub{\mathbin{\mathop{\otimes}^L}}
\newcommand{\For}{\mathsf{For}}
\newcommand{\id}{\mathrm{id}}
\newcommand{\scE}{\mathscr{E}}
\newcommand{\scF}{\mathscr{F}}
\newcommand{\scH}{\mathscr{H}}
\newcommand{\scL}{\mathscr{L}}
\newcommand{\scG}{\mathscr{G}}
\newcommand{\scT}{\mathscr{T}}
\DeclareMathOperator{\Hom}{Hom}
\DeclareMathOperator{\Ext}{Ext}
\DeclareMathOperator{\Res}{Res}
\newcommand{\aff}{\mathrm{aff}}
\newcommand{\excise}[1]{}
\newcommand{\Av}{\mathsf{Av}}
\newtheorem*{conj*}{Conjecture}
\newtheorem*{thm*}{Theorem}
\newtheorem*{cor*}{Corollary}
\numberwithin{equation}{section}
\newtheorem{thm}{Theorem}[section]
\newtheorem{lem}[thm]{Lemma}
\newtheorem{prop}[thm]{Proposition}
\newtheorem{cor}[thm]{Corollary}
\theoremstyle{definition}
\theoremstyle{remark}
\newtheorem{rmk}[thm]{Remark}
\newtheorem*{rmk*}{Remark}
\DeclareMathOperator{\Rep}{Rep}
\renewcommand{\a}{\alpha}
\newcommand{\Wf}{W_{\mathrm{f}}}
\newcommand{\Waff}{W_{\mathrm{aff}}}
\newcommand{\fW}{{}^{\mathrm{f}} W_{\mathrm{aff}}}
\newcommand{\Sf}{S_{\mathrm{f}}}
\newcommand{\Saff}{S_{\mathrm{aff}}}
\newcommand{\asph}{\mathrm{asph}}
\newcommand{\Haff}{\mathcal{H}_{\mathrm{aff}}}
\newcommand{\Masph}{\mathcal{M}^{\asph}}
\newcommand{\puN}{{}^{\ell} \hspace{-1pt} \underline{N}}
\newcommand{\ppn}{{}^{\ell} \hspace{-1pt} n}
\newcommand{\scK}{\mathscr{K}}
\newcommand{\scO}{\mathscr{O}}
\newcommand{\IW}{\mathcal{IW}}
\newcommand{\sph}{\mathrm{sph}}
\newcommand{\Iw}{\mathrm{Iw}}
\newcommand{\Iwu}{\mathrm{Iw}_{\mathrm{u}}}
\newcommand{\Iwuell}{\mathrm{Iw}_{\mathrm{u},\ell}}
\newcommand{\Iwun}{\mathrm{Iw}_{\mathrm{u},n}}
\newcommand{\Ga}{\mathbb{G}_{\mathrm{a}}}
\newcommand{\Gm}{\mathbb{G}_{\mathrm{m}}}
\newcommand{\st}{\mathsf{t}}
\newcommand{\LAS}{\mathscr{L}_{\mathrm{AS}}}
\newcommand{\Smith}{\mathsf{Sm}}
\newcommand{\coH}{\mathsf{H}}
\newcommand{\Sh}{\mathsf{Sh}}
\newcommand{\ubk}{\underline{\bk}}
\newcommand{\Satake}{\mathsf{S}}
\newcommand{\block}{\mathsf{b}}
\newcommand{\roots}{{\boldsymbol{\mu}}}
\newcommand{\bG}{\mathbf{G}}
\newcommand{\bB}{\mathbf{B}}
\newcommand{\bT}{\mathbf{T}}
\newcommand{\SmithParity}{\mathsf{SmParity}}
\title{Smith--Treumann theory and the linkage principle}
\author{Simon Riche}
\address{Universit\'e Clermont Auvergne, CNRS, LMBP, F-63000 Clermont-Ferrand, France.}
\email{simon.riche@uca.fr}
\author{Geordie Williamson}
\address{School of Mathematics and Statistics F07, University of
  Sydney NSW 2006, Australia. }
\email{g.williamson@sydney.edu.au}
\dedicatory{Dedicated to Roman Bezrukavnikov,\\in admiration.}
\begin{document}

\begin{abstract}
We apply Treumann's ``Smith theory for sheaves" in the context of the Iwahori--Whittaker model of the Satake category.
We deduce two results in the representation theory of reductive algebraic groups over fields of positive characteristic: (a) a geometric proof of the linkage principle; (b)
a character formula for tilting modules in terms of the $\ell$-canonical basis, valid in all blocks and in all characteristics.
\end{abstract}

\maketitle

\section{Introduction}
\label{sec:intro}

\subsection{Geometric representation theory of reductive algebraic groups}

Let $\bG$ be a connected reductive algebraic group over an algebraically
closed field $\bk$ of characteristic $\ell>0$, and consider
the category $\Rep(\bG)$ of finite-dimensional algebraic representations of $\bG$. The
study of this category has 
seen significant progress 
over the last fifty years; however several
fundamental questions (e.g.~dimensions and characters of simple and indecomposable
tilting modules) remain only partially understood.

One tempting avenue of pursuit is to find relationships to $\mathscr{D}$-modules
or constructible sheaves, and hence bring sheaf theory into play. The
archetypal example of the success of such an approach is the
Be{\u\i}linson--Bernstein localization theorem, which establishes such an
link for modules over complex semi-simple Lie algebras. 
Be{\u\i}linson--Bernstein localization is an indispensable tool in modern
Geometric Representation Theory, leading to proofs of the
Kazhdan--Lusztig conjectures, character formulas for real reductive
groups, etc.

Back in the setting of $\Rep(\bG)$, the \emph{geometric Satake
  equivalence} provides such a connection to constructible sheaves. To
$\bG$ we can associate the affine Grassmannian $\Gr_H$ of its complex Langlands dual
group $H$ (an ``infinite-dimensional algebraic variety''), and one has an equivalence
of tensor categories between $\Rep(\bG)$ and a certain category of perverse sheaves with
$\bk$-coefficients on $\Gr_H$.
The geometric Satake equivalence is central
to modern approaches to the Langlands program, and has become a
cornerstone of Geometric Representation Theory.

However, in contrast to Be{\u\i}linson--Bernstein localization, the geometric
Satake equivalence has been surprisingly ineffective at answering questions about $\Rep(\bG)$.
For example, several basic statements and constructions involving $\Rep(\bG)$ (e.g.~the
linkage principle, or Frobenius twist) have no geometric
explanation. This is the more surprising, as several known or
conjectured formulas (e.g.~Lusztig's character formula) involve
Kazhdan--Lusztig polynomials or their $\ell$-counterparts, which
encode dimensions of stalks of complexes on the affine Grassmannian and flag
variety of $H$. Nowadays we have several proofs of Lusztig's character
formula for large $\ell$, however none of them pass through the
geometric Satake equivalence!

\subsection{Overview}

The main result of the present paper is a proof of the linkage principle via the geometric
Satake equivalence. Our proof also explains that each ``block'' in the linkage
principle is controlled by a partial affine flag variety for the Langlands dual
group, which gives us new proofs of Lusztig's conjecture on simple
characters (for large $\ell$) and of a conjecture of the authors on
tilting characters (for all $\ell$). 

Our argument is a simple application of two new tools in Geometric
Representation Theory. The first one is Smith--Treumann theory, which is a variant of equivariant
localization for tori. In this theory the circle action is
replaced by the action of a cyclic group of order $\ell$, and the
coefficients are of characteristic $\ell$. We apply this
theory to the loop rotation action on the affine Grassmannian. Whilst
the fixed points under the full loop rotation action (infinitely many partial flag varieties for $H$) are rather boring, the fixed points under the subgroup
of $\ell$-th roots of unity (\emph{affine} flag varieties for $H$,
which are finite in number if $H$ is semi-simple) are rich.

The second ingredient is the Iwahori--Whittaker realisation of the
Satake category. This replaces the category of perverse
sheaves in the Satake equivalence with an equivalent category of perverse sheaves satisfying a certain
equivariance condition with respect to the pro-unipotent radical of
the Iwahori subgroup. (This condition, introduced---to our knowledge---by Bezrukavnikov, is inspired by the ``Whittaker
conditions'' in the representation theory of $p$-adic  groups, hence the name.) It
turns out that in the Iwahori--Whittaker realisation, the components of
the fixed points discussed above match precisely the decomposition
of $\Rep(\bG)$ given by the linkage principle. Our main theorem asserts
that the Smith restriction functor gives an equivalence between
tilting sheaves in the Iwahori--Whittaker realisation and a certain
category of parity complexes on the fixed points. It is then
straightforward to deduce the linkage principle. The character
formulas for simple and tilting modules alluded to above are also an 
immediate consequence.

In the rest of the introduction, we give a more detailed overview of
the techniques and results of this paper.

\subsection{The linkage principle}

As above, let $\bG$ be a connected reductive algebraic group over an algebraically
closed field $\bk$ of characteristic $\ell>0$, and let $\Rep(\bG)$ be its
category of finite-dimensional algebraic representations. Fix a maximal torus and Borel
subgroup $\bT \subset \bB \subset \bG$, and let $\fR^+ \subset \fR \subset \bX$
denote the (positive) roots inside the lattice of characters of
$\bT$.\footnote{We warn the reader that in the body of the paper we
  switch to a Langlands dual notation.} The simple objects in $\Rep(\bG)$ are classified by dominant weights $\bX^+ \subset \bX$; given
$\lambda \in \bX^+$ we denote by $\nabla(\lambda)$ the induced $\bG$-module of highest weight $\lambda$, and by $\mathsf{L}(\lambda)$ its simple socle.

Let $\Wf$ denote the Weyl group of $(\bG,\bT)$, and 
consider the affine Weyl group
\[
\Waff := \Wf \ltimes \Z \fR
\] 
which acts naturally on $\bX$. 
The \emph{linkage principle} \cite{verma, humphreys, jantzen-linkage, andersen}
states that we have a decomposition
\begin{equation} \label{eq:linkage}
\Rep(\bG) = \bigoplus_{\gamma \in \bX / (\Waff, \bullet_\ell)} \Rep_\gamma(\bG),
\end{equation}
where each summand is the Serre subcategory
\begin{equation*}
\Rep_\gamma \bG = \langle \mathsf{L}(\lambda) : \lambda \in \gamma \cap \bX^+ \rangle.
\end{equation*}
Notice that we do not consider the standard
action of $\Waff$ on $\bX$, but rather the ``dot'' action (denoted
$\bullet_\ell$); that is, if $\rho := \frac{1}{2}
\sum_{\a \in \fR^+} \a$, then
\[
(w\st_\mu) \bullet_\ell \lambda := w(\lambda + \ell\mu + \rho) - \rho
\] 
for $w \in \Wf$, $\mu \in \Z\fR$ and $\lambda \in \bX$.

\begin{rmk}
The subcategory $\Rep_\gamma(\bG)$ will be called the \emph{block} of
$\gamma$. This is an abuse since this subcategory might be
decomposable, hence is not a ``block'' in the strict sense. The
terminology is convenient however. The blocks of $\Rep(\bG)$ in the strict sense have
been described by Donkin~\cite{donkin-blocks}. In a work in preparation,
E.~Zabeth uses the methods of the present paper to recover Donkin's result geometrically.
\end{rmk}



\subsection{The geometric Satake equivalence} Let
$H$ be the complex\footnote{In a few paragraphs we will instead
  assume that $H$ is defined over a field of characteristic $p$ where
  $p \ne 0, \ell$.} reductive group which is Langlands dual to
$\bG$, and denote its maximal torus by $T$ (so that $X_*(T)=\bX=X^*(\bT)$).
Let $LH$ and $L^+H$ denote the ``loop'' group (ind-)schemes
whose $R$-points are $H(R( \hspace{-1pt} (z) \hspace{-1pt}))$ and $H(R[\hspace{-1pt}[z]\hspace{-1pt}])$ respectively, for any
$\C$-algebra $R$; and
let $\Gr_H := LH / L^+H$ denote the affine Grassmannian. The
affine Grassmannian is an ind-projective ind-scheme whose $T$-fixed
points (resp.~$L^+H$-orbits) are in a natural bijection with $\bX$ (resp. $\bX^+$), through a map denoted $\lambda \mapsto L_\lambda$ (resp.~$\mu \mapsto \Gr_H^\mu$). Here, $\Gr_H^\mu$ contains $L_\lambda$ iff $\lambda \in \Wf (\mu)$.

The \emph{geometric Satake equivalence} \cite{lusztig-weight, ginzburg, bd,mv}
gives an equivalence of Tannakian categories 
\begin{equation} \label{eq:satake}
( \Perv_{L^+H}(\Gr_H,\bk), \star) \cong (\Rep(\bG), \otimes)
\end{equation}
where $\Perv_{L^+H}(\Gr_H,\bk)$ denotes the
category of $L^+H$-equivariant perverse sheaves on $\Gr_H$ with coefficients in $\bk$, with its natural convolution product $\star$.



\subsection{Smith--Treumann theory}

A fundamental role in our proof is played by Treumann's ``Smith theory
for sheaves''~\cite{treumann}. The basic idea is that, when
dealing with coefficients of characteristic $\ell$, one should be able
to localize to fixed points of actions of cyclic groups of order
$\ell$. (This theory should be compared with Borel's ``localization
theorem'' for manifolds equipped with an action of the circle group;
in this analogy, finite cyclic groups become ``discrete circles'';
see~\cite{williamson-cdm} for more comments on this analogy.)

More precisely, let $X$ be a variety endowed with an action of the group $\roots_\ell$ of
$\ell$-th roots of unity, and denote by $X^{\roots_\ell}$ the subvariety of $\roots_\ell$-fixed points. One has two (Verdier dual)
restriction functors 
\[
\begin{tikzpicture}[scale=0.7]
\node (l) at (0,0) {$\Db_{\roots_\ell}(X,\bk)$};
\node (r) at (4,0) {$\Db_{\roots_\ell}(X^{\roots_\ell},\bk)$};
\draw[->] (l) to[out=20,in=160] node[above] {$i^!$} (r);
\draw[->] (l) to[out=-20,in=-160] node[below] {$i^*$} (r);
\end{tikzpicture}
\]
between the $\roots_\ell$-equivariant derived categories of constructible
$\bk$-sheaves on $X$ and on $X^{\roots_\ell}$. 

A fundamental observation of Treumann is that the compositions of these
functors with the quotient functor to the \emph{Smith category}
\[
\Smith_{\mathrm{Treu}} (X^{\roots_\ell}, \bk) := \Db_{\roots_\ell}(X^{\roots_\ell}, \bk) /
\left \langle 
\text{$\roots_\ell$-perfect complexes}
\right \rangle
\]
become canonically isomorphic. Here an object in $\Db_{\roots_\ell}(X^{\roots_\ell}, \bk)$ is called \emph{$\roots_\ell$-perfect} if
its stalks (naturally complexes of $\roots_\ell$-modules) may be
represented by a bounded complex of free $\bk[\roots_\ell]$-modules.
The resulting \emph{Smith
  restriction} functor
\[
\begin{tikzpicture}[scale=0.7]
\node (l) at (0,0) {$\Db_{\roots_\ell}(X,\bk)$};
\node (r) at (4,0) {$\Smith_{\mathrm{Treu}}(X^{\roots_\ell},\bk)$};
\draw[->] (l) to node[above] {$i^{!*}$} (r);
\end{tikzpicture}
\]
has remarkable properties. For example, it commutes with essentially
all sheaf theoretic functors~\cite{treumann}. It can be thought
of as an analogue of hyperbolic localization~\cite{braden} for $\roots_\ell$-actions.

For technical reasons (most notably, to ensure that the Smith category of a point satisfies appropriate parity vanishing properties), we use a variant of Treumann's construction,
proposed by the second author in \cite{williamson-cdm}. Namely, we assume that the
action of $\roots_\ell$ can be extended to an action of the
multiplicative group $\Gm$ on $X$ and consider the \emph{equivariant
Smith category}
\[
\Smith(X^{\roots_\ell}, \bk) := \Db_{\Gm}(X^{\roots_\ell}, \bk) / \left \langle 
\begin{array}{c} \text{complexes whose restriction} \\ \text{to
  $\roots_\ell \subset \Gm$ are $\roots_\ell$-perfect} \end{array} \right \rangle.
\]
With this definition,
the theory of parity complexes from~\cite{jmw} applies in the Smith quotient, which will be crucial for our arguments.\footnote{The fact that Smith--Treumann theory can be made to accommodate the theory of parity sheaves was first pointed out by Leslie--Lonergan \cite{leslie-lonergan}. The version they use is however different, and---from our point of view---technically more involved.}

\subsection{Fixed points} 

To apply this idea in our setting, note that $\Gr_H$ has a
natural action of $\Gm$ via ``loop rotation,'' induced by the action
of $\Gm$ on $\C( \hspace{-1pt} (z) \hspace{-1pt})$ which ``rescales'' $z$. A beautiful
fact (that we first learned from R.~Bezrukavnikov) is that, if
$\roots_\ell \subset \Gm$ denotes (as above) the subgroup of $\ell$-th roots of unity, we have a decomposition
\begin{equation}
\label{eq:fixed points}
(\Gr_H)^{\roots_\ell} = \bigsqcup_{\gamma \in \bX / (\Waff, \square_\ell)} \Gr_{H,\gamma},
\end{equation}
where the action $\square_\ell$ of $\Waff$ on $\bX$ is defined by $(w\st_\mu) \square_\ell \lambda = w(\lambda+\ell\mu)$ for $w \in \Wf$, $\mu \in \Z\fR$ and $\lambda \in \bX$. Moreover,
each component on the right-hand side of~\eqref{eq:fixed points} is a partial affine flag
variety for the loop group
$L_\ell H$ representing $R \mapsto H(R( \hspace{-1pt}(z^\ell)\hspace{-1pt}))$, whose ``partiality'' is governed by the stabilizer of an element in $\gamma$. For example, for $\gamma = \Waff \square_\ell 0$ we obtain the ``thin affine 
Grassmannian'' (defined as above for $\Gr_H$, but now with $z$ replaced by $z^\ell$); and if $\gamma$ has trivial stabiliser under $\Waff$ then $\Gr_{H,\gamma}$ is the full
affine flag variety for $H$.

The similarity between \eqref{eq:fixed points} and \eqref{eq:linkage}
is rather striking; for example there are as many
components in the right-hand side of~\eqref{eq:fixed points} as summands in the decomposition~\eqref{eq:linkage}.  However there is a fundamental difference:~\eqref{eq:linkage} involves the dot action (with $\Wf$ fixing $-\rho$), whereas~\eqref{eq:fixed points} involves the
unshifted action (with $\Wf$ fixing $0$). Thus we
do not expect the Smith restriction functor to realise the linkage
principle in this setting.\footnote{The effect of Smith restriction in this setting is investigated in~\cite{leslie-lonergan}. The authors show that it realises the ``Frobenius contraction'' functor of Gros--Kaneda~\cite{gk}.}

\subsection{The Iwahori-Whittaker model}
\label{ss:intro-IW}

To get around this issue, we
replace the ``traditional'' Satake category $\Perv_{L^+H}(\Gr_H,\bk)$
with the ``Iwahori--Whittaker mo\-del'' considered
in~\cite{bgmrr}. There it is proved that (under a mild assumption, satisfied e.g.~if $H$ is semisimple of adjoint type, which we assume from now on for simplicity) one has an equivalence of
abelian categories
\begin{equation}
  \label{eq:IW}
\Perv_{L^+H}(\Gr_H,\bk) \simto \Perv_{\IW}(\Gr_H,\bk)  
\end{equation}
where the right-hand side denotes a category of perverse sheaves of $\Gr_H$ which satisfy
a certain equivariance condition with respect to the pro-unipotent
radical $\Iwu^+$ of an Iwahori subgroup; such
perverse sheaves are called
``Iwahori--Whittaker.''\footnote{One can make sense of this condition in
  various ways. In this work (following Bezrukavnikov) we use \'etale
  sheaves and the Artin--Schreier covering, which necessitates passing
  to $\Gr_H$ defined over a field of characteristic $p > 0$ (with $p \neq \ell$). The
  geometric Satake equivalence is also available in this setting.
}

A crucial point for us is that on simple objects the equivalence
\eqref{eq:IW} sends the intersection cohomology complex associated with the $L^+ H$-orbit parametrized by $\lambda \in \bX^+$ (which corresponds to $\mathsf{L}(\lambda)$ under~\eqref{eq:satake}) to the Iwahori--Whittaker intersection cohomology complex associated with the $\Iwu^+$-orbit parametrized by $\lambda+\rho$.
Thus, after passage to the Iwahori--Whittaker model, our issue with the
two distinct actions of $\Waff$ goes away, and the linkage principle is reflected
perfectly in the geometry of the $\roots_{\ell}$-fixed points. In particular, two simple Iwahori--Whittaker perverse sheaves, parametrized by some weights $\lambda$ and $\mu$,
lie in
the same summand in the linkage principle if and only if the
corresponding points $L_\lambda$ and $L_\mu$ lie in the same
component of the fixed points!

Another favorable property of the $\Iwu^+$-action on $\Gr_H$ is that
each orbit is isomorphic to an affine space. This setting is known to
imply nice properties for categories of perverse sheaves (see
e.g.~\cite{bgs}), and in particular that this category admits a
transparent structure of a highest weight category. The situation is even more favorable here in that the ``relevant'' orbits (i.e.~those which support a nonzero Iwahori--Whittaker local system) have dimensions of constant parity in each connected component of $\Gr_H$. This implies that the tilting objects in $\Perv_{\IW}(\Gr_H,\bk)$ are parity in the sense of~\cite{jmw}; in particular the indecomposable tilting perverse sheaves coincide with the self-dual indecomposable parity objects.

\subsection{Main theorems}
\label{ss:intro-main}

Recall that $\Gm$ acts on $\Gr_H$ via loop rotation. The Iwahori--Whittaker
condition and the loop rotation equivariance are compatible; we thus
obtain a Smith restriction functor
\[
i^{!*} : \Db_{\IW,\Gm}(\Gr_H, \bk) \to
\Smith_{\IW}((\Gr_H)^{\roots_\ell}, \bk).
\]
We will write $\Parity_{\IW, \Gm}(\Gr_H, \bk)$, resp.~$\SmithParity_{\IW}((\Gr_H)^{\roots_\ell}, \bk)$, for the
additive category of parity sheaves in the source, resp.~target, of this
functor, and $\mathsf{PerPar}_{\IW, \Gm}(\Gr_H, \bk)$ for the full subcategory of $\Parity_{\IW, \Gm}(\Gr_H, \bk)$ whose objects are the \emph{perverse} parity complexes. Our first main result (stated more precisely in Theorem~\ref{thm:i!*-parities}) is the following.

\begin{thm}
\label{thm:main1}
  Smith restriction yields a fully-faithful functor
\[
i^{!*} : \mathsf{PerPar}_{\IW, \Gm}(\Gr_H, \bk) \to
\SmithParity_{\IW}((\Gr_H)^{\roots_\ell}, \bk).
\]
\end{thm}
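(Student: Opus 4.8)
The plan is to deduce the theorem from three ingredients: (i) the fact that both source and target are ``Krull--Schmidt-type'' additive categories generated (as Karoubian triangulated categories) by explicit indecomposable parity objects indexed by the same set; (ii) a cohomology-vanishing / purity argument showing that $i^{!*}$ sends parity objects to parity objects; and (iii) a dimension count on Hom-spaces, reduced via the standard ``parity'' formalism to the computation on a single orbit (equivalently, on a point), where Smith theory for a point is understood. The model to follow is the general yoga of \cite{jmw}: a parity complex is built by iterated extensions of shifted constant/local-system sheaves on affine-space strata, and maps between parity complexes are controlled by the cohomology of (relative) strata, so full faithfulness is checked stratum by stratum.

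Concretely, I would proceed as follows. \textbf{Step 1 (well-definedness on parity objects).} Show $i^{!*}\Parity_{\IW,\Gm}(\Gr_H,\bk)\subseteq\SmithParity_{\IW}((\Gr_H)^{\roots_\ell},\bk)$. Since $i^{!*}$ commutes with all the standard functors (Treumann), and since a parity complex in the source is an iterated extension of objects of the form $(j_w)_!\,\LAS\langle n\rangle$ (and dually $(j_w)_*$), it suffices to compute $i^{!*}$ on such generators and check the stalk/costalk parity condition on the fixed locus; here one uses that the $\Iwu^+$-orbits are affine spaces, that the relevant ones have constant-parity dimension on each component, and the parity-vanishing property of the equivariant Smith category of a point (the whole reason for passing to the $\Gm$-equivariant version). \textbf{Step 2 (faithfulness via a Hom-space inequality).} For $\mathcal{E},\mathcal{F}$ parity in the source, I would show
\[
\dim_{\bk}\Hom\bigl(\mathcal{E},\mathcal{F}\bigr)\ \le\ \dim_{\bk}\Hom\bigl(i^{!*}\mathcal{E},\,i^{!*}\mathcal{F}\bigr)
\]
by the usual devissage: stratify, reduce to the case where $\mathcal{E}$, $\mathcal{F}$ are (shifted) standard/costandard objects on strata via the long exact sequences, and on each stratum use that $i^{!*}$ identifies the relevant stalk cohomology (this is exactly the ``commutes with $i^*$, $i^!$'' property together with the base case over a point). \textbf{Step 3 (essential surjectivity / the reverse inequality).} Establish that every indecomposable parity object on the fixed locus is in the image, equivalently that the Hom-inequality in Step 2 is an equality. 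This follows once one matches the indexing sets: indecomposable objects of $\Parity_{\IW,\Gm}(\Gr_H,\bk)$ are indexed by (relevant) $\Iwu^+$-orbits, i.e.\ by $\lambda+\rho$ for $\lambda\in\bX^+$, while indecomposable objects of $\SmithParity_{\IW}$ are indexed by (relevant) orbits on $(\Gr_H)^{\roots_\ell}$; using \eqref{eq:fixed points} together with the compatibility of the IW condition with the fixed-point decomposition, these two sets are in natural bijection, and $i^{!*}$ respects supports, so it induces a bijection on indecomposables. Combined with Steps 1--2 and the Krull--Schmidt property this yields the equivalence.

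\textbf{Main obstacle.} The technical heart is Step 2 (and its sharpening in Step 3): proving that $i^{!*}$ is an \emph{isometry} on Hom-spaces between parity objects, not merely a functor that does not increase them. The difficulty is that a priori $i^{!*}$ could kill maps, so one genuinely needs to know that the Smith-category Hom between $i^{!*}$ of two standard objects on strata has exactly the expected dimension. This forces a careful analysis of what happens at a single $\Iwu^+$-orbit and its image in the fixed locus: one must verify that the orbit maps to a single orbit (or affine bundle) in the corresponding partial affine flag variety, identify the induced local system, and invoke the point-calculation $\Smith(\pt,\bk)\simeq \bk\text{-mod}$ in the appropriate (graded, parity) sense. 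Getting the equivariance/$\Gm$-weight bookkeeping right so that the parity vanishing survives — and checking that ``relevant'' orbits on $\Gr_H$ go to ``relevant'' orbits on the fixed locus with the shift $\lambda\mapsto\lambda+\rho$ matching the shift $\square_\ell$ vs.\ $\bullet_\ell$ — is where the real work lies; everything else is the standard \cite{jmw} machinery.
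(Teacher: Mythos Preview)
Your overall strategy---send parity to parity, then check Hom-spaces stratum by stratum---is the paper's strategy too, but you have made it harder than necessary and introduced a circularity.

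The key simplification you are missing is that in this setting indecomposable parity objects coincide with indecomposable tilting objects (see~\eqref{eqn:TIW-EIW}). Hence every parity object $\mathcal{E}$ carries a filtration by standard objects $\Delta^{\IW}_\lambda$ and every parity object $\mathcal{F}$ a filtration by costandard objects $\nabla^{\IW}_\mu$. Because $\Hom(\Delta^{\IW}_\lambda,\nabla^{\IW}_\mu[n])=0$ for $n\neq 0$ (and the analogous vanishing holds in the Smith category by Lemma~\ref{lem:parity-vanishing}), the Hom-spaces are \emph{additive} over these filtrations: there are no connecting maps to worry about. Full faithfulness therefore reduces at once to the base case $\Hom(\Delta^{\IW}_\lambda,\nabla^{\IW}_\mu)$, which is $\bk$ if $\lambda=\mu$ and $0$ otherwise on both sides, again by Lemma~\ref{lem:parity-vanishing}. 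This is Be{\u\i}linson's lemma, and it is the whole proof of full faithfulness.

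Your Step~2 ``inequality'' framing is therefore both unnecessary and misleading: the d\'evissage already gives an \emph{isomorphism}, not merely~$\le$, once you use the tilting structure. More seriously, your Step~3 is circular: to argue that $i^{!*}$ bijects indecomposables you need to know that the image of an indecomposable remains indecomposable, and that is exactly what full faithfulness provides. The paper does things in the correct order---full faithfulness first, then essential surjectivity follows immediately from support considerations.

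Finally, the ``Main obstacle'' you flag is not one. That $i^{!*}$ preserves standards and costandards (with the correct local systems and parities) is immediate from Proposition~\ref{prop:i!*-pushpull}: since $i^{!*}$ commutes with $(j_\lambda^+)_!$ and $(j_\lambda^+)_*$, one has $i^{!*}\Delta^{\IW}_\lambda=(j_\lambda^{+,\varpi})^{\Smith}_!\,\scL^\lambda_{\Smith}[\dim\Gr^+_{G,\lambda}]$ and likewise for $\nabla$. No separate orbit-by-orbit analysis of the fixed locus is needed.
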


Remarkably, the proof of this theorem is a few lines once one has the
appropriate technology in place. It is an easy consequence of
Be{\u\i}linson's lemma, once one knows that $i^{!*}$ preserves standard and
costandard objects; this in turn follows because $i^{!*}$ commutes
with $*$- and $!$-extensions.

Recall from~\S\ref{ss:intro-IW} that the self-dual indecomposable Iwahori--Whittaker parity complexes on $\Gr_H$ coincide with the indecomposable tilting perverse sheaves in $\Perv_{\IW}(\Gr_H,\bk)$ (which in turn
correspond to tilting $\bG$-modules under the geometric Satake
equivalence). Given $\lambda \in \rho+\bX^+$, we denote
by $\scE_\lambda^\IW$ the corresponding indecomposable parity
complex.

By Theorem~\ref{thm:main1}, the image of any $\scE^\IW_\lambda$ under $i^{!*}$ has to be
supported on a single component. This has the following consequence,
from which one easily obtains the promised proof of the linkage principle (see~\S\ref{ss:linkage}).

\begin{cor}
If $\Hom(\scE^{\IW}_\lambda, \scE^{\IW}_\mu) \ne 0$ then $\Waff \square_\ell \lambda = \Waff \square_\ell \mu$.
\end{cor}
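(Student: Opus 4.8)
The plan is to deduce the corollary from Theorem~\ref{thm:main1} together with the geometry of the fixed-point decomposition~\eqref{eq:fixed points}. First I would recall that, by Theorem~\ref{thm:main1}, the functor $i^{!*}$ is fully faithful on parity complexes, so
\[
\Hom(\scE^{\IW}_\lambda, \scE^{\IW}_\mu) \simto \Hom\bigl(i^{!*}\scE^{\IW}_\lambda, i^{!*}\scE^{\IW}_\mu\bigr)
\]
in $\SmithParity_{\IW}((\Gr_H)^{\roots_\ell}, \bk)$. Thus it suffices to show that if the right-hand side is nonzero then $\Waff \square_\ell \lambda = \Waff \square_\ell \mu$.

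Next I would pin down the support of $i^{!*}\scE^{\IW}_\lambda$. The complex $\scE^{\IW}_\lambda$ is the indecomposable parity complex attached to the $\Iwu^+$-orbit labelled by $\lambda \in \rho + \bX^+$, i.e.\ (after the shift identification from~\S\ref{ss:intro-IW}) to the point $L_\lambda \in (\Gr_H)^{\roots_\ell}$. Since Smith restriction commutes with $*$- and $!$-pullback (hence in particular does not increase supports, and cannot kill the contribution at the closed point $L_\lambda$ itself, which survives because $i^{!*}$ preserves standard/costandard objects as recalled after Theorem~\ref{thm:main1}), the object $i^{!*}\scE^{\IW}_\lambda$ is nonzero and supported inside the single connected component $\Gr_{H,\gamma}$ of $(\Gr_H)^{\roots_\ell}$ containing $L_\lambda$, where $\gamma = \Waff \square_\ell \lambda$. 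Concretely: the $\roots_\ell$-fixed locus of the closure of the $\Iwu^+$-orbit of $\lambda$ lies in one component, because the components are unions of $L_\ell H$-orbits and the decomposition~\eqref{eq:fixed points} is a decomposition into open-and-closed pieces.

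Then the argument closes: if $\Waff \square_\ell \lambda \ne \Waff \square_\ell \mu$, then $L_\lambda$ and $L_\mu$ lie in distinct connected components of $(\Gr_H)^{\roots_\ell}$, so $i^{!*}\scE^{\IW}_\lambda$ and $i^{!*}\scE^{\IW}_\mu$ are supported on disjoint open-and-closed subsets; hence there are no nonzero morphisms between them in the Smith category (which is a quotient of the $\Gm$-equivariant derived category on $(\Gr_H)^{\roots_\ell}$, where $\Hom$ between objects with disjoint supports vanishes — and this vanishing descends to the quotient). By full faithfulness, $\Hom(\scE^{\IW}_\lambda, \scE^{\IW}_\mu) = 0$, which is the contrapositive of the claim.

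The main obstacle I anticipate is not the formal deduction but making precise the two geometric inputs used above: (i) that $i^{!*}$ of a nonzero parity complex is nonzero and genuinely supported on the component determined by its labelling weight — this needs the compatibility of the shift $\lambda \mapsto \lambda + \rho$ in~\eqref{eq:IW} with the indexing of~\eqref{eq:fixed points}, i.e.\ that the orbit label $\lambda$ lands in the component indexed by $\Waff \square_\ell \lambda$; and (ii) the vanishing of $\Hom$ between objects with disjoint support in the Smith quotient, which requires knowing that the localizing subcategory (complexes that are $\roots_\ell$-perfect after restriction) is compatible with the decomposition along connected components. Both should be routine given the setup of the paper, but they are where the real content sits.
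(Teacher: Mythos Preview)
Your overall strategy is the paper's own, and the contrapositive via full faithfulness is exactly right. However, there is a genuine gap in the step where you conclude that $i^{!*}\scE^{\IW}_\lambda$ is supported on a single component. Your justification --- ``the $\roots_\ell$-fixed locus of the closure of the $\Iwu^+$-orbit of $\lambda$ lies in one component'' --- is false in general. The closure $\overline{\Gr^+_{G,\lambda}}$ contains many $\Iwu^+$-orbits $\Gr^+_{G,\mu}$, and by Lemma~\ref{lem:fixed-points-orbits} and Remark~\ref{rmk:parameters-orbits-fixed-points} each fixed-point orbit $(\Gr^+_{G,\mu})^{\roots_\ell}$ lies in the component indexed by $\Waff \square_\ell \mu$; for $\lambda$ large these $\mu$ certainly hit several $\Waff \square_\ell$-orbits. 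So the geometric support of $i^*\scE^{\IW}_\lambda$ on $(\Gr_H)^{\roots_\ell}$ typically meets multiple components.

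The correct argument (and the one the paper uses, see the sentence immediately preceding the corollary in \S\ref{ss:intro-main} and the proof of Theorem~\ref{thm:i!*-parities}) routes through \emph{indecomposability} rather than geometry of supports. Full faithfulness of $i^{!*}$ implies that $i^{!*}\scE^{\IW}_\lambda$ is indecomposable in the Smith category. Since $(\Gr_H)^{\roots_\ell}$ is a disjoint union of open-and-closed pieces, the Smith category splits as a direct sum over components (your point (ii), which is fine), and an indecomposable object must live in a single summand. That summand is the one containing $L_\lambda$, because the restriction of $i^{!*}\scE^{\IW}_\lambda$ to $(\Gr^+_{G,\lambda})^{\roots_\ell}$ is nonzero (e.g.\ via the standard/costandard compatibility you mention). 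The upshot is that the contributions of $i^*\scE^{\IW}_\lambda$ on the ``wrong'' components are not geometrically absent --- they are killed in the Smith quotient, i.e.\ they have perfect stalks. This is in fact the nontrivial content that Theorem~\ref{thm:main1} is providing.
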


Theorem~\ref{thm:main1} implies more generally that many questions about tilting Iwahori--Whittaker perverse sheaves on $\Gr_H$ (and hence about tilting $\bG$-modules) may be answered after applying Smith restriction. This will be crucial to our second application in representation theory, to the tilting character formula (see~\S\ref{ss:intro-tilting-char} below). But, in order to apply this idea,
one needs a more ``concrete'' description of indecomposable parity objects in $\Smith_{\IW}((\Gr_H)^{\roots_\ell}, \bk)$.
This is the subject of our second main result on the geometric side.

Consider the following diagram of quotient and forgetful functors:
\[
\begin{tikzpicture}[scale=0.5]
\node (bl) at (-8.5,0) {$\Smith_{\IW}((\Gr_H)^{\roots_\ell}, \bk)$};
\node (u) at (0,2) {$\Db_{\IW_\ell,\Gm}((\Gr_H)^{\roots_\ell}, \bk)$};
\node (br) at (8.5,0) {$\Db_{\IW_\ell}((\Gr_H)^{\roots_\ell}, \bk)$.};
\draw[<-] (bl) to node[above] {$\mathsf{Q}$} (u);
\draw[<-] (br) to node[above] {$\For_{\Gm}$} (u);
\end{tikzpicture}
\]
(Here the subscript in $\IW_\ell$ indicates that the Iwahori--Whittaker condition is imposed with respect to the action of an Iwahori subgroup in $L_\ell H$ now.)
Our second main theorem (which combines Proposition~\ref{prop:Hom-parity-Q} and Corollary~\ref{cor:Q-indec}) is the following.

\begin{thm}
\label{thm:main2}
The functors $\mathsf{Q}$ and $\For_{\Gm}$ preserve indecomposable parity complexes. Moreover,
if $\scE, \scF \in \Db_{\IW_\ell,\Gm}((\Gr_H)^{\roots_\ell}, \bk)$ are parity objects of the same parity, then there exists a canonical isomorphism
\begin{equation}
 \label{eq:Qhom}
 \Hom( \mathsf{Q}(\scE), \mathsf{Q}(\scF)) = \Hom^\bullet( \For_{\Gm}(\scE),\For_{\Gm}(\scF)).
\end{equation}
\end{thm}

The proof of Theorem~\ref{thm:main2} is given in~\S\ref{ss:comparison}.
The first step
is the observation that
given parity complexes $\scE, \scF \in \Db_{\IW_\ell,\Gm}((\Gr_H)^{\roots_\ell}, \bk)$ of the same parity we have
canonical isomorphisms:
\begin{align}
\label{eq:for}
  \Hom^\bullet( \For_{\Gm}(\scE), \For_{\Gm}(\scF)) &=  \Hom^\bullet(
  \scE, \scF) \otimes_{x \mapsto 0}  \bk.
\\
\label{eq:Q}
  \Hom( \mathsf{Q}(\scE), \mathsf{Q}(\scF)) &=  \Hom^\bullet(
  \scE, \scF) \otimes_{x \mapsto 1} \bk.
\end{align}
(Here, $x \in \mathsf{H}^2_{\Gm}(\pt,\bk)$ denotes the equivariant parameter, and $\Hom^\bullet(
  \scE, \scF) $ is viewed as an $\mathsf{H}^\bullet_{\Gm}(\pt, \bk)$-module in the
  natural way. The tensor products are taken over $\mathsf{H}^\bullet_{\Gm}(\pt, \bk)$, with the indicated module structure on $\bk$.)  The isomorphism~\eqref{eq:for} is simply the equivariant formality of
homomorphisms between parity complexes, which follows from a standard parity
argument. The isomorphism~\eqref{eq:Q} essentially follows from the analysis of
the Smith category of a point; see the proof of Proposition~\ref{prop:Hom-parity-Q} for details.

The preservation of indecomposable parity objects by the functor $\For_{\Gm}$ is immediate from~\eqref{eq:for}, and is a
basic ingredient in the theory of parity complexes. The statement for
$\mathsf{Q}$ is potentially more surprising, as inverting the equivariant
parameter rarely preserves indecomposability. The key point is that 
the $\Gm$-action on $(\Gr_H)^{\roots_\ell}$ factors through an action of
$\Gm/\roots_{\ell} = \Gm$. As a consequence, this action
is indistinguishable from the trivial action when we take
$\bk$-coefficients. Hence we obtain a canonical isomorphism
\[
\Hom^{\bullet}(\scE, \scF) = \mathsf{H}^{\bullet}_{\Gm}(\pt, \bk) \otimes_{\bk} \Hom^{\bullet}(\For_{\Gm}(\scE),
\For_{\Gm}(\scF)),
\]
see Lemma~\ref{lem:Hom-parity-For-fixed-points}. Combining this isomorphism with~\eqref{eq:for}--\eqref{eq:Q} we deduce~\eqref{eq:Qhom}.
Now preservation of indecomposable parity objects by $\mathsf{Q}$ follows from the similar property for $\For_{\Gm}$, as a
finite-dimensional $\Z$-graded algebra is local if and only if its
degree-$0$ part is.

\begin{rmk}
  The isomorphism \eqref{eq:Qhom} shows that ``$\mathsf{Q} \circ (\For_{\Gm})^{-1}$''
  behaves like a degrading functor. Degrading functors are ubiquitous
  in modern Geometric Representation Theory. In algebra, they are
  often realised by forgetting the grading on a graded module; in geometry
  they are often associated with forgetting a mixed structure. The
  above shows that Smith--Treumann theory provides another possible
  geometric realisation of degrading functors.
\end{rmk}

\subsection{Tilting characters} 
\label{ss:intro-tilting-char}

A fundamental question in the
representation theory of $\bG$ is to determine the characters of the
indecomposable tilting modules. In~\cite{rw} we started advocating the
idea that character formulas for $\bG$-modules should be expressed in
terms of the $\ell$-canonical basis of $\Waff$, and illustrated this idea by a conjectural formula for
characters of indecomposable tilting modules in the principal block,
under the assumption that $\ell$ is bigger than the Coxeter number $h$ of $\bG$. This formula was proved in case $\bG=\mathrm{GL}_n(\bk)$ in~\cite{rw}, and then for a general reductive
group in a joint work with P.~Achar and S.~Makisumi, see~\cite{amrw}.
A simple consequence of the results of~\S\ref{ss:intro-main} is a new and much simpler proof of this character
formula, along with an extension to a formula valid in all blocks of
$\Rep(\bG)$, without any restriction of $\ell$. 

To state this formula, recall that the summands on the right-hand side
of~\eqref{eq:linkage} can be parametrized by the weights in
the intersection $\mathcal{A}$ of the weight lattice with the closure
of the fundamental alcove for the dot action of $\Waff$. For $\lambda \in
\mathcal{A}$ we denote by $W_\lambda \subset \Waff$ the stabilizer of $\lambda$ for
$\bullet_\ell$ (a standard parabolic subgroup), and by $\Waff^{(\lambda)}$ the subset of $\Waff$ consisting
of elements $w$ which are both maximal in $wW_\lambda$ and minimal in
$\Wf w$. Then the
indecomposable tilting $\bG$-modules in the block of $\Waff \bullet_\ell \lambda$ are
in a natural bijection with $\Waff^{(\lambda)}$, and we denote by
$\mathsf{T}(w \bullet_\ell \lambda)$ the module of highest weight $w
\bullet_\ell \lambda$.

The tilting character formula alluded to above can be stated as follows.

\begin{thm}
\label{thm:tilting}
For any $\lambda \in \mathcal{A}$ and $w \in \Waff^{(\lambda)}$
we have
\[
\mathsf{ch}(\mathsf{T}(w \bullet_\ell \lambda)) = \sum_{y \in \Waff^{(\lambda)}} \ppn_{y,w}(1) \cdot \chi_{y \bullet_\ell \lambda},
\]
where $\chi_\mu$ is the Weyl character formula attached to the
dominant weight $\mu$, and $\ppn_{y,w}$ is the antispherical
$\ell$-Kazhdan--Lusztig polynomial attached to $(y,w)$.  
\end{thm}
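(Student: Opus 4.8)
The plan is to translate the character formula, via the geometric Satake equivalence~\eqref{eq:satake} and the Iwahori--Whittaker model~\eqref{eq:IW}, into a statement about graded multiplicities of standard objects in the indecomposable parity complexes $\scE^\IW_\lambda$ on $\Gr_H$; then to transport that computation to the $\roots_\ell$-fixed points using Theorems~\ref{thm:main1} and~\ref{thm:main2}; and finally to recognise the resulting numbers as antispherical $\ell$-Kazhdan--Lusztig polynomials evaluated at $1$. For the first step, recall that tilting $\bG$-modules correspond under~\eqref{eq:satake}--\eqref{eq:IW} to the indecomposable tilting Iwahori--Whittaker perverse sheaves, which by~\S\ref{ss:intro-IW} coincide with the self-dual indecomposable parity complexes $\scE^\IW_{w\bullet_\ell\lambda+\rho}$, while $\nabla(\mu)$ corresponds to the costandard object $\nabla^\IW_{\mu+\rho}$ (of character $\chi_\mu$). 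Since tilting modules admit costandard filtrations, one gets $\mathsf{ch}(\mathsf{T}(w\bullet_\ell\lambda))=\sum_\mu m_\mu\,\chi_\mu$ with $m_\mu=(\mathsf{T}(w\bullet_\ell\lambda):\nabla(\mu))$ the multiplicity of $\nabla^\IW_{\mu+\rho}$ in a costandard filtration of $\scE^\IW_{w\bullet_\ell\lambda+\rho}$; by highest-weight formalism this is the value at $q=1$ of the graded multiplicity $\sum_n\dim\Hom(\Delta^\IW_{\mu+\rho},\scE^\IW_{w\bullet_\ell\lambda+\rho}[n])\,q^n$.

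Next I would apply Smith restriction. By (the proof of) Theorem~\ref{thm:main1}, $i^{!*}$ commutes with $*$- and $!$-extensions, hence carries $\Delta^\IW_{\mu+\rho}$ and $\nabla^\IW_{\mu+\rho}$ to a standard, resp.\ costandard, object on $(\Gr_H)^{\roots_\ell}$, and it is fully faithful on parity complexes. Combining this with Theorem~\ref{thm:main2} and the morphism-space formulae~\eqref{eq:for}--\eqref{eq:Qhom} (which say that, after the $q=1$ specialisation of the equivariant parameter, $\For$ and $Q$ match graded morphism spaces between parity complexes), one concludes that $\scE^\IW_{w\bullet_\ell\lambda+\rho}$ is carried to the indecomposable Iwahori--Whittaker parity complex $\scE_w$ supported on the single component $\Gr_{H,\gamma}$ attached to $\lambda$, and that the graded multiplicity above equals the analogous graded standard-multiplicity of $\scE_w$ on $\Gr_{H,\gamma}$. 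By~\eqref{eq:fixed points} the component $\Gr_{H,\gamma}$ is a partial affine flag variety for $L_\ell H$ whose partiality is governed by the dot-stabiliser $W_\lambda$ of $\lambda$; and — this is exactly what is gained by passing to~\eqref{eq:IW}, with its built-in $\rho$-shift — the weights labelling $\bG$-modules in the block of $\lambda$ match the $\Iw_\ell$-relevant orbits of $\Gr_{H,\gamma}$, indexed by $\Waff^{(\lambda)}$.

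It then remains to identify these geometric multiplicities combinatorially. Here I would invoke the known description of the additive category of Iwahori--Whittaker parity complexes on a (partial) affine flag variety of $L_\ell H$ in terms of (a parabolic version of) the diagrammatic Hecke category: under this identification $\scE_w$ categorifies the $w$-th antispherical $\ell$-canonical basis element, so its graded standard-multiplicity polynomial is $\ppn_{y,w}(q)$ with $y,w\in\Waff^{(\lambda)}$, and the costandard object labelled by $y$ decategorifies to the Weyl character $\chi_{y\bullet_\ell 0}$. Setting $q=1$ and assembling the previous steps then yields
\[
\mathsf{ch}(\mathsf{T}(w\bullet_\ell\lambda))=\sum_{y\in\Waff^{(\lambda)}}\ppn_{y,w}(1)\,\chi_{y\bullet_\ell 0}.
\]

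The hard part will be this last matching: one must set up precisely the dictionary between the geometry of $\Gr_{H,\gamma}$ (which orbits carry a nonzero Iwahori--Whittaker local system, how the partiality $W_\lambda$ enters, and the reconciliation of the actions $\bullet_\ell$ and $\square_\ell$) and the representation-theoretic indexing by $\Waff^{(\lambda)}$ together with the correct Weyl characters, handling in particular the singular blocks, where translation functors and the parabolic (antispherical) module structure intervene. The input that parity complexes on the \emph{full} affine flag variety categorify the antispherical $\ell$-canonical basis can be quoted from the literature; its parabolic/partial-flag-variety incarnation, valid for all $\ell$ and all blocks, is the point likely to require additional verification.
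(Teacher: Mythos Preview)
Your proposal is correct and follows essentially the same route as the paper: translate to the Iwahori--Whittaker category, apply Smith restriction (full faithfulness on parity/tilting objects), pass to ordinary parity complexes on the fixed-point partial affine flag variety via the functors $Q$ and $\For$, and read off the antispherical $\ell$-Kazhdan--Lusztig polynomials. Two remarks on implementation.

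First, the paper does not compute the costandard multiplicities $(\mathsf{T}:\nabla)$ directly as you do; instead it reduces the character formula to the identity
\[
\dim\Hom\bigl(\mathsf{T}(w\bullet_\ell\lambda),\mathsf{T}(w'\bullet_\ell\lambda)\bigr)=\sum_{y\in\Waff^{(\lambda)}}\ppn_{y,w}(1)\,\ppn_{y,w'}(1),
\]
and then checks this via the chain of equivalences. This is equivalent to your approach (the bilinear form determines the multiplicities), but it has the advantage that both arguments of $\Hom$ are parity complexes throughout, so Proposition~\ref{prop:Hom-parity-Q} and the formulas~\eqref{eq:for}--\eqref{eq:Qhom} apply directly; your route requires a small extra argument matching stalk ranks of Smith-parity objects with those of their non-equivariant counterparts.

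Second, the ``hard part'' you anticipate is in fact handled quite simply: no translation functors and no diagrammatic Hecke category are needed. The $\ell$-canonical basis is \emph{defined} via indecomposable Iwahori--Whittaker parity complexes on the full affine flag variety (see~\eqref{eqn:pcan-basis}), and the passage from the partial flag variety $\Fl^{\ell,\circ}_{\mathbf{f}_\lambda}$ to the full one is accomplished by the elementary Lemma~\ref{lem:pi*-IWparity}, which says that shifted pullback along the projection sends indecomposable parity complexes to indecomposable parity complexes. The bookkeeping between $\bullet_\ell$, $\square_\ell$, the set $\Waff^{(\lambda)}$, and the orbits supporting a nonzero Iwahori--Whittaker local system is recorded in Remark~\ref{rmk:parameters-orbits-fixed-points} and the identity $W_\lambda=\Waff^{\mathbf{f}_\mu}$; singular blocks require no separate treatment.
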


See~\cite[\S 1.4]{rw} for a comparison of this formula with an earlier formula conjectured by Andersen~\cite{andersen-filtrations}, which was one of our sources of inspiration. The proof of Theorem~\ref{thm:tilting} proceeds as follows. By standard considerations, to compute the characters of indecomposable tilting modules it suffices to compute the dimensions of morphism spaces between such objects. In the present setting, thanks to Theorem~\ref{thm:main1} this boils down to computing the dimensions of  morphism spaces between indecomposable parity objects in $\Smith_{\IW}((\Gr_H)^{\roots_\ell}, \bk)$. Formula~\eqref{eq:Qhom} allows to translate this question into that of computing dimensions of morphism spaces between indecomposable parity objects in $\Db_{\IW_\ell}((\Gr_H)^{\roots_\ell}, \bk)$, which can be done using~\eqref{eq:fixed points} and the fact that the dimensions of stalks of indecomposable parity complexes on (partial) affine flag varieties are computed by $\ell$-Kazhdan--Lusztig polynomials, as follows from the results of~\cite[Part~III]{rw}.

\subsection{Simple characters} 

Using ideas of
Andersen~\cite{andersen-filtrations} recently refined by
Soba\-je~\cite{sobaje}, from the formula in Theorem~\ref{thm:tilting} one can in theory deduce a character formula for simple $\bG$-modules, in all blocks and all characteristics. This can be done in at least two ways. The first possibility is to use a ``reciprocity formula'' due to Andersen~\cite{andersen2} (based on earlier work of Jantzen) which expresses multiplicities of simple modules in Weyl modules in terms of multiplicities of induced modules in indecomposable tilting modules. This method has the advantage of allowing to deduce Lusztig's conjectural formula~\cite{lusztig} in case the relevant $\ell$-Kazhdan--Lusztig polynomials coincide with the corresponding standard Kazhdan--Lusztig polynomials, but it requires the assumption that $\ell \geq 2h-2$, and does not produce a very natural formula in general, since it involves a certain ``twist'' (denoted $y \mapsto \hat{y}$ below) on indices.

To explain this in more detail, let us assume that $\ell \geq 2h-2$ and that $\bG$ is quasi-simple, and denote by $\alpha_0^\vee$ the highest coroot. We then set
\[
 Y := \{w \in \Waff \mid \text{$w$ is minimal in $\Wf w$ and $\langle w \square_\ell \rho, \alpha_0^\vee \rangle < \ell(h-1)$}\}.
\]
This subset does not depend on $\ell$, and is an ideal in the Bruhat order on the set of elements $w$ minimal in $\Wf w$; in fact, in terms of the notation of~\cite{rw-simple}, it consists of the elements $w$ sending the fundamental alcove $A_{\mathrm{fund}}$ inside the portion of the dominant region delimited by the hyperplane orthogonal to $\alpha_0^\vee$ and passing through $\rho$. Consider also the operation $y \mapsto \hat{y}$ on $\Waff$ corresponding to the operation $A \mapsto \hat{A}$ on alcoves considered in~\cite{soergel} or~\cite{rw-simple} (through the canonical bijection between $\Waff$ and the set of alcoves). Then, in view of~\cite[Proposition~1.8.1]{rw}, from Theorem~\ref{thm:tilting} we obtain that for any $w \in Y$ we have
\[
 [\nabla(w \bullet_\ell 0)] = \sum_{y \in Y} \ppn_{w,\hat{y}}(1) \cdot [\mathsf{L}(y \bullet_\ell 0)]
\]
in the Grothendieck group of $\Rep(\bG)$. In order to compare this formula with that in Lusztig's conjecture, one needs to invert these equations. In general we do not know how to do that explicitly. However, if we assume that each polynomial $\ppn_{w,\hat{y}}$ in these formulas coincides with the corresponding ``standard'' Kazhdan--Lusztig polynomial $n_{w,\hat{y}}$ (as considered e.g.~in~\cite{soergel}), then the inverse matrix is computed in~\cite[Theorem~5.1]{soergel}; from this result we obtain that
\[
 [\mathsf{L}(w \bullet_\ell 0)] = \sum_{y \in Y} (-1)^{\ell(w)+\ell(y)} h_{y,w}(1) \cdot [\nabla(y \bullet_\ell 0)]
\]
for any $w \in Y$, as predicted by Lusztig in~\cite{lusztig}. This property is well known to hold in large characteristic (without any known explicit bound), which explains why Theorem~\ref{thm:tilting} provides a new proof of Lusztig's conjecture in large characteristics.

\begin{rmk}
 The condition on $w$ considered above is not the same as in Lusztig's formulation of his conjecture. However, the two versions are known to be equivalent under the present assumptions, due to results of Kato; see~\cite[\S\S1.12--1.13]{williamson-takagi} for more details and references.
\end{rmk}

The other method to obtain a character formula for simple $\bG$-modules out of a character formula for indecomposable tilting $\bG$-modules, which works for all values of $\ell$ thanks to the results of~\cite{sobaje}, is to express multiplicities of the simple $\bG$-modules whose highest weight is restricted in the baby Verma $\bG_1 \bT$-modules. In this way one obtains a formula that may be compared with the ``periodic'' formulation of Lusztig's conjecture, see~\cite{lusztig-patterns}. This formula was made explicit in~\cite{rw-simple}, under the assumption that $\ell \geq 2h-1$. The extension of the tilting character formula in Theorem~\ref{thm:tilting} makes it desirable to extend the validity of these results to smaller values of $\ell$, and we plan to come back to this question in a future publication.

\subsection{Acknowledgements}

It is a pleasure and an honor to dedicate this paper to Roman Bez\-rukavnikov, who taught us so much about our subject. His inspiring influence on our works should be clear, and cannot be overestimated.

One of the main ideas of this paper crystalized whilst the second
author was preparing \cite{williamson-cdm}. He would like to thank
  the organizers for the invitation to deliver lectures as part of the ``Current
  Developments in Mathematics'' at Harvard, as well as their
  insistence that he write \cite{williamson-cdm}.
We thank Luc Illusie and Weizhe Zheng for their help with the proof of
Proposition~\ref{prop:illusie-zheng}, and Ofer Gabber for providing an
alternative proof. We thank Timo Richarz for indicating to us the
reference~\cite{dchl}, whose constructions play a crucial role in the
proof of Proposition~\ref{prop:fixed-points-Grass}, and spotting some
typos. We also thank Pramod Achar, Drago\c s Fr\u a\c til\u a, Jesper Grodal, Emilien Zabeth and
the anonymous referee for useful comments and pointing out
typos. Finally, we would like to thank the organizers and participants
of the Oberwolfach Arbeitsgemeinschaft ``Geometric Representation
Theory'', where the results of this paper were discussed in detail.

This project has received
funding from the European Research Council (ERC) under the European Union's Horizon 2020
research and innovation programme (grant agreements No~677147 and~01002592).

\subsection{Notation}

For an abelian category $\scA$, $D \scA$, $D^+\scA$ and $\Db \scA$ denote its
unbounded, bounded below and bounded derived category
respectively, and $\scH^n$ denotes the associated cohomology functors. If $A$ is an algebra, we will denote by $A\Mod$ the category of (left) $A$-modules. If $A$ is (left) Noetherian, we will denote by $A\Mof$ the subcategory of $A$-modules of finite type.

\section{Preliminaries on equivariant sheaves}
\label{sec:equiv-sheaves}

\subsection{Equivariant sheaves}

We start by recalling some generalities on \'etale shea\-ves on schemes endowed with an action of a finite group. We fix a (commutative) coefficient ring $\bk$.

Let $X$ be a scheme, and let $A$ be a finite group acting on $X$ (by scheme automorphisms). For any $g \in A$, we denote by $\alpha_g : X \simto X$ the action on $X$. Recall that an $A$-equivariant (\'etale) sheaf of $\bk$-modules is the datum of an \'etale sheaf $\scF$ of $\bk$-modules on $X$ together with a collection $(\varphi_g)_{g \in A}$ where, for any $g \in A$,
\[
\varphi_g : \alpha_g^* \scF \simto \scF
\]
is an isomorphism of sheaves of $\bk$-modules, this collection satisfying the condition that for $g,h \in A$ we have
\begin{equation}
\label{eqn:cocycle}
 \varphi_h \circ \alpha_h^*(\varphi_g) = \varphi_{gh}
\end{equation}
as morphisms from $\alpha_{gh}^* \scF$ to $\scF$. (We will often abuse notation, and omit the isomorphisms $(\varphi_g)_{g \in A}$ from the notation.) Morphisms of $A$-equivariant sheaves are defined as morphisms of sheaves compatible (in the natural way) with the isomorphisms $\varphi_g$. The (abelian) category of $A$-equivariant sheaves of $\bk$-modules will be denoted $\Sh_A(X,\bk)$. We have a natural ``forgetful'' exact functor
\begin{equation}
\label{eqn:ForA}
 \For_A : \Sh_A(X,\bk) \to \Sh(X,\bk)
\end{equation}
(which simply forgets the collection of isomorphisms $(\varphi_g)_{g \in A}$), where $\Sh(X,\bk)$ denotes the category of sheaves of $\bk$-modules on $X$. If $A$ acts trivially on $X$, we have a canonical identification
\begin{equation}
\label{eqn:equ-sheaves-triv-action}
 \Sh_A(X,\bk) = \Sh(X,\bk[A]).
\end{equation}

If $X,Y$ are two schemes with actions of $A$ (with actions denoted $\alpha^X_{-}$ and $\alpha^Y_{-}$ respectively), and $f : X \to Y$ is an $A$-equivariant morphism, then for any $g \in A$ we have a canonical isomorphism
\[
 (\alpha^X_g)^* \circ f^* \cong f^* \circ (\alpha^Y_g)^*.
\]
As a consequence, the functor $f^*$ induces an exact functor
\[
 \Sh_A(Y,\bk) \to \Sh_A(X,\bk),
\]
which will also be denoted $f^*$. Similarly, we have a canonical isomorphism
\[
 (\alpha^Y_g)^* \circ f_* \cong f_* \circ (\alpha^X_g)^*.
\]
(Here we use the fact that $(\alpha^Z_g)^* \cong (\alpha^Z_{g^{-1}})_*$ for $Z=X$ or $Y$.) As a consequence, $f_*$ induces a functor
\[
 f_* : \Sh_A(X,\bk) \to \Sh_A(Y,\bk).
\]

\subsection{Equivariant sheaves and injective resolutions}
\label{ss:equ-sheaves-inj}

We consider again a sche\-me $X$ endowed with an action of the finite group $A$.
For any $\scF$ in $\Sh(X,\bk)$ we set
\[
 \Av_A(\scF) := \bigoplus_{g \in A} \alpha_g^* \scF.
\]
We endow $\Av_A(\scF)$ with the structure of an $A$-equivariant sheaf by defining, for any $g \in A$, the isomorphism
\[
 \varphi_g : \alpha_g^* \Av_A(\scF) \simto \Av_A(\scF)
\]
as the canonical identification
\[
 \alpha_g^* \left( \bigoplus_{h \in A} \alpha_h^* \scF \right) = \bigoplus_{h \in A} \alpha_{hg}^* \scF = \bigoplus_{a \in A} \alpha_a^* \scF.
\]
This construction extends in a natural way to an exact functor
\[
 \Av_A : \Sh(X,\bk) \to \Sh_A(X,\bk).
\]

\begin{lem}
\label{lem:Av-adjoint}
 The functor $\Av_A$ is left and right adjoint to the forgetful functor~\eqref{eqn:ForA}.
\end{lem}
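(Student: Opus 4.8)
The plan is to establish the two adjunctions directly from the universal properties, constructing the unit and counit maps by hand and checking the triangle identities. Since $\Av_A$ is defined as $\bigoplus_{g \in A} \alpha_g^*(-)$, and the group $A$ is finite, there is no distinction to worry about between a coproduct and a product; this is the structural reason both adjunctions hold simultaneously.

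First I would treat the adjunction $\Av_A \dashv \For_A$ (i.e.\ $\Av_A$ as \emph{left} adjoint). For $\scF \in \Sh(X,\bk)$, the unit $\scF \to \For_A \Av_A(\scF) = \bigoplus_{g \in A} \alpha_g^* \scF$ is the inclusion of the summand indexed by $g = e$ (the neutral element), composed with the identity $\alpha_e^* \scF \cong \scF$. For an $A$-equivariant sheaf $(\cG, (\psi_g))$, the counit $\Av_A(\For_A \cG) = \bigoplus_{g \in A} \alpha_g^* \cG \to \cG$ is the map whose restriction to the summand $\alpha_g^* \cG$ is $\psi_g \colon \alpha_g^* \cG \simto \cG$. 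One checks that this is a morphism of equivariant sheaves using the cocycle condition~\eqref{eqn:cocycle}, and that the two triangle identities hold: one follows because $\psi_e = \id$ (a consequence of~\eqref{eqn:cocycle} with $g = h = e$), and the other is a direct reindexing of the sum over $A$.

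Next I would treat the adjunction $\For_A \dashv \Av_A$ (i.e.\ $\Av_A$ as \emph{right} adjoint). For an $A$-equivariant sheaf $\cG$, the unit $\cG \to \Av_A(\For_A \cG) = \bigoplus_{g \in A} \alpha_g^* \cG$ has $g$-component $(\psi_g)^{-1} \colon \cG \simto \alpha_g^* \cG$; again~\eqref{eqn:cocycle} shows this is equivariant. For $\scF \in \Sh(X,\bk)$, the counit $\For_A \Av_A(\scF) = \bigoplus_{g \in A} \alpha_g^* \scF \to \scF$ is projection onto the $g = e$ summand followed by $\alpha_e^* \scF \cong \scF$. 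The triangle identities are verified as before, the key input again being $\psi_e = \id$ together with the reindexing bijection $h \mapsto hg$ on $A$.

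The verifications are entirely formal and short; the only point requiring a little care is keeping the variances straight, i.e.\ remembering that $\alpha_g^{-1} = \alpha_{g^{-1}}$ so that pulling back along $\alpha_g$ interacts correctly with the cocycle~\eqref{eqn:cocycle}, and that the reindexing used for one triangle identity in each adjunction is by left multiplication on $A$. I expect the main (very mild) obstacle to be simply bookkeeping the indices in the two triangle identities for the right-adjoint case, since there the unit involves the inverses $(\psi_g)^{-1}$ and one must confirm these assemble into a genuine morphism in $\Sh_A(X,\bk)$. No deeper input — in particular nothing about injective resolutions or the étale topology — is needed for this lemma; it is purely a statement about the additive functor $\bigoplus_{g} \alpha_g^*$ and the finiteness of $A$.
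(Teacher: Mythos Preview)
Your proposal is correct and follows essentially the same approach as the paper: you construct the same four unit/counit morphisms (inclusion/projection at the $e$-summand for $\For_A \circ \Av_A$, and $\bigoplus_g \psi_g$, $\bigoplus_g (\psi_g)^{-1}$ for $\Av_A \circ \For_A$), note that the cocycle condition makes the latter two equivariant, and then verify the triangle identities. The paper presents exactly this argument, only slightly more tersely.
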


\begin{proof}
 To prove the lemma we have to define morphisms of functors
 \[
  \For_A \circ \Av_A \to \id, \quad \id \to \For_A \circ \Av_A, \quad \Av_A \circ \For_A \to \id, \quad \id \to \Av_A \circ \For_A,
 \]
and check the appropriate zigzag relations. Here we have
\[
 \For_A \circ \Av_A = \bigoplus_{g \in A} \alpha_g^*,
\]
and the first two morphisms are defined as the projection to and embedding of the factor $\alpha_e^*=\id$. On the other hand, for $(\scF, (\varphi_g)_{g \in A})$ in $\Sh_A(X)$, the morphisms
\[
 \scF \to \Av_A \circ \For_A(\scF) \to \scF
\]
are defined as $\bigoplus_{g \in A} (\varphi_g)^{-1}$ and $\bigoplus_{g \in A} \varphi_g$ respectively. (These morphisms of sheaves are morphisms of $A$-equivariant sheaves thanks to the cocyle condition~\eqref{eqn:cocycle}.) The zigzag relations are all trivial to check.
\end{proof}

Lemma~\ref{lem:Av-adjoint} implies that the functor $\Av_A$ sends injective objects of $\Sh(X,\bk)$ to injective objects of $\Sh_A(X,\bk)$. Since the category $\Sh(X,\bk)$ has enough injectives (see~\cite[\href{https://stacks.math.columbia.edu/tag/01DU}{Tag 01DU}]{stacks-project}), it follows that the same property holds in $\Sh_A(X,\bk)$. In fact, if $\scF$ belongs to $\Sh_A(X,\bk)$, and if $\mathscr{I}$ is an injective object of $\Sh(X,\bk)$ such that we have an injection $\For_A(\scF) \hookrightarrow \mathscr{I}$, then the map $\scF \to \Av_A(\mathscr{I})$ deduced by adjunction is injective, and $\Av_A(\mathscr{I})$ is injective in $\Sh_A(X,\bk)$.

In particular, if $f : X \to Y$ is an $A$-equivariant morphism between schemes with $A$-actions, recall that we have the (non derived) pushforward functor
\[
 f_* : \Sh_A(X,\bk) \to \Sh_A(Y,\bk).
\]
From the considerations on injective objects above we deduce that this functor admits a derived functor
\[
 R f_* : D^+ \Sh_A(X,\bk) \to D^+ \Sh_A(Y,\bk),
\]
which can be computed by means of injective resolutions.

%

Since the functor $\For_A$ admits an exact left adjoint, it sends injective objects to injective objects. It follows
that for an equivariant morphism $f : X \to Y$ as above we have a natural commutative diagram
\[
 \xymatrix@C=1.5cm{
 D^+ \Sh_A(X,\bk) \ar[r]^-{Rf_*} \ar[d]_-{\For_A} & D^+ \Sh_A(Y,\bk) \ar[d]^-{\For_A} \\
 D^+ \Sh(X,\bk) \ar[r]^-{Rf_*} & D^+ \Sh(Y,\bk),
 }
\]
where the lower horizontal arrow is the standard pushforward functor. In particular, in case $X$ and $Y$ are of finite type over some field $\F$ of finite cohomological dimension (e.g.~algebraically closed), and $\bk$ is torsion (e.g.~a field of positive characteristic), it is known that the ``standard'' functor $Rf_*$ sends $\Db \Sh(X,\bk)$ into $\Db \Sh(Y,\bk)$, see~\cite[\href{https://stacks.math.columbia.edu/tag/0F10}{Tag 0F10}]{stacks-project}. It follows that the ``equivariant'' functor $Rf_*$ considered above restricts to a functor
\[
 Rf_* : \Db \Sh_A(X,\bk) \to \Db \Sh_A(Y,\bk).
\]

Of course, since the functor
\[
 f^* : \Sh_A(Y,\bk) \to \Sh_A(X,\bk)
\]
is exact, we have an induced functor
\[
 f^* : D \Sh_A(Y,\bk) \to D \Sh_A(X,\bk)
\]
which maps $D^+ \Sh_A(Y,\bk)$ into $D^+ \Sh_A(X,\bk)$ and $\Db \Sh_A(Y,\bk)$ into $\Db \Sh_A(X,\bk)$ (and is compatible with the usual pullback functor $f^*$ in the obvious way). It is easily checked that the functor
\[
 f^* : D^+ \Sh_A(Y,\bk) \to D^+ \Sh_A(X,\bk)
\]
is left adjoint to
\[
 Rf_* : D^+ \Sh_A(X,\bk) \to D^+ \Sh_A(Y,\bk).
\]

\subsection{Equivariant sheaves and quotient map}
\label{ss:equiv-quotient}

From now on we assume that $X$ is of finite type over some base scheme
$S$, that $A$ is abelian,\footnote{This assumption is probably not necessary. It is made in order to use~\cite{stacks-project} (where rings are assumed to be commutative) as a reference for some facts about (sheaves of) $\bk[A]$-modules.} and that each $\alpha_g$ is an automorphism of $S$-schemes. We assume furthermore that the action is admissible in the sense of~\cite[Expos\'e~5, D\'efinition~1.7]{sga1}. Then we have a quotient scheme $X/A$, and a finite quotient morphism $q : X \to X/A$, see~\cite[Expos\'e~V, Corollaire~1.5]{sga1}. (Here, by definition $X/A$ is the scheme which represents the functor $Z \mapsto \Hom(X,Z)^A$, where $A$ acts on $\Hom(X,Z)$ via its action on $X$. It can be constructed by gluing affine schemes of the form $\Spec(R^A)$ where $\Spec(R) \subset X$ is an $A$-stable affine open subscheme of $X$.)

By finiteness the functor $q_*$ is then exact (see~\cite[\href{https://stacks.math.columbia.edu/tag/03QP}{Tag 03QP}]{stacks-project}), and defines an exact functor
\[
 q_* : \Sh_A(X,\bk) \to \Sh_A(X/A,\bk) \overset{\eqref{eqn:equ-sheaves-triv-action}}{\cong} \Sh(X/A,\bk[A]).
\]

\begin{rmk}
\label{rmk:admissible}
 As explained in~\cite[Expos\'e~V, Proposition~1.8]{sga1}, the action of $A$ on $X$ is admissible iff each orbit of $A$ is included in an affine open subset of $X$. This condition is automatic if $S=\Spec(\F)$ for some field $\F$ and $X$ is quasi-projective over $S$, see e.g.~\cite[p.~59, Exemple~1]{serre}. (This setting is the only one we will consider in practice.)
\end{rmk}

Recall that a complex $\scG \in D\Sh(X/A,\bk[A])$ is said to have
\emph{tor amplitude in $[a,b]$} (for $a, b \in \Z$ with $a \le b$) if for any $\scG' \in \Sh(X/A,\bk[A])$ we have
\[
 \scH^n(\scG' \lotimes_{\bk[A]} \scG)=0 \quad \text{unless $n \in [a,b]$,}
\]
see~\cite[\href{https://stacks.math.columbia.edu/tag/08FZ}{Tag
  08FZ}]{stacks-project}. Recall also that $\scG$ has tor amplitude in
$[a,b]$ iff for any geometric point $\overline{x}$ of $X$ the complex
of $\bk[A]$-modules $\scG_{\overline{x}}$ has tor amplitude in $[a,b]$,
see~\cite[\href{https://stacks.math.columbia.edu/tag/0DJJ}{Tag
  0DJJ}]{stacks-project}. Finally, $\scG \in D\Sh(X/A,\bk[A])$ is said to be
\emph{of finite tor dimension} if it has tor amplitude in $[a,b]$ for
some $a, b \in \Z$.


The action of $A$ on $X$ induces an action on geometric points. Namely, if
\[
 \overline{x} : \Spec(K) \to X
\]
is a geometric point and $g \in A$, then the geometric point $g \cdot \overline{x}$ is the composition
\[
 \Spec(K) \xrightarrow{\overline{x}} X \xrightarrow{\alpha_{g}} X.
\]
We will say that the $A$-action on $X$ is \emph{free} if each geometric point of $X$ has trivial stabilizer for this action.

\begin{lem}
\label{lem:tor-dim}
 Assume that $\bk$ is a field, and that the $A$-action on $X$ is free. Then for any $\scG$ in $\Sh_A(X,\bk)$, the sheaf $q_* \scG \in \Sh(X/A,\bk[A])$ has finite tor dimension.
\end{lem}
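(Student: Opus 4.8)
The plan is to verify that $q_*\scG$ has finite tor dimension over $\bk[A]$ by checking it one geometric stalk at a time, using the criterion recalled just before the statement: it suffices to show that for every geometric point $\overline{y}$ of $X/A$, the stalk $(q_*\scG)_{\overline{y}}$ — a complex of $\bk[A]$-modules concentrated in degree $0$ — has finite tor dimension, with a bound independent of $\overline{y}$. In fact I would prove the stronger statement that each such stalk is a \emph{free} $\bk[A]$-module: freeness gives tor dimension $0$ with the uniform bound $[0,0]$, and this settles the lemma.

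To compute the stalk I would first use that, since the $A$-action is free and admissible, the quotient map $q : X \to X/A$ is a finite étale $A$-torsor (this is exactly the Artin--Schreier-type situation; see e.g.~\cite[Expos\'e~V]{sga1}); in particular each of its geometric fibres is an $A$-torsor. Since $q$ is finite, proper base change identifies $(q_*\scG)_{\overline{y}}$ with the global sections of the restriction of $\scG$ to the geometric fibre $X_{\overline{y}}$; as $X_{\overline{y}}$ is a finite (hence Artinian) scheme over a separably closed field and étale sheaves do not see nilpotents, this is $\bigoplus_{\overline{x}}\scG_{\overline{x}}$, the sum running over the finitely many geometric points $\overline{x}$ of $X$ lying over $\overline{y}$, which by the previous sentence form a single free $A$-orbit. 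Fixing a base point $\overline{x}_0$ in this orbit and using the stalk isomorphisms induced by the equivariance data $(\varphi_g)_{g\in A}$ of $\scG$ to identify each $\scG_{g\cdot\overline{x}_0}$ with $V := \scG_{\overline{x}_0}$, I obtain an isomorphism of $\bk[A]$-modules
\[
(q_*\scG)_{\overline{y}} \;\cong\; \bigoplus_{g\in A}\scG_{g\cdot\overline{x}_0} \;\cong\; \bk[A]\otimes_\bk V,
\]
with $A$ acting by left translation on the first factor only. The right-hand side is free over $\bk[A]$, as desired.

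The substantive inputs I would rely on are: a free admissible action produces an étale $A$-torsor (so the geometric fibres of $q$ are $A$-torsors), and proper base change for the finite morphism $q$. Everything else is bookkeeping, and the point requiring a little care is to track how $A$ acts on $(q_*\scG)_{\overline{y}}$ through the pushed-forward equivariance data and to check that, under the identification above, this action is exactly the left regular action on $\bk[A]\otimes_\bk V$. The conventions (left versus right, $g$ versus $g^{-1}$) in the equivariant structure only affect the chosen bijection $q^{-1}(\overline{y})\simto A$, not the conclusion, so I do not anticipate a genuine obstacle here.
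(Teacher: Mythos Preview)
Your proof is correct and follows essentially the same route as the paper: reduce to geometric stalks, compute $(q_*\scG)_{\overline{y}}$ as the direct sum of stalks over the geometric fibre (the paper cites \cite[\href{https://stacks.math.columbia.edu/tag/03QP}{Tag 03QP}]{stacks-project} directly rather than invoking proper base change), identify that fibre with $A$ via freeness, and use the equivariant structure to exhibit the stalk as $\bk[A]\otimes_\bk \scG_{\overline{x}_0}$, hence free. The only cosmetic difference is that the paper does not bother to record that $q$ is an \'etale $A$-torsor---it only needs finiteness of $q$ and the description of its fibres as $A$-orbits.
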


\begin{proof}
By the comments above,
to prove the lemma it suffices to prove that there exist $a,b \in \Z$ such that for any geometric point
$\overline{y} : \Spec(K) \to X/A$ the $\bk[A]$-module
$\scG_{\overline{y}}$ has tor amplitude in $[a,b]$. In fact one can take $a=b=0$, as we now explain.
 By~\cite[\href{https://stacks.math.columbia.edu/tag/03QP}{Tag 03QP}]{stacks-project} we have
 \[
  (q_* \scG)_{\overline{y}} = \bigoplus_{\overline{x}} \scG_{\overline{x}},
 \]
where $\overline{x}$ runs over the set $X_{\overline{y}}$ of maps $\overline{x} : \Spec(K) \to X$ such that $q \circ \overline{x} = \overline{y}$. The morphism $q$ is surjective, and its fibers are the $A$-orbits (see~\cite[Expos\'e~V, \S 1]{sga1}). Hence $X_{\overline{y}}$ is nonempty, and $A$ acts transitively on this set. Our assumption ensures on the other hand that this action has trivial stabilizers. Therefore, if we fix some $\overline{x} \in X_{\overline{y}}$, we deduce a bijection $A \simto X_{\overline{y}}$ determined by $g \mapsto g \cdot \overline{x}$. The $A$-equivariant structure on $\scG$ provides a canonical isomorphism
\[
 \scG_{\overline{x}} \simto \scG_{g \cdot \overline{x}}
\]
for each $g \in A$. Using these data we obtain an isomorphism
\[
 (q_* \scG)_{\overline{y}} = \bk[A] \otimes_\bk \scG_{\overline{x}},
\]
which is easily seen to be $A$-equivariant. Hence $(q_*
\scG)_{\overline{y}}$ is free as a $\bk[A]$-module, in particular of tor
amplitude in $[0,0]$.
\end{proof}

\subsection{Stalks at fixed points}
\label{ss:stalks-fixed-points}

We continue with the assumptions of~\S\ref{ss:equiv-quotient}. The closed subscheme $X^A \subset X$ of $A$-fixed points is the scheme which represents the functor $Z \mapsto \Hom(Z,X)^A$, where $A$ acts on $\Hom(Z,X)$ via its action on $X$. (The representability of this functor is easy in our setting: since our action is admissible it suffices to treat the case $X=\Spec(R)$ is affine; then $X^A$ is the spectrum of the maximal quotient of $R$ on which $A$ acts trivially, i.e.~the quotient of $R$ by the ideal generated by the elements $x - g\cdot x$ for $x \in R$ and $g \in A$.) As a set, the closed subscheme $X^A \subset X$ consists of the points $x \in X$ which are fixed by $A$ and such that the induced action on the residue field $k(x)$ is trivial.

By definition, any geometric point of $X$ which is stable under the $A$-action considered in~\S\ref{ss:equiv-quotient} factors through a geometric point of $X^A$. In particular, if $A$ is a simple group then the $A$-action on the open subset $U := X \smallsetminus X^A \subset X$ is free.

\begin{lem}
\label{lem:U-admissible}
 The $A$-action on $U$ is admissible.
\end{lem}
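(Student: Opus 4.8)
The plan is to verify the criterion recalled in Remark~\ref{rmk:admissible}: since $X^A$ is fixed pointwise by $A$, the open subset $U = X \smallsetminus X^A$ is $A$-stable and of finite type over $S$, so the setting of \S\ref{ss:equiv-quotient} applies to $U$ and the only thing to check is that every $A$-orbit contained in $U$ lies in some affine open subset of $U$. Note that, $U$ being open in $X$, any affine open subset of $X$ that happens to lie inside $U$ is in particular an affine open subset of $U$; so it is enough to produce such a subset inside $X$.

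First I would fix an $A$-orbit $O \subset U$. Since $A$ is finite, $O$ consists of finitely many points $x_1, \dots, x_n$. The $A$-action on $X$ is admissible by hypothesis, so by Remark~\ref{rmk:admissible} there is an affine open subset $V = \Spec(R) \subset X$ with $O \subset V$. The trace on $V$ of the closed subscheme $X^A$ is a closed subscheme of $V$, hence of the form $V(I)$ for some ideal $I \subset R$; and since no $x_i$ lies in $X^A$, the prime $\mathfrak{p}_i \subset R$ corresponding to $x_i$ does not contain $I$.

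Next I would invoke prime avoidance: as $I \not\subset \mathfrak{p}_i$ for every $i$, we have $I \not\subset \mathfrak{p}_1 \cup \dots \cup \mathfrak{p}_n$, so there exists $f \in I$ with $f \notin \mathfrak{p}_i$ for all $i$. Then the principal open $D_V(f) = \Spec(R_f)$ is an affine open subset of $V$ (hence of $X$); it contains each $x_i$, hence $O$; and since $f \in I$ we have $V(I) \subset V(f)$, so that $D_V(f) \subset V \smallsetminus V(I) = V \cap U \subset U$. Thus $O$ is contained in the affine open subset $D_V(f)$ of $U$, and since $O$ was arbitrary the criterion of Remark~\ref{rmk:admissible} shows that the $A$-action on $U$ is admissible.

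I do not expect a genuine obstacle here, as every step is elementary; the one point requiring a little care is that admissibility is a property relative to the ambient scheme, so one must check that the affine open produced actually lies inside $U$ — which is precisely what the choice $f \in I$ guarantees. (In the quasi-projective-over-a-field situation of Remark~\ref{rmk:admissible} one could alternatively observe that $U$, being open in a quasi-projective scheme, is itself quasi-projective, and conclude directly; the argument above has the advantage of working in the generality of \S\ref{ss:equiv-quotient}.)
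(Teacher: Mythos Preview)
Your proof is correct but takes a genuinely different route from the paper. The paper does not verify the orbit-in-affine criterion directly; instead it uses the already-existing quotient morphism $q : X \to X/A$, observes that $q(X^A)$ is closed (since $q$ is finite) and that the fibers of $q$ are $A$-orbits so that $U = q^{-1}\bigl(X/A \smallsetminus q(X^A)\bigr)$, and then invokes \cite[Expos\'e~V, Corollaire~1.4]{sga1}, which says that the preimage of an open subset of the quotient is again admissible. Your argument replaces this citation by an elementary prime-avoidance computation, which has the virtue of being self-contained. On the other hand, the paper's approach has a structural payoff: it simultaneously identifies $U/A$ with the open subscheme $X/A \smallsetminus q(X^A)$ of $X/A$ and the quotient map $q_U$ with the restriction of $q$, facts that are used immediately afterwards in the paper. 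Your proof establishes admissibility but leaves these identifications to be checked separately (which is of course easy).
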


\begin{proof}
 Consider the closed subset $X^A \subset X$. Since the quotient morphism $q$ is finite, hence closed, the subset $q(X^A) \subset X/A$ is closed. Now from the definition of the subset $X^A \subset X$ given above and the fact that the fibers of $q$ are the $A$-orbits in $X$, one sees that
 \[
 U = q^{-1}(X/A \smallsetminus q(X^A)).
 \]
 Hence the claim follows from~\cite[Expos\'e V, Corollaire~1.4]{sga1}.
\end{proof}

From this lemma we obtain in particular that the quotient $U/A$ exists as a scheme. In fact, in the proof of this lemma we have seen that the open embedding $j : U \hookrightarrow X$ induces an open embedding $\overline{\jmath} : U/A \to X/A$, with complement $q(X^A)$, and that the quotient morphism $q_U : U \to U/A$ is the restriction of $q$ to $U$.

We now denote by $i : X^A \to X$ the closed embedding. Note that $i$ is $A$-equivariant for the trivial $A$-action on $X^A$; we therefore have a functor
\[
 i^* : D \Sh_A(X,\bk) \to D \Sh_A(X^A, \bk) \overset{\eqref{eqn:equ-sheaves-triv-action}}{\cong} D \Sh(X^A,\bk[A]). 
\]
In other words, if $\scF$ belongs to $D \Sh_A(X,\bk)$, then $i^*(\For_A(\scF))$ admits a canonical ``lift'' as a complex of sheaves of $\bk[A]$-modules. In particular, for any geometric point $\overline{x}$ of $X^A$ 
the complex
\[
 \scF_{i(\overline{x})} = (i^* \scF)_{\overline{x}}
\]
is in a natural way an object of $D(\bk[A]\Mod)$.

From now on we assume that $S=\Spec(\F)$ for some field $\F$ of finite cohomological dimension, and that $X$ is of finite type over $\F$. (This assumption implies that $U$ is also of finite type over $\F$, see~\cite[Example~3.45]{gw}. By~\cite[Corollaire~1.5]{sga1} we deduce that $X/A$ and $U/A$ also are of finite type.) We also assume that $\bk$ is a field of characteristic $\ell>0$ which is nonzero in $\F$.
The proof of the following proposition was explained to us by L. Illusie and W. Zheng. (A different, longer and slightly less easy proof can also be deduced from~\cite[Proposition~3.7]{dl}.) Recall that a bounded complex of $\bk[A]$-modules is called perfect if it is isomorphic in $\Db (\bk[A]\Mod)$ to a bounded complex of finitely generated projective modules.

\begin{prop}
\label{prop:illusie-zheng}
Assume that $A$ is a simple group.
 If $\scF \in \Db \Sh_A(U,\bk)$ is such that $\For_A(\scF)$ has constructible cohomology sheaves (see~\cite[\href{https://stacks.math.columbia.edu/tag/03RW}{Tag 03RW}]{stacks-project}), then for any geometric point $\overline{x}$ of $X^A$ the complex of $\bk[A]$-modules
 \[
  (Rj_* \scF)_{\overline{x}}
 \]
is perfect.
\end{prop}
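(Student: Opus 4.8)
The plan is to reduce the statement to a local computation on the fixed-point locus, using the fact that $A$ is simple (so the $A$-action on $U=X\smallsetminus X^A$ is free), and then to exploit the finite tor dimension of the pushforward along the quotient map established in Lemma~\ref{lem:tor-dim}. Fix a geometric point $\overline{x}$ of $X^A$. Since perfectness of a complex of $\bk[A]$-modules can be checked after any faithfully flat base change on $\bk$, and since $\bk$ has characteristic $\ell$, the key point is that $\bk[A]$ has finite global dimension away from its "singular part" corresponding to the order-$\ell$ torsion; what we really must rule out is that $(Rj_*\scF)_{\overline{x}}$, as a complex of $\bk[A]$-modules, fails to have finite tor dimension. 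Thus the proof reduces to: \emph{the stalk $(Rj_*\scF)_{\overline{x}}$ has finite tor dimension over $\bk[A]$} (finiteness of the amplitude being automatic from constructibility and finite cohomological dimension of $\F$, once we know $Rj_*\scF\in\Db$).

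First I would set up the comparison between $Rj_*$ on $X$ and the quotient picture. Recall from the discussion after Lemma~\ref{lem:U-admissible} that $j:U\hookrightarrow X$ descends to an open embedding $\overline{\jmath}:U/A\hookrightarrow X/A$ with complement $q(X^A)$, and that $q_U=q|_U$. Because $q$ is finite, $q_*$ is exact and commutes with all the functors in sight; in particular, applying $q_*$ intertwines the equivariant $Rj_*$ with the ordinary $R\overline{\jmath}_*$ on $X/A$ after passing to $\bk[A]$-sheaves via \eqref{eqn:equ-sheaves-triv-action}. Concretely, for $\scF\in\Db\Sh_A(U,\bk)$ with constructible $\For_A(\scF)$, one gets a canonical isomorphism $q_*(Rj_*\scF)\cong R\overline{\jmath}_*(q_{U,*}\scF)$ in $\Db\Sh(X/A,\bk[A])$. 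The stalk of the left-hand side at the geometric point $\overline{y}:=q\circ i(\overline{x})$ of $X/A$ recovers $(Rj_*\scF)_{\overline{x}}$ as a $\bk[A]$-module (using that $i(\overline{x})$ is the unique point of $X$ over $\overline{y}$, since $\overline{x}$ is fixed), so it suffices to show that $\bigl(R\overline{\jmath}_*(q_{U,*}\scF)\bigr)_{\overline{y}}$ has finite tor dimension over $\bk[A]$.

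Now comes the main input: by Lemma~\ref{lem:tor-dim}, since $A$ is simple the $A$-action on $U$ is free, so $q_{U,*}\scF$ has finite tor dimension over $\bk[A]$ — more precisely, all its stalks are free $\bk[A]$-modules, so $q_{U,*}\scF$ is, locally on $U/A$, a complex of sheaves of projective $\bk[A]$-modules with bounded tor-amplitude. The remaining point is that applying $R\overline{\jmath}_*$ and then taking a stalk does not destroy finite tor dimension over $\bk[A]$. Here I would argue via the projection formula / base change: tensoring over $\bk[A]$ with an arbitrary right $\bk[A]$-module $M$ (viewed as a constant sheaf) commutes with $R\overline{\jmath}_*$ up to the usual boundedness, because $q_{U,*}\scF$ is locally tor-independent of $M$ (free stalks), and then with taking the stalk at $\overline{y}$; since $\F$ has finite cohomological dimension, $R\overline{\jmath}_*$ has bounded cohomological amplitude on constructible complexes, so the total complex $\bigl(R\overline{\jmath}_*(q_{U,*}\scF)\bigr)_{\overline{y}}\lotimes_{\bk[A]}M$ is cohomologically bounded with bounds independent of $M$. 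This is exactly finite tor dimension over $\bk[A]$ (Tag~08FZ / Tag~0DJJ), hence perfectness once combined with the bounded, finitely-generated cohomology coming from constructibility.

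The main obstacle I expect is justifying the interchange "$R\overline{\jmath}_*$ commutes with $\lotimes_{\bk[A]}M$ on constructible complexes with free stalks, with amplitude bounds uniform in $M$". Over a field of characteristic $\ell$ dividing $|A|$, $\bk[A]$ is non-regular, so one cannot simply invoke that $M$ is perfect; the honest route is to use that $q_{U,*}\scF$ has \emph{free} (not merely finite tor dimension) stalks, reducing the question, via a local-to-global and induction-on-a-stratification argument, to the statement that for a constructible sheaf $\scG$ of free $\bk[A]$-modules on a nice stratum, $R\overline{\jmath}_*\scG$ has tor-amplitude over $\bk[A]$ bounded by the cohomological dimension of $\F$ plus a constant. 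This is presumably where the Illusie--Zheng argument does its real work — either by a devissage to the case $\scG=\underline{\bk[A]}$ and invoking finiteness of $Rj_*$ of the constant sheaf together with a Künneth-type splitting off the $\bk[A]$-factor, or by a direct cohomological-dimension estimate for $R\overline{\jmath}_*$ that is insensitive to the coefficient ring. Everything else — the exactness and base-change properties of the finite morphism $q$, the identification of stalks over fixed points, the passage from finite tor dimension plus bounded constructible cohomology to perfectness — is formal.
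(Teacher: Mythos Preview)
Your approach is exactly the paper's: reduce perfectness to finite tor dimension via \cite[\href{https://stacks.math.columbia.edu/tag/0658}{Tag 0658}]{stacks-project}, identify $(Rj_*\scF)_{\overline{x}}$ with the stalk of $R\overline{\jmath}_*\bigl((q_U)_*\scF\bigr)$ at $q(\overline{x})$ using that $q^{-1}(q(x))=\{x\}$, invoke Lemma~\ref{lem:tor-dim}, and then conclude that $R\overline{\jmath}_*$ preserves finite tor dimension. What you flag as the ``main obstacle'' is not one: the preservation of finite tor dimension under $R\overline{\jmath}_*$ is a standard result, and the paper simply cites \cite[Expos\'e~XVII, Th\'eor\`eme~5.2.11]{sga4} together with the finite cohomological dimension bound \cite[\href{https://stacks.math.columbia.edu/tag/0F10}{Tag 0F10}]{stacks-project} --- no devissage, K\"unneth, or stratification argument is required.

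One minor correction: your opening remarks about ``faithfully flat base change on $\bk$'' and ``finite global dimension away from the singular part'' are off-target and should be dropped. The reduction to finite tor dimension is purely the characterization of perfect complexes as bounded complexes with finitely generated cohomology and finite tor dimension; boundedness of $(Rj_*\scF)_{\overline{x}}$ comes from \cite[\href{https://stacks.math.columbia.edu/tag/0F10}{Tag 0F10}]{stacks-project} and finite-dimensionality of its cohomology from Deligne's constructibility theorem \cite[Th.~finitude, Th\'eor\`eme~1.1]{sga4.5}.
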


\begin{proof}
Recall that the morphism $j$ has finite cohomological dimension by~\cite[\href{https://stacks.math.columbia.edu/tag/0F10}{Tag 0F10}]{stacks-project}. As a consequence the complex $Rj_* \scF$ is bounded, so that $(Rj_* \scF)_{\overline{x}}$ is bounded. On the other hand, by~\cite[Th. finitude, Th\'eor\`eme~1.1]{sga4.5}, the complex $Rj_* \scF$ is constructible; hence its stalk $(Rj_* \scF)_{\overline{x}}$ has finite-dimensional cohomology. By~\cite[\href{https://stacks.math.columbia.edu/tag/0658}{Tag 0658}]{stacks-project}, we deduce that to prove that this complex is perfect it suffices to prove that it has finite tor dimension. But since the image $x$ of $\overline{x}$ is fixed by $A$ we have $q^{-1}(q(x))=\{x\}$, which implies that
\[
(Rj_* \scF)_{\overline{x}} = \bigl( q_* (Rj_* \scF) \bigr)_{q(\overline{x})}
\]
by~\cite[\href{https://stacks.math.columbia.edu/tag/03QP}{Tag 03QP}]{stacks-project}. Now we have $q \circ j = \overline{\jmath} \circ q_U$, so that
\[
q_* (Rj_* \scF) = R\overline{\jmath}_* ( (q_U)_* \scF).
\]
By Lemma~\ref{lem:tor-dim} the sheaf $(q_U)_* \scF$ has finite tor dimension, hence by~\cite[Expos\'e XVII, Th\'eor\`eme~5.2.11]{sga4} and~\cite[\href{https://stacks.math.columbia.edu/tag/0F10}{Tag 0F10}]{stacks-project} the complex $R\overline{\jmath}_* ((q_U)_* \scF)$ also does, so that its stalk
\[
\bigl( R\overline{\jmath}_* ( (q_U)_* \scF ) \bigr)_{q(\overline{x})}
\]
must have finite tor dimension, which finishes the proof.
\end{proof}

\section{Smith theory for \'etale sheaves}
\label{sec:Smith-etale}

\subsection{\texorpdfstring{$\varpi$}{mul}-equivariant derived categories}
\label{ss:equiv-Db}

The formalism of ``Smith theory'' that we will build will use the equivariant derived category of Bernstein--Lunts~\cite{bl}. This category is explicitly constructed only in a topological setting in~\cite{bl}, but it is well known that it applies also in the setting of \'etale sheaves under appropriate assumptions, see~\cite[\S 4.3]{bl}. 
In this subsection we briefly recall this construction in the particular case that we require.

So, from now on we fix an algebraically closed field $\F$ of characteristic $p$, and a finite field $\bk$ of characteristic $\ell \neq p$. We will consider (admissible) actions of the finite $\F$-group scheme $\varpi=\roots_\ell$ of $\ell$-th roots of unity on $\F$-schemes of finite type. Here since $\F$ is algebraically closed, $\roots_\ell$ is the constant group scheme associated with the finite group $\roots_\ell(\F)$, so that the constructions of Section~\ref{sec:equiv-sheaves} also apply in this setting. For simplicity, we will not explicitly distinguish the group scheme $\roots_\ell$ and the finite group $\roots_\ell(\F)$.

The construction of~\cite{bl} uses some ``acyclic resolutions.'' In this case these resolutions can be constructed explicitly as follows: 
for any $n \geq 0$ we set
\[
 V_n := \F^n \smallsetminus \{0\},
\]
with the (admissible) action of $\varpi$ induced by the dilation action of the multiplicative group $\Gm$. We have
\[
 \coH^m(V_n; \bk) = \begin{cases}
                     \bk & \text{if $m=0$;} \\
                     0 & \text{if $1 \leq m \leq 2n-2$.}
                    \end{cases}
\]
From this we see that for any $\F$-scheme $X$ of finite type the projection
\[
 p^{X}_{n} : V_n \times X \to X
\]
is $(2n-2)$-acyclic, in the sense that for any $X$-scheme $Y$ of finite type the morphism $p^{X,Y}_{n} : Y \times_X (V_n \times X) \to Y$ is such that for any (\'etale) $\bk$-sheaf $\scF$ the morphism
\[
 \scF \to \tau_{\leq 2n-2}(R(p^{X,Y}_n)_* (p^{X,Y}_n)^* \scF)
\]
induced by adjunction is an isomorphism. (In fact, here by the K\"unneth formula~\cite[\href{https://stacks.math.columbia.edu/tag/0F1N}{Tag 0F1N}]{stacks-project} we have $R(p^{X,Y}_n)_* (p^{X,Y}_n)^* \scF \cong \coH^\bullet(V_n,\bk) \otimes_\bk \scF$.)

We now fix an $\F$-scheme $X$ of finite type endowed with an admissible action of $\varpi$. 
For any $n \geq 1$ we set
\[
 P_n^X := V_n \times X,
\]
and consider the projection $p^{X}_n : P_n^X \to X$ as above. Since the actions of $\varpi$ on $V_n$ and $X$ are admissible, this property also holds for the product $V_n \times X$ (with the diagonal action), so that we can consider the quotient
\[
\overline{P}_n^X := P_n^X / \varpi.
\]
With the notation of~\S\ref{ss:stalks-fixed-points}, we have $(P_n^X)^\varpi=\varnothing$; therefore, by~\cite[Expos\'e~V, Corollaire~2.3]{sga1} the quotient map $q_n^X : P_n^X \to \overline{P}_n^X$ is \'etale. In fact, in view of~\cite[Expos\'e~V, Proposition~2.6]{sga1} this map is an \'etale locally trivial principal homogeneous space for $\varpi$ in the sense of~\cite[\href{https://stacks.math.columbia.edu/tag/049A}{Tag 049A}]{stacks-project}. 

For any $n$, we will denote by
\[
 \Db (X,\varpi,n,\bk)
\]
the category whose
\begin{itemize}
 \item objects are triples $(\scF_n, \scF_X, \beta)$ where $\scF_n$ is an object of $\Db \Sh(\overline{P}_n^X,\bk)$, $\scF_X$ is an object of $\Db \Sh(X,\bk)$, and
 \[
  \beta : (q_n^X)^* \scF_n \simto (p_n^X)^* \scF_X
 \]
is an isomorphism;
\item morphisms from $(\scF_n, \scF_X, \beta)$ to $(\scG_n, \scG_X, \gamma)$ are pairs $(\varphi_n,\varphi_X)$ with
\[
 \varphi_n : \scF_n \to \scG_n, \qquad \varphi_X : \scF_X \to \scG_X
\]
morphisms such that $\gamma \circ ((q_n^X)^* \varphi_n) = ((p_n^X)^* \varphi_X) \circ \beta$.
\end{itemize}

For any bounded interval $I \subset \Z$ we denote by $D^I(X,\varpi,n,\bk)$ the full subcategory of $\Db (X,\varpi,n,\bk)$ whose objects are the triples $(\scF_n,\scF_X,\beta)$ where $\scH^m(\scF_X)$ vanishes unless $m \in I$. Then the category $D^I(X,\varpi,n,\bk)$ does not depend (up to canonical equivalence) on the choice of $n$, as long as $2n-2 \geq |I|$, where $|I|$ is the length of $I$; in fact, by the same arguments as in~\cite[\S 2.3.4]{bl}, 
if $n,m$ satisfy this condition then the natural functors from $D^I(X,\varpi,n,\bk)$ and $D^I(X,\varpi,m,\bk)$ to the category defined similarly with $P_n^X$ and $P_m^X$ replaced by
\[
 V_n \times V_m \times X = P_n^X \times_X P_m^X
\]
(with the diagonal $\varpi$-action) are equivalences of categories.

We can therefore define the $\varpi$-equivariant derived category
\[
 \Db_\varpi(X,\bk)
\]
as the direct limit of the categories $D^I(X,\varpi,n,\bk)$ with $n \gg 0$, where $I$ runs over the bounded intervals of $\Z$. This category admits a canonical structure of triangulated category, see~\cite[\S\S2.5.1--2.5.2]{bl}. By construction, we have a canonical triangulated forgetful functor
\begin{equation}
\label{eqn:For-Dbequ}
 \For_\varpi : \Db_\varpi(X,\bk) \to \Db \Sh(X,\bk)
\end{equation}
which sends a triple $(\scF_n, \scF_X, \beta)$ to $\scF_X$.

\begin{rmk}
\label{rmk:Hom-Dbeq}
 As explained in~\cite[Lemma~2.3.2]{bl}, if $2n-2 \geq |I|$ the functor
 \[
  D^I(X,\varpi,n,\bk) \to \Db\Sh(\overline{P}_n^X,\bk)
 \]
sending $(\scF_n, \scF_X, \beta)$ to $\scF_n$ is fully faithful. In particular, morphisms between objects in $\Db_\varpi(X,\bk)$ can always be computed as morphisms in derived categories of quotients of ``sufficiently acyclic'' resolutions.
\end{rmk}

This construction is of course functorial in $X$. 
Namely, consider $\F$-schemes of finite type $X,Y$ endowed with admissible actions of $\varpi$, and a $\varpi$-equivariant morphism of $\F$-schemes $f : X \to Y$. (Note that $f$ is automatically quasi-compact since $X$ is Noetherian, see~\cite[\href{https://stacks.math.columbia.edu/tag/01P0}{Tag 01P0}]{stacks-project}. It is also locally of finite type by~\cite[\href{https://stacks.math.columbia.edu/tag/01T8}{Tag 01T8}]{stacks-project}, hence of finite type. Finally, $X$, $Y$ and $f$ are quasi-separated by~\cite[\href{https://stacks.math.columbia.edu/tag/01T7}{Tag 01T7}]{stacks-project}.)
\begin{enumerate}
\item
We have a ($*$-)pullback functor
\[
 f^* : \Db_\varpi(Y,\bk) \to \Db_\varpi(X,\bk),
\]
which can be explicitly described in terms of the pullback functors associated with $f$ and the induced morphism $\overline{P}_n^X \to \overline{P}_n^Y$.
\item
We have a ($*$-)pushforward functor
\[
 Rf_* : \Db_\varpi(X,\bk) \to \Db_\varpi(Y,\bk).
\]
(Here we use the fact that the usual pushforward functors respect the bounded derived categories, see~\cite[\href{https://stacks.math.columbia.edu/tag/0F10}{Tag 0F10}]{stacks-project}, and also that the morphisms $p_n^X$, $p_n^Y$, $q_n^X$, $q_n^Y$ are smooth and that the induced morphism $\overline{P}_n^X \to \overline{P}_n^Y$ is quasi-compact and quasi-separated, which allows to use the smooth base change theorem~\cite[\href{https://stacks.math.columbia.edu/tag/0EYU}{Tag 0EYU}]{stacks-project} to ``transport'' the isomorphism $\beta$.) 
\item
If we assume that $f$ is separated 
then we also have a $!$-pushforward functor
\[
 Rf_! : \Db_\varpi(X,\bk) \to \Db_\varpi(Y,\bk),
\]
see~\cite[Expos\'e~XVII]{sga4}.
(Here, the fact that the $!$-pushforward functors respect bounded derived categories follows from~\cite[Expos\'e~XVII, Corollaire~5.2.8.1]{sga4}, and we use the base change theorem~\cite[Expos\'e~XVII, Th\'eor\`eme~5.2.6]{sga4} to ``transport'' $\beta$.) 
\item
Under the same assumption we also have a $!$-pullback functor
\[
 f^! : \Db_\varpi(Y,\bk) \to \Db_\varpi(X,\bk),
\]
see~\cite[Expos\'e~XVIII]{sga4}. (The fact that the $!$-pullback functors respect bounded derived categories is explained in~\cite[Th.~finitude, comments after Corollaire~1.5]{sga4.5}. And once again we use the smoothness of $q_n^X$, $q_n^Y$, $p_n^X$, $p_n^Y$, and the fact that for smooth maps the $*$- and $!$-pullback functors coincide up to shift, see~\cite[Expos\'e~XVIII, Th\'eor\`eme~3.2.5]{sga4}, to ``transport'' the isomorphisms $\beta$.)
\end{enumerate}

By construction, all of these functors are compatible with the forgetful functor~\eqref{eqn:For-Dbequ} in the obvious way, and satisfy the usual adjunction properties.

\begin{rmk}
\label{rmk:separated}
In practice all the schemes we will consider will be quasi-projective over $\F$, hence separated (see~\cite[\href{https://stacks.math.columbia.edu/tag/01VX}{Tag 01VX}]{stacks-project}), so that any morphism between them will automatically be separated (see~\cite[\href{https://stacks.math.columbia.edu/tag/01KV}{Tag 01KV}]{stacks-project}).
\end{rmk}

\subsection{Equivariant derived categories and equivariant sheaves}

We continue with the setting of~\S\ref{ss:equiv-Db}.
In Section~\ref{sec:equiv-sheaves} we have studied equivariant sheaves on schemes, and in ~\S\ref{ss:equiv-Db} we have considered the equivariant derived category. It is now time to explain the relation between these two constructions. This relation is based on the observation that (for any $\F$-scheme $X$ of finite type with an admissible $\varpi$-action, and any $n \geq 1$) the natural pullback functor
\[
 \Sh(\overline{P}^X_n, \bk) \to \Sh_\varpi(P^X_n, \bk)
\]
is an equivalence of categories, by the sheaf condition applied to the \'etale covering $q_n^X : P_n^X \to \overline{P}_n^X$. Therefore, for any $\varpi$-equivariant sheaf $\scF$ on $X$ the pullback $(p_n^X)^* \scF$ admits a natural structure of $\varpi$-equivariant sheaf on $P_n^X$, hence descends to a sheaf $\scF_n$ on $\overline{P}_n^X$. Using this construction we define a canonical triangulated functor
\begin{equation}
 \label{eqn:eq-sheaves-Db}
 \Db \Sh_\varpi(X,\bk) \to \Db_\varpi(X,\bk).
\end{equation}

\begin{prop}
\label{prop:real-equiv}
 The functor~\eqref{eqn:eq-sheaves-Db} is an equivalence of categories.
\end{prop}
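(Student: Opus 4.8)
The plan is to show that the functor~\eqref{eqn:eq-sheaves-Db} is fully faithful and essentially surjective, working level by level through the defining resolutions $\overline{P}_n^X$ and then passing to the limit. Fix a bounded interval $I \subset \Z$ and a large $n$ with $2n-2 \geq |I|$. By Remark~\ref{rmk:Hom-Dbeq}, morphisms in $D^I(X,\varpi,n,\bk)$ between objects coming from $\varpi$-equivariant sheaves can be computed as morphisms in $\Db\Sh(\overline{P}_n^X,\bk)$ between the associated descended complexes $\scF_n$, $\scG_n$. On the other hand, morphisms in $\Db\Sh_\varpi(X,\bk)$ are computed via the $Rf_*$-type formalism set up in~\S\ref{ss:equ-sheaves-inj}, using injective resolutions in $\Sh_\varpi(-,\bk)$. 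So the first key step is to compare these two; the bridge is the equivalence $\Sh(\overline{P}_n^X,\bk) \simto \Sh_\varpi(P_n^X,\bk)$ noted just before the proposition (\'etale descent along the locally trivial $\varpi$-torsor $q_n^X$), combined with the fact that $p_n^X : P_n^X \to X$ is smooth and $(2n-2)$-acyclic: pulling back along $p_n^X$ is fully faithful on the relevant truncated derived categories, by the acyclicity property recalled in~\S\ref{ss:equiv-Db} together with a standard projection/K\"unneth argument. Concretely, for $\scF,\scG \in \Db\Sh_\varpi(X,\bk)$ with cohomology in $I$, one has
\[
\Hom_{\Db\Sh_\varpi(X,\bk)}(\scF,\scG) \cong \Hom_{\Db\Sh_\varpi(P_n^X,\bk)}((p_n^X)^*\scF,(p_n^X)^*\scG) \cong \Hom_{\Db\Sh(\overline{P}_n^X,\bk)}(\scF_n,\scG_n),
\]
the first isomorphism by acyclicity (adjunction $(p_n^X)^* \dashv R(p_n^X)_*$ together with $R(p_n^X)_*(p_n^X)^* \scG \cong \coH^\bullet(V_n,\bk) \otimes_\bk \scG$ and truncation) and the second by \'etale descent along $q_n^X$. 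Since the right-hand side is exactly the Hom-group in $D^I(X,\varpi,n,\bk)$ after identifying an equivariant sheaf with its descended complex, full faithfulness of~\eqref{eqn:eq-sheaves-Db} on $D^I$ follows; letting $I$ grow gives full faithfulness of~\eqref{eqn:eq-sheaves-Db}.

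For essential surjectivity, let $(\scF_n, \scF_X, \beta) \in D^I(X,\varpi,n,\bk)$ be arbitrary. The data $\beta$ makes $(p_n^X)^*\scF_X$ into (the image of) a $\varpi$-equivariant complex on $P_n^X$, hence by descent $\scF_n$ lifts this equivariant structure; the point is to push this equivariant structure down to $X$ itself. Here one uses that $p_n^X$ is a $\varpi$-equivariant smooth morphism with a section up to acyclicity: more precisely, applying $R(p_n^X)_*$ and using $R(p_n^X)_*(p_n^X)^*\scF_X \cong \coH^\bullet(V_n,\bk)\otimes_\bk \scF_X$ together with the counit, one recovers $\scF_X$ equivariantly from $(p_n^X)_* $ of the equivariant complex on $P_n^X$, provided the cohomological amplitude stays within the acyclicity range — which is exactly the condition $2n-2 \geq |I|$. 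This produces an object of $\Db\Sh_\varpi(X,\bk)$ (after checking, by the usual argument of~\cite[\S 2.3.4]{bl} with $P_n^X \times_X P_m^X$, that it is independent of $n$) whose image under~\eqref{eqn:eq-sheaves-Db} is isomorphic to $(\scF_n,\scF_X,\beta)$.

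The main obstacle is the essential surjectivity step, specifically producing a genuine $\varpi$-equivariant complex on $X$ (an object of $\Db\Sh_\varpi(X,\bk)$ in the sense of~\S\ref{ss:equ-sheaves-inj}, i.e.\ living in a derived category of honest equivariant sheaves) out of the descent datum $\beta$, which a priori only gives compatibility after pullback to the acyclic resolution. One has to be careful that the equivariant structure obtained is strictly functorial (satisfies the cocycle condition~\eqref{eqn:cocycle} on the nose, not just up to coherent homotopy), and that the resulting object does not depend on the choice of $n$; the comparison via the bi-simplicial resolution $V_n \times V_m \times X$, exactly as in~\cite[\S 2.3.4]{bl}, is what makes this work. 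The remaining verifications — compatibility with $\For_\varpi$, triangulatedness of the comparison functor — are routine given the constructions already recorded in~\S\S\ref{ss:equiv-Db}--\ref{ss:equ-sheaves-inj}.
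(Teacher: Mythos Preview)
Your full-faithfulness argument is the same as the paper's: adjunction along $p_n^X$, the K\"unneth/acyclicity computation of $R(p_n^X)_*(p_n^X)^*$, and truncation.

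For essential surjectivity you take a genuinely different route. The paper does not construct a preimage of a given $(\scF_n,\scF_X,\beta)$ at all. Instead, once full faithfulness is known, it observes that the essential image of the (triangulated) functor contains all objects coming from honest $\varpi$-equivariant \emph{sheaves}, and that these generate $\Db_\varpi(X,\bk)$ as a triangulated category --- this last point being immediate from the construction of the standard t-structure in~\cite[\S\S2.5.1--2.5.2]{bl}. A fully faithful triangulated functor whose image generates is automatically an equivalence, so the proof ends in one line.

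Your direct approach can be made to work, but as written it has some imprecisions. First, the isomorphism $\beta$ lives in the derived category, so saying it ``makes $(p_n^X)^*\scF_X$ into a $\varpi$-equivariant complex'' is not quite right; what you should do instead is start from the honestly equivariant object $\tilde\scF_n\in\Db\Sh_\varpi(P_n^X,\bk)$ produced from $\scF_n$ via the abelian descent equivalence $\Sh(\overline{P}_n^X,\bk)\simto\Sh_\varpi(P_n^X,\bk)$ (applied termwise / derived), then push forward along $p_n^X$ in the equivariant sense of~\S\ref{ss:equ-sheaves-inj} and truncate. With that fix there is no homotopy-coherence issue: you land directly in $\Db\Sh_\varpi(X,\bk)$, so your worry about the cocycle condition ``on the nose'' is moot, and the appeal to~\cite[\S2.3.4]{bl} is not what resolves it. (Also, the map you want is the \emph{unit} $\id\to R(p_n^X)_*(p_n^X)^*$, not the counit.) The paper's argument buys you all of this for free.
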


\begin{proof}
 Let us first show that our functor is fully faithful. For this we need to show that for any $\varpi$-equivariant sheaves $\scF,\scF'$ on $X$ and any $m \in \Z_{\geq 0}$, for $n \gg 0$ the natural map
 \[
  \Hom_{\Db \Sh_\varpi(X,\bk)}(\scF,\scF'[m]) \to \Hom_{\Db \Sh(\overline{P}^X_n, \bk)}(\scF_n, \scF'_n[m])
 \]
is an isomorphism, see Remark~\ref{rmk:Hom-Dbeq}. By construction, this amounts to proving that for $n \gg 0$ the pullback functor induces an isomorphism
\[
 \Hom_{\Db \Sh_\varpi(X,\bk)}(\scF,\scF'[m]) \simto \Hom_{\Db \Sh_\varpi(P_n^X,\bk)}((p_n^X)^* \scF, (p_n^X)^*\scF'[m]).
\]
However by adjunction we have
\begin{multline*}
 \Hom_{\Db \Sh_\varpi(P_n^X,\bk)}((p_n^X)^* \scF, (p_n^X)^*\scF'[m]) \\
 \cong \Hom_{\Db \Sh_\varpi(X,\bk)}(\scF, R(p_n^X)_*(p_n^X)^*\scF'[m]).
\end{multline*}
Now the right-hand side can be replaced by
 \begin{multline*}
 \Hom_{\Db \Sh_\varpi(X,\bk)}(\scF, \tau_{\leq 0}(R(p_n^X)_*(p_n^X)^*\scF'[m])) \\ 
 \cong \Hom_{\Db \Sh_\varpi(X,\bk)}(\scF, \tau_{\leq m}(R(p_n^X)_*(p_n^X)^*\scF') [m]).
\end{multline*}
If $2n-2 \geq m$ the morphism
\[
 \scF' \to \tau_{\leq m}(R(p_n^X)_*(p_n^X)^*\scF')
\]
induced by adjunction is an isomorphism, which concludes the proof of fully faithfulness.

Once fully faithfulness is established, to conclude the proof it suffices to prove that images of $\varpi$-equivariant sheaves on $X$ generate $\Db_\varpi(X,\bk)$ as a triangulated category. This is however clear from the construction of the ``standard'' t-structure in~\cite[\S\S2.5.1--2.5.2]{bl}.
\end{proof}

In particular, in the special case where the $\varpi$-action on $X$ is trivial, using Proposition~\ref{prop:real-equiv} combined with~\eqref{eqn:equ-sheaves-triv-action}, we obtain a canonical equivalence of triangulated categories
\begin{equation}
\label{eqn:equ-Db-triv-action}
\Db_\varpi(X,\bk) \cong \Db \Sh(X,\bk[\varpi]).
\end{equation}

We now consider two $\F$-schemes $X$ and $Y$ of finite type, with admissible actions of $\varpi$, and a $\varpi$-equivariant morphism $f : X \to Y$. We have considered functors
\[
 \xymatrix{
 \Db \Sh_{\varpi}(X,\bk) \ar@<1ex>[r]^-{Rf_*} & \ar@<1ex>[l]^-{f^*} \Db \Sh_{\varpi}(Y,\bk)
 }
\]
in~\S\ref{ss:equ-sheaves-inj}, and functors
\[
 \xymatrix{
 \Db_{\varpi}(X,\bk) \ar@<1ex>[r]^-{Rf_*} & \ar@<1ex>[l]^-{f^*} \Db_{\varpi}(Y,\bk)
 }
\]
in~\S\ref{ss:equiv-Db}. These functors are related in the natural way, as explained in the following lemma.

\begin{lem}
\label{lem:push-pull-equ}
The diagrams
\[
 \vcenter{
 \xymatrix{
 \Db \Sh_{\varpi}(Y,\bk) \ar[r]^-{f^*} \ar[d]_-{\wr} & \Db \Sh_{\varpi}(X,\bk) \ar[d]^-{\wr} \\
  \Db_{\varpi}(Y,\bk) \ar[r]^-{f^*} & \Db_{\varpi}(X,\bk)
 }} \quad \text{and} \quad
  \vcenter{
 \xymatrix{
 \Db \Sh_{\varpi}(X,\bk) \ar[r]^-{Rf_*} \ar[d]_-{\wr} & \Db \Sh_{\varpi}(Y,\bk) \ar[d]^-{\wr} \\
  \Db_{\varpi}(X,\bk) \ar[r]^-{Rf_*} & \Db_{\varpi}(Y,\bk)
 }}
\]
are commutative, where the vertical arrows are the equivalences of Proposition~\ref{prop:real-equiv}.
\end{lem}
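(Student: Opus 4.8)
The plan is to prove the left-hand (pullback) square by a direct descent computation, and then to deduce the right-hand (pushforward) square formally, by passing to right adjoints. Throughout, write $e_X \colon \Db\Sh_\varpi(X,\bk) \simto \Db_\varpi(X,\bk)$ (and similarly $e_Y$) for the equivalence of Proposition~\ref{prop:real-equiv}; recall that it sends a complex $\scF$ to the triple $(\scF_n, \For_\varpi(\scF), \beta_{\mathrm{taut}})$, where $\scF_n \in \Db\Sh(\overline{P}_n^X, \bk)$ is the descent of $(p_n^X)^* \scF$ along the \'etale $\varpi$-torsor $q_n^X$ (using that $(q_n^X)^*$ is an equivalence $\Sh(\overline{P}_n^X,\bk) \simto \Sh_\varpi(P_n^X,\bk)$), and $\beta_{\mathrm{taut}}$ is the tautological identification. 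Recall also that on the equivariant derived categories the functor $f^*$ is given ``slot-wise'' by $\bar f_n^*$, where $\bar f_n \colon \overline{P}_n^X \to \overline{P}_n^Y$ is induced by $\tilde f_n := \id_{V_n} \times f \colon P_n^X \to P_n^Y$, and $Rf_*$ is given slot-wise by $R\bar f_{n,*}$ (with the descent data transported by smooth base change).

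For the pullback square we must produce, for $\scG$ in $\Db\Sh_\varpi(Y,\bk)$, a natural isomorphism $\bar f_n^*(\scG_n) \cong (f^*\scG)_n$ compatible with the tautological descent data. Now $(p_n^X)^* f^*\scG \cong \tilde f_n^*(p_n^Y)^*\scG$ canonically, as $\varpi$-equivariant complexes on $P_n^X$ (since $f \circ p_n^X = p_n^Y \circ \tilde f_n$), so it suffices to show that descent along the covers $q_n^\bullet$ intertwines $\tilde f_n^*$ and $\bar f_n^*$; and this holds because applying the equivalence $(q_n^X)^*$ to either side gives $\tilde f_n^*$ of the original complex, using $\bar f_n \circ q_n^X = q_n^Y \circ \tilde f_n$. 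All functors occurring here are exact, so there is no derived issue, and assembling these isomorphisms over $n$ gives the natural isomorphism $e_X \circ f^* \cong f^* \circ e_Y$, i.e.\ commutativity of the pullback square.

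For the pushforward square the key point is that $f^*$ is left adjoint to $Rf_*$ on both sides of the picture. On the side of equivariant sheaves this is recorded at the end of~\S\ref{ss:equ-sheaves-inj} (the adjunction on $D^+$ restricting to the bounded categories since $\F$, being algebraically closed, has finite cohomological dimension); on the side of $\Db_\varpi$ it follows from the slot-wise descriptions of $f^*$ and $Rf_*$, the ordinary adjunction $\bar f_n^* \dashv R\bar f_{n,*}$ on $\Db\Sh(\overline{P}_n^X,\bk)$, and the fact that morphisms in $\Db_\varpi$ may be computed in $\Db\Sh(\overline{P}_n^X,\bk)$ for $n$ large (Remark~\ref{rmk:Hom-Dbeq}). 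Granting these adjunctions, one takes the mate of the isomorphism $e_X \circ f^* \cong f^* \circ e_Y$ established above (with respect to the adjunctions $e_X \circ f^* \dashv Rf_* \circ e_X^{-1}$ and $f^* \circ e_Y \dashv e_Y^{-1} \circ Rf_*$, valid because $e_X$ and $e_Y$ are equivalences): the mate of a natural isomorphism is again a natural isomorphism, and after composing with $e_X$ and $e_Y$ it reads $e_Y \circ Rf_* \cong Rf_* \circ e_X$, which is exactly commutativity of the pushforward square.

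The step I expect to require the most care is the claim that $f^* \dashv Rf_*$ in $\Db_\varpi$, which is not among the facts stated earlier: one must check that the unit $\id \to Rf_* f^*$ and counit $f^* Rf_* \to \id$, built slot-wise from the ordinary adjunction on the $\overline{P}_n^X$, are compatible with the descent isomorphisms $\beta$, which in turn reduces to the compatibility of the ordinary adjunction units and counits with the smooth base change isomorphisms along the cartesian squares relating the $P_n^\bullet$ and $\overline{P}_n^\bullet$ (the same isomorphisms already used in~\S\ref{ss:equiv-Db} to define $Rf_*$). As an alternative that avoids adjunctions, one can prove the pushforward square directly: the squares with vertices $P_n^X, P_n^Y, \overline{P}_n^X, \overline{P}_n^Y$ and $P_n^X, P_n^Y, X, Y$ are cartesian with smooth vertical maps, so two applications of smooth base change (first along the $q_n^\bullet$, then along the $p_n^\bullet$) identify the $n$-th slot of $e_Y(Rf_*\scF)$ with $R\bar f_{n,*}$ of the $n$-th slot of $e_X(\scF)$, compatibly with $\beta$; this is more hands-on but carries essentially the same bookkeeping of equivariance data.
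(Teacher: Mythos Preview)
Your proposal is correct and follows essentially the same approach as the paper: the paper's proof reads in its entirety ``The commutativity of the left diagram can be seen on the definitions; the commutativity of the right diagram follows by adjunction,'' and you have unpacked both clauses faithfully, with the pullback square checked by direct descent and the pushforward square deduced by taking mates. Your extra care about the adjunction $f^* \dashv Rf_*$ on $\Db_\varpi$ (and the alternative via smooth base change) goes beyond what the paper spells out, but is consistent with it.
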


\begin{proof}
The commutativity of the left diagram can be seen from the definitions; the commutativity of the right diagram follows by adjunction.
\end{proof}

\subsection{The crucial lemma}
\label{ss:crucial-lemma}

We can now prove the lemma that will allow us to develop the ``Smith theory for sheaves" from~\cite{treumann} in our setting of \'etale sheaves.

We consider again an $\F$-scheme $X$ of finite type, with an admissible action of $\varpi$. As in~\S\ref{ss:stalks-fixed-points} we consider the fixed points subscheme $X^\varpi$ and the closed, resp.~open, embedding
\[
i : X^\varpi \hookrightarrow X, \quad \text{resp.} \quad j : X \smallsetminus X^\varpi \hookrightarrow X.
\]
(Here $i$ and $j$ are automatically separated, see~\cite[\href{https://stacks.math.columbia.edu/tag/01L7}{Tag 01L7}]{stacks-project}.)
For any $\scF$ in $\Db_\varpi(X,\bk)$ we have a canonical morphism
\begin{equation}
\label{eqn:morph-Smith}
 i^! \scF \to i^* \scF
\end{equation}
in the category $\Db_\varpi(X^\varpi,\bk)$,
which can be obtained by applying the functor $i^*$ to the adjunction morphism $i_! i^! \scF \to \scF$. 

From now on we will not consider the entire $\varpi$-equivariant derived category $\Db_\varpi(X,\bk)$, but only the full triangulated subcategory 
$\Db_{\varpi,c}(X,\bk)$ whose objects are those $\scF \in \Db_{\varpi}(X,\bk)$ such that the complex $\For_\varpi(\scF)$ has constructible cohomology objects, where $\For_\varpi$ is as in~\eqref{eqn:For-Dbequ}. The pushforawrd and pullback functors considered in~\S\ref{ss:equiv-Db} preserve these subcategories (in the obvious sense) by~\cite[Th. finitude, Corollaire~1.5]{sga4.5}.

For any $\F$-scheme $Y$ of finite type with trivial action of $\varpi$, we will say that an object $\scF$ in $\Db_{\varpi,c}(Y,\bk)$ has perfect geometric stalks if, denoting by $\scF'$ the image of $\scF$ under the equivalence $\Db_\varpi(Y,\bk) \cong \Db \Sh(Y, \bk[\varpi])$ from~\eqref{eqn:equ-Db-triv-action}, for any geometric point $\overline{y}$ of $Y$ the complex $\scF'_{\overline{y}}$ is a perfect complex of $\bk[\varpi]$-modules. 

\begin{lem}
\label{lem:crucial-Smith}
For any $\scF$ in $\Db_{\varpi,c}(X,\bk)$, 
the cone of~\eqref{eqn:morph-Smith} has perfect geometric stalks.
\end{lem}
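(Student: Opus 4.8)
The statement is local, so I would reduce to computing geometric stalks. Fix a geometric point $\overline{x}$ of $X^\varpi$. Since the morphism~\eqref{eqn:morph-Smith} is a morphism in $\Db_\varpi(X^\varpi,\bk) \cong \Db\Sh(X^\varpi,\bk[\varpi])$, its cone $\scC$ is an object of that category, and I need to show $\scC_{\overline{x}}$ is a perfect complex of $\bk[\varpi]$-modules. We know $\scF \in \Db_{\varpi,c}(X,\bk)$, so all the complexes in sight have bounded, finitely generated (over $\bk$) cohomology; hence by~\cite[\href{https://stacks.math.columbia.edu/tag/0658}{Tag 0658}]{stacks-project} perfectness over $\bk[\varpi]$ amounts to finite tor dimension. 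The plan is therefore: (i) identify $(i^*\scF)_{\overline{x}}$ and $(i^!\scF)_{\overline{x}}$ in terms of data on $U := X \smallsetminus X^\varpi$, where the $\varpi$-action is free (here we use that $\varpi = \roots_\ell$ is a simple group, as noted in~\S\ref{ss:stalks-fixed-points}); (ii) invoke Lemma~\ref{lem:tor-dim} and Proposition~\ref{prop:illusie-zheng} to get the finite-tor-dimension (equivalently, perfectness) input; (iii) conclude by a triangle argument.

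\textbf{Key steps.} First, write $\scF_U := j^* \scF$ and use the attaching triangle $i_! i^! \scF \to \scF \to Rj_* \scF_U \xrightarrow{[1]}$ (the standard recollement triangle for the closed/open decomposition $X^\varpi \hookrightarrow X \hookleftarrow U$), together with the dual triangle $j_! \scF_U \to \scF \to i_* i^* \scF \xrightarrow{[1]}$. Applying $i^*$ to the first and noting $i^* j_! = 0$ and $i^* i_! = \id$ (so $i^* i_! i^! \scF = i^! \scF$), one gets a triangle
\[
 i^! \scF \to i^* \scF \to i^* Rj_* \scF_U \triright
\]
in $\Db_\varpi(X^\varpi,\bk)$, in which the first map is precisely~\eqref{eqn:morph-Smith}. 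Hence the cone of~\eqref{eqn:morph-Smith} is canonically $i^* Rj_* \scF_U$, and it suffices to show that $(i^* Rj_* \scF_U)_{\overline{x}} = (Rj_* \scF_U)_{\overline{x}}$ is a perfect complex of $\bk[\varpi]$-modules for every geometric point $\overline{x}$ of $X^\varpi$. Second, transport the situation to equivariant sheaves: by Proposition~\ref{prop:real-equiv} and Lemma~\ref{lem:push-pull-equ} we may assume $\scF_U$ comes from an honest object of $\Db\Sh_\varpi(U,\bk)$ with constructible cohomology (for $\For_\varpi$), and then $(Rj_* \scF_U)_{\overline{x}}$, viewed with its natural $\bk[\varpi]$-structure coming from $i^* \cong$ the equivalence~\eqref{eqn:equ-sheaves-triv-action}, is exactly the complex whose perfectness is the conclusion of Proposition~\ref{prop:illusie-zheng}. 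That proposition applies since $\varpi = \roots_\ell$ is a simple group.

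\textbf{The main obstacle.} The bulk of the genuine work has been outsourced to Proposition~\ref{prop:illusie-zheng} (whose proof in turn rests on Lemma~\ref{lem:tor-dim}, the freeness of the $\varpi$-action on $U$, and constructibility/finiteness from~\cite{sga4.5}), so the remaining difficulty is bookkeeping: one must check that the $\bk[\varpi]$-module structure on $(Rj_* \scF_U)_{\overline{x}}$ arising from the equivariant-derived-category machinery of~\S\ref{ss:equiv-Db} (via the equivalence~\eqref{eqn:equ-Db-triv-action} on the fixed locus) agrees with the one used in Proposition~\ref{prop:illusie-zheng}, which is phrased in the language of $\Sh_\varpi$ and $i^* : D\Sh_\varpi(X,\bk) \to D\Sh(X^\varpi,\bk[\varpi])$. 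This is where Lemma~\ref{lem:push-pull-equ} and Proposition~\ref{prop:real-equiv} do the work: they guarantee that passing between the two formalisms (Bernstein--Lunts equivariant derived category versus naive equivariant sheaves) is compatible with $j^*$, $Rj_*$, $i^*$, and hence with the $\bk[\varpi]$-linear structure on stalks at fixed points. Once that compatibility is in hand the proof is a one-line deduction. A minor point to be careful about is boundedness of $Rj_* \scF_U$ (so that its stalk lives in $\Db(\bk[\varpi]\Mod)$ rather than just $D^+$), which follows from the finite cohomological dimension of $j$ as recorded in~\S\ref{ss:equiv-Db}.
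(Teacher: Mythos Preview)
Your proposal is correct and follows essentially the same route as the paper: identify the cone of~\eqref{eqn:morph-Smith} with $i^* Rj_* j^*\scF$ via the recollement triangle, then invoke Proposition~\ref{prop:illusie-zheng} (together with Lemma~\ref{lem:push-pull-equ} to pass between the Bernstein--Lunts and naive equivariant formalisms) to conclude perfectness of the stalk at each geometric fixed point. The paper's proof is a terse three-line version of exactly this; your more explicit bookkeeping about the compatibility of $\bk[\varpi]$-structures is helpful but not a different argument.
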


\begin{proof}
From the standard distinguished triangle in the ``recollement'' formalism we see that the cone of~\eqref{eqn:morph-Smith} is isomorphic to $i^* Rj_* j^*(\scF)$. The complex we want to consider is therefore
\[
\bigl( Rj_* j^*(\scF) \bigr)_{\overline{x}},
\]
where $\overline{x}$ is a geometric point of $X^\varpi$.
In these terms, the desired claim follows from Proposition~\ref{prop:illusie-zheng} and Lemma~\ref{lem:push-pull-equ}.
\end{proof}

Later we will also need the following lemma, whose proof is close to that of Lemma~\ref{lem:crucial-Smith}. Here we consider two $\F$-schemes of finite type $Z$ and $Y$ with trivial actions of $\varpi$, and a morphism of $\F$-schemes $f : Z \to Y$. Then we have a derived functor $Rf_* : \Db_{\varpi}(Z,\bk) \to \Db_{\varpi}(Y,\bk)$, see~\S\ref{ss:equiv-Db}, which sends the subcategory $\Db_{\varpi,c}(Z,\bk)$ into $\Db_{\varpi,c}(Y,\bk)$ by~\cite[Th. finitude, Th\'eor\`eme~1.1]{sga4.5}.

\begin{lem}
\label{lem:pushforward-perfect-stalks}
 The functor
 \[
  Rf_* : \Db_{\varpi,c}(Z,\bk) \to \Db_{\varpi,c}(Y,\bk)
 \]
transforms objects with perfect geometric stalks into objects with perfect geometric stalks.
\end{lem}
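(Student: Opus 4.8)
The plan is to reduce the statement to the already-established Lemma~\ref{lem:crucial-Smith} (or rather the machinery behind it) by checking the perfect-stalk property fiberwise on $Y$. First I would recall that, since $\varpi$ acts trivially on both $Z$ and $Y$, the equivalence~\eqref{eqn:equ-Db-triv-action} identifies $\Db_{\varpi,c}(Z,\bk)$ with $\Db_c\Sh(Z,\bk[\varpi])$, and similarly for $Y$; moreover, by Lemma~\ref{lem:push-pull-equ}, the functor $Rf_*$ in the $\varpi$-equivariant world corresponds under this identification to the ordinary derived pushforward of sheaves of $\bk[\varpi]$-modules. So the claim becomes: if $\scF' \in \Db_c\Sh(Z,\bk[\varpi])$ has the property that $\scF'_{\overline{z}}$ is a perfect complex of $\bk[\varpi]$-modules for every geometric point $\overline{z}$ of $Z$, then $(Rf_*\scF')_{\overline{y}}$ is a perfect complex of $\bk[\varpi]$-modules for every geometric point $\overline{y}$ of $Y$.

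Next I would apply the same strategy as in the proof of Proposition~\ref{prop:illusie-zheng}: to prove perfectness of a bounded complex of $\bk[\varpi]$-modules with finite-dimensional cohomology, by~\cite[\href{https://stacks.math.columbia.edu/tag/0658}{Tag 0658}]{stacks-project} it suffices to check that it has finite tor dimension. Boundedness of $(Rf_*\scF')_{\overline{y}}$ follows from the fact that $f$ has finite cohomological dimension (\cite[\href{https://stacks.math.columbia.edu/tag/0F10}{Tag 0F10}]{stacks-project}) together with boundedness of $\scF'$, and finite-dimensionality of the cohomology of the stalk follows from constructibility of $Rf_*\scF'$ (\cite[Th. finitude, Th\'eor\`eme~1.1]{sga4.5}). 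For the tor-dimension statement, I would base change: by proper (or rather general) base change for \'etale cohomology, $(Rf_*\scF')_{\overline{y}} \cong R\Gamma(Z_{\overline{y}}, \scF'|_{Z_{\overline{y}}})$ where $Z_{\overline{y}}$ is the geometric fiber. Since each stalk of $\scF'$ is perfect over $\bk[\varpi]$, and $\bk[\varpi]$ is a (Frobenius, self-injective) finite-dimensional algebra, $\scF'|_{Z_{\overline{y}}}$ has finite tor dimension as a complex of sheaves of $\bk[\varpi]$-modules (using the characterization of finite tor dimension via stalks, \cite[\href{https://stacks.math.columbia.edu/tag/0DJJ}{Tag 0DJJ}]{stacks-project}); then by~\cite[Expos\'e XVII, Th\'eor\`eme~5.2.11]{sga4} combined with~\cite[\href{https://stacks.math.columbia.edu/tag/0F10}{Tag 0F10}]{stacks-project} the derived pushforward $R\Gamma(Z_{\overline{y}},-)$ preserves finite tor dimension, so $(Rf_*\scF')_{\overline{y}}$ has finite tor dimension, hence is perfect.

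The main obstacle I expect is bookkeeping around the $\bk[\varpi]$-linear structure: one must make sure that base change, the tor-dimension theorem of \cite{sga4}, and the passage between the $\varpi$-equivariant derived category and $\Db\Sh(-,\bk[\varpi])$ are all carried out compatibly with the $\bk[\varpi]$-module structure, and that ``perfect over $\bk[\varpi]$'' is genuinely what the hypothesis and conclusion assert. This is essentially a matter of invoking Lemma~\ref{lem:push-pull-equ} and~\eqref{eqn:equ-Db-triv-action} carefully, after which the argument runs parallel to Proposition~\ref{prop:illusie-zheng}; there is no new geometric input. One could alternatively phrase the whole argument directly in $\Db_{\varpi,c}$ without translating to $\bk[\varpi]$-sheaves, but going through~\eqref{eqn:equ-Db-triv-action} keeps the tor-dimension arguments in the familiar language of complexes of modules over a ring.
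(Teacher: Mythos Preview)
Your overall strategy coincides with the paper's: reduce perfectness of the stalks to finite tor dimension (boundedness and finiteness of cohomology being automatic), and then invoke \cite[Expos\'e~XVII, Th\'eor\`eme~5.2.11]{sga4} together with \cite[\href{https://stacks.math.columbia.edu/tag/0F10}{Tag 0F10}]{stacks-project}. The paper's proof is literally one sentence doing exactly this.

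There is, however, a genuine gap in your execution. The base change step
\[
(Rf_*\scF')_{\overline{y}} \cong R\Gamma\bigl(Z_{\overline{y}},\,\scF'|_{Z_{\overline{y}}}\bigr)
\]
is not valid: there is no ``general base change'' for $Rf_*$ along a geometric point. The morphism $f$ is only assumed quasi-separated of finite type, not proper, and the inclusion of a geometric point is not smooth, so neither proper nor smooth base change applies. (For an open immersion such as $\mathbb{A}^1 \smallsetminus \{0\} \hookrightarrow \mathbb{A}^1$ the displayed isomorphism already fails.) The stalk of $Rf_*\scF'$ is rather computed via the strict henselization at $\overline{y}$, not the residue field.

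The fix is to drop the base change entirely, which is also what the paper does. By \cite[\href{https://stacks.math.columbia.edu/tag/0DJJ}{Tag 0DJJ}]{stacks-project}, your hypothesis that each stalk of $\scF'$ is perfect over $\bk[\varpi]$ says precisely that $\scF'$ itself has finite tor dimension as a complex of $\bk[\varpi]$-sheaves on $Z$. Then \cite[Expos\'e~XVII, Th\'eor\`eme~5.2.11]{sga4} and \cite[\href{https://stacks.math.columbia.edu/tag/0F10}{Tag 0F10}]{stacks-project} apply directly to $f:Z\to Y$ and show that $Rf_*\scF'$ has finite tor dimension on $Y$; one more application of \cite[\href{https://stacks.math.columbia.edu/tag/0DJJ}{Tag 0DJJ}]{stacks-project} then gives finite tor dimension of each stalk $(Rf_*\scF')_{\overline{y}}$, and you conclude as you wrote. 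In short: your ingredients are correct, but they should be applied globally to $f$, not fiberwise.
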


\begin{proof}
 As in the proof of Proposition~\ref{prop:illusie-zheng} it suffices to check that $Rf_*$ transforms objects of finite tor dimension into objects of finite tor dimension, which follows from~\cite[Expos\'e~XVII, Th\'eor\`eme~5.2.11]{sga4} and~\cite[\href{https://stacks.math.columbia.edu/tag/0F10}{Tag 0F10}]{stacks-project}.
\end{proof}

\subsection{$\Gm$-equivariant derived categories}
\label{ss:Gm-equ-Db}

Later we will also need to consider equivariant derived categories for actions of the multiplicative group $\Gm$ over $\F$. The construction of this category is similar to, and in fact simpler than, the construction in~\S\ref{ss:equiv-Db}. Namely, for any $n \geq 1$ the $\varpi$-action on $V_n$ is obtained by restriction from a natural $\Gm$-action, and moreover we have a canonical map $V_n \to \mathbb{P}_\F^{n-1}$ which is a Zariski locally trivial principal $\Gm$-bundle. Therefore, given any $\F$-scheme $X$ endowed with an action of $\Gm$, we consider the diagonal action on $V_n \times X$, and we have a Zariski locally trivial principal $\Gm$-bundle
\[
 V_n \times X \to V_n \times^{\Gm} X
\]
for some scheme $V_n \times^{\Gm} X$ which can be constructed by (Zariski) gluing over the natural cover which trivializes the map $V_n \to \mathbb{P}_\F^{n-1}$. If we assume $X$ to be of finite type, then as in~\S\ref{ss:equiv-Db} the map $p_{n}^X : V_n \times X \to X$ is $(2n-2)$-acyclic, which allows to define a category $\Db(X,\Gm,n,\bk)$ in terms similar to those for $\Db(X,\varpi,n,\bk)$, and check that the subcategory $D^I(X,\Gm,n,\bk)$ does not depend on the choice of $n$ as long as $2n-2 \geq |I|$. We can finally define the equivariant derived category $\Db_{\Gm}(X,\bk)$ as the direct limit of the categories $D^I(X,\Gm,n,\bk)$ (with $n \gg 0$) over the finite intervals $I \subset \Z$. These categories have the same functoriality properties as the categories $\Db_\varpi(X,\bk)$; in particular we have a natural (triangulated) forgetful functor
\[
 \For_{\Gm} : \Db_{\Gm}(X,\bk) \to \Db \Sh(X,\bk).
\]
As in the $\varpi$-equivariant setting, we will denote by $\Db_{\Gm,c}(X,\bk)$ the full subcategory of $\Db_{\Gm}(X,\bk)$ whose objects are those $\scF$ such that $\For_{\Gm}(\scF)$ has constructible cohomology sheaves.

The category $\Db_{\Gm}(X,\bk)$ has a canonical object whose image under $\For_{\Gm}$ is the constant sheaf $\underline{\bk}_X$; it will also be denoted $\underline{\bk}_X$.
In these terms, the $\Gm$-equivariant cohomology of a complex $\scF$ in $\Db_{\Gm}(X,\bk)$ is defined as
\[
 \coH^\bullet_{\Gm}(X,\scF)=\bigoplus_{n \in \Z} \Hom_{\Db_{\Gm}(X,\bk)}(\underline{\bk}_X,\scF[n]).
\]
In the case $X=\Spec(\F)=:\pt$,
it is well known (and easy to see) that we have a graded algebra isomorphism\footnote{\label{fn:Tate-twist}To be more precise, to get the isomorphism~\eqref{eqn:eq-cohom-pt} one needs to fix a trivialization of the Tate sheaf on $\pt$, see e.g.~the proof of Lemma~\ref{lem:perf-fd-cohom} below. This is possible---though not canonical---since $\F$ is algebraically closed; we fix such a trivialization once and for all.}
\begin{equation}
\label{eqn:eq-cohom-pt}
 \coH^\bullet_{\Gm}(\pt,\ubk_{\pt}) = \bk[x],
\end{equation}
where $x$ has degree $2$. If $n$ is even, we will denote by
\[
 \mathrm{can}_{\pt}^n : \ubk_{\pt} \to \ubk_{\pt}[n]
\]
the morphism obtained as the inverse image of $x^{n/2}$.

By definition we have an embedding $\varpi \subset \Gm$, which provides a $\varpi$-action on $X$ by restriction, and
we have a canonical ``restriction" triangulated functor
\[
 \Res^{\Gm}_\varpi : \Db_{\Gm}(X,\bk) \to \Db_{\varpi}(X,\bk).
\]
This functor is compatible (in the obvious sense) with the pushforward and pullback functors when they are defined, and moreover satisfies
\[
 \For_{\varpi} \circ \Res^{\Gm}_\varpi \cong \For_{\Gm}.
\]
As a consequence, it must send $\Db_{\Gm,c}(X,\bk)$ into $\Db_{\varpi,c}(X,\bk)$.

\subsection{The ``Smith category'' of a point}
\label{ss:Smith-pt}

In this subsection we consider the special case of the constructions of~\S\ref{ss:Gm-equ-Db} where $X=\pt$. In this case, in view of~\eqref{eqn:equ-Db-triv-action} we have an equivalence of triangulated categories
\begin{equation}
\label{eqn:equv-Db-pt}
 \Db_{\varpi}(\pt,\bk) \cong \Db (\bk[\varpi]\Mod).
\end{equation}
Under this equivalence, the full subcategory $\Db_{\varpi,c}(\pt,\bk)$ corresponds to the full subcategory of $\Db (\bk[\varpi]\Mod)$ whose objects are the complexes whose cohomology is finite-dimensional (or equivalently finitely generated over $\bk[\varpi])$, which itself is canonically equivalent to the category $\Db (\bk[\varpi]\Mof)$.
In particular, the sheaf $\Res^{\Gm}_\varpi(\ubk_{\pt})$ corresponds to the trivial module $\bk$.

We will denote by
\[
 \Db_{\Gm,c}(\pt,\bk)_{\varpi\perf} \subset \Db_{\Gm,c}(\pt,\bk)
\]
the full triangulated subcategory whose objects are the complexes $\scF$ such that $\Res^{\Gm}_\varpi(\scF)$, considered as a complex of $\bk[\varpi]$-modules through~\eqref{eqn:equv-Db-pt}, is perfect.
We then set
\[
 \Smith(\pt,\bk) := \Db_{\Gm,c}(\pt,\bk) / \Db_{\Gm,c}(\pt,\bk)_{\varpi\perf},
\]
where we consider the Verdier quotient category.

The following lemma will be crucial for us below, in that it will allow to use some parity vanishing arguments in various variants of the category $\Smith(\pt,\bk)$.

\begin{lem}
\label{lem:Hom-Smith-pt}
 For any $n \in \Z$ we have
 \[
  \Hom_{\Smith(\pt,\bk)}(\underline{\bk}_{\pt}, \underline{\bk}_{\pt}[n]) \cong \begin{cases}
                                                               \bk & \text{if $n$ is even;}\\
                                                               0 & \text{otherwise,}
                                                              \end{cases}
 \]
where we omit the quotient functor $\Db_{\Gm,c}(\pt,\bk) \to \Smith(\pt,\bk)$.
\end{lem}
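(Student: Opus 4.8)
The plan is to compute the Hom-spaces in the Smith category $\Smith(\pt,\bk)$ directly from the definition as a Verdier quotient, using the calculus of fractions. First I would recall that $\Db_{\Gm,c}(\pt,\bk) \cong \Db(\bk[x]\text{-mod}_{\mathrm{fg}})$ with $x$ in degree $2$ (combining~\eqref{eqn:eq-cohom-pt} with the construction of $\coH^\bullet_{\Gm}$), so that $\underline{\bk}_{\pt}$ corresponds to the module $\bk = \bk[x]/(x)$, and
\[
\Hom_{\Db_{\Gm,c}(\pt,\bk)}(\underline{\bk}_{\pt}, \underline{\bk}_{\pt}[n]) = \Ext^n_{\bk[x]}(\bk,\bk),
\]
which is $\bk$ in degrees $0$ and $1$ (the degree-$1$ class being the extension coming from multiplication by $x$) and $0$ otherwise. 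The point of passing to the Smith quotient is to kill the degree-$1$ class and turn the degree-$0$ periodicity into $x^{\pm 1}$; the expected answer $\bk$ in all even degrees reflects that, after inverting the $\varpi$-perfect objects, $\underline{\bk}_{\pt}$ becomes $2$-periodic (this is the shadow of the Tate cohomology of $\bk[\varpi]$).

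Next I would identify the thick subcategory $\Db_{\Gm,c}(\pt,\bk)_{\varpi\perf}$ concretely. Under $\Res^{\Gm}_\varpi$, a $\bk[x]$-module $M$ (with cohomology concentrated in one degree, say) is sent to its underlying complex of $\bk[\varpi]$-modules; since $\bk[\varpi] = \bk[t]/(t^\ell - 1) = \bk[u]/(u^\ell)$ (using $\ell = \mathrm{char}\,\bk$, so $t^\ell - 1 = (t-1)^\ell$) and $x$ acts through $\coH^\bullet_{\Gm} \to \coH^\bullet_{\varpi}$, the key computation is that $\Res^{\Gm}_\varpi(\bk[x])$ is a free $\bk[\varpi]$-module while $\Res^{\Gm}_\varpi(\bk[x]/(x)) = \bk$ is \emph{not} perfect over $\bk[\varpi]$ (as $\bk[\varpi]$ is not semisimple). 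Thus $\bk[x]$ and all its shifts, and more generally any bounded complex of free $\bk[x]$-modules of finite rank, lie in $\Db_{\Gm,c}(\pt,\bk)_{\varpi\perf}$; conversely, I would argue that the perfect-over-$\bk[\varpi]$ condition forces a complex to be, up to isomorphism, built from copies of $\bk[x]$ (e.g.\ via the structure theory of finitely generated $\bk[x]$-modules: torsion summands $\bk[x]/(x^k)$ restrict to non-perfect $\bk[\varpi]$-modules), so that $\Db_{\Gm,c}(\pt,\bk)_{\varpi\perf}$ is exactly the thick subcategory generated by $\bk[x]$, i.e.\ $\Db$ of finite-rank free $\bk[x]$-modules, equivalently $\Kb(\bk[x]\text{-proj})$ sitting inside $\Db(\bk[x])$ in the usual way.

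Then I would compute morphisms in the Verdier quotient. Using the short exact sequence $0 \to \bk[x] \xrightarrow{x} \bk[x][2] \to \bk[2] \to 0$, i.e.\ a distinguished triangle $\bk[x] \xrightarrow{x} \bk[x][2] \to \underline{\bk}_{\pt}[2] \xrightarrow{[1]}$, and the fact that $\bk[x]$ is killed in the quotient, I get an isomorphism $\underline{\bk}_{\pt} \simto \underline{\bk}_{\pt}[2]$ in $\Smith(\pt,\bk)$. This immediately reduces the claim to the case $n \in \{0,1\}$. For $n = 0$: I would show $\Hom_{\Smith}(\underline{\bk}_{\pt},\underline{\bk}_{\pt}) = \bk$ by a roof/fraction computation — a roof $\underline{\bk}_{\pt} \xleftarrow{s} C \xrightarrow{f} \underline{\bk}_{\pt}$ with $\cone(s) \in \Db_{\Gm,c}(\pt,\bk)_{\varpi\perf}$; since the subcategory is concentrated in the "free $\bk[x]$" part, any such $s$ is essentially an isomorphism after modding out, and one checks the only fractions are scalar multiples of the identity (nothing new is created because $\Hom(\bk[x][j],\bk) = 0$ for $j > 0$ in the relevant range, so no fractions with nontrivial numerator survive). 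For $n = 1$: I would show the degree-$1$ Ext class $\bk[x] \to \bk$, or rather the corresponding map $\underline{\bk}_{\pt} \to \underline{\bk}_{\pt}[1]$, becomes zero in the quotient — indeed that map factors as $\underline{\bk}_{\pt} \to \bk[x][2] \to \cdots$; more cleanly, the triangle above shows $\underline{\bk}_{\pt}[2] \simto \underline{\bk}_{\pt}$ is realized by a genuine morphism in the quotient whose composite structure forces $\Hom_{\Smith}(\underline{\bk}_{\pt},\underline{\bk}_{\pt}[1])$ to be a quotient of $\Hom_{\Smith}(\underline{\bk}_{\pt},\underline{\bk}_{\pt}[-1]) = 0$ by periodicity, hence $0$.

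The main obstacle I expect is the careful bookkeeping in the fraction calculus — specifically, verifying that no "new" morphisms are introduced in the Verdier quotient beyond those forced by the $2$-periodicity, which amounts to showing that for $C$ with $\cone(C \to \underline{\bk}_{\pt})$ a finite-rank free $\bk[x]$-complex, the induced maps on $\Hom(-,\underline{\bk}_{\pt}[n])$ stabilize. A clean way to sidestep much of this is: once one knows $\Db_{\Gm,c}(\pt,\bk)_{\varpi\perf} = \Kb(\bk[x]\text{-proj})$ (as a subcategory of $\Db(\bk[x])$), the quotient $\Db(\bk[x]\text{-mod}_{\mathrm{fg}})/\Kb(\bk[x]\text{-proj})$ is by definition the (bounded, finitely generated) \emph{singularity category} $D_{\mathrm{sg}}(\bk[x])$ of the regular (indeed, smooth of dimension $1$) ring $\bk[x]$ — wait, $\bk[x]$ has infinite global dimension issues only if not regular; $\bk[x]$ is regular, so $D_{\mathrm{sg}}(\bk[x]) = 0$. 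That cannot be right, so the correct statement is that the relevant subcategory is \emph{not} all perfect $\bk[x]$-complexes but only the $\varpi$-perfect ones, and the quotient is instead the category of singularities of $\bk[\varpi]$ pulled back along $\bk[x] \to \bk[\varpi]$; concretely $\Smith(\pt,\bk)$ is equivalent to $\Db(\bk[\varpi]\Mof)/\Kb(\bk[\varpi]\text{-proj})$, the stable module category $\underline{\bk[\varpi]\Mof}$, whose graded endomorphism algebra of $\bk$ is the Tate cohomology $\widehat{\coH}^\bullet(\varpi,\bk) = \bk[x^{\pm 1}]$ with $x$ in degree $2$ — giving exactly the asserted answer. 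So the cleanest route is: (1) show $\Smith(\pt,\bk) \simeq \underline{\bk[\varpi]\Mof}$ via $\Res^{\Gm}_\varpi$; (2) invoke the classical computation of Tate cohomology of a cyclic $\ell$-group over a field of characteristic $\ell$. Step (1) is where the $\Gm$-equivariance genuinely matters (it is what makes the quotient by $\varpi$-perfects behave well and produces the grading), and is the real content; step (2) is standard.
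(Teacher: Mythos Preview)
Your proposal has two genuine errors, one at the start and one at the end, and the second is the more important.

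\textbf{First error.} The identification $\Db_{\Gm,c}(\pt,\bk)\cong\Db(\bk[x]\text{-mod}_{\mathrm{fg}})$ with $\underline{\bk}_{\pt}\mapsto\bk=\bk[x]/(x)$ is wrong. By definition $\Hom_{\Db_{\Gm,c}(\pt,\bk)}(\underline{\bk}_{\pt},\underline{\bk}_{\pt}[n])=\coH^n_{\Gm}(\pt,\bk)$, which equals $\bk$ for \emph{all} even $n\ge 0$ and vanishes otherwise; this is not $\Ext^n_{\bk[x]}(\bk,\bk)$ for the ungraded polynomial ring. If one insists on a module-theoretic model, $\underline{\bk}_{\pt}$ corresponds to the free rank-one module $\bk[x]$ (in the dg or graded sense, with $|x|=2$), not to $\bk$. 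This misidentification propagates through your description of the $\varpi$-perfect subcategory: in fact $\underline{\bk}_{\pt}$ is \emph{not} $\varpi$-perfect (its equivariant cohomology is infinite-dimensional), whereas the cone of $\mathrm{can}^2_{\pt}:\underline{\bk}_{\pt}\to\underline{\bk}_{\pt}[2]$ \emph{is} $\varpi$-perfect. The paper's Lemma~\ref{lem:perf-fd-cohom} gives the correct characterisation: $\varpi$-perfect $=$ finite-dimensional $\coH^\bullet_{\Gm}$.

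\textbf{Second error, and the conceptual point.} Your ``clean route'' proposes to identify $\Smith(\pt,\bk)$ with the stable module category $\underline{\bk[\varpi]\Mof}$ and then read off the answer as Tate cohomology. But $\widehat{\coH}{}^\bullet(\varpi,\bk)$ is $\bk$ in \emph{every} degree, not just even ones (for $\ell$ odd it is $\bk[x^{\pm1}]\otimes\Lambda(\xi)$ with $|\xi|=1$; for $\ell=2$ it is $\bk[\xi^{\pm1}]$). So the stable module category of $\bk[\varpi]$ gives the \emph{wrong} answer. This is not an accident: it is exactly why the paper works with the $\Gm$-equivariant Smith category rather than Treumann's original $\varpi$-equivariant version. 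The extra $\Gm$-equivariance is what forces the odd-degree vanishing; see Lemma~\ref{lem:eq-cohom-Gm-pi}, which shows how the $\varpi$-equivariant Hom splits as the $\Gm$-equivariant cohomology plus a shifted copy, and Lemma~\ref{lem:perf-fd-cohom}, which then pins down the quotient. The paper's actual argument is a direct calculus-of-fractions computation in $\Db_{\Gm,c}(\pt,\bk)$: any roof can be replaced by one through a shift $\underline{\bk}_{\pt}[-N]$ with $N$ even, and then one reads off the answer from~\eqref{eqn:eq-cohom-pt}. Your intermediate idea of producing the $2$-periodicity from a distinguished triangle is correct in spirit and matches the paper's use of $\mathrm{can}^2_{\pt}$, but the surrounding framework you set up does not support it.
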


The proof of Lemma~\ref{lem:Hom-Smith-pt} will require some preparation. We start with the following claim.

\begin{lem}
\label{lem:eq-cohom-Gm-pi}
 For any $\scF$ in $\Db_{\Gm,c}(\pt,\bk)$, there exists a canonical isomorphism of graded $\bk$-vector spaces
 \[
  \bigoplus_{m \in \Z} \Hom_{\Db_{\varpi}(\pt,\bk)} \bigl( \ubk_\pt, \Res^{\Gm}_\varpi(\scF)[m] \bigr) \cong \coH^\bullet_{\Gm}(\pt, \scF) \oplus \coH^\bullet_{\Gm}(\pt, \scF)[-1].
 \]
\end{lem}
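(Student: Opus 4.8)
The statement to prove is Lemma~\ref{lem:eq-cohom-Gm-pi}: for $\scF$ in $\Db_{\Gm,c}(\pt,\bk)$ there is a canonical isomorphism of graded $\bk$-vector spaces
\[
 \bigoplus_{m \in \Z} \Hom_{\Db_{\varpi}(\pt,\bk)} \bigl( \ubk_\pt, \Res^{\Gm}_\varpi(\scF)[m] \bigr) \cong \coH^\bullet_{\Gm}(\pt, \scF) \oplus \coH^\bullet_{\Gm}(\pt, \scF)[1].
\]
The plan is to compare the two equivariant derived categories of the point via the geometric models $\overline{P}^\pt_n = V_n/\varpi$ and $V_n \times^{\Gm} \pt = \mathbb{P}^{n-1}_\F$ used in~\S\ref{ss:equiv-Db} and~\S\ref{ss:Gm-equ-Db}. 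By Remark~\ref{rmk:Hom-Dbeq} and the analogous statement for $\Gm$-equivariant sheaves, for $n \gg 0$ (relative to the cohomological amplitude of $\scF$ and the degree $m$ of interest) the left-hand side is computed as $\Hom$ in $\Db\Sh(V_n/\varpi,\bk)$ between the pullbacks of the models of $\ubk_\pt$ and of $\scF$, while $\coH^\bullet_{\Gm}(\pt,\scF)$ is computed as $\Hom$ in $\Db\Sh(\mathbb{P}^{n-1}_\F,\bk)$. The restriction functor $\Res^{\Gm}_\varpi$ corresponds geometrically to pullback along the map $\pi_n : V_n/\varpi \to \mathbb{P}^{n-1}_\F$ induced by $V_n \to \mathbb{P}^{n-1}_\F$; this map is a Zariski-locally trivial $\Gm/\varpi \cong \Gm$-bundle. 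The key computation is then the projection-formula identity
\[
 R(\pi_n)_* \, \ubk_{V_n/\varpi} \cong \ubk_{\mathbb{P}^{n-1}_\F} \oplus \ubk_{\mathbb{P}^{n-1}_\F}[-1]
\]
in $\Db\Sh(\mathbb{P}^{n-1}_\F,\bk)$, reflecting $\coH^\bullet(\Gm,\bk) = \bk \oplus \bk[-1]$. Applying adjunction $\Hom_{V_n/\varpi}(\pi_n^* A, \pi_n^* B) = \Hom_{\mathbb{P}^{n-1}}(A, R(\pi_n)_* \pi_n^* B) = \Hom_{\mathbb{P}^{n-1}}(A, B) \oplus \Hom_{\mathbb{P}^{n-1}}(A, B[-1])$ then yields the desired splitting once one sums over $m$.

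More precisely, I would first fix, for a given truncation range, an integer $n$ large enough that both sides compute correctly (using the two acyclicity statements and the fact that $D^I(\pt,\varpi,n,\bk)$, $D^I(\pt,\Gm,n,\bk)$ stabilize for $2n-2 \geq |I|$). Let $\cE$ denote the model of $\scF$ on $\mathbb{P}^{n-1}_\F$, so $\coH^\bullet_{\Gm}(\pt,\scF) = \bigoplus_m \Hom_{\Db\Sh(\mathbb{P}^{n-1})}(\ubk, \cE[m])$. The model of $\Res^{\Gm}_\varpi(\scF)$ on $V_n/\varpi$ is $\pi_n^*\cE$ (this is where one must check that the restriction functor is indeed realized by $\pi_n^*$ on models — it follows from chasing the defining diagrams, since $V_n \times \pt \to V_n/\varpi$ factors the $\varpi$-quotient through the $\Gm$-quotient compatibly). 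Then
\[
 \bigoplus_m \Hom_{\Db_\varpi(\pt,\bk)}(\ubk_\pt, \Res^{\Gm}_\varpi(\scF)[m]) = \bigoplus_m \Hom_{\Db\Sh(V_n/\varpi,\bk)}(\ubk, \pi_n^*\cE[m]),
\]
and the projection-formula / base-change computation of $R(\pi_n)_*\ubk$ (using that $\pi_n$ is a Zariski-locally trivial $\Gm$-bundle, so smooth base change applies and $\coH^\bullet(\Gm,\bk)$ is the fiberwise cohomology) converts this into $\coH^\bullet_{\Gm}(\pt,\scF) \oplus \coH^\bullet_{\Gm}(\pt,\scF)[1]$.

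The main obstacle I anticipate is \emph{canonicity and independence of $n$}: one must check that the isomorphism produced for one choice of $n$ agrees with that for larger $n'$ under the transition equivalences of~\S\ref{ss:equiv-Db} (which go through $V_n \times V_{n'} \times \pt$), and that the identification $\coH^\bullet(\Gm,\bk) \cong \bk \oplus \bk[-1]$ used to split $R(\pi_n)_*\ubk$ can be made uniformly. Concretely, the degree-$1$ generator of $\coH^1(\Gm,\bk)$ is only well-defined up to scalar, so ``canonical'' here should be read as canonical up to the choices already fixed in footnote~\ref{fn:Tate-twist} (the trivialization of the Tate sheaf). A clean way to finesse this is to observe that $\pi_n$ fits into a fiber square with the structure map $\Gm \to \pt$ under the $\Gm$-bundle $V_n \to \mathbb{P}^{n-1}$, reducing the splitting of $R(\pi_n)_*\ubk$ to the (once-and-for-all-fixed) splitting of $\coH^\bullet(\Gm,\bk)$; the compatibility across $n$ then follows because all the transition maps are pullbacks along $\Gm$-bundle maps and hence commute with this base change. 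Everything else — the acyclicity bounds, the adjunctions, the finiteness needed to stay in bounded constructible categories — is routine given the machinery already set up in~\S\S\ref{ss:equiv-Db}--\ref{ss:Smith-pt}.
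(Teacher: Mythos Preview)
Your strategy is exactly the one the paper uses: model both sides on $\overline{P}^{\pt}_n = V_n/\varpi$ and $\mathbb{P}^{n-1}_\F$, identify $\Res^{\Gm}_\varpi$ with pullback along $\pi_n : V_n/\varpi \to \mathbb{P}^{n-1}_\F$, then use adjunction and a projection-formula identity for $R(\pi_n)_*$. The paper also proves $(\pi_n)_*(\pi_n)^*\scG \cong \scG \otimes (\pi_n)_*\ubk$ via smooth base change, just as you outline.

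There is, however, a real gap in your justification of the key splitting
\[
 R(\pi_n)_*\,\ubk_{V_n/\varpi} \;\cong\; \ubk_{\mathbb{P}^{n-1}_\F} \oplus \ubk_{\mathbb{P}^{n-1}_\F}[-1].
\]
You appeal to the fibrewise cohomology $\coH^\bullet(\Gm,\bk)=\bk\oplus\bk[-1]$ and to a base-change square with $\Gm\to\pt$, but that only identifies the \emph{stalks} (equivalently, the cohomology sheaves $R^0(\pi_n)_*\ubk$ and $R^1(\pi_n)_*\ubk$, both constant). It does not split the triangle
\[
 \ubk_{\mathbb{P}^{n-1}_\F} \to R(\pi_n)_*\ubk \to \ubk_{\mathbb{P}^{n-1}_\F}[-1] \xrightarrow{[1]},
\]
whose connecting map is the Euler class of the $\Gm$-bundle, i.e.\ of the line bundle whose complement of the zero section is $V_n/\varpi$. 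For a general Zariski-locally trivial $\Gm$-bundle this class is nonzero (think of $\scO(1)^\times$ over $\mathbb{P}^1$). The paper's proof identifies $V_n/\varpi$ with the complement of the zero section in $\scO(\ell)$ over $\mathbb{P}^{n-1}_\F$; the Euler class is then $\ell$ times the hyperplane class, hence vanishes precisely because $\mathrm{char}(\bk)=\ell$. This is where the hypothesis on the characteristic enters, and your argument as written does not use it. Once you insert this Euler-class computation, the rest of your proof goes through and coincides with the paper's.
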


\begin{proof}
 We fix $\scF$ in $\Db_{\Gm,c}(\pt,\bk)$ and $m \in \Z$. Then for $n \gg 0$ the object $\scF$ is represented by a triple $(\scF_n,\scF_X,\beta)$ in $\Db(\pt,\Gm,n,\bk)$, and by an analogue of Remark~\ref{rmk:Hom-Dbeq} we have
 \[
  \coH^m_{\Gm}(\pt,\scF)=\Hom_{\Db(\mathbb{P}_\F^{n-1},\bk)}(\ubk_{\mathbb{P}_\F^{n-1}}, \scF_n[m]),
 \]
and similarly for $\coH^{m+1}_{\Gm}(\pt,\scF)$. If we denote by
\[
 \pi_n : V_n / \varpi \to \mathbb{P}_\F^{n-1}
\]
the natural map, then $\Res^{\Gm}_\varpi(\scF)$ is represented by the object $((\pi_n)^*\scF_n,\scF_X,\beta)$ in $\Db(\pt,\varpi,n,\bk)$, so that we have
\[
 \Hom_{\Db_{\varpi}(\pt,\bk)} \bigl( \ubk_\pt, \Res^{\Gm}_\varpi(\scF)[m] \bigr) = \Hom_{\Db(V_n / \varpi,\bk)}(\ubk_{V_n / \varpi}, (\pi_n)^*\scF_n[m]).
\]
To prove the lemma, it therefore suffices to prove that for any $\scG$ in $\Db(\mathbb{P}_\F^{n-1},\bk)$ we have a canonical isomorphism
\begin{multline}
\label{eqn:eq-cohom-Gm-pi}
 \Hom_{\Db(V_n / \varpi,\bk)}(\ubk_{V_n / \varpi}, (\pi_n)^*\scG) \cong \\
 \Hom_{\Db(\mathbb{P}_\F^{n-1},\bk)}(\ubk_{\mathbb{P}_\F^{n-1}}, \scG) \oplus \Hom_{\Db(\mathbb{P}_\F^{n-1},\bk)}(\ubk_{\mathbb{P}_\F^{n-1}}, \scG[-1]).
\end{multline}

We start by proving that
\begin{equation}
\label{eqn:isom-Euler-class}
 R(\pi_n)_* \ubk_{V_n / \varpi} = \ubk_{\mathbb{P}_\F^{n-1}} \oplus \ubk_{\mathbb{P}_\F^{n-1}}[-1].
\end{equation}
In fact, $V_n / \varpi$ is the complement of the zero section in the line bundle $\tilde{\pi}_n : \scO(\ell) \to \mathbb{P}_\F^{n-1}$. If $i_n : \mathbb{P}_\F^{n-1} \hookrightarrow \scO(\ell)$ is the embedding of the zero section, and $j_n : V_n / \varpi \hookrightarrow \scO(\ell)$ is the complementary open embedding, then we have a distinguished triangle
\[
 (i_n)_* \ubk_{\mathbb{P}_\F^{n-1}}[-2] \to \ubk_{\scO(\ell)} \to R(j_n)_* \ubk_{V_n / \varpi} \xrightarrow{[1]}.
\]
Applying the functor $R(\tilde{\pi}_n)_*$ we deduce a distinguished triangle
\[
 \ubk_{\mathbb{P}_\F^{n-1}}[-2] \to \ubk_{\mathbb{P}_\F^{n-1}} \to R(\pi_n)_* \ubk_{V_n / \varpi} \xrightarrow{[1]},
\]
in which the first map is (by definition) the (shift by $-2$ of the) Euler class of $\scO(\ell)$. Since $\bk$ has characteristic $\ell$ this Euler class vanishes, and we deduce the desired isomorphism~\eqref{eqn:isom-Euler-class}.

Next, we claim that for any $\scG$ in $\Db \Sh(\mathbb{P}_\F^{n-1},\bk)$ we have a canonical isomorphism
\begin{equation}
\label{eqn:isom-proj-formula}
 \scG \otimes_{\bk} R(\pi_n)_* \ubk_{V_n / \varpi} \simto R(\pi_n)_*(\pi_n)^* \scG.
\end{equation}
In fact adjunction provides a canonical morphism from the left-hand side to the right-hand side. To prove that this morphism is invertible it suffices to check this property after pullback under the surjective morphism $\pi_n$. However $\pi_n$ is a principal $\Gm/\varpi=\Gm$-bundle, so that we have a Cartesian diagram
\[
 \xymatrix{
 V_n / \varpi \times \Gm \ar[r] \ar[d] & V_n / \varpi \ar[d]^-{\pi_n} \\
 V_n / \varpi \ar[r]^-{\pi_n} & \mathbb{P}_\F^{n-1},
 }
\]
hence the claim follows from the smooth base change theorem~\cite[\href{https://stacks.math.columbia.edu/tag/0EYU}{Tag 0EYU}]{stacks-project} and the K\"unneth formula~\cite[\href{https://stacks.math.columbia.edu/tag/0F1N}{Tag 0F1N}]{stacks-project}.

Combining~\eqref{eqn:isom-Euler-class} and~\eqref{eqn:isom-proj-formula} we obtain, for any $\scG$ in $\Db \Sh(\mathbb{P}_\F^{n-1},\bk)$, an isomorphism
\[
 R(\pi_n)_*(\pi_n)^* \scG \cong \scG \oplus \scG[-1].
\]
In view of the isomorphism
\begin{multline*}
 \Hom_{\Db(V_n / \varpi,\bk)}(\ubk_{V_n / \varpi}, (\pi_n)^*\scG) = \Hom_{\Db(V_n / \varpi,\bk)}((\pi_n)^* \ubk_{\mathbb{P}_\F^{n-1}}, (\pi_n)^*\scG) \\
 \cong \Hom_{\Db(\mathbb{P}_\F^{n-1},\bk)}( \ubk_{\mathbb{P}_\F^{n-1}}, R(\pi_n)_*(\pi_n)^*\scG),
\end{multline*}
this implies~\eqref{eqn:eq-cohom-Gm-pi}, hence finishes the proof of the lemma.
\end{proof}

Using Lemma~\ref{lem:eq-cohom-Gm-pi} we will be able to give a more explicit description of the category $\Db_{\Gm,c}(\pt,\bk)_{\varpi\perf}$, as follows.

\begin{lem}
\label{lem:perf-fd-cohom}
 The full subcategory $\Db_{\Gm,c}(\pt,\bk)_{\varpi\perf} \subset \Db_{\Gm,c}(\pt,\bk)$ consists of the complexes $\scF$ such that
 \[
 \dim_\bk \bigl(\coH^\bullet_{\Gm}(\pt, \scF) \bigr) < \infty.
 \]
Moreover, in $\Smith(\pt,\bk)$ we have a canonical isomorphism of functors
 \[
 \id \cong [2].
 \]
\end{lem}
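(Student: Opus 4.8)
The plan is to reduce the first assertion, via Lemma~\ref{lem:eq-cohom-Gm-pi} and the equivalence~\eqref{eqn:equv-Db-pt}, to a statement about complexes of $\bk[\varpi]$-modules, and then to deduce the isomorphism $\id \cong [2]$ from an explicit cone computation.

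First I would make the reduction. Under~\eqref{eqn:equv-Db-pt} the object $\ubk_\pt$ corresponds to the trivial $\bk[\varpi]$-module $\bk$, so for $\scF$ in $\Db_{\Gm,c}(\pt,\bk)$, writing $N := \Res^{\Gm}_\varpi(\scF)$ (viewed through~\eqref{eqn:equv-Db-pt} as an object of $\Db(\bk[\varpi]\Mof)$), we have a canonical identification $\bigoplus_m \Hom_{\Db_\varpi(\pt,\bk)}(\ubk_\pt, N[m]) = \Ext^\bullet_{\bk[\varpi]}(\bk, N)$. By Lemma~\ref{lem:eq-cohom-Gm-pi} this graded space is isomorphic to $\coH^\bullet_{\Gm}(\pt,\scF) \oplus \coH^\bullet_{\Gm}(\pt,\scF)[1]$; hence $\dim_\bk \coH^\bullet_{\Gm}(\pt,\scF) < \infty$ if and only if $\Ext^\bullet_{\bk[\varpi]}(\bk, N)$ is finite-dimensional, and (since $N$ is bounded with finite-dimensional cohomology) this last condition is in turn equivalent to $\Ext^n_{\bk[\varpi]}(\bk, N) = 0$ for $n \gg 0$. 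So the first assertion reduces to the purely algebraic claim that $N$ is a perfect complex of $\bk[\varpi]$-modules if and only if $\Ext^n_{\bk[\varpi]}(\bk, N) = 0$ for $n \gg 0$.

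For this algebraic claim I would use that $\bk[\varpi] \cong \bk[u]/(u^\ell)$ (as $\bk$ has characteristic $\ell$ and $\varpi \cong \Z/\ell\Z$), a local self-injective --- indeed symmetric --- finite-dimensional $\bk$-algebra, over which free, projective and injective finitely generated modules coincide, so that for an object of $\Db(\bk[\varpi]\Mof)$ being perfect is the same as having finite injective dimension. If $N$ is perfect it is quasi-isomorphic to a bounded complex of injectives, whence $R\Hom_{\bk[\varpi]}(\bk, N)$ is bounded. Conversely, if $N$ is not perfect it admits an unbounded minimal injective resolution $N \to I^\bullet$ with each $I^n$ finite-dimensional; minimality forces $\ker(d^n) = \im(d^{n-1})$ to be essential in $I^n$, hence to contain $\soc(I^n)$, so the differentials of $\Hom_{\bk[\varpi]}(\bk, I^\bullet) = \soc(I^\bullet)$ vanish and $\Ext^n_{\bk[\varpi]}(\bk, N) \cong \soc(I^n) \neq 0$ for all $n$ in the (infinite) range where $I^n \neq 0$. (Alternatively one may argue in the stable module category of $\bk[\varpi]$, using that $\bk$ generates it and that $\widehat{\Ext}^\bullet_{\bk[\varpi]}(\bk,\bk)$ is $2$-periodic.) This converse --- that a non-perfect complex cannot have finite-dimensional $\Ext^\bullet_{\bk[\varpi]}(\bk,-)$ --- is the step I expect to require the most care.

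Finally, for the consequence, I would set $K := \cone(\mathrm{can}_\pt^2 \colon \ubk_\pt \to \ubk_\pt[2])$. Applying $\coH^\bullet_{\Gm}(\pt,-)$ to the defining triangle and using that --- after fixing the trivialization of the Tate sheaf as in footnote~\ref{fn:Tate-twist} --- one has $\coH^\bullet_{\Gm}(\pt,\ubk_\pt) = \bk[x]$ with $\mathrm{can}_\pt^2$ inducing multiplication by $x$, which is injective with one-dimensional cokernel, one computes that $\coH^\bullet_{\Gm}(\pt, K)$ is one-dimensional; by the first part $K$ then lies in $\Db_{\Gm,c}(\pt,\bk)_{\varpi\perf}$, i.e.\ $K = 0$ in $\Smith(\pt,\bk)$. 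Since the $\varpi$-perfect objects form a $\otimes$-ideal for the natural symmetric monoidal structure on $\Db_{\Gm,c}(\pt,\bk)$ (because $\bk[\varpi] \otimes_{\bk} (-)$ sends finite-dimensional $\bk[\varpi]$-modules to free ones, so that tensoring a perfect complex with anything yields a perfect complex), the natural transformation $\id \to [2]$ given by $(-)\otimes \mathrm{can}_\pt^2$ has cones $(-)\otimes K$ lying in $\Db_{\Gm,c}(\pt,\bk)_{\varpi\perf}$, and therefore becomes an isomorphism of functors in the Verdier quotient $\Smith(\pt,\bk)$.
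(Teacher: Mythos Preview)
Your argument is correct and takes a genuinely different route from the paper's. The paper first establishes that the cone $C$ of $\mathrm{can}_\pt^2$ lies in $\Db_{\Gm,c}(\pt,\bk)_{\varpi\perf}$ \emph{geometrically}, by realizing $\mathrm{can}_\pt^2$ via the closed embedding $\{0\} \hookrightarrow \mathbb{A}^1_\F$ and invoking Lemma~\ref{lem:crucial-Smith}; only afterwards does it prove (by an induction on $\dim_\bk \coH^\bullet_{\Gm}$, building everything out of $C$) that the $\varpi$-perfect objects are exactly those with finite-dimensional equivariant cohomology. You reverse the logic: you prove the characterization first, by the purely algebraic observation that over the local Frobenius algebra $\bk[\varpi]\cong\bk[u]/(u^\ell)$ a complex in $\Db(\bk[\varpi]\Mof)$ is perfect iff $\Ext^n_{\bk[\varpi]}(\bk,-)$ vanishes for $n\gg 0$, and then compute $\coH^\bullet_{\Gm}(\pt,K)$ directly from the definition of $\mathrm{can}_\pt^2$ as the class $x$. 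Your approach is pleasantly self-contained and avoids appealing to Lemma~\ref{lem:crucial-Smith} here (though of course that lemma is indispensable elsewhere); the paper's approach has the virtue of illustrating, already on a point, the Smith-theoretic mechanism that drives the whole paper. One small wording issue: in your minimal injective resolution step you write ``$\ker(d^n)=\im(d^{n-1})$'', which only holds for $n$ beyond the cohomological amplitude of $N$; what you actually need (and what minimality gives directly) is $\soc(I^n)\subset\ker(d^n)$, and since only $n\gg 0$ matters this does not affect the argument.
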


\begin{proof}
 Recall from~\eqref{eqn:eq-cohom-pt} that there exists a canonical morphism
 \[
  \mathrm{can}_{\pt}^2 : \ubk_\pt \to \ubk_\pt[2]
 \]
 in $\Db_{\Gm}(\pt,\bk)$.
More explicitly, this morphism can be constructed as follows: consider the natural dilation action of $\Gm$ on $\mathbb{A}_\F^1$. If we denote by $i : \pt=\{0\} \hookrightarrow \mathbb{A}_\F^1$ the embedding, then we have $i^!(\ubk_{\mathbb{A}_\F^1}) = \ubk_{\pt}[-2]$. Using adjunction, we deduce a canonical map
\[
 i_*(\ubk_{\pt}[-2]) \to \ubk_{\mathbb{A}_\F^1}.
\]
(Note that we have ignored a Tate twist here; see Footnote~\ref{fn:Tate-twist}.)
Applying $i^*$ and shifting by $2$, we obtain the morphism $\mathrm{can}_{\pt}^2$. Since $(\mathbb{A}_\F^1)^\varpi=\{0\}$, from this description and Lemma~\ref{lem:crucial-Smith} we obtain that the cone $C$ of $\mathrm{can}_{\pt}^2$ belongs to $\Db_{\Gm,c}(\pt,\bk)_{\varpi\perf}$. In particular, since the tensor product of any bounded complex with a perfect complex is perfect, this implies that for any $\scF$ in $\Smith(\pt,\bk)$ we have a canonical isomorphism
\[
 \scF \simto \scF[2],
\]
providing the desired isomorphism of functors $\id \cong [2]$.

Now we claim that the triangulated subcategory $\langle C \rangle_{\Delta}$ of $\Db_{\Gm,c}(\pt,\bk)$ generated by $C$ is exactly the subcategory whose objects are the complexes $\scF$ such that
\[
\dim_\bk \bigl(\coH^\bullet_{\Gm}(\pt, \scF) \bigr) < \infty.
\]
Indeed we have $\coH^\bullet_{\Gm}(\pt,C)=\bk[2]$, so that $C$ belongs
to this subcategory. To prove the opposite inclusion, we prove by
induction that for any $n \in \Z_{\geq 0}$, any complex $\scF$ such
that $\dim_\bk \bigl(\coH^\bullet_{\Gm}(\pt, \scF) \bigr)=n$ belongs
to $\langle C \rangle_{\Delta}$. In fact, if $n=0$ then using
Lemma~\ref{lem:eq-cohom-Gm-pi} and the fact that $\Db_{\varpi}(\pt,\bk)$ is generated (as a triangulated category) by $\underline{\bk}_\pt$ (see~\eqref{eqn:equv-Db-pt}) we see that any object $\scF$ such that
$\coH^\bullet_{\Gm}(\pt, \scF)=0$ satisfies
$\Res^{\Gm}_\varpi(\scF)=0$, hence $\For_{\Gm}(\scF)=0$. From the definition, we see that this implies that $\scF=0$. Fix now $n \geq 0$, and assume the result is known for $n$. If $\dim_\bk \bigl(\coH^\bullet_{\Gm}(\pt, \scF) \bigr)=n+1$, and if $m$ is maximal such that
\[
 \coH^m_{\Gm}(\pt, \scF) \neq 0,
\]
then any choice of a nonzero vector in this space provides a morphism
\[
 \ubk_\pt[-m] \to \scF
\]
in $\Db_{\Gm}(\pt,\bk)$. By maximality the composition of this map with $\mathrm{can}_{\pt}^2[-m-2]$ vanishes, so that this map must factor through a morphism
\[
 C[-m-2] \to \scF.
\]
From the long exact sequence in equivariant cohomology we see that the cone $\scG$ of this map satisfies $\dim_\bk \bigl(\coH^\bullet_{\Gm}(\pt, \scG) \bigr)=n$, which allows to conclude by induction.

The two claims we have proved so far show that the subcategory with objects those complexes $\scF$ such that $\coH^\bullet_{\Gm}(\pt, \scF)$ is finite-dimensional is included in $\Db_{\Gm,c}(\pt,\bk)_{\varpi\perf}$. On the other hand, if $\scF$ is an object of $\Db_{\Gm,c}(\pt,\bk)$ such that $\Res^{\Gm}_{\varpi}(\scF)$ is perfect, then
\[
 \dim_\bk \left( \bigoplus_{n \in \Z} \Hom_{\Db_{\varpi}(\pt,\bk)}(\ubk_\pt, \Res^{\Gm}_\varpi(\scF)[n]) \right) < \infty.
\]
From Lemma~\ref{lem:eq-cohom-Gm-pi} we deduce that in this case $\coH^\bullet_{\Gm}(\pt, \scF)$ is finite-dimensional, which concludes the proof.
\end{proof}

\begin{rmk}
 Concretely, in terms of the equivalence~\eqref{eqn:equv-Db-pt}, the object $\Res^{\Gm}_\varpi(C)$ corresponds to the complex $(\cdots \to 0 \to \bk[\varpi] \xrightarrow{\varpi \cdot (-)} \bk[\varpi] \to 0 \to \cdots)$ where the nonzero terms are in degrees $-2$ and $-1$.
\end{rmk}

We can finally give the proof of Lemma~\ref{lem:Hom-Smith-pt}.

\begin{proof}[Proof of Lemma~\ref{lem:Hom-Smith-pt}]
Lemma~\ref{lem:perf-fd-cohom} shows in particular that $\ubk_{\pt}$ does not belong to $\Db_{\Gm,c}(\pt,\bk)_{\varpi\perf}$, hence has nonzero image in $\Smith(\pt,\bk)$. In view of the isomorphism $\id \cong [2]$, this shows that $\Hom_{\Smith(\pt,\bk)}(\underline{\bk}_{\pt}, \underline{\bk}_{\pt}[n]) \neq 0$ for any even $n$. Hence to conclude it only remains to prove that
\[
 \dim \Hom_{\Smith(\pt,\bk)}(\underline{\bk}_{\pt}, \underline{\bk}_{\pt}[n]) \leq \begin{cases}
                                                               1 & \text{if $n$ is even;}\\
                                                               0 & \text{otherwise.}
                                                              \end{cases}
\]

 A morphism $a$ from $\ubk_{\pt}$ to $\ubk_{\pt}[n]$ in $\Smith(\pt,\bk)$ is represented by a diagram
 \[
  \ubk_{\pt} \xleftarrow{f} \scF \xrightarrow{g} \ubk_{\pt}[n]
 \]
in which $\scF$ belongs to $\Db_{\Gm,c}(\pt,\bk)$, $f$ and $g$ are morphisms in $\Db_{\Gm,c}(\pt,\bk)$, and the cone of $f$ belongs to $\Db_{\Gm,c}(\pt,\bk)_{\varpi\perf}$, i.e.~has finite-dimensional equivariant cohomology (see Lemma~\ref{lem:perf-fd-cohom}). In particular, from the long exact sequence in equivariant cohomology and~\eqref{eqn:eq-cohom-pt} we obtain that there exists $N \in 2\Z$ (which, for later use, we will assume to be at least $-n$) such that for $m \geq N$ we have
\[
 \coH^m_{\Gm}(\pt,\scF)=\begin{cases}
                     \bk & \text{if $m$ is even;}\\
                     0 & \text{otherwise.}
                    \end{cases}
\]
If we choose a nonzero element in $\coH^N_{\Gm}(\pt,\scF)$, considered as a morphism $h: \ubk_{\pt}[-N] \to \scF$ in $\Db_{\Gm,c}(\pt,\bk)$, then the cone of $h$ has finite-dimensional equivariant cohomology, i.e.~belongs to $\Db_{\Gm,c}(\pt,\bk)_{\varpi\perf}$ by Lemma~\ref{lem:perf-fd-cohom}. As a consequence, $a$ can also be represented by the diagram
\[
  \ubk_{\pt} \xleftarrow{f \circ h} \ubk_{\pt}[-N] \xrightarrow{g \circ h} \ubk_{\pt}[n].
 \]
 In case $n$ is odd we have
 \[
  \Hom_{\Db_{\Gm,c}(\pt,\bk)}(\ubk_{\pt}[-N], \ubk_{\pt}[n])=0,
 \]
so that $g \circ h$ must be zero, which finishes the proof in this case.

On the other hand, if $n$ is even both spaces $\Hom_{\Db_{\Gm,c}(\pt,\bk)}(\ubk_{\pt}[-N], \ubk_{\pt})$ and $\Hom_{\Db_{\Gm,c}(\pt,\bk)}(\ubk_{\pt}[-N], \ubk_{\pt}[n])$ are $1$-dimensional, with a basis given by $\mathrm{can}_{\pt}^{N}[-N]$ and $\mathrm{can}_{\pt}^{n+N}[-N]$ respectively. Hence to conclude, it only remains to prove that for $M,M' \geq -n$ even, the diagrams
\[
 \ubk_{\pt} \xleftarrow{\mathrm{can}_{\pt}^{M}[-M]} \ubk_{\pt}[-M] \xrightarrow{\mathrm{can}_{\pt}^{n+M}[-M]} \ubk_{\pt}[n]
\]
and
\[
 \ubk_{\pt} \xleftarrow{\mathrm{can}_{\pt}^{M'}[-M']} \ubk_{\pt}[-M'] \xrightarrow{\mathrm{can}_{\pt}^{n+M'}[-M']} \ubk_{\pt}[n]
\]
represent the same morphism in $\Smith(\pt,\bk)$. However we can assume that $M' \geq M$; then the morphism $\mathrm{can}^{M'-M}_{\pt}[-M'] : \ubk_{\pt}[-M'] \to \ubk_{\pt}[-M]$ has a cone which belongs to $\Db_{\Gm,c}(\pt,\bk)_{\varpi\perf}$, and satisfies
\begin{gather*}
 \mathrm{can}_{\pt}^{M'}[-M'] = (\mathrm{can}_{\pt}^{M}[-M]) \circ (\mathrm{can}^{M'-M}_{\pt}[-M']), \\
 \mathrm{can}_{\pt}^{n+M'}[-M'] = (\mathrm{can}_{\pt}^{n+M}[-M]) \circ (\mathrm{can}^{M'-M}_{\pt}[-M']).
\end{gather*}
The desired claim follows.
\end{proof}

\section{Fixed points of roots of unity on the affine Grassmannian}
\label{sec:fixed-points-Gr}

As in Section~\ref{sec:Smith-etale} we let $\F$ be an algebraically closed field of characteristic $p>0$.

\subsection{Affine Weyl group}
\label{ss:Waff}

Let $G$ be a connected reductive algebraic group over $\F$, and choose a Borel subgroup $B \subset G$ and a maximal torus $T \subset B$. The Weyl group of $(G,T)$ will be denoted $\Wf$. (Here, the subscript stands for ``finite,'' and is here to avoid any confusion with the affine Weyl group introduced below.) We will also denote by $U$ the unipotent radical of $B$, by $B^+$ the Borel subgroup opposite to $B$ with respect to $T$, and by $U^+$ the unipotent radical of $B^+$.

 We will denote by $\bX:=X^*(T)$ the character lattice of $T$, and by $\bX^\vee:=X_*(T)$ its cocharacter lattice. Let $\fR \subset \bX$ be the root system of $(G,T)$, and let $\fR^+ \subset \fR$ be the system of positive roots consisting of the $T$-weights in $\mathrm{Lie}(U^+)$. Let also $\fR^{\mathrm{s}}$ be the associated basis of $\fR$ (the ``simple roots''). These data define a set $\Sf$ of Coxeter generators for $\Wf$, consisting of the reflections $s_\alpha$ with $\alpha \in \fR^{\mathrm{s}}$.

%

The \emph{affine Weyl group} is the semi-direct product
\[
\Waff := \Wf \ltimes \Z\fR^\vee,
\]
where $\fR^\vee \subset \bX^\vee$ is the coroot system, and $\Z\fR^\vee$ is the coroot lattice.
For $\lambda \in \Z\fR^\vee$ we will denote by $\st_\lambda$ the image of $\lambda$ in $\Waff$. The group $\Waff$ admits a natural structure of Coxeter group extending that of $\Wf$; the corresponding simple reflections $\Saff \subset \Waff$ consist of $\Sf$ together with the elements $\st_{\beta^\vee} s_\beta$ with $\beta$ a maximal root in $\mathfrak{R}$.

Given $n \in \Z \smallsetminus \{0\}$, we will consider two actions of $\Waff$ on $V:=\bX^\vee \otimes_\Z \R$ defined, for $w \in \Wf$ and $\lambda \in \Z\fR^\vee$, by
\[
(w\st_\lambda) \cdot_n \mu = w(\mu - n\lambda), \qquad (w\st_\lambda) \square_n \mu = w(\mu + n\lambda)
\]
for $\mu \in \bX^\vee$, where in the right-hand side we consider the natural action of $\Wf$ on $\bX^\vee$. 
(Here the action $\cdot_n$ appears due to the sign conventions in Bruhat--Tits theory; but the action $\square_n$ is closer to the action which will be relevant when considering Representation Theory.) Of course, these actions are related via
\[
 w \square_n \mu = -(w \cdot_n (-\mu))
\] 
for any $w \in \Waff$ and $\mu \in V$.

We set
\[
 \mathbf{a}_n := \{ \lambda \in V \mid \forall \alpha \in \fR^+, \, -n < \langle \lambda, \alpha \rangle < 0\}.
\]
Then the closure $\overline{\mathbf{a}}_n$ of $\mathbf{a}_n$ is a fundamental
domain for the action of $\Waff$ on $V$ via $\cdot_n$ and via $\square_n$. These actions stabilize $\bX^\vee$, and a fundamental domain for the action of $\Waff$ on $\bX^\vee$ (for each of these actions) is therefore $\overline{\mathbf{a}}_n \cap \bX^\vee$. (Compared to the conventions used in~\cite{dchl}, our ``fundamental alcove'' $\mathbf{a}_n$ is opposite. This is related to the fact that our fixed Borel subgroup $B$ is chosen to be \emph{negative}, while in~\cite{dchl} it is chosen to be positive.)

%

The \emph{affine roots} are the formal linear combinations $\alpha + m \hbar$ with $\alpha \in \mathfrak{R}$ and $m \in \Z$. To such a combination we attach an affine function $f^n_{\alpha+m\hbar}$ on $V$, determined by
\[
 f^n_{\alpha + m \hbar}(v) = \langle \alpha,v \rangle + nm,
\]
and an element $s_{\alpha + m \hbar} \in \Waff$ determined by
\[
 s_{\alpha + m \hbar} = \st_{m\alpha^\vee} s_\alpha.
\]
We then have
\[
 s_{\alpha + m \hbar} \cdot_n v = v - f^n_{\alpha + m \hbar}(v) \alpha^\vee
\]
for any $v \in V$.

\begin{rmk}
 In practice, when considering these constructions in later sections, the integer $n$ will be either $1$ or a prime number different from $p$. As this assumption does not simplify the discussion in any way, we will not impose any restriction on $n$ in this section.
\end{rmk}

\subsection{Some Bruhat--Tits theory}

For any positive integer $n$, we set $\scK_n:=\F ( \hspace{-1pt} ( z^n ) \hspace{-1pt} )$. We will consider $\scK:=\scK_1$ as a valued field with its natural valuation (such that $z$ has valuation $1$), and endow each $\scK_n$ with the valuation obtained by restriction. (In this way, all the fields $\scK_n$ are canonically isomorphic, but their valuations differ.) We will denote by $\scO_n$ the valuation ring of $\scK_n$, so that $\scO_n:=\F [ \hspace{-1pt} [ z^n ] \hspace{-1pt} ]$. For any $\lambda \in \bX^\vee$ we have a point $z^\lambda \in G(\scK)$, defined as the image of $z$ under the map $(\scK)^\times \to G(\scK)$ induced by $\lambda$. If $\lambda \in n \bX^\vee$, then $z^{\lambda}$ belongs to $G(\scK_n)$.

The group scheme $G \times_{\Spec(\F)} \Spec(\scK_n)$ is a (split)
connected reductive group scheme over $\scK_n$, so that one can
consider the associated ``enlarged'' Bruhat--Tits
 building $\mathfrak{B}_n$ (in the sense considered e.g.~in~\cite{prasad}) which carries an action of $G(\scK_n)$. Our choice of maximal torus in $G$ provides a split maximal torus $T \times_{\Spec(\F)} \Spec(\scK_n) \subset G \times_{\Spec(\F)} \Spec(\scK_n)$, which itself defines an apartment $A_n$ in $\mathfrak{B}_n$. This apartment is an affine space with underlying vector space $V$, and it is stable under the action of $N_G(T)(\scK_n)$. The vectorial part of the action of $N_G(T)(\scK_n)$ on $A_n$ factors through the natural action of $N_G(T)(\scK_n)/T(\scK_n) = \Wf$ on $V$, and for $\lambda \in \bX^\vee$ the element $z^{n\lambda} \in T(\scK_n)$ acts by translation by $-n\lambda$ (see~\cite[\S 1.8]{prasad}). Let us choose, for any $w \in \Wf$, a lift $\dot{w}$ of $w$ in $N_G(T)$. Then we will consider the map
\[
 \iota_n : \Waff \to N_G(T)(\scK_n)
\]
defined by $\iota_n(\st_\lambda w)=z^{n\lambda} \dot{w}$ for $w \in \Wf$ and $\lambda \in \Z\fR^\vee$. 

If we choose a $\Wf$-fixed point in $A_n$, then the action of $V$ on this point defines an identification 
\begin{equation}
\label{eqn:apartment-V}
V \simto A_n,
\end{equation}
under which the action of $N_G(T)(\scK_n)$ on $A_n$ identifies with the action of $\Waff$ on $V$ provided by $\cdot_n$. We will fix such an identification once and for all, and use it to identify all the data considered above related to $V$ as data related to $A_n$. (None of our considerations below will depend on the choice of identification~\eqref{eqn:apartment-V}.)

The collection of fixed points of the reflections $s_{\alpha + m \hbar}$ (or in other words of kernels of the functions $f^n_{\alpha + m \hbar}$) defines a hyperplane arrangement in $A_n$, hence a collection of \emph{facets} (see~\cite[Chap.~V, \S 1.2]{bourbaki}). In particular, 
$\mathbf{a}_n$
is a facet of maximal dimension, i.e.~an alcove. 
Another example of facet is the intersection $\mathbf{o}_n$ of the reflection hyperplanes associated with all the reflections $s_\beta$ with $\beta \in \mathfrak{R}$, i.e.~the set of $\Wf$-fixed points. The facets we will mainly be interested in are those contained in the closure $\overline{\mathbf{a}}_n$ of $\mathbf{a}_n$.

To any facet $\mathbf{f}$ in $A_n$, Bruhat--Tits theory
associates a ``parahoric group scheme'' $P_{\mathbf{f}}$ over $\Spec(\scO_n)$, a smooth affine group scheme over $\Spec(\scO_n)$ with connected geometric fibers such that
\begin{equation}
\label{eqn:parahoric-generic}
 P_{\mathbf{f}} \times_{\Spec(\scO_n)} \Spec(\scK_n) = G \times_{\Spec(\F)} \Spec(\scK_n),
\end{equation}
and whose $\scO_n$-points are of finite index in the pointwise stabilizer of $\mathbf{f}$ in $G(\scK_n)$. (This group scheme is denoted similarly in~\cite{dchl}; in~\cite{prasad} it is denoted $\mathscr{G}_{\mathbf{f}}^\circ$.) In particular we have
\[
 P_{\mathbf{o}_n} = G \times_{\Spec(\F)} \Spec(\scO_n),
\]
and $P_{\mathbf{a}_n}$ is an Iwahori group scheme, whose group of $\scO_n$-points is the inverse image 
of $B$ under the map $G(\scO_n) \to G$ of evaluation at $z^n=0$. This construction is compatible with inclusions of closures of facets in a natural way; in particular for any facet $\mathbf{f}$ contained in $\overline{\mathbf{a}}_n$ we have a closed embedding
\begin{equation}
\label{eqn:inclusion-parahoric}
 P_{\mathbf{a}_n} \subset P_{\mathbf{f}}.
\end{equation}

\subsection{Loop groups and partial affine flag varieties}
\label{ss:loop-flag}

As above we fix a positive integer $n$.
The $n$-th \emph{loop group} associated with $G$ is the ind-affine group ind-scheme $L_n G$ over $\F$ which represents the functor sending an $\F$-algebra $R$ to $G(R ( \hspace{-1pt} ( z^n ) \hspace{-1pt} ))$. The associated \emph{arc group} (or positive loop group) is the affine group scheme $L^+_n G$ over $\F$ which represents the functor sending $R$ to $G(R[ \hspace{-1pt} [ z^n ] \hspace{-1pt} ])$. (For basics about ind-schemes, see~\cite[\S 1]{richarz}.)

The case we are mostly interested in is when $n=1$. In this case (here and in later related notation), we will usually omit the subscript from the notation. The case of a more general $n$ however naturally appears when considering the action of $n$-th roots of unity by loop rotation. Namely, we have a natural action of the multiplicative group $\Gm$ over $\F$ on $L G$ by loop rotation. This action stabilizes the subgroup $L^+ G$. Denote now by $\roots_n \subset \Gm$ the subgroup scheme of $n$-th roots of unity; we can then consider the fixed-points ind-scheme $(L G)^{\roots_n}$ and the fixed-points scheme $(L^+ G)^{\roots_n}$ in the sense of~\S\ref{ss:stalks-fixed-points}.

\begin{lem}
\label{lem:fixed-points-loop-group}
 We have identifications
 \[
  (L G)^{\roots_n} = L_n G, \quad (L^+ G)^{\roots_n}=L_n^+ G.
 \]
\end{lem}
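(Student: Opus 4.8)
The plan is to compute the two functors of points. Recall that $LG$ represents $R \mapsto G(R(\!(z)\!))$, that $L^+G$ represents $R \mapsto G(R[\![z]\!])$, and that loop rotation is induced by the $\Gm$-action on $R(\!(z)\!)$ which rescales $z$; the $\roots_n$-action is the restriction of this along $\roots_n \subset \Gm$. Since $LG$ is ind-affine and loop rotation preserves an exhaustive filtration of it by $\roots_n$-stable closed subschemes of finite type, while $L^+G$ is affine, the fixed-point (ind-)subschemes $(LG)^{\roots_n} \into LG$ and $(L^+G)^{\roots_n} \into L^+G$ are defined in the scheme-theoretic sense of \S\ref{ss:stalks-fixed-points} (applied to each term of the filtration in the first case), are closed (ind-)immersions, and represent the ``naive'' fixed-point subfunctors $R \mapsto \{x : x \text{ is } \roots_n\text{-fixed}\}$. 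It therefore suffices to identify these subfunctors with $L_n G$ and $L_n^+ G$.

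The heart of the matter is the elementary observation that, for any $\F$-algebra $R$, the $\roots_n$-invariants of $R(\!(z)\!)$ form the subring $R(\!(z^n)\!)$, and those of $R[\![z]\!]$ form $R[\![z^n]\!]$. To deduce the lemma from this, fix a closed embedding of $G$ into some $\mathrm{GL}_N$; it is $\roots_n$-equivariant for loop rotation on both sides, so a point $x \in G(R(\!(z)\!))$ is $\roots_n$-fixed if and only if the corresponding matrix over $R(\!(z)\!)$ is, if and only if every entry of that matrix is a $\roots_n$-invariant element of $R(\!(z)\!)$; granting the observation, this holds iff all entries lie in $R(\!(z^n)\!)$, i.e. iff the matrix defines a point of $\mathrm{GL}_N(R(\!(z^n)\!))$, and such a point satisfies the defining equations of $G$ iff it lies in $G(R(\!(z^n)\!))$. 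Hence $(LG)^{\roots_n}(R) = G(R(\!(z^n)\!)) = L_n G(R)$, and likewise $(L^+G)^{\roots_n}(R) = G(R[\![z^n]\!]) = L_n^+ G(R)$, naturally in $R$ and compatibly with the inclusions into $LG(R)$, resp.~$L^+G(R)$. For the observation itself, $\roots_n$-invariance of a Laurent series $\sum_k a_k z^k \in R(\!(z)\!)$ is tested after base change to $R' := R[t]/(t^n-1)$ (the coordinate ring of $\roots_n \times \Spec R$): it asserts that $\sum_k a_k t^k z^k = \sum_k a_k z^k$ in $R'(\!(z)\!)$, i.e. that $a_k(t^k-1) = 0$ in $R'$ for every $k$. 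Since $1, t, \dots, t^{n-1}$ is an $R$-basis of $R'$ and $t^k-1$ equals $t^r - 1$ for the residue $r$ of $k$ modulo $n$, this forces $a_k = 0$ when $n \nmid k$ and is vacuous when $n \mid k$; the converse is clear. The same computation, with $R[\![z]\!]$ replacing $R(\!(z)\!)$, handles $R[\![z]\!]$.

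Finally, $(LG)^{\roots_n} \into LG$ is a closed immersion and $L_n G \to LG$ is a monomorphism of (ind-)schemes (it is injective on $R$-points, since $R(\!(z^n)\!) \into R(\!(z)\!)$ is), so the identity of subfunctors just proved promotes, by the Yoneda lemma, to an isomorphism $(LG)^{\roots_n} \simto L_n G$ compatible with the maps to $LG$; the same argument applies to $L^+ G$. The one point requiring care is the scheme-theoretic (as opposed to set-theoretic) structure of the fixed points, but this is precisely what \S\ref{ss:stalks-fixed-points} supplies; and in the only cases we use ($n = 1$, or $n$ a prime different from $p$) the group $\roots_n$ is a constant finite group, so that the universal parameter $t$ above may be replaced by an honest primitive $n$-th root of unity $\zeta \in \F^\times$, for which $\zeta^k - 1 \in \F^\times$ whenever $n \nmid k$.
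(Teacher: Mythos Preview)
Your proof is correct and follows essentially the same approach as the paper: both compute the functor of points and reduce to the observation that the $\roots_n$-invariants of $R(\!(z)\!)$ are $R(\!(z^n)\!)$. The paper's version is slightly slicker in that it avoids the embedding into $\mathrm{GL}_N$: it observes directly that an $\roots_n$-invariant morphism $\Spec(R(\!(z)\!)) \to G$ must factor through the quotient $\Spec(R(\!(z)\!))/\roots_n = \Spec(R(\!(z^n)\!))$, which is exactly the universal property of the quotient scheme. Your detour through $\mathrm{GL}_N$ is harmless but unnecessary, and your final caveat about $n$ prime to $p$ is also unnecessary, since your computation with the $R$-basis $1,t,\dots,t^{n-1}$ of $R[t]/(t^n-1)$ already works for arbitrary $n$.
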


\begin{proof}
 For any $\F$-algebra $R$, the $R$-points $(L G)(R)$ consist of the $\F$-scheme morphisms $\Spec(R( \hspace{-1pt} ( z) \hspace{-1pt})) \to G$. Therefore, the $R$-points of $(L G)^{\roots_n}$ consist of the $\roots_n$-invariant morphisms $\Spec(R( \hspace{-1pt} ( z) \hspace{-1pt})) \to G$, i.e.~the morphisms which factor through the quotient $\Spec(R( \hspace{-1pt} ( z) \hspace{-1pt})) \to \Spec(R( \hspace{-1pt} ( z) \hspace{-1pt})) /\roots_n = \Spec(R( \hspace{-1pt} ( z^n) \hspace{-1pt}))$. This proves the first identification. The proof of the second one is similar.
\end{proof}

It is well known (see e.g.~\cite[\S 3]{richarz}) that the fppf sheafification of the functor
\[
 R \mapsto (L G)(R)/(L^+ G)(R)
\]
is represented by an ind-projective ind-scheme over $\F$, which is called the \emph{affine Grassmannian} of $G$, and will be denoted $\Gr_G$. The main goal of this section is to describe the ind-scheme $(\Gr_G)^{\roots_n}$, see Proposition~\ref{prop:fixed-points-Grass} below. This will require discussing more general ``partial affine flag varieties'' attached to $L_n G$, as follows.

If $\mathbf{f} \subset \overline{\mathbf{a}}_n$ is a facet, we can consider the affine group scheme $L^+_n P_{\mathbf{f}}$ over $\F$ which represents the functor sending $R$ to $P_{\mathbf{f}}(R[ \hspace{-1pt} [ z^n ] \hspace{-1pt} ])$. In view of~\eqref{eqn:parahoric-generic}, $L^+_n P_{\mathbf{f}}$ is a subgroup of $L_n G$. The partial affine flag variety 
\[
\Fl^n_{\mathbf{f}}
\]
associated with $\mathbf{f}$ is the ind-projective ind-scheme over $\F$ which represents the fppf sheafification of the presheaf
\[
 R \mapsto L_n G(R) / L_n^+ P_{\mathbf{f}}(R).
\]
These ind-schemes are the main object of study of~\cite{pappas-rapoport}. In particular, the connected components of $\Fl^n_{\mathbf{f}}$ are in a natural bijection with the algebraic fundamental group of $G$ (see~\cite[Theorem~0.1]{pappas-rapoport}); the component corresponding to the neutral element will be denoted $\Fl^{n,\circ}_{\mathbf{f}}$.

If $\alpha \in \mathfrak{R}$, we will denote by $U_\alpha$ the root subgroup of $G$ attached to $\alpha$. Then, for an affine root $\alpha + m \hbar$, we will denote by $U_{\alpha + m \hbar}$ the subgroup of $L G$ which, for any isomorphism $u_\alpha : \Ga \simto U_\alpha$, identifies with the image of the morphism $x \mapsto u_\alpha(xz^m)$. 

The following statements are easily checked.

\begin{lem}
\label{lem:properties-root-subgroups}
Let $\alpha \in \mathfrak{R}$ and $m \in \Z$.
\begin{enumerate}
\item
\label{it:fixed-points-root-subgroups}
The subgroup $U_{\alpha+m\hbar}$ is stable under the action of $\roots_n$, and we have
\[
(U_{\alpha+m\hbar})^{\roots_n} = \begin{cases}
U_{\alpha+m\hbar} & \text{if $m \in n\Z$;} \\
\{1\} & \text{otherwise.}
\end{cases}
\]
\item
\label{it:conjugation-root-subgroups}
If $\lambda \in \bX^\vee$, we have
\begin{equation*}
 z^{\lambda} \cdot U_{\alpha+m\hbar} \cdot z^{-\lambda} = U_{\alpha + (m+\langle \lambda,\alpha \rangle)\hbar}.
\end{equation*}
\item
If $\mathbf{f} \subset \overline{\mathbf{a}}_n$ is a facet, we have $U_{\alpha+mn\hbar} \subset L_n^+ P_{\mathbf{f}}$ iff $f^n_{\alpha+m\hbar}$ takes nonnegative values on $\mathbf{f}$.
\end{enumerate}
\end{lem}

\subsection{Fixed points on orbits}
\label{ss:orbits-Gr}
A crucial role in our discussion will be played by the following Iwahori subgroups of $L^+G$, for which we introduce special notation:
\[
\Iw := L_1^+ P_{\mathbf{a}_1}, \quad \Iw^+ := \dot{w}_0 \cdot \Iw \cdot (\dot{w}_0)^{-1}.
\]
(Here, $w_0$ is the longest element in $\Wf$.) More concretely, $\Iw$, resp.~$\Iw^+$,
is the inverse image of $B$, resp.~$B^+$, under the map $\mathrm{ev}_0 : L^+G \to G$ sending $z$ to $0$. We also denote by $\Iwu$ and $\Iwu^+$ the pro-unipotent radicals of $\Iw$ and $\Iw^+$, i.e.~the inverse images of $U$ and $U^+$ under $\mathrm{ev}_0$.

Lor $\lambda \in \bX^\vee$, we denote by $L_\lambda$ the coset of $z^{\lambda}$ in $\Gr_G$.
The group scheme $L^+G$ acts on $\Gr_G$ (see~\S\ref{ss:loop-flag}), and the orbits of this action are parametrized by the subsemigroup $\bX^\vee_+ \subset \bX^\vee$ of dominant cocharacters. More precisely, 
we have
\begin{equation}
\label{eqn:orbits-Gr-GO}
(\Gr_G)_{\mathrm{red}} = \bigsqcup_{\lambda \in \bX^\vee_+} \Gr_G^\lambda \quad \text{with  $\Gr_G^\lambda := L^+G \cdot L_\lambda$,}
\end{equation}
where the left-hand side denotes the reduced ind-scheme associated with $\Gr_G$.
Moreover, for any $\lambda \in \bX^\vee_+$ the closure $\overline{\Gr_G^\lambda}$ is a projective $\F$-scheme, on which the action of $L^+G$ factors through an action of a smooth quotient group scheme of finite type.

The orbits of $\Iw$ and $\Iw^+$ can be described similarly: we have
\begin{equation}
\label{eqn:GrG-Iworbits}
(\Gr_G)_{\mathrm{red}} = \bigsqcup_{\lambda \in \bX^\vee} \Gr_{G,\lambda} \quad \text{with $\Gr_{G,\lambda} := \Iw \cdot L_\lambda$}
\end{equation}
and
\begin{equation}
\label{eqn:GrG-Iw+orbits}
(\Gr_G)_{\mathrm{red}} = \bigsqcup_{\lambda \in \bX^\vee} \Gr^+_{G,\lambda} \quad \text{with $\Gr^+_{G,\lambda} := \Iw^+ \cdot L_\lambda$.}
\end{equation}
Moreover, each $\Iw$-orbit (resp.~$\Iw^+$-orbit) is also an $\Iwu$-orbit (resp.~$\Iwu^+$-orbit), and for any $\mu \in \bX^\vee_+$ we have
\[
\Gr_G^\mu = \bigsqcup_{\lambda \in \Wf \mu} \Gr_{G,\lambda} = \bigsqcup_{\lambda \in \Wf \mu} \Gr^+_{G,\lambda}.
\]
For $\lambda \in \bX^\vee$, the embedding of $\Gr_{G,\lambda}^+$ in $\Gr_G$ will be denoted $j_\lambda^+$.

If $n \in \Z_{>0}$, we can also consider the
Iwahori subgroups $\Iw_n, \Iw_n^+ \subset L_n G$ defined as above with $z$ replaced by $z^n$, and their pro-unipotent radicals $\Iwun, \Iwun^+$.

\begin{lem}
\label{lem:fixed-points-orbits}
We have
\[
\Iw^{\roots_n}=\Iw_n, \quad (\Iw^+)^{\roots_n}=\Iw_n^+, \quad
(\Iwu)^{\roots_n}=\Iwun, \quad (\Iwu^+)^{\roots_n}=\Iwun^+.
\]
For any $\lambda \in \bX^\vee$ we have
\[
(\Gr_{G,\lambda})^{\roots_n} = \Iw_n \cdot L_\lambda, \quad (\Gr^+_{G,\lambda})^{\roots_n} = \Iw^+_n \cdot L_\lambda.
\]
\end{lem}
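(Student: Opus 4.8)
The plan is to separate the two assertions: the identities between subgroups are essentially formal once one has Lemma~\ref{lem:fixed-points-loop-group}, whereas the identities between orbits require one piece of standard geometric input, the ``big cell'' description of Iwahori orbits.

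For the subgroup identities, I would first note that the evaluation morphism $\mathrm{ev}_0\colon L^+G\to G$ is equivariant for the loop rotation action of $\Gm$ on the source and the \emph{trivial} action on the target (loop rotation fixes $z=0$), hence $\roots_n$-equivariant in the same way after restricting to $\roots_n\subset\Gm$. Therefore $\Iw=\mathrm{ev}_0^{-1}(B)$, $\Iw^+=\mathrm{ev}_0^{-1}(B^+)$, $\Iwu=\mathrm{ev}_0^{-1}(U)$ and $\Iwu^+=\mathrm{ev}_0^{-1}(U^+)$ are $\roots_n$-stable closed subgroups, realized as fibre products over $G$. Since forming $\roots_n$-fixed points commutes with fibre products and since $B,B^+,U,U^+$ carry the trivial action, Lemma~\ref{lem:fixed-points-loop-group} gives $\Iw^{\roots_n}=(L^+G)^{\roots_n}\times_G B=L_n^+G\times_G B$, and similarly for the other three. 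Finally, for $g\in L_n^+G(R)=G(R[[z^n]])$ one has $\mathrm{ev}_0(g)=g\bmod z=g\bmod z^n$, so these fibre products are precisely $\Iw_n,\Iw_n^+,\Iwun,\Iwun^+$. (Alternatively, $\Iw^+=\dot{w}_0\,\Iw\,(\dot{w}_0)^{-1}$ and conjugation by the constant element $\dot{w}_0\in G(\F)$ commutes with loop rotation, hence is $\roots_n$-equivariant, giving $(\Iw^+)^{\roots_n}=\dot{w}_0\,\Iw_n\,(\dot{w}_0)^{-1}=\Iw_n^+$.)

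For the orbits, fix $\lambda\in\bX^\vee$. The first step is to record that $L_\lambda$ is fixed by loop rotation: for $t\in\Gm(R)$ one has $t\cdot z^\lambda=\lambda(t)\,z^\lambda$ with $\lambda(t)\in T(R)\subset L^+G(R)$, whence $t\cdot L_\lambda=L_\lambda$ in $\Gr_G$; in particular $L_\lambda\in(\Gr_G)^{\roots_n}$. Since on $\Gr_G$ every $\Iw$-orbit (resp.\ $\Iw^+$-orbit) is an $\Iwu$-orbit (resp.\ $\Iwu^+$-orbit), and likewise for $L_nG$, it suffices to identify $(\Iwu\cdot L_\lambda)^{\roots_n}$ with $\Iwun\cdot L_\lambda$, and its $+$-analogue. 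Here I would invoke the ``big cell'' description recalled in~\S\ref{ss:big-cells} (going back to~\cite{faltings,pappas-rapoport}): there is a finite set $N_\lambda$ of affine roots, each of the form $\alpha+m\hbar$ with $U_{\alpha+m\hbar}\subset\Iwu$, such that multiplication in any fixed order induces an isomorphism of schemes $\prod_{\beta\in N_\lambda}U_\beta\simto\Gr_{G,\lambda}$, $(u_\beta)_\beta\mapsto\bigl(\prod_\beta u_\beta\bigr)\cdot L_\lambda$.

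The crucial point is that this isomorphism is $\Gm$-equivariant, $\Gm$ acting on $\prod_\beta U_\beta$ factor by factor through loop rotation on each $U_{\alpha+m\hbar}$ (scaling its coordinate by $t^m$): this uses that loop rotation acts by group automorphisms of $LG$, preserves each $U_{\alpha+m\hbar}$, and fixes $L_\lambda$. Passing to $\roots_n$-fixed points — which, for an action preserving each factor of a finite product of schemes, is computed factorwise — and applying Lemma~\ref{lem:properties-root-subgroups}(1), I obtain $(\Gr_{G,\lambda})^{\roots_n}=\bigl(\prod_{\alpha+m\hbar\in N_\lambda,\ n\mid m}U_{\alpha+m\hbar}\bigr)\cdot L_\lambda$. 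Each root subgroup occurring here equals its own $\roots_n$-fixed points and lies in $\Iwu$, hence in $(\Iwu)^{\roots_n}=\Iwun$, so this locus is contained in $\Iwun\cdot L_\lambda$; conversely $\Iwun\cdot L_\lambda\subset\Iwu\cdot L_\lambda=\Gr_{G,\lambda}$ and consists of $\roots_n$-fixed points (since $\roots_n$ acts trivially on $\Iwun$ and fixes $L_\lambda$), so it lies in $(\Gr_{G,\lambda})^{\roots_n}$. Sandwiching yields $(\Gr_{G,\lambda})^{\roots_n}=\Iwun\cdot L_\lambda=\Iw_n\cdot L_\lambda$, and conjugating the whole argument by $\dot{w}_0$ (equivalently, running it with $\Iwu^+$ in place of $\Iwu$) gives $(\Gr^+_{G,\lambda})^{\roots_n}=\Iw_n^+\cdot L_\lambda$. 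The only genuinely non-formal ingredient, and hence the main thing to set up carefully, is this $\roots_n$-equivariant big-cell decomposition — that the Iwahori orbit is an affine space cut out by precisely these affine root subgroups, and that loop rotation respects the resulting product while grading each factor by its level $m$; granting this, everything else is a diagram chase with fixed points.
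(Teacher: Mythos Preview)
Your proof is correct and follows essentially the same route as the paper: the subgroup identities are deduced from Lemma~\ref{lem:fixed-points-loop-group} (the paper calls this ``immediate'' without spelling out the fibre-product argument you give), and the orbit identities are obtained by writing $\Gr_{G,\lambda}$ as a product of affine root subgroups acting on $L_\lambda$, passing to $\roots_n$-fixed points factorwise via Lemma~\ref{lem:properties-root-subgroups}, and identifying the result with $\Iw_n\cdot L_\lambda$. Your final sandwich argument is a slightly different packaging of the paper's last step, and your pointer to~\S\ref{ss:big-cells} is a bit off (that section treats big cells in partial flag varieties rather than Iwahori orbits on $\Gr_G$; the paper simply calls the product decomposition $\Iwu^\lambda\simto\Gr_{G,\lambda}$ ``well known''), but neither affects correctness.
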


\begin{proof}
The identifications in the first sentence are immediate consequences of Lem\-ma~\ref{lem:fixed-points-loop-group}.

For the description of $(\Gr_{G,\lambda})^{\roots_n}$, for any $\alpha \in \mathfrak{R}$ we set $\delta_\alpha=1$ if $\alpha \in \mathfrak{R}^+$, and $\delta_\alpha=0$ otherwise. 
Using the notation introduced in~\S\ref{ss:loop-flag}, we set
\[
\Iwu^\lambda := \prod_{\alpha \in \mathfrak{R}} \left( \prod_{\delta_\alpha \leq m < \langle \lambda,\alpha \rangle} U_{\alpha+m\hbar} \right),
\]
where the products are taken in any chosen order.
Then it is well known that the map $u \mapsto u \cdot L_\lambda$ induces an isomorphism
$\Iwu^\lambda \simto \Gr_{G,\lambda}$. Since $L_\lambda$ is stable under the $\roots_n$-action
we deduce an isomorphism $(\Iwu^\lambda)^{\roots_n} \simto (\Gr_{G,\lambda})^{\roots_n}$, and here by Lemma~\ref{lem:properties-root-subgroups}\eqref{it:fixed-points-root-subgroups} we have
\[
(\Iwu^\lambda)^{\roots_n} = \prod_{\alpha \in \mathfrak{R}} \left( \prod_{\substack{\delta_\alpha \leq m < \langle \lambda,\alpha \rangle \\ n \mid m}} U_{\alpha + m\hbar} \right).
\]
It follows that $(\Gr_{G,\lambda})^{\roots_n} = \Iw_n \cdot L_\lambda$, as desired.
The proof that $(\Gr_{G,\lambda}^+)^{\roots_n} = \Iw_n^+ \cdot L_\lambda$ is similar.
\end{proof}

\subsection{Big cells in partial affine flag varieties}
\label{ss:big-cells}

Our arguments below will make use of the ``big cell'' in $\Fl^n_{\mathbf{f}}$, whose construction we now recall following de Cataldo--Haines--Li~\cite{dchl}. We first consider the ind-affine group ind-scheme $L^{(-1)}_n G$ which represents the functor sending $R$ to the kernel of the morphism
\[
 G(R[z^{-n}]) \to G(R)
\]
of evaluation at $z^{-n}=0$. Then $L^{(-1)}_n G$ is a subgroup ind-scheme of $L_n G$, and we set
\[
 L^{--}_n P_{\mathbf{a}_n} = L^{(-1)}_n G \cdot U^+.
\]
With this definition, it is well known (see e.g.~\cite[\S 2]{faltings}) that the morphism
\[
 L^{--}_n P_{\mathbf{a}_n} \to \Fl^{n,\circ}_{\mathbf{a}_n}
\]
induced by the action on the base point is representable by an open immersion,
and that from this one can obtain an ``open cover'' of $\Fl^{n,\circ}_{\mathbf{a}_n}$ parametrized by $\Waff$, where the open subset corresponding to $w$ is the image of
\[
 \iota_n(w) \cdot L^{--}_n P_{\mathbf{a}_n} \cdot \iota_n(w)^{-1}
\]
under the map $g \mapsto g \cdot [\iota_n(w)]$. (Here, $[\iota_n(w)]$ is the image of $\iota_n(w)$ in $\Fl^{n,\circ}_{\mathbf{a}_n}$.)

For a general facet $\mathbf{f} \subset \overline{\mathbf{a}}_n$, we denote by $\Waff^{\mathbf{f}}$ the pointwise stabilizer of $\mathbf{f}$ in $\Waff$ (a finite parabolic subgroup) and set
\[
 L^{--}_n P_{\mathbf{f}} = \bigcap_{w \in \Waff^{\mathbf{f}}} \iota_n(w) \cdot L_n^{--} P_{\mathbf{a}_n} \cdot \iota_n(w)^{-1}.
\]
For instance, in the special case $\mathbf{f}=\mathbf{o}_n$, we have
\begin{equation}
\label{eqn:opposite-parahoric-o}
 L^{--}_n P_{\mathbf{o}_n} = L^{(-1)}_n G.
\end{equation}

The following claim is easily checked (see~\cite[Proposition~3.6.4]{dchl}).

\begin{lem}
\label{lem:root-subgroups-L-}
For $\alpha \in \mathfrak{R}$ and $m \in \Z$, we have $U_{\alpha+nm\hbar} \subset L^{--}_n P_{\mathbf{f}}$ iff $f^n_{\alpha+m\hbar}$ takes negative values on $\mathbf{f}$.
\end{lem}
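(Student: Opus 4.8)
The plan is to reduce the general-facet case to the case $\mathbf{f} = \mathbf{a}_n$, which is essentially a direct unwinding of the definition of $L^{--}_n P_{\mathbf{a}_n} = L^{(-1)}_n G \cdot U^+$. First I would treat $\mathbf{f} = \mathbf{a}_n$: an affine root subgroup $U_{\alpha+nm\hbar}$ is the image of $x \mapsto u_\alpha(xz^{nm})$, so it lies in $L^{(-1)}_n G$ precisely when the exponent $nm$ is $\leq -n$, i.e.\ $m \leq -1$ — equivalently $nm < 0$ — and it lies in $U^+ \subset G$ precisely when $\alpha \in \fR^+$ and $m = 0$. Thus $U_{\alpha+nm\hbar} \subset L^{--}_n P_{\mathbf{a}_n}$ iff either ($m<0$) or ($m=0$ and $\alpha\in\fR^+$); one checks directly that this is exactly the condition that $f^n_{\alpha+m\hbar}(v) = \langle\alpha,v\rangle + nm$ is negative on $\mathbf{a}_n$. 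Indeed, on $\mathbf{a}_n = \{v : -n < \langle v,\alpha\rangle < 0 \ \forall\alpha\in\fR^+\}$ the function $\langle\alpha,\cdot\rangle$ takes values in $(-n,0)$ for $\alpha\in\fR^+$ and in $(0,n)$ for $\alpha\in-\fR^+$; adding $nm$ and sorting out signs yields exactly the dichotomy above. (A small care point: the decomposition $L^{--}_n P_{\mathbf{a}_n} = L^{(-1)}_n G \cdot U^+$ together with the fact that $L^{(-1)}_n G$ is generated by the $U_{\alpha+nm\hbar}$ with $nm<0$ and the relevant part of $T$ means that membership of a \emph{single} root subgroup can be detected componentwise; this is where one invokes the standard structure theory of $L^{(-1)}_n G$ as in~\cite[\S2]{faltings}.)

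Next I would pass to a general facet $\mathbf{f} \subset \overline{\mathbf{a}}_n$ using the defining intersection
\[
 L^{--}_n P_{\mathbf{f}} = \bigcap_{w \in \Waff^{\mathbf{f}}} \iota_n(w) \cdot L_n^{--} P_{\mathbf{a}_n} \cdot \iota_n(w)^{-1}.
\]
So $U_{\alpha+nm\hbar} \subset L^{--}_n P_{\mathbf{f}}$ iff for every $w\in\Waff^{\mathbf{f}}$ we have $\iota_n(w)^{-1} U_{\alpha+nm\hbar} \iota_n(w) \subset L^{--}_n P_{\mathbf{a}_n}$. Writing $w = \st_\lambda u$ with $u\in\Wf$, $\lambda\in\Z\fR^\vee$, conjugation by $\iota_n(w)$ sends $U_{\alpha+nm\hbar}$ to $U_{\beta + n m' \hbar}$ where $\beta = u^{-1}(\alpha)$ and, by Lemma~\ref{lem:properties-root-subgroups}\eqref{it:conjugation-root-subgroups}, $m' = m + \langle\lambda, \alpha\rangle$ (up to the bookkeeping that $z^{n\lambda}$ acts and the exponent shifts by $n\langle\lambda,\alpha\rangle$). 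By the $\mathbf{a}_n$-case, this lies in $L^{--}_n P_{\mathbf{a}_n}$ iff $f^n_{\beta+m'\hbar}$ is negative on $\mathbf{a}_n$. The key identity is then $f^n_{\beta + m'\hbar}(v) = f^n_{\alpha+m\hbar}(w\cdot_n v)$, which follows from the formula $f^n_{\alpha+m\hbar}(v) = \langle\alpha,v\rangle + nm$ and the description of the $\cdot_n$-action (translations by $-n\lambda$, linear part $u$). Therefore $U_{\alpha+nm\hbar} \subset L^{--}_n P_{\mathbf{f}}$ iff $f^n_{\alpha+m\hbar}$ is negative on $w\cdot_n \mathbf{a}_n$ for every $w\in\Waff^{\mathbf{f}}$, i.e.\ negative on $\bigcup_{w\in\Waff^{\mathbf{f}}} w\cdot_n \overline{\mathbf{a}}_n$ (closure is harmless since negativity is an open condition on an open facet and $\mathbf{f}$ is in the closure of each $w\cdot_n\mathbf{a}_n$).

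Finally I would identify $\bigcup_{w\in\Waff^{\mathbf{f}}} \overline{w\cdot_n \mathbf{a}_n}$ with a neighbourhood of $\mathbf{f}$ — precisely, the union of the closures of all alcoves containing $\mathbf{f}$ in their closure — and observe that an affine function $f^n_{\alpha+m\hbar}$ which does not vanish on $\mathbf{f}$ has constant sign on this union, equal to its sign on $\mathbf{f}$ itself. Concretely: $\Waff^{\mathbf{f}}$ is generated by the reflections $s_{\alpha+m\hbar}$ fixing $\mathbf{f}$ pointwise (the ``walls through $\mathbf{f}$''), and these are exactly the ones whose associated hyperplanes contain $\mathbf{f}$; a root hyperplane not containing $\mathbf{f}$ does not separate $\mathbf{a}_n$ from $w\cdot_n\mathbf{a}_n$ for $w\in\Waff^{\mathbf{f}}$, so $f^n_{\alpha+m\hbar}$ keeps the same sign across all these alcoves. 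Hence negativity on all of $\bigcup_{w} \overline{w\cdot_n\mathbf{a}_n}$ is equivalent to negativity on $\mathbf{f}$, which is the claim. The main obstacle is the bookkeeping in this last step — making precise that $\Waff^{\mathbf{f}}$-translates of $\mathbf{a}_n$ sweep out exactly the alcoves adjacent to $\mathbf{f}$, and that a non-vanishing affine root function is sign-constant there — but this is a standard fact about Coxeter complexes and the detailed verification is routine; all the genuinely new input is the $\mathbf{a}_n$-case and Lemma~\ref{lem:properties-root-subgroups}.
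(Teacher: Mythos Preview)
Your proposal is correct and is the natural unwinding of the definitions; the paper itself gives no proof, simply stating that the claim ``is easily checked,'' and your argument is exactly the kind of verification the authors have in mind. One minor streamlining: for the ``only if'' direction you can bypass the componentwise membership issue entirely by invoking the big-cell property $L^{--}_n P_{\mathbf{f}} \cap L^+_n P_{\mathbf{f}} = \{1\}$ together with Lemma~\ref{lem:properties-root-subgroups}(3), which immediately excludes $U_{\alpha+nm\hbar}$ from $L^{--}_n P_{\mathbf{f}}$ whenever $f^n_{\alpha+m\hbar} \geq 0$ on $\mathbf{f}$.
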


With this definition, as explained in~\cite[\S 3.8.1]{dchl}, the morphism
\[
 L^{--}_n P_{\mathbf{f}} \to \Fl^{n,\circ}_{\mathbf{f}}
\]
defined by the action on the base point is representable by an open immersion.
One can obtain from this an ``open cover'' of $\Fl^{n,\circ}_{\mathbf{f}}$ parametrized by the quotient $\Waff / \Waff^{\mathbf{f}}$, where the open subset attached to a coset $w \Waff^{\mathbf{f}}$ is the image of the subgroup 
\[
 \iota_n(w) \cdot L^{--}_n P_{\mathbf{f}} \cdot \iota_n(w)^{-1}
\]
under the morphism of action on the image of $\iota_n(w)$. (These data do not depend on the choice of $w$ in its coset, and this claim can be deduced from the corresponding fact for $\mathbf{a}_n$ by using the morphism $\Fl^{n,\circ}_{\mathbf{a}_n} \to \Fl^{n,\circ}_{\mathbf{f}}$ induced by~\eqref{eqn:inclusion-parahoric}.)

For $m \in \Z_{\geq 1}$, we will denote by $L^{(-1)} G(m)$, resp.~$L_n^{(-1)} G(m)$, the subgroup of $L^{(-1)}G$, resp.~$L_n^{(-1)} G$, which represents the functor sending $R$ to the preimage of $T(R[z^{-1}]/z^{-m})$ under the composition
 \[
  L^{(-1)} G(R) \hookrightarrow G(R[z^{-1}]) \to G(R[z^{-1}]/z^{-m}),
 \]
resp.~the preimage of $T(R[z^{-n}]/z^{-mn})$ under the composition
 \[
  L_n^{(-1)} G(R) \hookrightarrow G(R[z^{-n}]) \to G(R[z^{-n}]/z^{-nm}).
 \]
Below we will require the following properties of these subgroups:
\begin{enumerate}
\item
\label{it:property-1}
for fixed $\lambda \in \bX^\vee$ and $m \in \Z_{>0}$, for $m' \gg 0$ we have $z^{\lambda} L^{(-1)} G(m') z^{-\lambda} \subset L^{(-1)} G(m)$;
\item
\label{it:property-2}
for any facet $\mathbf{f} \subset \overline{\mathbf{a}}_n$, for $m \gg 0$ we have $L_n^{(-1)}G(m) \subset L_n^{--} P_{\mathbf{f}}$.
\end{enumerate}
(For~\eqref{it:property-1}, we can use~\cite[Remark~3.1.1]{dchl} to reduce the claim to the case $G=\mathrm{GL}_n(\F)$, which is clear from a matrix calculation. This property implies that for any fixed $w \in \Waff$, for $m \gg 0$ we have $L_n^{(-1)}G(m) \subset \iota_n(w) \cdot L_n^{(-1)}G \cdot \iota_n(w)^{-1} \subset \iota_n(w) \cdot L_n^{--} P_{\mathbf{a}_n} \cdot \iota_n(w)^{-1}$, which implies~\eqref{it:property-2}.)

For $\lambda \in (-\overline{\mathbf{a}}_n) \cap \bX^\vee$, we will denote by 
\begin{equation}
\label{eqn:def-flambda}
\mathbf{f}_\lambda \subset \overline{\mathbf{a}}_n
\end{equation}
the facet containing $-\lambda$.

\begin{lem}
\label{lem:fixed-points-stab}
 For any $\lambda \in (-\overline{\mathbf{a}}_n) \cap \bX^\vee$, we have
 \[
  (z^\lambda \cdot L^{--}_1 P_{\mathbf{o}_1} \cdot z^{-\lambda})^{\roots_n} = L^{--}_n P_{\mathbf{f}_\lambda}.
 \]
\end{lem}

\begin{proof} 
 For any $\alpha \in \fR$, let us denote by $i_\alpha$ the largest integer such that the function $f^{n}_{\alpha+i_\alpha \hbar}$ takes negative values on $\mathbf{f}_\lambda$. In fact, since $\mathbf{f}_\lambda \subset \overline{\mathbf{a}}_n$ we can describe this integer very explicitly:
 \begin{itemize}
 \item
  if $\alpha \in \fR^+$ then $\langle \lambda, \alpha \rangle \in \{0, \cdots, n\}$, and
 \begin{equation}
 \label{eqn:ialpha+}
 i_\alpha = \begin{cases}
 0 & \text{if $\langle \lambda,\alpha \rangle >0$;}\\
 -1 & \text{if $\langle \lambda,\alpha \rangle =0$;}
 \end{cases}
 \end{equation}
  \item
  if $\alpha \in -\fR^+$ then $\langle \lambda, \alpha \rangle \in \{-n, \cdots, 0\}$, and
  \begin{equation}
 \label{eqn:ialpha-}
 i_\alpha = \begin{cases}
 -1 & \text{if $\langle \lambda,\alpha \rangle >-n$;}\\
 -2 & \text{if $\langle \lambda,\alpha \rangle =-n$.}
 \end{cases}
 \end{equation}
 \end{itemize}
 
 Recall from~\eqref{it:property-1}--\eqref{it:property-2} above that we can choose $m$ large enough such that
 \[
 L_n^{(-1)} G(m) \subset L^{--}_n P_{\mathbf{f}_\lambda} \quad \text{and} \quad z^{-\lambda} \cdot L_n^{(-1)} G(m) \cdot z^{\lambda} \subset L^{(-1)} G.
 \]
 Then as in~\cite[Proposition~3.6.4]{dchl} we have a direct product decomposition
 \[
  L^{--}_n P_{\mathbf{f}_\lambda} = L_n^{(-1)} G(m) \cdot \prod_{\alpha \in \mathfrak{R}^+} \prod_{j=-m}^{i_\alpha} U_{\alpha+jn\hbar} \cdot \prod_{\alpha \in -\mathfrak{R}^+} \prod_{j=-m}^{i_\alpha} U_{\alpha+jn\hbar}.
 \]
 (Here we use arbitrary orders on $\mathfrak{R}^+$ and on $-\mathfrak{R}^+$.) Our choice of $m$ guarantees that
$z^{-\lambda} \cdot L_n^{(-1)} G(m) \cdot z^{\lambda} \subset L^{(-1)} G$, and 
in view of Lemma~\ref{lem:properties-root-subgroups}\eqref{it:conjugation-root-subgroups}, for any affine root $\alpha+jn\hbar$ appearing in the decomposition above, the fact that $f^n_{\alpha+j\hbar}(-\lambda)<0$ implies that
\[
z^{-\lambda} \cdot U_{\alpha+jn\hbar} \cdot z^{\lambda} \subset L^{(-1)}G.
\]
These considerations show that
\[
 L^{--}_n P_{\mathbf{f}_\lambda} \subset z^\lambda \cdot L^{--}_1 P_{\mathbf{o}_1} \cdot z^{-\lambda}
\]
(see~\eqref{eqn:opposite-parahoric-o}),
so that
\[
 L^{--}_n P_{\mathbf{f}_\lambda} \subset (z^\lambda \cdot L^{--}_1 P_{\mathbf{o}_1} \cdot z^{-\lambda})^{\roots_n}.
\]

To prove the reverse inclusion, we continue with some $m$ as above, and choose $m' \gg 0$ such that
\[
 z^{\lambda} L^{(-1)} G (m') z^{-\lambda} \subset L^{(-1)} G (nm)
\]
(see~\eqref{it:property-1} above). We then have
\[
 (z^{\lambda} L^{(-1)} G (m') z^{-\lambda})^{\roots_n} \subset (L^{(-1)} G (nm))^{\roots_n}=L_n^{(-1)}G(m).
\]
As above we have a direct product decomposition
\[
 L^{(-1)} G = L^{(-1)} G (m') \cdot \prod_{\alpha \in \mathfrak{R}^+} \prod_{j=-m'}^{-1} U_{\alpha+j\hbar} \cdot \prod_{\alpha \in -\mathfrak{R}^+} \prod_{j=-m'}^{-1} U_{\alpha+j\hbar},
\]
where now (for notational convenience) we choose the order on $\mathfrak{R}^+$ such that all the roots such that $\langle \lambda,\alpha \rangle=0$ are bigger than the other ones, and the order on $-\mathfrak{R}^+$ such that all the roots such that $\langle \lambda,\alpha \rangle = -n$ are bigger than the other ones. From this decomposition we see that $ (z^{\lambda} \cdot L^{(-1)} G \cdot z^{-\lambda})^{\roots_n}$ is the product of $(z^{\lambda} L^{(-1)} G (m') z^{-\lambda})^{\roots_n}$, which is included in $L^{--}_n P_{\mathbf{f}_\lambda}$ by the choices of $m$ and $m'$, and of
\[
\left( \prod_{\alpha \in \mathfrak{R}^+} \prod_{j=-m'+\langle \lambda,\alpha\rangle}^{-1+\langle \lambda,\alpha\rangle} U_{\alpha+j\hbar} \cdot \prod_{\alpha \in -\mathfrak{R}^+} \prod_{j=-m'+\langle \lambda,\alpha\rangle}^{-1+\langle \lambda,\alpha\rangle} U_{\alpha+j\hbar} \right)^{\roots_n}, 
\]
which by Lemma~\ref{lem:properties-root-subgroups}\eqref{it:fixed-points-root-subgroups} is included in
\begin{multline*}
\prod_{\substack{\alpha \in \mathfrak{R}^+ \\ \langle \lambda,\alpha \rangle > 0}} \prod_{j=-N}^0 U_{\alpha+jn\hbar} \cdot \prod_{\substack{\alpha \in \mathfrak{R}^+ \\ \langle \lambda,\alpha \rangle = 0}} \prod_{j=-N}^{-1} U_{\alpha+jn\hbar} \cdot \\
\prod_{\substack{\alpha \in -\mathfrak{R}^+ \\ \langle \lambda,\alpha \rangle > -n}} \prod_{j=-N}^{-1} U_{\alpha+jn\hbar} \cdot \prod_{\substack{\alpha \in -\mathfrak{R}^+ \\ \langle \lambda,\alpha \rangle = -n}} \prod_{j=-N}^{-2} U_{\alpha+jn\hbar}
\end{multline*}
for $N \gg 0$. Here all the affine root subgroups are included in $L^{--}_n P_{\mathbf{f}_\lambda}$ by~\eqref{eqn:ialpha+}--\eqref{eqn:ialpha-} and Lemma~\ref{lem:root-subgroups-L-}, which finally proves that
\[
 (z^{\lambda} \cdot L^{(-1)} G \cdot z^{-\lambda})^{\roots_n} \subset L^{--}_n P_{\mathbf{f}_\lambda}
\]
and concludes the proof.
\end{proof}


\subsection{Fixed points on the affine Grassmannian}
\label{ss:Gr-fixed-points}

%

Fix $\lambda \in (-\overline{\mathbf{a}}_n) \cap \bX^\vee$, and consider the morphism from $L_n G$ to $\Gr_G$ defined by $g \mapsto g \cdot L_\lambda$. Arguments similar to those for Lemma~\ref{lem:fixed-points-stab} show that $L_\lambda$ is fixed under the action of $L_n^+ P_{\mathbf{a}_n}$. Since this point is also stable under the action of lifts of elements in $\Waff^{\mathbf{f}_\lambda}$ (where $\mathbf{f}_\lambda$ is as in~\eqref{eqn:def-flambda}), it is stabilized by $L_n^+ P_{\mathbf{f}_\lambda}$. Since $\roots_n$ acts trivially on $L_nG$, our morphism therefore factors through a morphism
$\Fl^{n}_{\mathbf{f}_\lambda} \to (\Gr_G)^{\roots_n}$,
which we can then restrict to a morphism of ind-schemes
\begin{equation}
\label{eqn:morph-fixed-pts-conn-comp}
 \Fl^{n,\circ}_{\mathbf{f}_\lambda} \to (\Gr_G)^{\roots_n}.
\end{equation}

Recall that the category of ind-schemes admits arbitrary coproducts, see~\cite[Lemma~1.10(3)]{richarz}.
From the morphisms~\eqref{eqn:morph-fixed-pts-conn-comp}, using the universal property of coproducts we deduce a morphism of ind-schemes
\begin{equation}
\label{eqn:morph-fixed-pts}
 \bigsqcup_{\lambda \in (-\overline{\mathbf{a}}_n) \cap \bX^\vee} \Fl^{n,\circ}_{\mathbf{f}_\lambda} \to (\Gr_G)^{\roots_n}.
\end{equation}

\begin{prop}
\label{prop:fixed-points-Grass}
The morphism~\eqref{eqn:morph-fixed-pts} is an isomorphism of ind-schemes.
\end{prop}

\begin{proof}
Let us first check that our map induces a bijection on $\F$-points. For that we use the decomposition~\eqref{eqn:GrG-Iworbits}. Since each orbit in this decomposition is stable under the action of $\roots_n$, we deduce a decomposition of fixed points indexed by $\bX^\vee$, whose parts are described in Lemma~\ref{lem:fixed-points-orbits}:
\[
 (\Gr_G)^{\roots_n}(\F) = \bigsqcup_{\mu \in \bX^\vee} \Iw_n(\F) \cdot L_\mu.
\]
On the other hand, for any facet $\mathbf{f} \subset \overline{\mathbf{a}}_n$, the $\Iw_n$-orbits on $\Fl^{n,\circ}_{\mathbf{f}}$ are parametrized in a natural way by the quotient $\Waff/\Waff^{\mathbf{f}}$; we deduce a decomposition 
\[
 \bigsqcup_{\lambda \in (-\overline{\mathbf{a}}_n) \cap \bX^\vee} \Fl^{n,\circ}_{\mathbf{f}_\lambda}(\F) = \bigsqcup_{\lambda \in (-\overline{\mathbf{a}}_n) \cap \bX^\vee} \bigsqcup_{w\Waff^{\mathbf{f}_\lambda} \in \Waff/\Waff^{\mathbf{f}_\lambda}} \Iw_n(\F) \cdot [\iota_n(w)]_{\mathbf{f}_\lambda}
\]
where $[\iota_n(w)]_{\mathbf{f}_\lambda}$ is the image of $\iota_n(w)$ in $\Fl^{n,\circ}_{\mathbf{f}_\lambda}$.
Since $(-\overline{\mathbf{a}}_n) \cap \bX^\vee$ is a fundamental domain for the action of $\Waff$ on $\bX^\vee$ (via $\square_n$), we have a bijection
\begin{equation}
\label{eqn:bijection-Xvee-Waff}
 \bigsqcup_{\lambda \in (-\overline{\mathbf{a}}_n) \cap \bX^\vee} \Waff/\Waff^{\mathbf{f}_\lambda} \simto \bX^\vee
\end{equation}
sending $w \Waff^{\mathbf{f}_\lambda} \in \Waff / \Waff^{\mathbf{f}_\lambda}$ to $-(w \cdot_n (-\lambda))=w \square_n \lambda$. To conclude the proof of our claim, it therefore suffices to check that for any $w$ and $\lambda$ our morphism induces a bijection
\[
 \Iw_n(\F) \cdot [\iota_n(w)]_{\mathbf{f}_\lambda} \simto \Iw_n(\F) \cdot L_{w \square_n \lambda},
\]
which follows from the description of both sides in terms of (affine) root subgroups; see Lemma~\ref{lem:properties-root-subgroups}\eqref{it:conjugation-root-subgroups} (for the left-hand side) and the proof of Lemma~\ref{lem:fixed-points-orbits} (for the right-hand side).

The rest of the proof will use the ``big cell'' theory recalled in~\S\ref{ss:big-cells}. Namely, 
recall that for $\nu \in \bX^\vee$ the morphism $g \mapsto g \cdot L_\nu$ defines an ``open embedding'' 
\[
z^{\nu} \cdot L^{--}_1 P_{\mathbf{o}_1} \cdot z^{-\nu} \to \Gr_G,
\]
and that the images of these maps constitute an ``open cover'' of $\Gr_G$ (see~\S\ref{ss:big-cells}). These open subsets are stable under the $\Gm$-action by loop rotation, hence also under the $\roots_n$-action we consider here; it follows that $(\Gr_G)^{\roots_n}$ has an ``open cover'' parametrized by $\bX^\vee$, with the subset corresponding to $\nu$ naturally isomorphic to $(z^{\nu} L^{--}_1 P_{\mathbf{o}_1} z^{-\nu})^{\roots_n}$.
Similarly,
for any $\lambda \in (-\overline{\mathbf{a}}_n) \cap \bX^\vee$ and any coset $w \Waff^{\mathbf{f}_\lambda}$ in $\Waff/\Waff^{\mathbf{f}_\lambda}$, we have considered in~\S\ref{ss:big-cells} an open subset of $\Fl^{n,\circ}_{\mathbf{f}_\lambda}$ naturally isomorphic to $\iota_n(w) \cdot L^{--}_n P_{\mathbf{f}_\lambda} \cdot \iota_n(w)^{-1}$. Taken together, this subsets provide an ``open cover'' of the left-hand side of~\eqref{eqn:morph-fixed-pts-conn-comp}.
%
%
The sets which parametrize these open covers are in a canonical bijection via~\eqref{eqn:bijection-Xvee-Waff}.
Now we will show that the map~\eqref{eqn:morph-fixed-pts} identifies the open subset of $\Fl^{n,\circ}_{\mathbf{f}_\lambda}$ associated with the coset $w \Waff^{\mathbf{f}_\lambda}$ with the open subset of $(\Gr_G)^{\roots_n}$ corresponding to $w \square_n \lambda$. 

To prove this claim it suffices to prove the equality
\begin{equation*}
 (z^{w \square_n \lambda} \cdot L^{--}_1 P_{\mathbf{o}_1} \cdot z^{-w \square_n \lambda})^{\roots_n} = \iota_n(w) \cdot L^{--}_n P_{\mathbf{f}_\lambda} \cdot \iota_n(w)^{-1}.
\end{equation*}
In case $w=1$, this equality was checked in Lemma~\ref{lem:fixed-points-stab}. To deduce the general case, write $w=\st_\mu v$ with $\mu \in \bX^\vee$ and $v \in \Wf$. Then we have
\begin{multline*}
 (z^{w \square_n \lambda} \cdot L^{--}_1 P_{\mathbf{o}_1} \cdot z^{-w \square_n \lambda})^{\roots_n} = (z^{v(\lambda)+n\mu} \cdot L^{--}_1 P_{\mathbf{o}_1} \cdot z^{-v(\lambda)-n\mu})^{\roots_n} \\
 = z^{n\mu} \dot{v} \cdot (z^{\lambda} \cdot L^{--}_1 P_{\mathbf{o}_1} \cdot z^{-\lambda})^{\roots_n} \cdot \dot{v}^{-1} z^{-n\mu} = z^{n\mu} \dot{v} \cdot L^{--}_n P_{\mathbf{f}_\lambda} \cdot \dot{v}^{-1} z^{-n\mu},
\end{multline*}
which concludes the proof.

These identifications allow us to define a right inverse to~\eqref{eqn:morph-fixed-pts} on each open subset of our ``open cover'' of $(\Gr_G)^{\roots_n}$. These morphisms coincide on intersections of open subsets because they coincide on $\F$-points (by the bijectivity claim at the beginning of the proof), which are dense since our ind-schemes are of ind-finite type. We can therefore glue these locally defined morphisms to obtain an inverse to~\eqref{eqn:morph-fixed-pts}, which finishes the proof.
\end{proof}

For $\lambda \in (-\overline{\mathbf{a}}_n) \cap \bX^\vee$,
we will denote by $\Gr_{G,(\lambda)}$ the image of $\Fl^{n,\circ}_{\mathbf{f}_\lambda}$ in $(\Gr_G)^{\roots_n}$ under the map of Proposition~\ref{prop:fixed-points-Grass}. We then have
\begin{equation}
\label{eqn:fixed-points-conn-comp}
 (\Gr_G)^{\roots_n} = \bigsqcup_{\lambda \in (-\overline{\mathbf{a}}_n) \cap \bX^\vee} \Gr_{G,(\lambda)},
\end{equation}
which describes $(\Gr_G)^{\roots_n}$ as the union of its connected components.


\begin{rmk}
\phantomsection
\label{rmk:fixed-points-Gr}
\begin{enumerate}
 \item 
The action of $L_n G$ on $L_0$ induces an embedding
 \[
  L_n G / L_n^+ G \hookrightarrow (\Gr_G)^{\roots_n}.
 \]
Here $L_n G / L_n^+ G$ is of course isomorphic to $\Gr_G$. In terms of the decomposition~\eqref{eqn:fixed-points-conn-comp},
this embedding identifies $L_n G / L_n^+ G$ with the union of the components $\Gr_{G,(\lambda)}$ where $\lambda$ runs over $(-\overline{\mathbf{a}}_n) \cap n \bX^\vee$. These components are the only ones that are considered in~\cite{leslie-lonergan}.
\item
\label{it:parameters-orbits-fixed-points}
As in the proof of Proposition~\ref{prop:fixed-points-Grass} in the case of $\Iw_n$-orbits,
for any facet $\mathbf{f} \subset \overline{\mathbf{a}}_n$ the $\Iw_n^+$-orbits on $\Fl^{n,\circ}_{\mathbf{f}}$ are parametrized in a natural way by the quotient $\Waff/\Waff^{\mathbf{f}}$. On the other hand, the $\Iw^+$-orbits on $\Gr_G$ are naturally parametrized by $\bX^\vee$, so that by Lemma~\ref{lem:fixed-points-orbits} the $\Iw_n^+$-orbits on $(\Gr_G)^{\roots_n}$ are also parametrized by $\bX^\vee$. Under the identification of Proposition~\ref{prop:fixed-points-Grass}, for any $\lambda \in (-\overline{\mathbf{a}}_n) \cap \bX^\vee$ the orbit in $\Fl^{n,\circ}_{\mathbf{f}_\lambda}$ corresponding to the coset $w\Waff^{\mathbf{f}}$ is mapped to the orbit in $(\Gr_G)^{\roots_n}$ parametrized by $w \square_n \lambda$. This correspondence of orbits will be important for our applications in Section~\ref{sec:applications-red-gps}.
\end{enumerate}
\end{rmk}

\section{Iwahori--Whittaker sheaves on the affine Grassmannian}
\label{sec:perv-Gr}


We continue with the setting of Section~\ref{sec:fixed-points-Gr}.

\subsection{Iwahori--Whittaker sheaves}
\label{ss:IW}

The category of sheaves on $\Gr_G$ we will study is the \emph{Iwahori--Whittaker} derived category, whose definition we briefly recall. (For more details, see e.g.~\cite[Appendix~A]{modrap1}.)

From now on we 
let $\bk$ be a finite field of positive characteristic $\ell \neq p$ containing a nontrivial $p$-th root of unity. After choosing such a root of unity $\zeta$, we obtain
an Artin--Schreier local system $\LAS$ on $\Ga$, defined as the direct summand of the local system $\mathrm{AS}_* \underline{\bk}_{\Ga}$ on which $\F_p$ acts via $n \mapsto \zeta^n$. (Here, $\mathrm{AS} : \Ga \to \Ga$ is the map $x \mapsto x^p-x$, a Galois cover of group $\F_p$.) We choose once and for all a morphism of $\F$-algebraic groups $\chi_0 : U^+ \to \Ga$ which is nontrivial on any root subgroup of $U^+$ associated with a simple root, and denote by
\[
\chi : \Iwu^+ \to \Ga
\]
the composition of $\chi_0$ with the morphism $\Iwu^+ \to U^+$ induced by $\mathrm{ev}_0$.

For $X \subset \Gr_G$ a locally closed finite union of $\Iw^+$-orbits, we can choose a smooth quotient $J$ of $\Iwu^+$ of finite type such that the $\Iwu^+$-action on $X$ factors through an action of $J$, and such that $\chi$ factors through a morphism $\chi_J : J \to \Ga$. Let $\Db_c(X,\bk)$ be the constructible derived category of $\bk$-sheaves on $X$, i.e.~the full subcategory of $\Db \Sh(X,\bk)$ whose objects are the complexes with constructible cohomology sheaves. Then the constructible $(J,\chi_J^* \LAS)$-equivariant derived category of $\bk$-sheaves on $X$ is by definition the full subcategory\footnote{We insist that here the equivariant derived category can be simply defined as a full subcategory of the ordinary derived category, because we are working with an acyclic group. This is completely different from the theory of~\cite{bl}; see again~\cite[Appendix~A]{modrap1} for details.} of $\Db_c(X,\bk)$ whose objects are the complexes $\scF$ whose pullback under the action map $J \times X \to X$ is isomorphic to $\chi_J^* \LAS \boxtimes \scF$. It is well known that this subcategory is triangulated, and that it does not depend on the choice of $J$; it will be denoted
\[
\Db_{\IW}(X,\bk).
\]
It is known also that the perverse t-structure on $\Db\Sh(X,\bk)$ restricts to a t-structure on $\Db_{\IW}(X,\bk)$, which will also be called the perverse t-structure.
(Here, ``$\IW$'' stands for ``Iwahori--Whittaker.'' We will use this expression as a replacement for ``constructible and $(J,\chi_J^* \LAS)$-equivariant'' where $J$ is as above, in all circumstances where this notion does not depend on the choice of $J$.) 

One can also define the category
\[
\Db_{\IW}(\Gr_G,\bk)
\]
of Iwahori--Whittaker sheaves on $\Gr_G$
as the direct limit of the categories $\Db_{\IW}(X,\bk)$ where $X$
runs over the closed finite unions of $\Iw^+$-orbits, ordered by
inclusion. (Here, the transition functors are the fully-faithful pushforward functors.) Since, for $X \subset Y$, the pushforward functor $\Db_{\IW}(X,\bk) \to \Db_{\IW}(Y,\bk)$ is t-exact, from the perverse t-structures on the categories $\Db_{\IW}(X,\bk)$ we obtain a perverse t-structure on $\Db_{\IW}(\Gr_G,\bk)$, whose heart will be denoted $\Perv_{\IW}(\Gr_G,\bk)$.

The considerations in the proof of Lemma~\ref{lem:fixed-points-orbits} can be used to see that
for $\lambda \in \bX^\vee$, 
the orbit $\Gr_{G,\lambda}^+$ supports a nonzero Iwahori--Whittaker local system iff $\lambda$ belongs to the subset
\[
\bX^\vee_{++}:=\{\mu \in \bX^\vee \mid \forall \alpha \in \mathfrak{R}^+, \, \langle \mu,\alpha \rangle >0\}.
\]
Moreover, in this case there exists (up to isomorphism) exactly one such local system of rank~$1$; it will be denoted $\LAS^\lambda$. This remark implies that for any $\mu \in \bX^\vee \smallsetminus \bX^\vee_{++}$ the category $\Db_{\IW}(\Gr_{G,\mu}^+,\bk)$ is $0$; in particular, the restriction and co-restriction of any object in $\Db_{\IW}(X,\bk)$ to $\Gr_{G,\mu}^+$ (where $X$ is any locally closed finite union of $\Iw^+$-orbits containing $\Gr_{G,\mu}^+$) vanishes.

\begin{rmk}
 If we assume that there exists an element $\varsigma \in \bX^\vee$ such that $\langle \varsigma, \alpha \rangle =1$ for all $\alpha \in \mathfrak{R}^{\mathrm{s}}$, then we have $\bX^\vee_{++} = \varsigma + \bX^\vee_{+}$.
\end{rmk}

For $\lambda \in \bX^\vee_{++}$, we set
\[
\Delta^{\IW}_\lambda := R(j_\lambda^+)_! \LAS^\lambda [\dim(\Gr_{G,\lambda}^+)], \quad \nabla^{\IW}_\lambda := R(j_\lambda^+)_* \LAS^\lambda [\dim(\Gr_{G,\lambda}^+)].
\]
Since $j_\lambda^+$ is an affine embedding, these objects are perverse sheaves by~\cite[Corollaire~4.1.3]{bbd}.
Standard arguments (going back to~\cite{bgs}) show that the category $\Perv_{\IW}(\Gr_G,\bk)$ admits a structure of a highest weight category (in the sense of~\cite[\S 7]{riche-hab}) with weight set $\bX^\vee_{++}$, standard objects the objects $(\Delta^\IW_\lambda : \lambda \in \bX^\vee_{++})$, and costandard objects the objects $(\nabla^\IW_\lambda : \lambda \in \bX^\vee_{++})$. In particular, one can consider the tilting objects in this category, i.e.~the objects which admit both a filtration with standard subquotients, and a filtration with costandard subquotients. Recall that, as remarked in~\cite{bbm}, this notion can also be characterized topologically: a perverse sheaf $\scF$ is tilting iff the complexes $(j_\lambda^+)^* \scF$ and $(j_\lambda^+)^! \scF$ are perverse (i.e.~are direct sums of copies of $\LAS^\lambda [\dim(\Gr_{G,\lambda}^+)]$) for any $\lambda \in \bX^{\vee}_{++}$.

The full subcategory of $\Perv_{\IW}(\Gr_G,\bk)$ whose objects are the tilting objects will be denoted $\Tilt_\IW(\Gr_G,\bk)$. The general theory of highest weight categories (reviewed e.g.~in~\cite{riche-hab}) guarantees that the indecomposable objects in this category are parametrized in a natural way by $\bX^\vee_{++}$. More precisely, for any $\lambda \in \bX^\vee_{++}$ there exists a unique (up to isomorphism) indecomposable object $\scT^\IW_\lambda$ in $\Tilt_\IW(\Gr_G,\bk)$ which is supported on $\overline{\Gr_{G,\lambda}^+}$, and whose restriction to $\Gr_{G,\lambda}^+$ is $\LAS^\lambda [\dim(\Gr_{G,\lambda}^+)]$; then the assignment $\lambda \mapsto \scT^\IW_\lambda$ induces a bijection between $\bX^\vee_{++}$ and the set of isomorphism classes of indecomposable objects in $\Tilt_\IW(\Gr_G,\bk)$.


\subsection{Loop rotation equivariant Iwahori--Whittaker sheaves}
\label{ss:IW-Gm}

We will need to ``add'' the (loop rotation) $\Gm$-equivariance in the constructions of~\S\ref{ss:IW}. We therefore consider a locally closed finite union of $\Iw^+$-orbits $X \subset \Gr_G$ as above. The $\Gm$-action by loop rotation on $\Gr_G$ stabilizes each $\Iw^+$-orbit, hence also $X$, so that we can consider the constructible $\Gm$-equivariant derived category of \'etale $\bk$-sheaves $\Db_{\Gm,c}(X,\bk)$, see~\S\ref{ss:Gm-equ-Db}. The quotient $J$ of $\Iwu^+$ as in~\S\ref{ss:IW} can be chosen in such a way that the $\Gm$-action on $\Iwu^+$ induces an action on $J$. Since the morphism $\chi : \Iwu^+ \to \Ga$ is $\Gm$-equivariant (for the trivial $\Gm$-action on $\Ga$), so is $\chi_J$, and the local system $\chi_J^* \LAS$ is therefore $\Gm$-equivariant. We define the category
\[
\Db_{\IW,\Gm}(X,\bk)
\]
as the full subcategory of $\Db_{\Gm,c}(X,\bk)$ whose objects are the complexes $\scF$ such that
\[
a_J^* \scF \cong \chi_J^* \LAS \boxtimes \scF \quad \text{in } \Db_{\Gm}(J \times X,\bk),
\]
where $a_J : J \times X \to X$ is the action morphism. (Here, $\Gm$ acts diagonally on $J \times X$.) Arguments similar to those for the case when the $\Gm$-action is dropped 
show that $\Db_{\IW,\Gm}(X,\bk)$ is a triangulated subcategory of $\Db_{\Gm,c}(X,\bk)$; in fact this category is the essential image of the fully faithful functor
\[
\Db_{\Gm,c}(X,\bk) \to \Db_{\IW,\Gm}(X,\bk)
\]
sending a complex $\scF$ to $R(a_J)_! \bigl( \chi_J^* \LAS \boxtimes \scF \bigr)$. 
It is also easily checked that this category does not depend on the choice of $J$, and that the perverse t-structure on $\Db_{\Gm,c}(X,\bk)$ restricts to a t-structure on $\Db_{\IW,\Gm}(X,\bk)$.

Taking the direct limit of the categories $\Db_{\IW,\Gm}(X,\bk)$ where $X$ runs over the closed finite unions of $\Iw^+$-orbits, we obtain a triangulated category
\[
\Db_{\IW,\Gm}(\Gr_G,\bk)
\]
with a natural perverse t-structure, whose heart will be denoted
$\Perv_{\IW,\Gm}(\Gr_G,\bk)$.
We have a natural 
t-exact 
forgetful functor
\begin{equation}
\label{eqn:For-IW-Gr}
\Db_{\IW,\Gm}(\Gr_G,\bk) \to \Db_{\IW}(\Gr_G,\bk).
\end{equation}

\subsection{Parity complexes}
\label{ss:def-parity}

Let $X \subset \Gr_G$ be a locally closed finite union of $\Iw^+$-orbits. Recall (see~\cite{jmw, rw}) that an object $\scF$ in $\Db_\IW(X,\bk)$ is called $*$-even, resp.~$!$-even, if for any $\lambda \in \bX_{++}^\vee$ such that $\Gr_{G,\lambda}^+ \subset X$ the complex $(j_\lambda^+)^* \scF$, resp.~$(j_\lambda^+)^! \scF$, is concentrated in even degrees, i.e.~is a direct sum of objects of the form $\LAS^\lambda[n]$ with $n \in 2\Z$. (Here, by abuse we still denote by $j_\lambda^+$ the embedding of $\Gr_{G,\lambda}^+$ in $X$. Note also that if $\lambda \in \bX^\vee \smallsetminus \bX_{++}^\vee$ is such that $\Gr_{G,\lambda}^+ \subset X$, then as explained in~\S\ref{ss:IW} we have $(j_\lambda^+)^* \scF=(j_\lambda^+)^! \scF=0$ for any $\scF$ in $\Db_\IW(X,\bk)$, so that no condition is required for these strata.) We define similarly the $*$-odd and $!$-odd objects (requiring that $n$ is odd in this case), and we say that $\scF$ is even, resp.~odd, if it is both $*$-even and $!$-even, resp.~$*$-odd and $!$-odd.

These notions can also be considered in $\Db_{\IW,\Gm}(X,\bk)$; more precisely an object $\scF$ in $\Db_{\IW,\Gm}(X,\bk)$ is said to be $*$-even, resp.~$!$-even, etc., if its image in $\Db_{\IW}(X,\bk)$ (under the forgetful functor) is $*$-even, resp.~$!$-even, etc. If $\scF \in \Db_{\IW,\Gm}(X,\bk)$ is $*$-even, for any $\lambda \in \bX_{++}^\vee$ such that $\Gr_{G,\lambda}^+ \subset X$ the complex $(j_\lambda^+)^* \scF$ is a direct sum of objects of the form $\LAS^\lambda[n]$ with $n \in 2\Z$ in $\Db_{\IW,\Gm}(\Gr_{G,\lambda}^+,\bk)$. A similar comment applies to $!$-even objects (with respect to $!$-restriction), and to $*$-odd and $!$-odd objects.

By definition, the category $\Db_\IW(\Gr_G,\bk)$, resp.~$\Db_{\IW,\Gm}(\Gr_G,\bk)$, is the direct limit of the categories $\Db_{\IW}(X,\bk)$, resp.~$\Db_{\IW,\Gm}(X,\bk)$, where $X$ runs over the closed finite unions of $\Iw^+$-orbits in $\Gr_G$. Hence it makes sense to consider even and odd complexes in these categories. The general theory of~\cite{jmw} (see also~\cite{rw, acr} for some comments on the Iwahori--Whittaker case) guarantees that for any $\lambda \in \bX^\vee_{++}$ there exists a unique (up to isomorphism) indecomposable object in $\Db_{\IW}(\Gr_G,\bk)$, resp.~$\Db_{\IW,\Gm}(\Gr_G,\bk)$, which has the same parity as $\dim(\Gr^+_{G,\lambda})$, which is supported on $\overline{\Gr_{G,\lambda}^+}$, and whose restriction to $\Gr^+_{G,\lambda}$ is $\LAS^\lambda[\dim(\Gr_{G,\lambda}^+)]$. This object will be denoted
\[
\scE^\IW_\lambda, \quad \text{resp.} \quad \scE^\IW_{\lambda,\Gm}.
\]
It is known also that the image of $\scE^\IW_{\lambda,\Gm}$ under the forgetful functor~\eqref{eqn:For-IW-Gr} is $\scE^\IW_{\lambda}$, see e.g.~\cite[Lemma~2.4]{mr}. Moreover, any parity object in $\Db_{\IW}(\Gr_G,\bk)$, resp.~$\Db_{\IW,\Gm}(\Gr_G,\bk)$, is a direct sum of cohomological shifts of objects of the form $\scE^\IW_\lambda$, resp.~$\scE^\IW_{\lambda,\Gm}$.

As remarked already in~\cite{bgmrr}, these objects have an alternative description, as follows. 
It is known that the parity of $\dim(\Gr_{G,\lambda}^+)$ (with $\lambda \in \bX^\vee_{++}$) is constant on each connected component of $\Gr_G$. (This follows from the same property for $L^+G$-orbits, since each orbit $\Gr_{G,\lambda}^+$ with $\lambda \in \bX^\vee_{++}$ is dense in an $L^+G$-orbit.) As a consequence, a tilting object supported on a component where these dimensions are even, resp.~odd, is even, resp.~odd. In particular, by unicity, for any $\lambda \in \bX^\vee_{++}$ we must have
\begin{equation}
\label{eqn:TIW-EIW}
\scT^\IW_\lambda \cong \scE^\IW_\lambda.
\end{equation}
As a consequence, we obtain that the parity objects in $\Db_{\IW}(\Gr_G,\bk)$ are exactly the direct sums of cohomological shifts of tilting perverse sheaves.

Using these considerations we prove the following lemma, to be used later.

\begin{lem}
\label{lem:IW-Gm-Perv}
The forgetful functor induces an equivalence of categories
\[
\Perv_{\IW,\Gm}(\Gr_G,\bk) \simto \Perv_{\IW}(\Gr_G,\bk).
\]
\end{lem}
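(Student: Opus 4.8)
The plan is to show that the forgetful functor $\Perv_{\IW,\Gm}(\Gr_G,\bk) \to \Perv_{\IW}(\Gr_G,\bk)$ is essentially surjective, full, and faithful, reducing everything to a single connected component and ultimately to the statement that tilting (equivalently parity) objects are equivariantly formal. First I would reduce to a closed finite union of $\Iw^+$-orbits $X$, since both sides are direct limits over such $X$ and the forgetful functor is compatible with the transition functors. I would also note that the perverse t-structure on $\Db_{\IW,\Gm}(X,\bk)$ is defined so that the forgetful functor $\Db_{\IW,\Gm}(X,\bk) \to \Db_{\IW}(X,\bk)$ is t-exact; hence it suffices to prove that the induced functor on hearts is an equivalence.

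For faithfulness and fullness, the key input is that $\Perv_{\IW}(X,\bk)$ is a highest weight category with standard objects $\Delta^\IW_\lambda$ and costandard objects $\nabla^\IW_\lambda$, whose lifts $\Delta^\IW_{\lambda,\Gm}$, $\nabla^\IW_{\lambda,\Gm}$ to $\Db_{\IW,\Gm}(X,\bk)$ are defined by the same $!$- and $*$-pushforward formulas applied to the (unique) $\Gm$-equivariant lift $\LAS^\lambda_{\Gm}$ of $\LAS^\lambda$ on $\Gr^+_{G,\lambda}$; here I use that $\Gm$ acts trivially on $\bk$-cohomology of a torus-orbit isomorphic to an affine space, so the $\Gm$-equivariant local system lifting $\LAS^\lambda$ exists and is unique. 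The orbitwise comparison of $\Hom$ and $\Ext$ groups between such standard/costandard objects after forgetting the equivariance then follows from the fact that each $\Iw^+$-orbit is an affine space and the $\Gm$-equivariant cohomology of an affine space with $\bk$-coefficients is free over $\coH^\bullet_\Gm(\pt,\bk) = \bk[x]$; more precisely, for $\scE,\scF$ parity one has $\Hom^\bullet(\For\scE,\For\scF) = \Hom^\bullet(\scE,\scF) \otimes_{\bk[x], x\mapsto 0} \bk$, which is the equivariant formality statement~\eqref{eq:for} (applied on $\Gr_G$ rather than on the fixed points). Running this through the recollement/standard filtration argument gives that $\For$ induces an isomorphism on $\Hom$-spaces between objects with standard (resp.\ costandard) filtrations, hence in particular between all perverse sheaves admitting both, and then between arbitrary perverse sheaves by resolving by tilting objects.

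For essential surjectivity, I would argue that $\For$ hits all indecomposable tilting objects $\scT^\IW_\lambda$: by~\eqref{eqn:TIW-EIW} these coincide with the indecomposable parity complexes $\scE^\IW_\lambda$, and the latter lift to $\scE^\IW_{\lambda,\Gm} \in \Db_{\IW,\Gm}(\Gr_G,\bk)$ with $\For(\scE^\IW_{\lambda,\Gm}) = \scE^\IW_\lambda$ by construction of parity complexes in the equivariant setting (cited in~\S\ref{ss:def-parity}). Since $\scE^\IW_{\lambda,\Gm}$ is perverse (it is a tilting perverse sheaf, its image being one and $\For$ being t-exact and conservative on the level of underlying complexes), every indecomposable tilting object is in the essential image; as tilting objects generate $\Perv_{\IW}(X,\bk)$ under extensions and $\For$ is full and exact, essential surjectivity follows. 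Alternatively, and perhaps more cleanly, I can lift an arbitrary perverse sheaf $\scF$ directly: write a tilting resolution $0 \to \scT^{-1} \to \scT^0 \to \scF \to 0$, lift the $\scT^i$ and the map between them (using fullness of $\For$ on tilting objects, which is the faithful/full part above), and take the cokernel in $\Perv_{\IW,\Gm}(X,\bk)$, which $\For$ sends to $\scF$.

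The main obstacle is establishing the full faithfulness cleanly, i.e.\ showing that forgetting $\Gm$-equivariance does not lose or create morphisms between perverse sheaves; the content here is the equivariant formality of $\Hom$-complexes, which for parity/tilting objects reduces to the freeness of $\coH^\bullet_\Gm$ over $\bk[x]$ together with the vanishing of the equivariant parameter $x$ in degree reasons once one specializes, exactly as in~\eqref{eq:for}. Everything else is formal bookkeeping with recollement, highest weight structure, and the direct limit over $X$. I would therefore organize the proof as: (1) reduce to $X$; (2) record existence/uniqueness of $\Gm$-equivariant lifts of $\LAS^\lambda$, hence of $\Delta^\IW_{\lambda,\Gm}$, $\nabla^\IW_{\lambda,\Gm}$, $\scT^\IW_{\lambda,\Gm}$; (3) prove $\For$ induces isomorphisms on $\Hom$-spaces between standardly/costandardly filtered objects via the affine-space orbit geometry and equivariant formality; (4) deduce full faithfulness on $\Perv$ by tilting resolutions; (5) deduce essential surjectivity, also by tilting resolutions.
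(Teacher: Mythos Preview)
Your approach is substantially more laborious than the paper's and, as written, has a gap in the full-faithfulness step. The paper dispatches full faithfulness in one line by citing the general fact (see e.g.\ \cite[\S 1.16]{br}) that for any \emph{connected} group $H$ the forgetful functor $\Perv_H(X,\bk)\to\Perv(X,\bk)$ is fully faithful; this is a soft statement about equivariant perverse sheaves (an equivariant structure on a perverse sheaf for a connected group is unique when it exists, and morphisms are automatically equivariant). Restricting to the Iwahori--Whittaker subcategories gives full faithfulness immediately, with no appeal to equivariant formality or tilting resolutions.

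Your route through equivariant formality does show that $\For$ is an equivalence on tilting objects; that part is correct and standard (cf.\ \cite[Lemma~2.2]{mr}). But ``deduce full faithfulness on $\Perv$ by resolving by tilting objects'' is circular as you have stated it: to resolve an arbitrary $\scF\in\Perv_{\IW,\Gm}$ by tiltings you would need a highest weight structure on the \emph{equivariant} category, which you have not established and which is essentially what the lemma asserts; alternatively, lifting a tilting resolution from $\Perv_{\IW}$ requires fullness of $\For$ on $\Hom(\scT_\Gm,\scF)$ for arbitrary $\scF$, which is the very thing being proved. (This is repairable---build surjections from sums of $\Delta_{\lambda,\Gm}$'s and injections into sums of $\nabla_{\lambda,\Gm}$'s directly from the stratification in the equivariant category, and use that $\For$ is conservative hence faithful---but none of this is in your sketch.) The ``length-$2$ tilting resolution'' in your alternative essential-surjectivity argument is also not generally available. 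For essential surjectivity your idea agrees with the paper's: tiltings lift via $\scE^\IW_{\lambda,\Gm}\mapsto\scE^\IW_\lambda$ and then generate; the paper finishes by noting that the essential image of a fully faithful exact functor is closed under subquotients (\cite[\S 12.19]{jantzen-nilp}) and that every perverse sheaf is a subquotient of a tilting via~\eqref{eqn:equiv-Tilt-IW}.
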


\begin{proof}
It follows from the general theory of equivariant perverse sheaves
(recalled e.g.~ in~\cite[\S 1.16]{br}) that the forgetful functor
$\Perv_{\Gm}(\Gr_G,\bk) \to \Perv(\Gr_G,\bk)$ is fully faithful;
therefore, so is its restriction $\Perv_{\IW,\Gm}(\Gr_G,\bk) \to
\Perv_{\IW}(\Gr_G,\bk)$. This general theory also implies that the
essential image of this functor is stable under subquotients (see
e.g.~\cite[\S 12.19]{jantzen-nilp}).
  Now from~\eqref{eqn:TIW-EIW} and the fact that each object $\scE^\IW_\lambda$ belongs to the essential image of the forgetful functor $\Db_{\IW,\Gm}(\Gr_G,\bk) \to \Db_{\IW}(\Gr_G,\bk)$ (see the remarks above), we see that the essential image of our functor contains all the tilting objects. By the general theory of highest weight categories (see~\cite[Proposition~7.17]{riche-hab}), the canonical functor provides an equivalence of triangulated categories
\begin{equation}
\label{eqn:equiv-Tilt-IW}
 \Kb \Tilt_{\IW}(\Gr_G,\bk) \simto \Db \Perv_{\IW}(\Gr_G,\bk).
\end{equation}
In particular, it follows that
any object of $\Perv_{\IW}(\Gr_G,\bk)$ is a subquotient of a tilting object, hence that it belongs to this essential image, which finishes the proof.
\end{proof}

\section{Smith theory for Iwahori--Whittaker sheaves on \texorpdfstring{$\Gr_G$}{GrG}}
\label{sec:smith-theory}

We continue with the setting of Sections~\ref{sec:fixed-points-Gr}--\ref{sec:perv-Gr}.
Our goal in this section is to build a ``Smith theory'' for the category $\Db_{\IW}(\Gr_G,\bk)$, following Treumann~\cite{treumann} and Leslie--Lonergan~\cite{leslie-lonergan}. 

\subsection{The Iwahori--Whittaker Smith category}
\label{ss:IW-Smith}

As in Section~\ref{sec:Smith-etale} we consider the subgroup scheme $\varpi=\roots_\ell \subset \Gm$, and the fixed points $(\Gr_G)^\varpi \subset \Gr_G$ with respect to the loop rotation action. This subscheme is described in~\S\ref{ss:Gr-fixed-points}; in particular since $(\Iw^+)^\varpi=\Iw_\ell^+$ (see Lemma~\ref{lem:fixed-points-orbits}), this group acts on $(\Gr_G)^\varpi$, and each $\Iw^+_\ell$-orbit is also an $\Iwuell^+$-orbit.

The $\Gm$-action on $\Gr_G$ stabilizes $(\Gr_G)^\varpi$, hence induces an action on this sub-ind-scheme. On the other hand, as explained above we also have an action of $\Iwuell^+$ on $(\Gr_G)^\varpi$. The analysis in~\S\ref{ss:orbits-Gr} shows that the orbits of the latter action are naturally parametrized by $\bX^\vee$, and that each orbit is stable under the action of $\Gm$. Repeating the construction in~\S\ref{ss:IW-Gm}, now with the morphism $\Iwuell^+ \to \Ga$ obtained by restricting $\chi$ one can define for any locally closed finite union of $\Iw^+_\ell$-orbits $Y \subset (\Gr_G)^\varpi$ the Iwahori--Whittaker loop rotation equivariant derived category
\[
\Db_{\IW_\ell,\Gm}(Y,\bk).
\]
As in~\S\ref{ss:IW}, in case $Y=(\Gr_{G,\lambda}^+)^\varpi$ for some $\lambda \in \bX^\vee$, this category vanishes unless $\lambda \in \bX^\vee_{++}$.

We define $\Db_{\IW_\ell,\Gm}(Y,\bk)_{\varpi\perf}$ as the full subcategory of $\Db_{\IW_\ell,\Gm}(Y,\bk)$ whose objects are the complexes $\scF$ such that $\Res^{\Gm}_{\varpi}(\scF)$ has perfect geometric stalks in the sense of~\S\ref{ss:crucial-lemma}.
We then define
the \emph{Iwahori--Whittaker Smith category} of $Y$ as the Verdier quotient
\[
\Smith_\IW(Y,\bk) := \Db_{\IW_\ell,\Gm}(Y,\bk)/\Db_{\IW_\ell,\Gm}(Y,\bk)_{\varpi\perf}.
\]
This category has a natural structure of triangulated category; the (cohomological) shift functor will be denoted $[1]$ as usual.

We now check that this construction is functorial in the following sense.

\begin{lem}
 Let $Y,Z \subset (\Gr_G)^\varpi$ be two locally closed finite unions of $\Iw_\ell^+$-orbits such that $Z \subset Y$. Denoting by $f$ this inclusion, for $? \in \{*,!\}$ there exist canonical functors
 \[
  f_?^{\Smith} : \Smith_\IW(Z,\bk) \to \Smith_\IW(Y,\bk), \quad f_{\Smith}^? : \Smith_\IW(Y,\bk) \to \Smith_\IW(Z,\bk)
 \]
such that the diagrams
\[
\vcenter{
 \xymatrix{
 \Db_{\IW_\ell,\Gm}(Z,\bk) \ar[r]^-{Rf_?} \ar[d] & \Db_{\IW_\ell,\Gm}(Y,\bk) \ar[d] \\
 \Smith_\IW(Z,\bk) \ar[r]^-{f_?^\Smith} & \Smith_{\IW}(Y,\bk)
 }} \ \text{and} \
 \vcenter{
 \xymatrix{
 \Db_{\IW_\ell,\Gm}(Y,\bk) \ar[r]^-{f^?} \ar[d] & \Db_{\IW_\ell,\Gm}(Z,\bk) \ar[d] \\
 \Smith_\IW(Y,\bk) \ar[r]^-{f^?_\Smith} & \Smith_{\IW}(Z,\bk)
 }}
\]
are commutative, where the vertical arrows are the quotient functors.
\end{lem}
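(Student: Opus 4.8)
The plan is to deduce everything from the universal property of Verdier quotients: a triangulated functor $F\colon\mathcal{C}\to\mathcal{C}'$ with $F(\mathcal{N})\subseteq\mathcal{N}'$, where $\mathcal{N}\subseteq\mathcal{C}$ and $\mathcal{N}'\subseteq\mathcal{C}'$ are thick subcategories, descends uniquely to a triangulated functor $\mathcal{C}/\mathcal{N}\to\mathcal{C}'/\mathcal{N}'$ commuting with the quotient functors. Applying this to the four functors $Rf_*,Rf_!\colon \Db_{\IW_\ell,\Gm}(Z,\bk)\to\Db_{\IW_\ell,\Gm}(Y,\bk)$ and $f^*,f^!\colon\Db_{\IW_\ell,\Gm}(Y,\bk)\to\Db_{\IW_\ell,\Gm}(Z,\bk)$ (which exist since $f$ is a locally closed immersion between quasi-projective $\F$-schemes, hence separated, cf.\ Remark~\ref{rmk:separated}, and which all commute with $\Res^{\Gm}_\varpi$, see~\S\ref{ss:Gm-equ-Db}), the entire lemma reduces to a single claim: each of these four functors preserves the subcategories $\Db_{\IW_\ell,\Gm}(-,\bk)_{\varpi\perf}$. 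These subcategories are thick (hence in particular triangulated), being defined by the condition that $\Res^{\Gm}_\varpi(\scF)$ have perfect geometric stalks, a condition checked stalkwise, the perfect complexes of $\bk[\varpi]$-modules forming a thick subcategory of $\Db(\bk[\varpi]\Mod)$.

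\textbf{Pullback and pushforward.} Since having perfect geometric stalks depends only on $\Res^{\Gm}_\varpi(\scF)$, and the fixed-point schemes carry the trivial $\varpi$-action, the claim becomes a statement about the induced functors on the categories $\Db_{\varpi,c}(-,\bk)$. For $f^*$ it is immediate, as $(f^*\scF)_{\overline{z}}\cong\scF_{f(\overline{z})}$. Factor $f$ as $Z\xrightarrow{a}W\xrightarrow{b}Y$ with $a$ an open immersion into the closure $W$ of $Z$ in $Y$ (again a finite union of $\Iw_\ell^+$-orbits, hence of finite type) and $b$ the closed immersion. Then $Rf_!=b_*\circ a_!$ and $Rf_*=b_*\circ Ra_*$; here $a_!\scF$ and $b_*(-)$ have perfect geometric stalks since their stalks are either stalks of the input or $0$, while $Ra_*$ preserves perfect geometric stalks by Lemma~\ref{lem:pushforward-perfect-stalks}. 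Hence the claim holds for $Rf_!$ and $Rf_*$.

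\textbf{The functor $f^!$.} Here $f^!=a^*\circ b^!$, and restriction along $a$ is harmless, so it suffices to treat the closed immersion $b$. Let $c\colon Y\smallsetminus W\hookrightarrow Y$ be the complementary open immersion (note $Y\smallsetminus W$ is open in $Y$, hence a finite union of orbits and of finite type). Applying $b^*$ to the recollement triangle $b_*b^!\to\id\to Rc_*c^*\xrightarrow{[1]}$ and using $b^*b_*\cong\id$ yields, for any $\scF$, a distinguished triangle $b^!\scF\to b^*\scF\to b^*Rc_*c^*\scF\xrightarrow{[1]}$. The last two terms have perfect geometric stalks — the middle by restriction, the last because $c^*$ and $b^*$ are restrictions and $Rc_*$ is covered by Lemma~\ref{lem:pushforward-perfect-stalks} — hence so does $b^!\scF$, the objects with perfect geometric stalks forming a triangulated subcategory. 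This establishes the claim, and the lemma follows; commutativity of the displayed squares is automatic from the universal property of the Verdier quotient. Within this argument the only step needing a little care is the $f^!$ case via recollement; the one genuinely non-formal input is Lemma~\ref{lem:pushforward-perfect-stalks} (ultimately resting on finiteness theorems for étale cohomology), which is where I expect the actual content to lie.
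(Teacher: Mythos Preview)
Your proof is correct and follows essentially the same approach as the paper: reduce to the universal property of Verdier quotients, then check that each of the four functors preserves the $\varpi$-perfect subcategories using stalk computations and Lemma~\ref{lem:pushforward-perfect-stalks}. The only cosmetic difference is that you factor $f$ as open-then-closed for all four functors, whereas the paper handles $Rf_!$ directly via the stalk description of compactly supported pushforward and only invokes the open/closed factorization for $f^!$; in both treatments the recollement triangle is the key input for $f^!$.
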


\begin{proof}
By the universal property of Verdier quotients,
we need to show that the functors $Rf_*$, $Rf_!$, resp.~$f^*$, $f^!$, send $\Db_{\IW_\ell,\Gm}(Z,\bk)_{\varpi\perf}$ into $\Db_{\IW_\ell,\Gm}(Y,\bk)_{\varpi\perf}$, resp.~$\Db_{\IW_\ell,\Gm}(Y,\bk)_{\varpi\perf}$ into $\Db_{\IW_\ell,\Gm}(Z,\bk)_{\varpi\perf}$. For the functor $f^*$ this claim is obvious from definition, and for $Rf_*$ it follows from Lemma~\ref{lem:pushforward-perfect-stalks}. For the functor $Rf_!$, one can argue as follows. If $\overline{y} : \Spec(K) \to Y$ is a geometric point of $Y$, then by~\cite[Expos\'e~XVII, Proposition~5.2.8]{sga4} we have
 \[
  (Rf_! \scF)_{\overline{y}} \cong R\Gamma_c(Z \times_Y \Spec(K), \scF'),
 \]
where $\scF'$ is the pullback of $\scF$. Now $Z \times_Y \Spec(K)$ is a locally closed subscheme of $\Spec(K)$, hence is either $\varnothing$ or $\Spec(K)$. Hence $(Rf_! \scF)_{\overline{y}}$ is either equal to $\scF_{\overline{y}}$ or to $0$, which shows that $Rf_! \scF$ must belong to $\Db_{\IW_\ell,\Gm}(Y,\bk)_{\varpi\perf}$.

Finally we treat the case of $f^!$. For this we can assume that $f$ is either a closed embedding or an open embedding. In the latter case we have $f^!=f^*$, hence the claim is known. In the former case, we denote by $g$ the complementary open embedding. Then, given $\scF$ in $\Db_{\IW_\ell,\Gm}(Y,\bk)_{\varpi\perf}$ we consider the distinguished triangle
\[
 f_* f^! \scF \to \scF \to Rg_* g^* \scF \xrightarrow{[1]}.
\]
Here $\scF$ and $Rg_* g^* \scF$ belong to $\Db_{\IW_\ell,\Gm}(Y,\bk)_{\varpi\perf}$, hence so does $f_* f^! \scF$. This implies that $f^! \scF$ belongs to $\Db_{\IW_\ell,\Gm}(Z,\bk)_{\varpi\perf}$, which completes the proof.
\end{proof}

It is easily seen that $(f^*_\Smith, f^\Smith_*)$ and $(f^\Smith_!, f_\Smith^!)$ are adjoint pairs of functors. In particular, if $f$ is a closed embedding then the functor $f^\Smith_*=f^\Smith_!$ is fully faithful, so that the category $\Smith_\IW(Z,\bk)$ can (and will) be identified with a full triangulated subcategory in $\Smith_\IW(Y,\bk)$. It is also easily checked that, given a decomposition of $Y$ as a disjoint union of a closed (in $Y$) finite union of $\Iw_\ell^+$-orbits and its open complement, we have canonical distinguished triangles as in the ``recollement'' setting of~\cite[\S 1.4]{bbd}.

The full faithfulness of pushforward under closed embeddings allows to define the category $\Smith_\IW((\Gr_G)^\varpi,\bk)$ as the direct limit of the categories $\Smith_\IW(Y,\bk)$ where $Y$ runs over the closed finite unions of $\Iw^+_\ell$-orbits in $(\Gr_G)^\varpi$.

\subsection{The Smith localization functor}

We will be particularly interested in the construction of~\S\ref{ss:IW-Smith} in the case $Y=X^\varpi$ for some
locally closed finite union of $\Iw^+$-orbits $X \subset \Gr_G$. In this case, we denote by $i_X : X^\varpi \to X$ the embedding. For any $\scF$ in $\Db_{\IW,\Gm}(X,\bk)$, we have objects $i_X^! \scF$ and $i_X^* \scF$ in $\Db_{\IW_\ell,\Gm}(X^\varpi,\bk)$, and a canonical morphism
\begin{equation*}
\label{eqn:morph-i!*}
i_X^! \scF \to i_X^* \scF,
\end{equation*}
see~\eqref{eqn:morph-Smith}.
It follows from Lemma~\ref{lem:crucial-Smith} that the cone of this morphism is killed by the quotient functor
\[
 \Db_{\IW_{\ell}, \Gm}(X^\varpi, \bk) \to \Smith_{\IW}(X^\varpi,\bk).
\]
We can therefore define the functor
\[
i_X^{!*} : \Db_{\IW, \Gm}(X, \bk) \to \Smith_\IW(X^\varpi,\bk)
\]
as the composition of either $i_X^*$ or $i_X^!$ with this quotient functor.

This functor is compatible with the push/pull functors associated with locally closed embeddings, in the following sense.

\begin{prop}
\label{prop:i!*-pushpull}
If $X,Y \subset \Gr_G$ are two locally closed finite unions of $\Iw^+$-orbits such that $X \subset Y$, and if we denote by $f : X \to Y$ the embedding and by $f^\varpi : X^\varpi \to Y^\varpi$ its restriction to $X^\varpi$, then we have canonical isomorphisms of functors
\begin{gather*}
i_Y^{!*} \circ Rf_* \cong (f^\varpi)^\Smith_* \circ i_X^{!*}, \qquad i_Y^{!*} \circ Rf_! \cong (f^\varpi)^\Smith_! \circ i_X^{!*}, \\
i_X^{!*} \circ f^* \cong (f^\varpi)_\Smith^* \circ i_Y^{!*}, \qquad i_X^{!*} \circ f^! \cong (f^\varpi)_\Smith^! \circ i_Y^{!*}.
\end{gather*}
\end{prop}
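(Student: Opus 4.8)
The plan is to deduce all four isomorphisms from standard base change (for the statements involving $f_*$ and $f_!$) and from transitivity of composition (for those involving $f^*$ and $f^!$) at the level of the equivariant derived categories, then to transport them through the Verdier quotient functors using the preceding lemma. Write $Q_X$, $Q_Y$ for the quotient functors $\Db_{\IW_\ell,\Gm}(X^\varpi,\bk) \to \Smith_\IW(X^\varpi,\bk)$ and $\Db_{\IW_\ell,\Gm}(Y^\varpi,\bk) \to \Smith_\IW(Y^\varpi,\bk)$. The key flexibility I would exploit is that, by construction of the Smith localization functor, $i_X^{!*} = Q_X \circ i_X^* = Q_X \circ i_X^!$ and likewise $i_Y^{!*} = Q_Y \circ i_Y^* = Q_Y \circ i_Y^!$; for each of the four claimed isomorphisms I would use whichever of the two presentations makes the comparison at the unquotiented level hold. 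I would also record two inputs: (i) the square with vertices $X^\varpi, X, Y^\varpi, Y$ and arrows $i_X, i_Y, f, f^\varpi$ is Cartesian, i.e.\ $X^\varpi = X \times_Y Y^\varpi$ --- this follows by comparing functors of points, since $f$ is a monomorphism and hence a morphism $Z \to X$ is $\varpi$-equivariant as soon as its composite with $f$ is; in particular $i_Y \circ f^\varpi = f \circ i_X$ as morphisms $X^\varpi \to Y$; and (ii) applying the preceding lemma to the locally closed inclusion $f^\varpi$ of finite unions of $\Iw_\ell^+$-orbits, one obtains canonical isomorphisms $Q_Y \circ (f^\varpi)_? \cong (f^\varpi)^{\Smith}_? \circ Q_X$ and $Q_X \circ (f^\varpi)^? \cong (f^\varpi)^?_{\Smith} \circ Q_Y$ for $? \in \{*,!\}$.

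For the two pullback isomorphisms I would use nothing more than transitivity of $*$- and $!$-pullback. Taking $i_X^{!*} = Q_X \circ i_X^*$, $i_Y^{!*} = Q_Y \circ i_Y^*$ and the equality $i_X^* \circ f^* = (f \circ i_X)^* = (i_Y \circ f^\varpi)^* = (f^\varpi)^* \circ i_Y^*$ coming from (i), one obtains
\[
i_X^{!*} \circ f^* \;=\; Q_X \circ (f^\varpi)^* \circ i_Y^* \;\cong\; (f^\varpi)^*_{\Smith} \circ Q_Y \circ i_Y^* \;=\; (f^\varpi)^*_{\Smith} \circ i_Y^{!*},
\]
the middle step being input (ii) with $? = *$. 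The identity for $f^!$ follows in exactly the same way, now using $i_X^{!*} = Q_X \circ i_X^!$, $i_Y^{!*} = Q_Y \circ i_Y^!$ and input (ii) with $? = !$.

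For the two pushforward isomorphisms I would appeal to base change along the Cartesian square of input (i). With $i_Y^{!*} = Q_Y \circ i_Y^!$ and $i_X^{!*} = Q_X \circ i_X^!$, it is enough to produce a canonical isomorphism $i_Y^! \circ f_* \cong (f^\varpi)_* \circ i_X^!$ of functors $\Db_{\IW,\Gm}(X,\bk) \to \Db_{\IW_\ell,\Gm}(Y^\varpi,\bk)$; applying $Q_Y$ and using input (ii) turns the right-hand side into $(f^\varpi)^{\Smith}_* \circ i_X^{!*}$, giving the desired isomorphism. This is the base change isomorphism adjoint to proper base change, valid for an arbitrary Cartesian square; it is available in the $\Gm$-equivariant étale framework thanks to the constructions and references recalled in~\S\ref{ss:equiv-Db}, and it respects the Iwahori--Whittaker condition (that condition is phrased via pullback along action maps, with which all the functors in sight commute). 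For $f_!$ one argues symmetrically, using instead $i_Y^{!*} = Q_Y \circ i_Y^*$, $i_X^{!*} = Q_X \circ i_X^*$ and the proper base change isomorphism $i_Y^* \circ f_! \cong (f^\varpi)_! \circ i_X^*$.

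The argument is thus essentially formal once the inputs are in place. The one step I expect to require genuine (though routine) care is checking that the classical base change isomorphisms persist in the Iwahori--Whittaker, $\Gm$-equivariant étale setting and stay compatible with the restriction functor $\Res^{\Gm}_\varpi$ and with the $\varpi$-perfect subcategories --- which is precisely what is needed in order for them to descend to the Smith quotients. This compatibility is of the same nature as the one already verified in the preceding lemma, so I do not anticipate a real obstacle.
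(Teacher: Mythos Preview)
Your proposal is correct and follows essentially the same approach as the paper: use the freedom to write $i^{!*}$ as either $Q \circ i^*$ or $Q \circ i^!$, invoke transitivity of pullbacks for the two statements on the second line, and invoke base change along the Cartesian square $X^\varpi = X \times_Y Y^\varpi$ (with the $!$-presentation for $f_*$ and the $*$-presentation for $f_!$) for the two statements on the first line. Your write-up is in fact more explicit than the paper's, which simply cites the relevant SGA4 base change theorems and the compatibility of pullbacks with composition.
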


\begin{proof}
The first, resp.~second, isomorphism on the first line follows from the base change theorem (see~\cite[Expos\'e~XVIII, Corollaire~3.1.12.3]{sga4} and~\cite[Expos\'e~XVII, Th\'eor\`eme~5.2.6]{sga4} respectively) if we see $i_Y^{!*}$ and $i_X^{!*}$ as the compositions of $i_Y^!$ and $i_X^!$, resp.~of $i_Y^*$ and $i_X^*$, with the appropriate quotient functors. The isomorphisms on the second line follow similarly from the compatibility of pullback functors with composition.
\end{proof}

Taking the direct limit of the functors $i_X^{!*}$ for $X$ a closed finite union of $\Iw^+$-orbits in $\Gr_G$, we also obtain a functor
\[
i_{\Gr_G}^{!*} : \Db_{\IW, \Gm}(\Gr_G, \bk) \to \Smith_\IW((\Gr_G)^\varpi,\bk).
\]

\subsection{Some first properties of the Iwahori--Whittaker Smith category}

Let us fix some $\lambda \in \bX^\vee_{++}$, and consider the $\Iwuell^+$-orbit $(\Gr_{G,\lambda}^+)^\varpi \subset (\Gr_G)^\varpi$. (Once again, the Iwahori--Whittaker category associated with an orbit labelled by a weight in $\bX^\vee \smallsetminus \bX^{\vee}_{++}$ vanishes; these coweights can therefore be ignored.) We set
\[
\scL_\Smith^\lambda := i^{!*}_{\Gr_{G,\lambda}^+}(\LAS^\lambda).
\]

\begin{lem}
\label{lem:parity-vanishing}
For any $n \in \Z$, we have
\[
\Hom_{\Smith_\IW((\Gr_{G,\lambda}^+)^\varpi,\bk)}(\scL_\Smith^\lambda,\scL_\Smith^\lambda[n])=\begin{cases}
\bk & \text{if $n$ is even;} \\
0 & \text{if $n$ is odd.}
\end{cases}
\]
\end{lem}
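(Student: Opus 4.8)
The plan is to reduce this $\Hom$ computation over the orbit $(\Gr_{G,\lambda}^+)^\varpi$ to the computation over a point that was carried out in Lemma~\ref{lem:Hom-Smith-pt}. First I would analyze the geometry of the orbit: by the considerations of~\S\ref{ss:orbits-Gr} (in particular Lemma~\ref{lem:fixed-points-orbits} and the description of $\Iwu^\lambda$ appearing in its proof), the fixed-point orbit $(\Gr_{G,\lambda}^+)^\varpi$ is isomorphic as a $\Gm$-variety to an affine space $\A^d$ on which $\Gm$ acts linearly (with only positive weights, since the ambient $\Gm$-action contracts the orbit to the base point $L_\lambda$). Moreover, the local system $\LAS^\lambda$ on this orbit is $\Gm$-equivariant and, being a rank-one local system with finite monodromy on an affine space, is actually (equivariantly) trivial or at least ``constant'' up to the $(J,\chi_J^*\LAS)$-twist, so that $\End^\bullet$ of it in $\Db_{\IW_\ell,\Gm}$ agrees with $\coH^\bullet_{\Gm}$ of the constant sheaf on $\A^d$.

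Next I would use that the projection $a : (\Gr_{G,\lambda}^+)^\varpi \simeq \A^d \to \pt$ is $\Gm$-equivariant, that $a^*$ sends $\ubk_{\pt}$ to $\ubk$ (twisted appropriately into the Iwahori--Whittaker category via the equivalence $\Db_{\Gm}(X,\bk)\simeq\Db_{\IW,\Gm}(X,\bk)$ recalled in~\S\ref{ss:IW-Gm}), and that $a^*$ is fully faithful on the relevant objects because $a$ is an affine-space bundle. Concretely, $\RHom_{\Db_{\IW_\ell,\Gm}}(\scL^\lambda,\scL^\lambda) \cong \coH^\bullet_{\Gm}(\pt,\ubk_{\pt}) = \bk[x]$ by homotopy invariance, and the same holds for the $\varpi$-equivariant and non-equivariant versions; in particular an object of $\Db_{\IW_\ell,\Gm}((\Gr_{G,\lambda}^+)^\varpi,\bk)$ is $\varpi$-perfect (i.e.~lies in the subcategory we quotient by) if and only if its $\coH^\bullet_{\Gm}$ is finite-dimensional, exactly as in Lemma~\ref{lem:perf-fd-cohom}. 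I would then verify that $a^*$ descends to a fully faithful functor $\Smith(\pt,\bk) \to \Smith_\IW((\Gr_{G,\lambda}^+)^\varpi,\bk)$ carrying $\ubk_{\pt}$ to $\scL_\Smith^\lambda$: this is because $a^*$ commutes with the quotient functors (it preserves the $\varpi$-perfect subcategories, by the characterization just mentioned, or directly by Lemma~\ref{lem:pushforward-perfect-stalks} applied to a section of $a$) and a Verdier quotient of a fully faithful functor between the ``numerators'' that matches the ``denominators'' remains fully faithful.

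Granting that, the lemma follows immediately:
\[
\Hom_{\Smith_\IW((\Gr_{G,\lambda}^+)^\varpi,\bk)}(\scL_\Smith^\lambda,\scL_\Smith^\lambda[n]) = \Hom_{\Smith(\pt,\bk)}(\ubk_{\pt},\ubk_{\pt}[n]),
\]
which is $\bk$ for $n$ even and $0$ for $n$ odd by Lemma~\ref{lem:Hom-Smith-pt}. The main obstacle I anticipate is the bookkeeping needed to make the first step fully rigorous in the Iwahori--Whittaker setting: one must check that the Artin--Schreier twist $\LAS^\lambda$ really behaves like the constant sheaf for the purposes of these $\coH^\bullet_{\Gm}$-computations (using that the functor $\scF\mapsto (a_J)_!(\chi_J^*\LAS\boxtimes\scF)$ of~\S\ref{ss:IW-Gm} is an equivalence onto $\Db_{\IW_\ell,\Gm}$), and that the $\Gm$-action on the affine-space orbit is genuinely contracting so that homotopy/acyclicity arguments apply; once these are in place the rest is formal.
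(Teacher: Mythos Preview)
Your reduction to Lemma~\ref{lem:Hom-Smith-pt} is the right move, and the conclusion is correct. The paper reaches it more directly: rather than invoking contractibility of the affine space and homotopy invariance, it simply observes that $\Iwuell^+$ acts transitively on $(\Gr_{G,\lambda}^+)^\varpi$ (Lemma~\ref{lem:fixed-points-orbits}), which immediately yields an equivalence of triangulated categories
\[
\Db_{\IW_\ell,\Gm}\bigl((\Gr_{G,\lambda}^+)^\varpi,\bk\bigr)\;\cong\;\Db_{\Gm,c}(\pt,\bk)
\]
sending $\LAS^\lambda$ to $\ubk_\pt$ (this is the standard fact that twisted equivariant sheaves on a homogeneous space are governed by the stabiliser of a point). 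This equivalence visibly identifies the $\varpi$-perfect subcategories, hence descends to an equivalence of Smith categories, and one is done. No separate analysis of $\Gm$-weights, affine contractibility, or behaviour of $a^*$ is needed.

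One point in your write-up deserves correction: the assertion that ``a Verdier quotient of a fully faithful functor between the numerators that matches the denominators remains fully faithful'' is false in general (take $F$ the inclusion of a nonzero category into a larger one, with the denominator on the target side being everything). What saves you here is that your functor is not merely fully faithful but an \emph{equivalence}: the target $\Db_{\IW_\ell,\Gm}((\Gr_{G,\lambda}^+)^\varpi,\bk)$ is generated as a triangulated category by $\LAS^\lambda$, so once you know $a^*$ is fully faithful and hits $\LAS^\lambda$, essential surjectivity is automatic. With that fix your argument goes through, but the transitivity shortcut makes all of this bookkeeping unnecessary.
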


\begin{proof}
Since $\Iwuell^+$ acts transitively on $(\Gr_{G,\lambda}^+)^\varpi$ (see Lemma~\ref{lem:fixed-points-orbits}), we have an equivalence of triangulated categories
\[
 \Db_{\IW_\ell,\Gm}((\Gr_{G,\lambda}^+)^\varpi,\bk) \cong \Db_{\Gm,c}(\pt,\bk)
\]
which matches $\LAS^\lambda$ with $\underline{\bk}_{\pt}$. This equivalence induces an equivalence
\[
 \Smith_{\IW}((\Gr_{G,\lambda}^+)^\varpi,\bk) \cong \Smith(\pt,\bk),
\]
where the right-hand side is defined in~\S\ref{ss:Smith-pt}. The claim then follows from Lemma~\ref{lem:Hom-Smith-pt}.
\end{proof}

We consider once again a general locally closed finite union of $\Iw_\ell^+$-orbits $Y \subset (\Gr_G)^\varpi$.

\begin{lem}
\label{lem:shift-Smith}
There exists a canonical isomorphism of endofunctors of $\Smith_\IW(Y,\bk)$
\[
\id \simto [2].
\]
\end{lem}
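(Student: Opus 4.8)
The plan is to reduce the statement about $\Smith_\IW(Y,\bk)$ to the corresponding fact for the Smith category of a point, which is exactly the isomorphism $\id \cong [2]$ recorded in Lemma~\ref{lem:perf-fd-cohom}. Concretely, there is a canonical morphism $\mathrm{can}^2_Y : \ubk_Y \to \ubk_Y[2]$ in $\Db_{\Gm}(Y,\bk)$, obtained by pulling back the morphism $\mathrm{can}^2_{\pt}$ of Lemma~\ref{lem:perf-fd-cohom} along the structure map $Y \to \pt$ (equivalently, via the $\Gm$-equivariant parameter $x \in \coH^2_{\Gm}(\pt,\bk)$ acting on $\coH^\bullet_{\Gm}(Y,\ubk_Y)$). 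Cupping with this class gives a morphism of functors $\id \to [2]$ on $\Db_{\Gm}(Y,\bk)$, hence on the Iwahori--Whittaker subcategory $\Db_{\IW_\ell,\Gm}(Y,\bk)$, and I will show it becomes invertible after passing to the Verdier quotient $\Smith_\IW(Y,\bk)$.

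First I would describe the morphism precisely: following the construction in the proof of Lemma~\ref{lem:perf-fd-cohom}, consider the dilation $\Gm$-action on $\bA^1_\F$ and the inclusion $i : \{0\} \hookrightarrow \bA^1_\F$; for $Y$ with its $\Gm$-action, form the product $Y \times \bA^1_\F$ with the diagonal action (trivial on $Y$ in the $\bA^1$-direction, so really just $\Gm$ on the second factor), and repeat the adjunction construction to obtain $\mathrm{can}^2_Y : \ubk_Y \to \ubk_Y[2]$. Tensoring (i.e. applying $-\otimes_{\bk} \scF$, which makes sense $\Gm$-equivariantly) yields for each $\scF$ a natural morphism $\scF \to \scF[2]$. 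The cone of $\mathrm{can}^2_Y$ is $\scC_Y$, and by the same argument as in Lemma~\ref{lem:perf-fd-cohom} — using that $(Y\times\bA^1_\F)^\varpi = Y \times \{0\} = Y^\varpi$ sits over $Y$, together with Lemma~\ref{lem:crucial-Smith} — the complex $\Res^{\Gm}_\varpi(\scC_Y)$ has perfect geometric stalks; more directly, $\scC_Y \cong p^* \scC_{\pt}$ where $p : Y \to \pt$ and $\scC_{\pt}$ has $\Res^{\Gm}_\varpi$-perfect stalks, and $p^*$ preserves perfectness of geometric stalks stratum-by-stratum. Hence $\scC_Y \otimes_\bk \scF \in \Db_{\IW_\ell,\Gm}(Y,\bk)_{\varpi\perf}$ for every $\scF$, so the triangle $\scF \to \scF[2] \to \scC_Y\otimes_\bk\scF \triright$ shows that $\scF \to \scF[2]$ is an isomorphism in $\Smith_\IW(Y,\bk)$.

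The remaining point is naturality and functoriality: one must check that the collection of morphisms $\scF \to \scF[2]$ is a morphism of triangulated functors $\id \to [2]$, i.e. compatible with morphisms in $Y$ and with distinguished triangles; this follows because $\mathrm{can}^2_Y$ is a fixed morphism of the unit object and cup product with it is automatically natural, so I would just invoke this. The genuinely delicate step is making sure $\scC_Y$, or rather $\scC_Y \otimes_\bk \scF$, really lies in $\Db_{\IW_\ell,\Gm}(Y,\bk)_{\varpi\perf}$ — this requires the fixed-point locus of the auxiliary $\bA^1$ to match up correctly so that Lemma~\ref{lem:crucial-Smith} applies, and that the Iwahori--Whittaker condition is preserved (which it is, since $\scC_Y$ comes from $\pt$ and tensoring by a constant sheaf respects equivariance). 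I expect that identifying $\scC_Y$ with the pullback of $\scC_{\pt}$ and citing Lemma~\ref{lem:perf-fd-cohom} and Lemma~\ref{lem:crucial-Smith} is the crux, after which the conclusion that $\mathrm{can}^2_Y$ is invertible in the quotient is formal.
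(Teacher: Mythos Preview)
Your proposal is correct and follows essentially the same approach as the paper: pull back the canonical morphism $\mathrm{can}^2_{\pt}$ (whose cone lies in $\Db_{\Gm,c}(\pt,\bk)_{\varpi\perf}$ by Lemma~\ref{lem:perf-fd-cohom}) to obtain $\ubk_Y \to \ubk_Y[2]$ with cone of perfect geometric stalks, then tensor to get a natural transformation $\id \to [2]$ that becomes invertible in the quotient. The paper's proof is just the terse version of exactly this, so your extra remarks on naturality and preservation of the Iwahori--Whittaker condition are fine but not strictly needed.
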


\begin{proof}
 As explained in Lemma~\ref{lem:Hom-Smith-pt}, there exists a canonical map $\ubk_{\pt} \to \ubk_{\pt}[2]$ in $\Db_{\Gm,c}(\pt,\bk)$ whose cone has perfect geometric stalks. Pulling back to $Y$ we deduce a canonical morphism $\ubk_Y \to \ubk_Y[2]$ whose cone has perfect geometric stalks. Since the tensor product with $\ubk_Y$, resp.~$\ubk_Y[2]$, defines an endofunctor of $\Db_{\IW_\ell,\Gm}(Y,\bk)$ which is isomorphic to $\id$, resp.~to $[2]$, and since the tensor product (over $\bk$) of a perfect complex of $\bk[\varpi]$-modules with any bounded complex is perfect, the desired claim follows.
\end{proof}

\begin{prop}
\label{prop:Hom-finite}
For any $\scF,\scG$ in $\Smith_\IW(Y,\bk)$, the $\bk$-vector space
\[
\Hom_{\Smith_\IW(Y,\bk)}(\scF,\scG)
\]
is finite-dimensional.
\end{prop}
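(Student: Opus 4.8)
The plan is to reduce the statement to a finiteness property that holds before passing to the Smith quotient, using the structure of $Y$ as a finite union of affine-space orbits together with the computation of $\Hom$-spaces over a point carried out in Section~\ref{sec:Smith-etale}. Concretely, I would argue by induction on the number of $\Iw_\ell^+$-orbits in $Y$, using the recollement triangles available in $\Smith_\IW(Y,\bk)$ (noted at the end of~\S\ref{ss:IW-Smith}): if $Y$ is a single orbit the category $\Smith_\IW(Y,\bk)$ is equivalent to $\Smith(\pt,\bk)$ (cf.~the proof of Lemma~\ref{lem:parity-vanishing}), and Lemma~\ref{lem:Hom-Smith-pt} together with Lemma~\ref{lem:shift-Smith} shows that $\Hom(\ubk,\ubk[n])$ is at most one-dimensional for each $n$; since $\scF$ and $\scG$ are built from finitely many shifted copies of the ``constant'' object on that orbit, any $\Hom$-space between them is finite-dimensional. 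For the inductive step, write $Y = Z \sqcup U$ with $Z$ closed and $U$ open, use the distinguished triangle $f^\Smith_! f_\Smith^* \scF \to \scF \to g^\Smith_* g^*_\Smith \scF \triright$ (where $f : Z \into Y$, $g : U \into Y$), and apply the long exact $\Hom$-sequence together with the adjunctions $(f^*_\Smith, f^\Smith_*)$, $(f^\Smith_!, f^!_\Smith)$, $(g^*_\Smith, g^\Smith_*)$ to reduce $\Hom_{\Smith_\IW(Y,\bk)}$-groups to $\Hom$-groups in $\Smith_\IW(Z,\bk)$ and $\Smith_\IW(U,\bk)$, which are finite-dimensional by induction.

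The subtle point is that, unlike the abelian or bounded derived settings, a priori a $\Hom$-group in a Verdier quotient could a priori fail to be finite-dimensional even when it is computed from finite-dimensional pieces, because the quotient involves a colimit over ``roofs.'' So the real content is to control the $\Hom$-groups at the level of a single orbit and then see that the recollement machinery propagates this. For the single-orbit case, I would make the identification $\Smith_\IW((\Gr_{G,\lambda}^+)^\varpi,\bk) \cong \Smith(\pt,\bk)$ as in Lemma~\ref{lem:parity-vanishing}; then any object of $\Smith(\pt,\bk)$ is, by Lemma~\ref{lem:perf-fd-cohom}, determined up to the subcategory $\Db_{\Gm,c}(\pt,\bk)_{\varpi\perf}$ by a finite amount of data (its equivariant cohomology modulo finite-dimensional pieces), and Lemma~\ref{lem:Hom-Smith-pt} gives the explicit bound $\dim \Hom(\ubk_\pt,\ubk_\pt[n]) \leq 1$. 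An arbitrary object of $\Db_{\Gm,c}((\Gr_{G,\lambda}^+)^\varpi,\bk)$ lies in the triangulated category generated by $\LAS^\lambda$ and its shifts by the construction of the equivariant derived category, so $\Hom$-finiteness between two such follows from Lemma~\ref{lem:Hom-Smith-pt}, the isomorphism $\id\cong[2]$ of Lemma~\ref{lem:shift-Smith}, and a d\'evissage on the number of cohomology objects.

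The main obstacle I expect is the bookkeeping in the inductive step: one must check that the relevant functors $f^\Smith_?$, $f^?_\Smith$ send objects of $\Smith_\IW$ to objects that are again built from finitely many orbits' worth of data, and that the long exact sequence only ever involves finitely many nonzero terms --- this uses that each object of $\Db_{\IW_\ell,\Gm}(Y,\bk)$ is supported on finitely many orbits (which is part of the definition of the ind-category) and that each $*$- and $!$-restriction to an orbit is bounded with constructible (hence finite-type) cohomology, so the long exact $\Hom$-sequence is finite in each internal degree. Once this is in place the induction closes without difficulty. Finally, to get the statement for $Y$ an arbitrary closed finite union of $\Iw_\ell^+$-orbits in $(\Gr_G^+)^\varpi$, and then for $(\Gr_G^+)^\varpi$ itself via the direct limit, one simply observes that any two objects $\scF,\scG$ already live in some $\Smith_\IW(Y,\bk)$ with $Y$ a genuine finite union, and the transition functors are fully faithful, so no new $\Hom$'s appear.
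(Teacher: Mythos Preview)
Your proposal is correct and follows essentially the same approach as the paper: induction on the number of $\Iw_\ell^+$-orbits in $Y$, using the recollement triangles to reduce to the single-orbit case, which is handled via the identification with $\Smith(\pt,\bk)$ and Lemma~\ref{lem:Hom-Smith-pt}. The paper's proof is two sentences and cites only Lemma~\ref{lem:parity-vanishing} for the base case; your extra d\'evissage on the number of cohomology objects of $\scF,\scG$ in $\Db_{\Gm,c}(\pt,\bk)$ is the detail the paper leaves implicit, and it is correctly handled.

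One small slip: with $f:Z\hookrightarrow Y$ closed and $g:U\hookrightarrow Y$ open, the recollement triangle is $f^\Smith_! f^!_\Smith\scF \to \scF \to g^\Smith_* g^*_\Smith\scF\triright$ (you wrote $f^*_\Smith$ in place of $f^!_\Smith$); this does not affect the argument, since the adjunctions you then invoke are the correct ones.
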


\begin{proof}
 The proof proceeds by induction on the number of $\Iw_\ell^+$-orbits in $Y$. In fact the distinguished triangles from the ``recollement'' setting (see~\S\ref{ss:IW-Smith}) reduce the proof to the case $Y$ consists of one orbit, which follows from Lemma~\ref{lem:parity-vanishing}.
\end{proof}

\section{Parity objects in Smith categories}
\label{sec:parity-Smith}

We continue with the setting of Sections~\ref{sec:fixed-points-Gr}--\ref{sec:smith-theory}.

\subsection{Definition}

As remarked already in~\cite{leslie-lonergan} (using slightly different definitions), the theory of parity complexes from~\cite{jmw} adapts easily to the Smith category $\Smith_\IW(Y,\bk)$, where $Y \subset (\Gr_G)^\varpi$ is any locally closed union of $\Iw_\ell^+$-orbits. Namely, we will say
that an object $\scF$ in $\Smith_\IW(Y,\bk)$ is $*$-\emph{even}, resp. $!$-\emph{even}, if for any $\lambda \in \bX_{++}^\vee$ such that $(\Gr^+_{G,\lambda})^\varpi \subset Y$, denoting by $j_\lambda^{+,\varpi} : (\Gr^+_{G,\lambda})^\varpi \to Y$ the embedding, the object $(j^{+,\varpi}_\lambda)^*_\Smith \scF$, resp.~$(j^{+,\varpi}_\lambda)^!_\Smith \scF$, is isomorphic to a direct sum of copies of $\scL_\Smith^\lambda$. (In this case we do not need to consider even shifts because of Lemma~\ref{lem:shift-Smith}.) We will then say that $\scF$ is \emph{even} if it is both $*$-even and $!$-even, and define similarly the notions of $*$-odd, $!$-odd, and odd objects (replacing $\scL_\Smith^\lambda$ by its shift by $1$). We will denote by $\Smith^0_\IW(Y,\bk)$, resp.~$\Smith^1_\IW(Y,\bk)$, resp.~$\Smith^{\mathrm{par}}_\IW(Y,\bk)$, the full subcategory of $\Smith_\IW(Y,\bk)$ whose objects are the even objects, resp.~the odd objects, resp.~the objects which are isomorphic to a direct sum of an even and an odd object.

Recall that an additive category is called \emph{Krull--Schmidt} if any object can be written as a direct sum of indecomposable objects whose endomorphism rings are local. (If this property holds, then such a decomposition is unique up to isomorphism and permutation of factors.)

\begin{lem}
\label{lem:Krull-Schmidt}
The categories $\Smith^0_\IW(Y,\bk)$, $\Smith^1_\IW(Y,\bk)$ and $\Smith^{\mathrm{par}}_\IW(Y,\bk)$ are Krull--Schmidt.
\end{lem}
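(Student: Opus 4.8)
The plan is to establish the Krull--Schmidt property by the standard route: it suffices to show that the endomorphism ring of any object in $\Smith^{\mathrm{par}}_\IW(Y,\bk)$ is a (semiperfect, in fact) finite-dimensional $\bk$-algebra, since an additive category in which all $\Hom$-spaces are finite-dimensional over a field and idempotents split is automatically Krull--Schmidt. Finite-dimensionality of all $\Hom$-spaces in $\Smith_\IW(Y,\bk)$ is already available from Proposition~\ref{prop:Hom-finite}, so the only remaining point is idempotent-splitting (i.e.\ that these subcategories are idempotent-complete). First I would note that $\Smith_\IW(Y,\bk)$, being a Verdier quotient of the triangulated category $\Db_{\IW_\ell,\Gm}(Y,\bk)$, need not a priori be idempotent-complete; however, one can either pass to its idempotent completion throughout, or---more in the spirit of~\cite{jmw}---argue directly that the subcategory of parity objects is closed under taking direct summands inside a category where summands do split.

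The key steps, in order, would be: (1) recall from~\cite[\S 2]{jmw} that a full additive subcategory of a category with finite-dimensional $\Hom$-spaces is Krull--Schmidt as soon as it is closed under direct summands; (2) verify that $\Smith^0_\IW(Y,\bk)$, $\Smith^1_\IW(Y,\bk)$ and $\Smith^{\mathrm{par}}_\IW(Y,\bk)$ are each closed under direct summands, using that the defining conditions are stated in terms of the functors $(j^{+,\varpi}_\lambda)^*_\Smith$ and $(j^{+,\varpi}_\lambda)^!_\Smith$ (which are additive), together with the fact, from Lemma~\ref{lem:parity-vanishing}, that $\End(\scL_\Smith^\lambda) = \bk$, so that a summand of a direct sum of copies of $\scL_\Smith^\lambda$ is again such a direct sum (here one uses that $\bk$ is a field, hence that finitely generated $\bk$-modules are free and summands of free modules are free); and (3) invoke Proposition~\ref{prop:Hom-finite} to conclude finite-dimensionality of endomorphism rings, so that each indecomposable summand has local endomorphism ring. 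One should also check idempotent-completeness of the ambient category $\Smith_\IW(Y,\bk)$ restricted to parity objects; this follows by the usual argument of~\cite{jmw} that parity objects, being built from shifted local systems on affine-space orbits with constant-parity dimensions, admit no nonzero odd self-extensions between an even and an odd object, so idempotents can be lifted strata by strata along the recollement triangles of~\S\ref{ss:IW-Smith}.

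I expect the main obstacle to be the idempotent-splitting issue: Verdier quotients are not automatically idempotent-complete, so one must either work with the idempotent completion of $\Smith_\IW(Y,\bk)$ from the outset (and check that the parity subcategories are already idempotent-complete inside it) or give a hands-on argument that an idempotent endomorphism of a parity object has image a parity object in $\Smith_\IW(Y,\bk)$ itself. The cleanest approach is probably to imitate~\cite[Remark~2.2 and the proof of Theorem~2.12]{jmw} verbatim: induct on the number of $\Iw_\ell^+$-orbits in $Y$, using the recollement triangles to reduce to the single-orbit case, where by Lemma~\ref{lem:parity-vanishing} the relevant category is equivalent to (a shift of) free $\bk$-modules, which is visibly Krull--Schmidt. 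The parity vanishing of $\Hom$ in odd degrees between even and odd objects, which again reduces via the strata to Lemma~\ref{lem:parity-vanishing}, is what makes the inductive step work and guarantees that summands of parity objects are parity.
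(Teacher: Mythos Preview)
Your proposal is correct and follows essentially the same route as the paper: reduce to idempotent-splitting via finite-dimensionality of $\Hom$ (the paper cites~\cite[Corollary~A.2]{cyz}), then induct on the number of $\Iw_\ell^+$-orbits using the recollement triangles, with the single-orbit base case handled by Lemma~\ref{lem:parity-vanishing}. The one ingredient you leave implicit is the precise mechanism for the inductive step---that an idempotent on the middle term of a distinguished triangle splits once the induced idempotents on the outer terms do---which the paper supplies by citing~\cite[Proposition~2.3]{le-chen}.
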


\begin{proof}
By Proposition~\ref{prop:Hom-finite} and~\cite[Corollary~A.2]{cyz}, to prove the lemma it suffices to prove that any idempotent in the category $\Smith^{\mathrm{par}}_\IW(Y,\bk)$ splits. We do this by induction on the number of $\Iw^+_\ell$-orbits in $Y$. If $Y=(\Gr^+_{G,\lambda})^\varpi$ for some $\lambda \in \bX^\vee$, and if $\scF$ belongs to $\Smith^{\mathrm{par}}_\IW(Y,\bk)$ then either $\scF=0$ (in which case there is nothing to prove) or $\lambda \in \bX^\vee_{++}$ and $\scF=(\scL_\Smith^\lambda)^{\oplus n} \oplus (\scL_\Smith^\lambda)^{\oplus m}[1]$ for some $n,m \in \Z_{\geq 0}$. In this case, by Lemma~\ref{lem:parity-vanishing} we have
\[
\End_{\Smith_\IW(Y,\bk)}(\scF) \cong \mathrm{M}_n(\bk) \times \mathrm{M}_m(\bk),
\]
so that any idempotent in $\End_{\Smith_\IW(Y,\bk)}(\scF)$ indeed splits.

To treat the induction step, we choose a closed $\Iw^+_\ell$-orbit $Z \subset Y$, and denote by
\[
i : Z \hookrightarrow Y, \qquad j : Y \smallsetminus Z \hookrightarrow Y
\]
the embeddings. For any $\scF$ in $\Smith^{\mathrm{par}}_\IW(Y,\bk)$ we then have a distinguished triangle
\[
i^\Smith_! i^!_\Smith \scF \to \scF \to j_*^\Smith j^*_{\Smith} \scF \xrightarrow{[1]},
\]
and the objects $i^!_\Smith \scF$ and $j^*_{\Smith} \scF$ belong to $\Smith^{\mathrm{par}}_\IW(Z,\bk)$ and to $\Smith^{\mathrm{par}}_\IW(Y \smallsetminus Z,\bk)$ respectively. If $e \in \End_{\Smith_\IW(Y,\bk)}(\scF)$ is an idempotent, then $i^!_\Smith(e)$ and $j^*_{\Smith}(e)$ are idempotents too, hence they split by the induction hypothesis. By~\cite[Proposition~2.3]{le-chen}, this implies that $e$ splits.
\end{proof}

We will also define the categories
\[
\Smith^0_\IW((\Gr_G)^\varpi,\bk), \quad \Smith^1_\IW((\Gr_G)^\varpi,\bk) \quad \text{and} \quad \Smith^{\mathrm{par}}_\IW((\Gr_G)^\varpi,\bk)
\]
as the direct limits of their counterparts for $Y$, where $Y$ runs over closed finite unions of $\Iw^+_\ell$-orbits in $(\Gr_G)^\varpi$. (Equivalently, these categories can be defined in terms of restrictions and corestrictions to $\Iw^+_\ell$-orbits, as for their counterparts above.) Of course, Lemma~\ref{lem:Krull-Schmidt} implies that these categories
are Krull--Schmidt.

\subsection{Basic properties}

The study of parity objects in $\Smith_\IW(Y,\bk)$ is very similar to its counterpart in ordinary derived categories of sheaves performed in~\cite{jmw}; its essential ingredients are the parity vanishing property for one stratum proved in Lemma~\ref{lem:parity-vanishing}, and standard distinguished triangles associated with a decomposition of a space into a closed part and its open complement. For this reason we will not give any proof in this subsection; these can be obtained by repeating the proofs of~\cite{jmw} essentially word-for-word.

The following is the analogue of~\cite[Corollary~2.8 and Proposition~2.11]{jmw}.

\begin{lem}
\label{lem:Hom-vanishing-even-odd}
If $\scF, \scG \in \Smith_\IW(Y,\bk)$ are such that $\scF$ is $*$-even and $\scG$ is $!$-odd, then we have
\[
\Hom_{\Smith_\IW(Y,\bk)}(\scF,\scG)=0.
\]
As a consequence, if $Z \subset Y$ is an open union of $\Iw^+_\ell$-orbits, the restriction of an indecomposable even (resp.~odd) object of $\Smith_\IW(Y,\bk)$ to $Z$ is either indecomposable or zero.
\end{lem}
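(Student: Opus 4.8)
\textbf{Proof strategy for Lemma~\ref{lem:Hom-vanishing-even-odd}.} The plan is to mimic the arguments of~\cite[Corollary~2.8 and Proposition~2.11]{jmw} in the Smith setting, using as the only genuinely new input the parity vanishing on a single orbit established in Lemma~\ref{lem:parity-vanishing} together with the isomorphism $\id \simto [2]$ from Lemma~\ref{lem:shift-Smith}. The first assertion will be proved by induction on the number of $\Iw_\ell^+$-orbits in $Y$. The base case is where $Y$ is a single orbit $(\Gr_{G,\lambda}^+)^\varpi$ with $\lambda \in \bX^\vee_{++}$: there $\scF$ is a direct sum of copies of $\scL_\Smith^\lambda$ and $\scG$ a direct sum of copies of $\scL_\Smith^\lambda[1]$, so the claim is exactly the vanishing of $\Hom_{\Smith_\IW((\Gr_{G,\lambda}^+)^\varpi,\bk)}(\scL_\Smith^\lambda,\scL_\Smith^\lambda[n])$ for $n$ odd, which is Lemma~\ref{lem:parity-vanishing}. (The case of an orbit labelled by a weight outside $\bX^\vee_{++}$ is trivial since the relevant category vanishes.)

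For the induction step, I would pick a closed $\Iw_\ell^+$-orbit $Z \subset Y$ with open complement $U := Y \smallsetminus Z$, and write $i : Z \hookrightarrow Y$, $j : U \hookrightarrow Y$ for the embeddings. Applying the recollement distinguished triangles (available in $\Smith_\IW(-,\bk)$, see~\S\ref{ss:IW-Smith}) to $\scF$ and $\scG$, one reduces $\Hom(\scF,\scG)$ to a computation involving $\Hom$'s of $*$-even objects on $Z$ with $!$-odd objects on $Z$, and $*$-even objects on $U$ with $!$-odd objects on $U$, after accounting for the adjunctions $(j^*_\Smith, j^\Smith_*)$ and $(i^\Smith_!, i^!_\Smith)$; here one uses that $i^*_\Smith \scF$ is $*$-even on $Z$, that $i^!_\Smith \scG$ is $!$-odd on $Z$, that $j^*_\Smith \scF$ is $*$-even on $U$, and that $j^*_\Smith \scG = j^!_\Smith \scG$ is $!$-odd on $U$. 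More precisely, the triangle $i^\Smith_! i^!_\Smith \scG \to \scG \to j^\Smith_* j^*_\Smith \scG \triright$ gives a long exact sequence relating $\Hom(\scF,\scG)$ to $\Hom(\scF, i^\Smith_! i^!_\Smith \scG) \cong \Hom(i^*_\Smith\scF, i^!_\Smith\scG)$ and $\Hom(\scF, j^\Smith_* j^*_\Smith\scG) \cong \Hom(j^*_\Smith\scF, j^*_\Smith\scG)$, both of which vanish by the induction hypothesis applied to $Z$ and to $U$ respectively. This yields the first claim.

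For the second assertion, let $\scF$ be an indecomposable even (or odd) object of $\Smith_\IW(Y,\bk)$ and suppose $\scF|_Z := j^*_\Smith \scF \neq 0$, where $j : Z \hookrightarrow Y$ is the inclusion of an open union of orbits. If $\scF|_Z$ were decomposable, write $\scF|_Z \cong \scF_1 \oplus \scF_2$ with both summands nonzero; both are parity of the same type. Using the first assertion, the idempotent $e$ of $\End(\scF|_Z)$ projecting to $\scF_1$ has $\Hom$-groups controlled by parity vanishing, and the adjunction $\Hom_{\Smith_\IW(Y,\bk)}(\scF,\scF) \to \Hom_{\Smith_\IW(Z,\bk)}(\scF|_Z,\scF|_Z)$ together with the vanishing of $\Hom$ from $\scF$ into the $!$-extension piece supported on $Y \smallsetminus Z$ (which is of the opposite relevant parity after the relevant shift, so killed by the first part) shows this restriction map is surjective; hence $e$ lifts to an idempotent of $\End(\scF)$, which by the Krull--Schmidt property (Lemma~\ref{lem:Krull-Schmidt}) and indecomposability of $\scF$ must be $0$ or $\id$, forcing $\scF_1 = 0$ or $\scF_2 = 0$, a contradiction. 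I expect the main obstacle to be the careful bookkeeping of which parity (even vs.\ odd, and the attendant shift by $1$) each term in the recollement triangles carries, so that all the relevant $\Hom$-groups genuinely fall under the hypotheses of the first assertion; once the shift conventions and the roles of $i$ and $j$ are pinned down this is routine, exactly as in~\cite{jmw}.
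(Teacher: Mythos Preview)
Your proposal is correct and follows exactly the approach the paper indicates: the paper does not give a proof but explicitly says to repeat the arguments of \cite[Corollary~2.8 and Proposition~2.11]{jmw} word-for-word, using Lemma~\ref{lem:parity-vanishing} for the single-stratum case and the recollement triangles of~\S\ref{ss:IW-Smith} for the induction. One small cleanup for the second assertion: rather than claiming the idempotent $e$ lifts to an \emph{idempotent} (which surjectivity alone does not give), simply note that since $\End(\scF)$ is local (Lemma~\ref{lem:Krull-Schmidt}) and the restriction map $\End(\scF) \to \End(j^*_\Smith\scF)$ is surjective, the target is local or zero, hence $j^*_\Smith\scF$ is indecomposable or zero.
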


Next, we define the \emph{support} of an object $\scF \in \Smith_\IW(Y,\bk)$ as the closure of the union of the strata $(\Gr_{G,\lambda}^+)^\varpi$ where $\lambda \in \bX^\vee_{++}$ is such that $(\Gr_{G,\lambda}^+)^\varpi \subset Y$ and $(j_\lambda^{+,\varpi})^*_\Smith \scF$ or $(j_\lambda^{+,\varpi})^!_\Smith \scF$ is nonzero. The following claim is the analogue of~\cite[Theorem~2.12]{jmw}.

\begin{prop}
\label{prop:classification-parity}
If $\scF \in \Smith_\IW(Y,\bk)$ is even (resp.~odd), nonzero, and indecomposable, then there exists exactly one $\lambda \in \bX^\vee_{++}$ such that $(\Gr_{G,\lambda}^+)^\varpi$ is open in the support of $\scF$.

Moreover, for any $\lambda \in \bX^\vee_{++}$ such that $(\Gr_{G,\lambda}^+)^\varpi \subset Y$, there exists at most one indecomposable even, resp.~odd, object $\scF$ in $\Smith_\IW(Y,\bk)$ such that $(\Gr_{G,\lambda}^+)^\varpi$ is open in the support of $\scF$ and $(j^{+,\varpi}_\lambda)^* \scF \cong \scL_\Smith^\lambda$, resp.~$(j^{+,\varpi}_\lambda)^* \scF \cong \scL_\Smith^\lambda[1]$.
\end{prop}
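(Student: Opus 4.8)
The plan is to follow the proof of the classification of indecomposable parity complexes in~\cite{jmw}, which adapts verbatim to $\Smith_\IW$ once the one--stratum parity vanishing of Lemma~\ref{lem:parity-vanishing} and the Krull--Schmidt property of Lemma~\ref{lem:Krull-Schmidt} are in place. I will treat only the even case, the odd case being identical after applying $[1]$, and I may assume $Y$ is a closed finite union of $\Iw_\ell^+$-orbits (the general case, and the case of $(\Gr_G)^\varpi$, follow by passing to supports and taking direct limits, using full faithfulness of pushforward along closed embeddings of unions of orbits). The homological input I would first isolate is: for $\scE$ $*$-even and $\scG$ $!$-even in $\Smith_\IW(Y,\bk)$, and $j : U \hookrightarrow Y$ the inclusion of an open union of $\Iw_\ell^+$-orbits with closed complement $i : Z \hookrightarrow Y$, the restriction map
\[
\Hom_{\Smith_\IW(Y,\bk)}(\scE,\scG) \longrightarrow \Hom_{\Smith_\IW(U,\bk)}(j^*\scE, j^*\scG)
\]
is surjective. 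This follows by applying $\Hom(\scE,-)$ to the recollement triangle $i^\Smith_! i^!_\Smith \scG \to \scG \to j^\Smith_* j^*_\Smith \scG \triright$ from~\S\ref{ss:IW-Smith}: the term following the restriction map is $\Hom(i^*\scE, i^!\scG[1])$, and $i^*\scE$ is $*$-even while $i^!\scG[1]$ is $!$-odd, so this vanishes by Lemma~\ref{lem:Hom-vanishing-even-odd}. Note also that on a single orbit $(\Gr^+_{G,\lambda})^\varpi$ an even object restricts to a direct sum of copies of $\scL_\Smith^\lambda$ with no shifts, using $*$-evenness together with Lemma~\ref{lem:shift-Smith}.

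For the first assertion, let $\scF$ be nonzero, even and indecomposable. Its support $W:=\supp\scF$ is a nonempty closed union of orbits, hence has at least one orbit open in it; let $U=\bigsqcup_{\lambda\in S}(\Gr^+_{G,\lambda})^\varpi$ be the union of all such. Replacing $Y$ by $W$ (harmless by full faithfulness of closed pushforward), the embedding $j:U\hookrightarrow Y$ is open and $j^*\scF=\bigoplus_{\lambda\in S}(\scL_\Smith^\lambda)^{\oplus n_\lambda}$ with each $n_\lambda\geq 1$, so by Lemma~\ref{lem:parity-vanishing} we have $\End(j^*\scF)\cong\prod_{\lambda\in S}\mathrm{M}_{n_\lambda}(\bk)$. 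By Lemma~\ref{lem:Krull-Schmidt} and Proposition~\ref{prop:Hom-finite} the ring $\End(\scF)$ is local, and since $j^*$ is a functor the input above exhibits a surjective ring homomorphism $\End(\scF)\twoheadrightarrow\End(j^*\scF)$; as any nonzero quotient of a local ring is local, $\End(j^*\scF)$ is local, which forces $|S|=1$.

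For the second assertion, suppose $\scF_1,\scF_2$ are indecomposable even with $(\Gr^+_{G,\lambda})^\varpi$ open in their supports and $(j^{+,\varpi}_\lambda)^*_\Smith\scF_i\cong\scL_\Smith^\lambda$. By the first part each support equals $\overline{(\Gr^+_{G,\lambda})^\varpi}$ (a closed union of orbits with a unique maximal orbit is the closure of that orbit), so after replacing $Y$ by this set the orbit $(\Gr^+_{G,\lambda})^\varpi$ is open in $Y$. Applying the homological input to $(\scF_1,\scF_2)$ and to $(\scF_2,\scF_1)$ produces $\phi:\scF_1\to\scF_2$ and $\psi:\scF_2\to\scF_1$ whose restrictions to $(\Gr^+_{G,\lambda})^\varpi$ are $\mathrm{id}_{\scL_\Smith^\lambda}$; then $\psi\phi\in\End(\scF_1)$ restricts to the identity. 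The restriction map $\End(\scF_1)\to\End(\scL_\Smith^\lambda)=\bk$ is surjective onto a field, so its kernel is the unique maximal (two-sided) ideal, i.e.\ the radical, of the local ring $\End(\scF_1)$; hence $\psi\phi$ is a unit, and likewise $\phi\psi$, so $\phi$ is an isomorphism.

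The only point requiring genuine care is the step establishing the homological input: one must check that the $*$- and $!$-restrictions and corestrictions entering the recollement triangles preserve the $*$-even, resp.\ $!$-even, conditions (closed pull-back preserves $*$-parity, $!$-pull-back along a closed embedding preserves $!$-parity, and pull-back along an open embedding of unions of orbits preserves both), so that Lemma~\ref{lem:Hom-vanishing-even-odd} can be invoked with the correct parities, and one must keep track of the shift $[1]$ throughout. Everything else is elementary ring theory combined with Krull--Schmidt and the parity vanishing of Lemma~\ref{lem:parity-vanishing}, exactly as in~\cite{jmw}.
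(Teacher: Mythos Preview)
Your proof is correct and follows exactly the approach the paper intends: the paper does not give a proof but states that the argument of~\cite[Theorem~2.12]{jmw} goes through verbatim once Lemma~\ref{lem:parity-vanishing} and the recollement triangles of~\S\ref{ss:IW-Smith} are available, and this is precisely what you have written out. The surjectivity of restriction for $*$-even source and $!$-even target, the local-ring argument forcing a unique open stratum, and the lifting of the identity to produce an isomorphism are all the standard steps from~\cite{jmw}.
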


\subsection{Comparison of parity objects in \texorpdfstring{$\Db_\IW(\Gr_G,\bk)$}{DbGr} and \texorpdfstring{$\Smith_\IW((\Gr_G)^\varpi,\bk)$}{SmithGr}}
\label{ss:comparison-IW-Sm}

Proposition~\ref{prop:classification-parity} implies that for any $\lambda \in \bX^\vee_{++}$ there exists at most one indecomposable even, resp.~odd, object in $\Smith_\IW((\Gr_G)^\varpi,\bk)$ in the support of which $(\Gr_{G,\lambda}^+)^\varpi$ is open, and whose restriction to $(\Gr_{G,\lambda}^+)^\varpi$ is $\scL_\Smith^\lambda$, resp.~$\scL_\Smith^\lambda[1]$. If it exists (which, as we shall see very soon, is always the case), this object will be denoted $\scE^{\Smith,0}_\lambda$, resp.~$\scE^{\Smith,1}_\lambda$. (Of course, as soon as one of these objects exists the other one exists also, and we have $\scE^{\Smith,1}_\lambda \cong \scE^{\Smith,0}_\lambda[1]$.) With this notation, any indecomposable object in $\Smith^{\mathrm{par}}_\IW((\Gr_G)^\varpi,\bk)$ is isomorphic to an object $\scE^{\Smith,0}_\lambda$ or $\scE^{\Smith,1}_\lambda$, and
Lemma~\ref{lem:Krull-Schmidt} implies that any object of $\Smith^{\mathrm{par}}_\IW((\Gr_G)^\varpi,\bk)$ is a direct sum of such objects (in an essentially unique way).

Recall that the connected components of $(\Gr_G)^\varpi$ are the subvarieties $\Gr_{G,(\lambda)}$ with $\lambda \in (-\overline{\mathbf{a}}_\ell) \cap \bX^\vee$, see~\eqref{eqn:fixed-points-conn-comp}.
Of course, each such connected component is contained in a connected component of $\Gr_G$. Recall also (see~\S\ref{ss:def-parity}) that the dimensions of the orbits $\Gr_{G,\mu}^+$ with $\mu \in \bX^\vee_{++}$ contained in a given connected component of $\Gr_G$ are of constant parity. We set $\mathsf{p}(\lambda)=0$, resp.~$\mathsf{p}(\lambda)=1$, if all these orbits 
contained in the connected component containing $\Gr_{G,(\lambda)}$ are even-dimensional, resp.~odd-dimensional. We then denote by $\Smith^{\natural}_\IW((\Gr_G)^\varpi,\bk)$ the full subcategory of $\Smith_\IW((\Gr_G)^\varpi,\bk)$ whose objects are those whose restriction to $\Gr_{G,(\lambda)}$ is even if $\mathsf{p}(\lambda)=0$, and odd if $\mathsf{p}(\lambda)=1$.

The following statement is 
the crux of this paper.
(This statement is equivalent to Theorem~\ref{thm:main1} in the introduction since perverse parity objects in $\Db_{\IW,\Gm}(\Gr_G,\bk)$ are the same as images of tilting perverse sheaves under the equivalence of Lemma~\ref{lem:IW-Gm-Perv}; see~\S\ref{ss:def-parity}.)

\begin{thm}
\label{thm:i!*-parities}
The composition
\[
\Perv_\IW(\Gr_G,\bk) \xrightarrow[\sim]{\text{Lemma~\ref{lem:IW-Gm-Perv}}} \Perv_{\IW,\Gm}(\Gr_G,\bk) \xrightarrow{i^{!*}_{\Gr_G}} \Smith_\IW((\Gr_G)^\varpi,\bk)
\]
restricts to an equivalence of categories
\begin{equation*}
\Tilt_\IW(\Gr_G,\bk) \to \Smith^{\natural}_\IW((\Gr_G)^\varpi,\bk).
\end{equation*}
Moreover, the objects $\scE^{\Smith,0}_\lambda$ and $\scE^{\Smith,1}_\lambda$ exist for any $\lambda \in \bX^\vee_{++}$.
\end{thm}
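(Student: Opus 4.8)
The strategy is to combine the tilting-theoretic description of $\Perv_\IW(\Gr_G,\bk)$ with the parity-complex formalism in the Smith category developed in this section. First I would recall that by Lemma~\ref{lem:IW-Gm-Perv} we may work in $\Perv_{\IW,\Gm}(\Gr_G,\bk)$, and that a perverse sheaf $\scF$ there is tilting if and only if $(j_\lambda^+)^*\scF$ and $(j_\lambda^+)^!\scF$ are perverse (direct sums of shifts of $\LAS^\lambda$) for all $\lambda$; since the relevant orbit dimensions have constant parity on each connected component of $\Gr_G$, such $\scF$ is moreover $*$-even or $*$-odd on each component, i.e.\ the tilting objects are exactly the self-dual parity objects in $\Db_{\IW,\Gm}(\Gr_G,\bk)$ lying in the appropriate parity class. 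The key input is Proposition~\ref{prop:i!*-pushpull}: since $i^{!*}_{\Gr_G}$ commutes with $*$- and $!$-pullback along locally closed embeddings of unions of orbits, and since $i^{!*}_{\Gr_{G,\lambda}^+}(\LAS^\lambda)=\scL_\Smith^\lambda$ by definition, the functor $i^{!*}_{\Gr_G}$ sends a $*$-even (resp.\ $!$-even, resp.\ $*$-odd, $!$-odd) object of $\Db_{\IW,\Gm}(\Gr_G,\bk)$ to a $*$-even (resp.\ $!$-even, etc.) object of $\Smith_\IW((\Gr_G)^\varpi,\bk)$. In particular it carries $\Tilt_\IW(\Gr_G,\bk)$ into $\Smith^\natural_\IW((\Gr_G)^\varpi,\bk)$.

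Next I would prove full faithfulness of the restricted functor. Both source and target are Krull--Schmidt additive categories (for the source, via the equivalence~\eqref{eqn:equiv-Tilt-IW} and standard highest-weight theory; for the target, by Lemma~\ref{lem:Krull-Schmidt}), so it suffices to compute $\Hom$'s. I would argue by induction on the support, using the recollement triangles available both in $\Db_{\IW,\Gm}$ and in $\Smith_\IW$ (compatible via Proposition~\ref{prop:i!*-pushpull}), to reduce computing $\Hom_{\Db_{\IW,\Gm}(\Gr_G,\bk)}(\scF,\scG)$ and $\Hom_{\Smith_\IW}(i^{!*}\scF,i^{!*}\scG)$ for parity objects of opposite-compatible parity to the one-orbit case. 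On a single orbit $(\Gr^+_{G,\lambda})^\varpi$, Lemma~\ref{lem:parity-vanishing} gives exactly $\bk$ in even degrees and $0$ in odd degrees for $\scL_\Smith^\lambda$, matching the $\Gm$-equivariant cohomology of a point $\bk[x]$ modulo the identification $\id\cong[2]$ of Lemma~\ref{lem:shift-Smith}; comparing the two spectral sequences (or, more concretely, the two filtrations by support) degree by degree, using that parity forces degeneration, yields the isomorphism on $\Hom$-spaces. (Here one uses the dévissage: a $*$-even and a $!$-even object have $\Hom$ concentrated in even degrees on both sides, by Lemma~\ref{lem:Hom-vanishing-even-odd} and its analogue in $\Db_\IW$.)

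For essential surjectivity, together with the final assertion that $\scE^{\Smith,0}_\lambda$ and $\scE^{\Smith,1}_\lambda$ exist, I would proceed by induction on $\lambda$ in the closure order on $\bX^\vee_{++}$. Given $\lambda$, consider $i^{!*}_{\Gr_G}(\scT^\IW_\lambda)$, where $\scT^\IW_\lambda\cong\scE^\IW_\lambda$ is the indecomposable tilting/parity object; it is an object of $\Smith^\natural_\IW((\Gr_G)^\varpi,\bk)$ supported on (the $\varpi$-fixed points of) $\overline{\Gr^+_{G,\lambda}}$, whose restriction to the open orbit $(\Gr^+_{G,\lambda})^\varpi$ is $\scL_\Smith^\lambda$ (up to the parity shift $\mathsf{p}(\lambda)$), since $i^{!*}$ commutes with the open restriction. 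By full faithfulness its endomorphism ring equals $\End(\scT^\IW_\lambda)$, which is local, so $i^{!*}_{\Gr_G}(\scT^\IW_\lambda)$ is indecomposable; by Proposition~\ref{prop:classification-parity} this proves that the object $\scE^{\Smith,\mathsf{p}(\lambda)}_\lambda$ exists and equals $i^{!*}_{\Gr_G}(\scT^\IW_\lambda)$. Since every indecomposable object of $\Smith^\natural_\IW((\Gr_G)^\varpi,\bk)$ is one of these $\scE^{\Smith,\bullet}_\lambda$ (by the discussion preceding the theorem together with Lemma~\ref{lem:Krull-Schmidt}), and since the indecomposable tilting objects $\scT^\IW_\lambda$ are indexed by the same set $\bX^\vee_{++}$, full faithfulness plus Krull--Schmidt gives essential surjectivity, hence the equivalence.

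\textbf{Main obstacle.} The delicate point is the $\Hom$-comparison: one must show that applying $i^{!*}_{\Gr_G}$ induces an \emph{isomorphism}, not merely a well-defined map, on morphism spaces between parity objects. The subtlety is that $i^{!*}$ is only a composite of a genuine sheaf functor with a Verdier quotient, so one has to control morphisms in the quotient category; the argument rests on Lemma~\ref{lem:Hom-Smith-pt}/Lemma~\ref{lem:perf-fd-cohom} (analysis of the Smith category of a point) to see that the quotient does not kill or create morphisms between parity objects, combined with a clean dévissage so that everything reduces to the one-orbit computation. Getting the bookkeeping of parities and the shift $\id\cong[2]$ exactly right, so that the $\Hom$-spaces on the two sides genuinely match in each degree, is the part that requires care.
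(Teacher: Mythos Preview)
Your overall architecture is sound and matches the paper: preservation of parity via Proposition~\ref{prop:i!*-pushpull}, then full faithfulness, then essential surjectivity by showing $i^{!*}_{\Gr_G}(\scT^\IW_\lambda)$ is indecomposable with the right open stratum. The essential surjectivity step is essentially identical to the paper's.

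The substantive difference lies in the full faithfulness argument, and here your proposal is noticeably less efficient than the paper's, and in places confused. You propose a parity-style d\'evissage by support, reducing to a one-orbit comparison where you try to ``match the $\Gm$-equivariant cohomology of a point $\bk[x]$ modulo the identification $\id\cong[2]$''. But this matching is not straightforward: on a single orbit, $\Hom^{\bullet}_{\Db_{\IW,\Gm}}(\LAS^\lambda,\LAS^\lambda)\cong\bk[x]$ lives in nonnegative even degrees, whereas $\Hom_{\Smith_\IW}(\scL^\lambda_\Smith,\scL^\lambda_\Smith[n])$ is $\bk$ in \emph{every} even degree; these graded objects are not isomorphic, and the long exact sequences in your d\'evissage will drag in precisely these mismatched higher $\Hom$'s. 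One can make a parity d\'evissage work, but it requires carefully tracking which degree is being compared on each side, and your sketch does not do this.

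The paper's route is cleaner and avoids the bookkeeping entirely: it exploits the highest-weight structure. Since $i^{!*}_{\Gr_G}$ commutes with $!$- and $*$-pushforward (Proposition~\ref{prop:i!*-pushpull}), it sends $\Delta^\IW_\lambda=(j^+_\lambda)_!\LAS^\lambda[\dim]$ to $(j^{+,\varpi}_\lambda)^\Smith_!\scL^\lambda_\Smith[\dim]$ and $\nabla^\IW_\mu$ to $(j^{+,\varpi}_\mu)^\Smith_*\scL^\mu_\Smith[\dim]$. By adjunction, $\Hom$ from a $!$-extension to a $*$-extension reduces immediately to a single orbit, where $\Hom(\Delta^\IW_\lambda,\nabla^\IW_\mu[n])$ and $\Hom_{\Smith}(i^{!*}\Delta^\IW_\lambda,i^{!*}\nabla^\IW_\mu[n])$ both vanish for $\lambda\neq\mu$, and for $\lambda=\mu$ equal $\bk$ in degree $0$ and vanish in degree $1$ (the latter by Lemma~\ref{lem:parity-vanishing}). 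The degree-$1$ vanishing gives the $\Ext^1$ acyclicity needed to induct on the lengths of the standard (resp.\ costandard) filtrations of tilting objects, yielding
\[
\Hom_{\Perv_\IW(\Gr_G,\bk)}(\scF,\scG)\;\simto\;\Hom_{\Smith_\IW((\Gr_G)^\varpi,\bk)}(i^{!*}_{\Gr_G}\scF,\,i^{!*}_{\Gr_G}\scG)
\]
whenever $\scF$ is standard-filtered and $\scG$ is costandard-filtered. This is the ``Be{\u\i}linson's lemma'' argument alluded to in the introduction (\S\ref{ss:intro-main}), and it sidesteps entirely the delicate parity bookkeeping you flagged as the main obstacle.
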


\begin{proof}
It easily follows from Proposition~\ref{prop:i!*-pushpull} and the considerations above that the functor $i^{!*}_{\Gr_G}$ sends even, resp.~odd, objects to even, resp.~odd, objects. Therefore, since any indecomposable object in $\Tilt_\IW(\Gr_G,\bk)$ is either even or odd (see~\eqref{eqn:TIW-EIW}), our functor restricts to a functor
\[
\Tilt_\IW(\Gr_G,\bk) \to \Smith^{\natural}_\IW((\Gr_G)^\varpi,\bk).
\]
Next, standard arguments (based in particular on Lemma~\ref{lem:Hom-vanishing-even-odd}) allow to prove by induction on the length of the filtrations that, for any $\scF,\scG$ in $\Perv_\IW(\Gr_G,\bk)$ such that $\scF$ admits a standard filtration and $\scG$ admits a costandard filtration, this functor induces an isomorphism
\[
\Hom_{\Perv_\IW(\Gr_G,\bk)}(\scF,\scG) \simto \Hom_{\Smith_\IW((\Gr_G)^\varpi,\bk)}(i^{!*}_{\Gr_G}(\scF),i^{!*}_{\Gr_G}(\scG)).
\]
(Here, the crucial case when $\scF=\Delta^\IW_\lambda$ and $\scG=\nabla^\IW_\lambda$ for some $\lambda \in \bX^\vee_{++}$ is given by Lemma~\ref{lem:parity-vanishing}.) Full faithfulness of our functor follows.

For any $\lambda \in \bX^\vee_{++}$, the object $i_{\Gr_G}^{!*}(\scE^\IW_{\lambda,\Gm})$ is indecomposable (by full faithfulness) and either even or odd. Moreover, since $\Gr_{G,\lambda}^+$ is open in the support of $\scE^\IW_{\lambda,\Gm}$ we see that $(\Gr_{G,\lambda}^+)^\varpi$ is open in the support of $i_{\Gr_G}^{!*}(\scE^\IW_{\lambda,\Gm})$. Therefore the objects $\scE^{\Smith,0}_\lambda$ and $\scE^{\Smith,1}_\lambda$ exist, and we have
\begin{equation}
\label{eqn:i!*-parities}
i_{\Gr_G}^{!*}(\scE^\IW_{\lambda,\Gm}) \cong 
\begin{cases}
\scE^{\Smith,0}_\lambda & \text{if $\dim(\Gr_{G,\lambda}^+)$ is even;} \\
\scE^{\Smith,1}_\lambda & \text{if $\dim(\Gr_{G,\lambda}^+)$ is odd.}
\end{cases}
\end{equation}
These considerations show that our functor is essentially surjective, hence an equivalence of categories.
\end{proof}

\subsection{Comparison of parity objects on \texorpdfstring{$(\Gr_G)^\varpi$}{Grpi} and in the Smith category}
\label{ss:comparison}


Now we consider the Iwahori--Whittaker categories
\[
 \Db_{\IW_\ell}((\Gr_G)^\varpi,\bk) \quad \text{and} \quad \Db_{\IW_\ell,\Gm}((\Gr_G)^\varpi,\bk),
\]
and the quotient functor
\[
 \mathsf{Q} : \Db_{\IW_\ell,\Gm}((\Gr_{G})^{\varpi},\bk) \to \Smith_{\IW}((\Gr_G)^{\varpi},\bk).
\]
The theory of parity complexes (as in~\S\ref{ss:def-parity}) of course also applies in the categories $\Db_{\IW_\ell}((\Gr_G)^\varpi,\bk)$ and $\Db_{\IW_\ell,\Gm}((\Gr_G)^\varpi,\bk)$;
once again the indecomposable parity objects in these categories are classified (up to cohomological shift) by the $\Iw_\ell^+$-orbits in $(\Gr_G)^{\varpi}$ which support a nonzero Iwahori--Whittaker local system (i.e.~by $\bX^\vee_{++}$), and the forgetful functor
\begin{equation*}
 \For_{\Gm} : \Db_{\IW_\ell,\Gm}((\Gr_{G})^{\varpi},\bk) \to \Db_{\IW_\ell}((\Gr_{G})^{\varpi},\bk)
\end{equation*}
sends indecomposable parity complexes to indecomposable parity complexes. In particular, this functor induces a bijection between the sets of isomorphism classes of indecomposable parity objects in $\Db_{\IW_\ell,\Gm}((\Gr_{G})^{\varpi},\bk)$ and in $\Db_{\IW_\ell}((\Gr_{G})^{\varpi},\bk)$; up to replacing the category of parity objects in $\Db_{\IW_\ell}((\Gr_{G})^{\varpi},\bk)$ by an equivalent category (which we will omit from notation), one can therefore consider whenever convenient that the objects in these categories are the same.

The situation in this setting is even more favorable than in that of~\S\ref{ss:def-parity}, due to the following property. (Here, if $\mathsf{D}$ is a triangulated category, we write $\Hom^\bullet_{\mathsf{D}}(-,-)$ for $\bigoplus_{n \in \Z} \Hom_{\mathsf{D}}(-,-[n])$.)

\begin{lem}
\label{lem:Hom-parity-For-fixed-points}
 For any parity complexes $\scE,\scE'$ in $\Db_{\IW_\ell,\Gm}((\Gr_{G})^{\varpi},\bk)$, there exists a canonical isomorphism of graded $\bk$-vector spaces
 \begin{multline*}
  \Hom^\bullet_{\Db_{\IW_\ell,\Gm}((\Gr_{G})^{\varpi},\bk)}(\scE,\scE') \\
  \cong \coH^\bullet_{\Gm}(\mathrm{pt};\bk) \otimes_\bk \Hom^{\bullet}_{\Db_{\IW_\ell}((\Gr_{G})^{\varpi},\bk)}(\For_{\Gm}(\scE),\For_{\Gm}(\scE')).
 \end{multline*}
Moreover, these isomorphisms are compatible with composition in the obvious way.
\end{lem}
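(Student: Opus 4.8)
The plan is to prove the isomorphism by reducing to the case where the $\Gm$-action on the fixed-point locus is trivialized, and then invoking equivariant formality, which holds thanks to parity vanishing. The first key observation is that the loop rotation $\Gm$-action on $(\Gr_G)^\varpi$ factors through the quotient $\Gm/\varpi \cong \Gm$, because $\varpi = \roots_\ell$ acts trivially on $(\Gr_G)^\varpi$ by definition of the fixed points. As a consequence, the $\Gm$-equivariant structure on objects of $\Db_{\IW_\ell,\Gm}((\Gr_G)^\varpi,\bk)$ carries the same information as a $\Gm/\varpi$-equivariant structure; this is a statement about the equivariant derived category of the fixed-point ind-scheme, and can be checked orbit-by-orbit (each $\Iw_\ell^+$-orbit being an affine space with a linear $\Gm$-action that factors through $\Gm/\varpi$).

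Next, I would reduce the statement to the case of a single orbit via the recollement distinguished triangles for closed/open decompositions. For parity complexes $\scE, \scE'$, the key vanishing input is Lemma~\ref{lem:Hom-vanishing-even-odd} (or rather the analogous statement for $\Db_{\IW_\ell,\Gm}((\Gr_G)^\varpi,\bk)$, which follows by the same parity argument): $\Hom$-spaces between a $*$-even and a $!$-odd object vanish. Combined with the observation that on each orbit $(\Gr_{G,\lambda}^+)^\varpi$ (with $\lambda \in \bX^\vee_{++}$) one has an equivalence with $\Db_{\Gm,c}(\pt,\bk)$ matching the local system with $\ubk_\pt$, and that $\coH^\bullet_{\Gm}(\pt,\bk) = \bk[x]$ with $x$ in degree $2$, one gets the graded freeness of $\Hom^\bullet$ over $\coH^\bullet_{\Gm}(\pt;\bk)$ by the usual spectral-sequence/filtration argument from~\cite{jmw}: the parity hypotheses force all the relevant $\Ext$-spectral sequences to degenerate, so that $\Hom^\bullet$ between parity objects is a free $\coH^\bullet_{\Gm}(\pt;\bk)$-module. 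Since the $\Gm$-action factors through $\Gm/\varpi = \Gm$, the equivariant parameter acts ``the same way'' as in the classical (non-Smith) setting, and forgetting the $\Gm$-equivariance corresponds exactly to base change $- \otimes_{\coH^\bullet_{\Gm}(\pt;\bk)} \bk$ along $x \mapsto 0$; this yields the desired canonical isomorphism
\[
\Hom^\bullet_{\Db_{\IW_\ell,\Gm}}(\scE,\scE') \cong \coH^\bullet_{\Gm}(\pt;\bk) \otimes_\bk \Hom^\bullet_{\Db_{\IW_\ell}}(\For^{\Gm}(\scE),\For^{\Gm}(\scE')).
\]
Compatibility with composition is then formal, since all the identifications (the single-orbit equivalences, the recollement triangles, the base-change maps) are natural.

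The main obstacle will be making precise the claim that the $\Gm$-equivariant and non-equivariant $\Hom$-spaces differ exactly by a free module over $\coH^\bullet_\Gm(\pt;\bk)$ in the Smith/étale setting: one needs to run the parity-complex formality argument of~\cite{jmw} in the $\Gm$-equivariant étale derived category $\Db_{\IW_\ell,\Gm}((\Gr_G)^\varpi,\bk)$ built in~\S\ref{ss:equiv-Db}--\S\ref{ss:Gm-equ-Db}, checking that the standard distinguished triangles, the parity vanishing on a single orbit (Lemma~\ref{lem:parity-vanishing} in its $\Db_{\IW_\ell,\Gm}$-incarnation, i.e.~before passing to the Smith quotient), and the $\coH^\bullet_\Gm(\pt;\bk)$-module structures all interact correctly. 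Once one knows that these ingredients transfer verbatim — which they do, the only subtlety being the bookkeeping of the $\Gm/\varpi = \Gm$ reduction — the argument is the standard one, and I would present it by induction on the number of orbits, treating the base case via the point computation and the inductive step via recollement.
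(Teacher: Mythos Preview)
Your argument establishes that $\Hom^\bullet_{\Db_{\IW_\ell,\Gm}}(\scE,\scE')$ is free over $\coH^\bullet_{\Gm}(\pt;\bk)$ with quotient $\Hom^\bullet_{\Db_{\IW_\ell}}(\For^{\Gm}(\scE),\For^{\Gm}(\scE'))$; this is the standard equivariant formality for parity complexes and is correct. But freeness plus an identification of the quotient only yields an \emph{abstract} isomorphism $M \cong \bk[x] \otimes_\bk (M/xM)$, obtained by lifting a graded basis. Such a lift is not canonical, and there is no reason for an arbitrary choice to be compatible with composition. Since the compatibility with composition is exactly what is needed in Proposition~\ref{prop:Hom-parity-Q} (to transport the ring structure on $\End$ and conclude that $\mathsf{Q}$ preserves indecomposables), this is a genuine gap, not a cosmetic one. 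Your claim that ``compatibility with composition is then formal'' is not justified by what you have written.

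The paper closes this gap by exploiting the factorisation through $\Gm' := \Gm/\varpi$ more precisely than you do. The point is not merely that the action factors, but that the induced map $\coH^\bullet_{\Gm'}(\pt;\bk) \to \coH^\bullet_{\Gm}(\pt;\bk)$ sends the equivariant parameter $x'$ to $\ell x = 0$ because $\bk$ has characteristic $\ell$. Hence $\coH^\bullet_{\Gm}(\pt;\bk)$ is the trivial $\coH^\bullet_{\Gm'}(\pt;\bk)$-module, and the canonical base-change isomorphism
\[
\coH^\bullet_{\Gm}(\pt;\bk) \otimes_{\coH^\bullet_{\Gm'}(\pt;\bk)} \Hom^\bullet_{\Gm'}(\scF,\scF') \;\cong\; \coH^\bullet_{\Gm}(\pt;\bk) \otimes_\bk \bigl(\bk \otimes_{\coH^\bullet_{\Gm'}(\pt;\bk)} \Hom^\bullet_{\Gm'}(\scF,\scF')\bigr)
\]
holds on the nose, with no splitting required. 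Two applications of equivariant formality (once for $\Res^{\Gm'}_{\Gm}$, once for $\For^{\Gm'}$) then identify the left side with $\Hom^\bullet_{\Gm}$ and the right factor with the non-equivariant $\Hom^\bullet$, all via canonical restriction maps that are manifestly compatible with composition. You noticed the factorisation through $\Gm/\varpi$ but did not extract this characteristic-$\ell$ vanishing, which is the crux.
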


\begin{rmk} 
  The moral of this lemma is that the $\Gm$-action on
  $(\Gr_{G})^{\varpi}$ ``looks like the trivial action'' as it factors through the map $t \mapsto t^\ell$ and our
  coefficients are of characteristic $\ell$.
\end{rmk}

\begin{proof}
 By definition, the $\Gm$-action on $(\Gr_G)^\varpi$ factors through the quotient
 \[
 \Gm \to \Gm/\varpi = \Gm, \quad t \mapsto t^\ell.
 \]
 In other words, if we denote by $\Gm'$ another copy of $\Gm$, then there exists an action of $\Gm'$ on $(\Gr_G)^\varpi$ from which the $\Gm$-action we want to consider is deduced via the morphism $\Gm \to \Gm'$ defined by $t \mapsto t^\ell$. The $\Gm$-action on $\Iwuell^+$ is similarly obtained from an action of $\Gm'$, so that one can consider the category $\Db_{\IW_\ell,\Gm'}((\Gr_G)^\varpi,\bk)$ defined in the obvious way. With this notation introduced, the same considerations as in~\cite[Lemma~2.2]{mr} show that for any parity complexes $\scF,\scF'$ in $\Db_{\IW_\ell,\Gm'}((\Gr_G)^\varpi,\bk)$, the restriction functor
 \begin{equation}
 \label{eqn:Res-Gm'-Gm}
  \Res^{\Gm'}_{\Gm} : \Db_{\IW_\ell,\Gm'}((\Gr_G)^\varpi,\bk) \to \Db_{\IW_\ell,\Gm}((\Gr_G)^\varpi,\bk)
 \end{equation}
induces an isomorphism of graded $\bk$-vector spaces
\begin{multline*}
 \coH^\bullet_{\Gm}(\mathrm{pt};\bk) \otimes_{\coH^\bullet_{\Gm'}(\mathrm{pt};\bk)} \Hom^{\bullet}_{\Db_{\IW_\ell, \Gm'}((\Gr_G)^\varpi,\bk)}(\scF,\scF') \\
 \simto \Hom^\bullet_{\Db_{\IW_\ell,\Gm}((\Gr_G)^\varpi,\bk)}(\Res^{\Gm'}_{\Gm}(\scF),\Res^{\Gm'}_{\Gm}(\scF')).
\end{multline*}
Now since $\bk$ has characteristic $\ell$, the morphism $\coH^\bullet_{\Gm'}(\mathrm{pt};\bk) \to \coH^\bullet_{\Gm}(\mathrm{pt};\bk)$ induced by our morphism $\Gm \to \Gm'$ vanishes in positive degrees, so that we have
\begin{multline*}
 \coH^\bullet_{\Gm}(\mathrm{pt};\bk) \otimes_{\coH^\bullet_{\Gm'}(\mathrm{pt};\bk)} \Hom^{\bullet}_{\Db_{\IW_\ell, \Gm'}((\Gr_G)^\varpi,\bk)}(\scF,\scF') \\
 \cong \coH^\bullet_{\Gm}(\mathrm{pt};\bk) \otimes_\bk \left( \bk \otimes_{\coH^\bullet_{\Gm'}(\mathrm{pt};\bk)} \Hom^{\bullet}_{\Db_{\IW_\ell, \Gm'}((\Gr_G)^\varpi,\bk)}(\scF,\scF') \right).
\end{multline*}
As in~\cite[Lemma~2.2]{mr} the forgetful functor $\For_{\Gm'}$ induces an isomorphism
\begin{multline*}
  \bk \otimes_{\coH^\bullet_{\Gm'}(\mathrm{pt};\bk)} \Hom^{\bullet}_{\Db_{\IW_\ell, \Gm'}((\Gr_G)^\varpi,\bk)}(\scF,\scF') \\
  \simto  \Hom^{\bullet}_{\Db_{\IW_\ell}((\Gr_G)^\varpi,\bk)}(\For_{\Gm'}(\scF),\For_{\Gm'}(\scF')),
\end{multline*}
so that we finally obtain a canonical isomorphism
\begin{multline*}
 \coH^\bullet_{\Gm}(\mathrm{pt};\bk) \otimes_\bk  \Hom^{\bullet}_{\Db_{\IW_\ell}((\Gr_G)^\varpi,\bk)}(\For_{\Gm'}(\scF),\For_{\Gm'}(\scF')) \\
 \simto \Hom^\bullet_{\Db_{\IW_\ell,\Gm}((\Gr_G)^\varpi,\bk)}(\Res^{\Gm'}_{\Gm}(\scF),\Res^{\Gm'}_{\Gm}(\scF')).
\end{multline*}
To conclude it suffices to remark that the functor $\Res^{\Gm'}_{\Gm}$ from~\eqref{eqn:Res-Gm'-Gm} induces a (canonical) bijection between the isomorphism classes of parity complexes in the categories $\Db_{\IW_\ell,\Gm'}((\Gr_G)^\varpi,\bk)$ and $\Db_{\IW_\ell,\Gm}((\Gr_G)^\varpi,\bk)$; one can therefore replace $\Res^{\Gm'}_{\Gm}(\scF)$ and $\Res^{\Gm'}_{\Gm}(\scF')$ in these isomorphisms by general parity complexes in $\Db_{\IW_\ell,\Gm}((\Gr_G)^\varpi,\bk)$.
\end{proof}

It is clear from definitions that the functor $\mathsf{Q}$ sends parity complexes to parity complexes. 

\begin{prop}
\label{prop:Hom-parity-Q}
 For any complexes $\scE,\scE'$ in $\Db_{\IW_\ell,\Gm}((\Gr_G)^\varpi,\bk)$ which are either both even or both odd, there exis\-ts a canonical isomorphism of $\bk$-vector spaces
\[
  \Hom_{\Db_{\IW_\ell}((\Gr_G)^\varpi,\bk)}^\bullet(\For_{\Gm}(\scE),\For_{\Gm}(\scE'))
  \cong \Hom_{\Smith_{\IW}((\Gr_G)^{\varpi},\bk)}(\mathsf{Q}(\scE), \mathsf{Q}(\scE')).
\]
Moreover, these isomorphisms are compatible with composition in the obvious way.
\end{prop}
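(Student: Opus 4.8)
The plan is to prove the stronger, more precise statement that $\Hom_{\Smith_\IW((\Gr_G)^\varpi,\bk)}(\mathsf{Q}(\scE),\mathsf{Q}(\scE'))$ is the "specialization at $x=1$" of the graded $\coH^\bullet_{\Gm}(\pt;\bk)=\bk[x]$-module $\Hom^\bullet_{\Db_{\IW_\ell,\Gm}((\Gr_G)^\varpi,\bk)}(\scE,\scE')$, and then to feed in Lemma~\ref{lem:Hom-parity-For-fixed-points}. First I would recall that $\Smith_\IW((\Gr_G)^\varpi,\bk)$ is a Verdier quotient, so by the calculus of right fractions $\Hom_{\Smith}(\mathsf{Q}(\scE),\mathsf{Q}(\scE'))$ is the filtered colimit of $\Hom_{\Db_{\IW_\ell,\Gm}}(\scG,\scE')$ over morphisms $s\colon\scG\to\scE$ whose cone lies in $\Db_{\IW_\ell,\Gm}((\Gr_G)^\varpi,\bk)_{\varpi\perf}$. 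The key structural claim is that, when $\scE$ is a parity complex, the family $\mathrm{can}^{2N}[-2N]\colon\scE[-2N]\to\scE$ (for $N\ge 0$), obtained by tensoring with $\scE$ the canonical map $\underline{\bk}_{(\Gr_G)^\varpi}\to\underline{\bk}_{(\Gr_G)^\varpi}[2]$ of Lemma~\ref{lem:shift-Smith}, is cofinal in this system; note that each of these morphisms has $\varpi$-perfect cone, since the cone of $\mathrm{can}^2$ has perfect geometric stalks and a perfect complex tensored with a bounded complex is perfect. Granting cofinality, one obtains a canonical isomorphism
\[
\Hom_{\Smith}(\mathsf{Q}(\scE),\mathsf{Q}(\scE'))\cong\varinjlim_N\Hom_{\Db_{\IW_\ell,\Gm}}(\scE,\scE'[2N]),
\]
with transition maps given by composition with $\mathrm{can}^2$, i.e.\ by the action of $x$; this description is visibly compatible with composition.

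The main obstacle is the lemma underlying cofinality: for $\scE$ a parity complex and $C\in\Db_{\IW_\ell,\Gm}((\Gr_G)^\varpi,\bk)_{\varpi\perf}$, every element of $\Hom^\bullet_{\Db_{\IW_\ell,\Gm}}(\scE,C)$ is annihilated by a power of $x$. Indeed, to factor $\mathrm{can}^{2N}[-2N]$ through an arbitrary $s$ with $\varpi$-perfect cone $C_s$ one uses the triangle $C_s[-1]\to\scG\xrightarrow{s}\scE\to C_s$: the obstruction to factoring is the image of $\mathrm{can}^{2N}[-2N]$ in $\Hom_{\Db_{\IW_\ell,\Gm}}(\scE[-2N],C_s)$, which is $x^N$ times a fixed element and hence vanishes for $N\gg0$; the same torsion statement shows that two such factorizations agree after further composition with a power of $\mathrm{can}^2$. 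I would prove the torsion lemma by dévissage on the (finite) support of $\scE$: for a closed $\Iw^+_\ell$-orbit $i\colon Z\hookrightarrow\supp(\scE)$ with open complement $j$, the recollement triangle $j_!j^*\scE\to\scE\to i_*i^*\scE$ and the adjunctions $\Hom^\bullet(j_!j^*\scE,C)=\Hom^\bullet(j^*\scE,j^*C)$, $\Hom^\bullet(i_*i^*\scE,C)=\Hom^\bullet(i^*\scE,i^!C)$ reduce the claim to parity complexes supported on strictly smaller unions of orbits, using crucially that $j^*$ and $i^!$ preserve the $\varpi$-perfect subcategory (this is exactly what the proof of the functoriality lemma for $\Smith_\IW$ given above establishes; cf.\ also Lemma~\ref{lem:pushforward-perfect-stalks}). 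The base case, where $\scE$ is supported on a single orbit, reduces via transitivity of the $\Iwuell^+$-action (Lemma~\ref{lem:fixed-points-orbits}), under which that orbit category becomes $\Db_{\Gm,c}(\pt,\bk)$, to the statement that $\coH^\bullet_{\Gm}(\pt,C_0)$ is finite-dimensional for $C_0\in\Db_{\Gm,c}(\pt,\bk)_{\varpi\perf}$, which is Lemma~\ref{lem:perf-fd-cohom}.

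Finally I would evaluate the colimit. By Lemma~\ref{lem:Hom-parity-For-fixed-points} there is, compatibly with composition, an isomorphism of graded $\bk[x]=\coH^\bullet_{\Gm}(\pt;\bk)$-algebras
\[
\Hom^\bullet_{\Db_{\IW_\ell,\Gm}}(\scE,\scE')\cong\bk[x]\otimes_\bk\Hom^\bullet_{\Db_{\IW_\ell}}(\For^{\Gm}(\scE),\For^{\Gm}(\scE')),
\]
and, since $\scE$ and $\scE'$ have the same parity, the right-hand tensor factor is a finite-dimensional graded vector space concentrated in even degrees. Hence for $N\gg0$ the maps $\Hom^{2N}_{\Db_{\IW_\ell,\Gm}}(\scE,\scE')\xrightarrow{\cdot x}\Hom^{2N+2}_{\Db_{\IW_\ell,\Gm}}(\scE,\scE')$ are isomorphisms (their cokernel is $\Hom^{2N+2}_{\Db_{\IW_\ell}}(\For^{\Gm}(\scE),\For^{\Gm}(\scE'))=0$), and for such $N$ one has $\Hom^{2N}_{\Db_{\IW_\ell,\Gm}}(\scE,\scE')=\bigoplus_{k\ge0}x^k\otimes\Hom^{2N-2k}_{\Db_{\IW_\ell}}(\For^{\Gm}(\scE),\For^{\Gm}(\scE'))\cong\Hom^\bullet_{\Db_{\IW_\ell}}(\For^{\Gm}(\scE),\For^{\Gm}(\scE'))$ as ungraded $\bk$-vector spaces. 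Thus the colimit is canonically $\Hom^\bullet_{\Db_{\IW_\ell}}(\For^{\Gm}(\scE),\For^{\Gm}(\scE'))$ with its multiplication, which yields the proposition together with the asserted compatibility with composition. Beyond this bookkeeping, the one genuinely delicate point is the dévissage in the torsion lemma — in particular the verification that the exceptional pullbacks $i^!$ (and the open restriction $j^*$) keep us inside the subcategory of $\varpi$-perfect complexes.
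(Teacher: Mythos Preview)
Your proof is correct and follows essentially the same route as the paper: both identify $\Hom_{\Smith}(\mathsf{Q}(\scE),\mathsf{Q}(\scE'))$ with the specialization at $x=1$ of the $\bk[x]$-module $\Hom^\bullet_{\Db_{\IW_\ell,\Gm}}(\scE,\scE')$ via the isomorphism $\id\cong[2]$ of Lemma~\ref{lem:shift-Smith}, establish this by a recollement d\'evissage reducing to a single stratum (the paper cites Lemma~\ref{lem:parity-vanishing}, you cite the equivalent Lemma~\ref{lem:perf-fd-cohom}), and then apply Lemma~\ref{lem:Hom-parity-For-fixed-points}. The only difference is packaging: you unfold the Verdier-quotient $\Hom$ as a filtered colimit and prove cofinality of the $\mathrm{can}^{2N}$-system via your torsion lemma, whereas the paper constructs the comparison map directly and declares the d\'evissage ``standard.''
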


\begin{proof}
 Recall from~\cite[Corollary~2.8]{jmw} that since $\scE,\scE'$ are either both even or both odd,
 the graded $\bk$-vector space $\Hom_{\Db_{\IW_\ell,\Gm}((\Gr_G)^\varpi,\bk)}^\bullet(\scE,\scE')$
is concentrated in even degrees. Using Lemma~\ref{lem:shift-Smith} (and its proof), we see that the functor $\mathsf{Q}$ induces a canonical morphism
\[
 \Hom^\bullet_{\Db_{\IW_\ell,\Gm}((\Gr_G)^\varpi,\bk)}(\scE,\scE') \to \Hom_{\Smith_{\IW}((\Gr_G)^{\varpi},\bk)}(\mathsf{Q}(\scE), \mathsf{Q}(\scE'))
\]
which factors through a morphism
\[
\bk' \otimes_{\coH^{\bullet}_{\Gm}(\pt,\bk)} \Hom_{\Db_{\IW_\ell,\Gm}((\Gr_G)^\varpi,\bk)}^\bullet(\scE,\scE') \\
 \to \Hom_{\Smith_{\IW}((\Gr_G)^{\varpi},\bk)}(\mathsf{Q}(\scE), \mathsf{Q}(\scE')),
\]
where $\bk'$ means $\bk$ seen as an $\coH^{\bullet}_{\Gm}(\pt,\bk)$-module where $x$ acts by multiplication by $1$ under the identification~\eqref{eqn:eq-cohom-pt}. Standard arguments based on Lemma~\ref{lem:parity-vanishing}, Lemma~\ref{lem:Hom-vanishing-even-odd} and the distinguished triangles in the ``recollement'' setting show that the latter morphism is an isomorphism. The desired isomorphism follows, in view of Lemma~\ref{lem:Hom-parity-For-fixed-points}.
\end{proof}

\begin{cor}
\label{cor:Q-indec}
 The functor $\mathsf{Q}$ sends indecomposable parity complexes to indecomposable parity complexes.
\end{cor}

\begin{proof}
If $\scE$ is an indecomposable parity complex in $\Db_{\IW_\ell,\Gm}((\Gr_G)^\varpi,\bk)$, then as explained above the complex $\For_{\Gm}(\scE)$ is indecomposable, so that the ring
\[
 \Hom_{\Db_{\IW_\ell}((\Gr_G)^\varpi,\bk)}(\For_{\Gm}(\scE),\For_{\Gm}(\scE)) 
\]
is local. Since a finite-dimensional graded ring whose degree-$0$ part is local is itself local (see~\cite[Theorem~3.1]{gg}), it follows that the ring
\[
 \bigoplus_{n \in \Z} \Hom_{\Db_{\IW_\ell}((\Gr_G)^\varpi,\bk)}(\For_{\Gm}(\scE),\For_{\Gm}(\scE)[n]) 
\]
is also local. In view of Proposition~\ref{prop:Hom-parity-Q}, this implies that $\mathsf{Q}(\scE)$ is indecomposable. 
 \end{proof}

\section{Applications in representation theory of reductive algebraic groups}
\label{sec:applications-red-gps}

In this section, we finally use the constructions of Sections~\ref{sec:fixed-points-Gr}--\ref{sec:parity-Smith} to derive consequences on categories of representations of split connected reductive algebraic groups over $\bk$.

\subsection{The geometric Satake equivalence and its Iwahori--Whittaker variant}

The \emph{Satake category} is the category
\[
\Perv_{\sph}(\Gr_G,\bk)
\]
of \'etale $L^+G$-equivariant perverse sheaves on $\Gr_G$. (By
definition, this category is the inductive limit of the categories
$\Perv_{\sph}(X,\bk)$ where $X$ runs over the closed finite unions of
$L^+G$-orbits in $\Gr_G$. Given such  an $X$, the category $\Perv_{\sph}(X,\bk)$ is defined as $\Perv_{H}(X,\bk)$, where $H$ is a smooth quotient of $L^+G$ of finite type such that the $L^+G$-action on $X$ factors through $H$, and such that the kernel of the surjection $L^+G \to H$ is contained in $\ker(\mathrm{ev}_0)$; the resulting category does not depend on the choice of $H$ up to canonical equivalence.)
The natural convolution product $\star$ on the equivariant derived category $\Db_{L^+G}(\Gr_G,\bk)$ restricts to an exact monoidal product on the category $\Perv_{\sph}(\Gr_G,\bk)$, see~\cite{mv}.

The classification of the simple objects in $\Perv_{\sph}(\Gr_G,\bk)$ is given by the general theory of perverse sheaves from~\cite{bbd}. Namely, in view of the description of the $L^+G$-orbits on $\Gr_G$ (see~\eqref{eqn:orbits-Gr-GO}) and since each of these orbits is simply connected, for any $\lambda \in \bX^\vee_+$ there exists a unique simple perverse sheaf $\IC^\lambda$ in $\Perv_{\sph}(\Gr_G,\bk)$ which is supported on $\overline{\Gr_G^\lambda}$ and whose restriction to $\Gr_G^\lambda$ is $\ubk_{\Gr_G^\lambda}[\dim(\Gr_G^\lambda)]$. Moreover, the assignment $\lambda \mapsto \IC^\lambda$ induces a bijection between $\bX^\vee_+$ and the set of isomorphism classes of simple objects in $\Perv_{\sph}(\Gr_G,\bk)$.

On the other hand, we denote by $G^\vee_\Z$ the unique split reductive group scheme over $\Z$ whose base change to $\C$ has root datum $(\bX^\vee,\bX, \fR^\vee,\fR)$, and then set
\[
G^\vee_\bk := \Spec(\bk) \times_{\Spec(\Z)} G^\vee_\Z.
\]
We will denote by $\Rep(G^\vee_\bk)$ the category of finite-dimensional algebraic representations of the group scheme $G^\vee_\bk$.

The following theorem is due (in this generality) to Mirkovi{\'c}--Vilonen~\cite{mv,mv-erratum}.

\begin{thm}
\label{thm:Satake}
There exists an equivalence of monoidal categories
\[
\Satake: (\Perv_{\sph}(\Gr_G,\bk), \star) \cong (\Rep(G^\vee_\bk), \otimes_\bk).
\]
\end{thm}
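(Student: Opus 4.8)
The statement to prove is the geometric Satake equivalence (Theorem~\ref{thm:Satake}), and the plan is essentially to recall the Mirkovi\'c--Vilonen construction in the \'etale setting, since this is a known theorem being quoted rather than reproved from scratch.

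\textbf{Outline of the approach.} First I would set up the \emph{total cohomology functor}
\[
\mathsf{F} := \bigoplus_{n \in \Z} \coH^n(\Gr_G, -) : \Perv_{\sph}(\Gr_G,\bk) \to \bk\text{-mod}.
\]
The key structural input is that $\mathsf{F}$ is exact: this follows from the fact that the $L^+G$-orbit stratification of $\Gr_G$ is \emph{even} (each $\overline{\Gr_G^\lambda}$ admits a stratification by affine spaces of dimensions of constant parity), so that hypercohomology of any $L^+G$-equivariant perverse sheaf is concentrated in a single parity and the perverse filtration degenerates; the standard weight/parity argument then shows there are no differentials and no extension problems, giving exactness of $\mathsf{F}$ together with the fact that it is faithful (it detects the support of a subquotient via the support of its costalks). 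Next, one equips $\Perv_{\sph}(\Gr_G,\bk)$ with the convolution product $\star$ and checks it is exact and that $\mathsf{F}$ is monoidal: exactness of $\star$ uses the semismallness of the convolution morphism $L^+G \times^{L^+G} \Gr_G \to \Gr_G$ (again a parity/dimension estimate on orbits), and the monoidal structure on $\mathsf{F}$ comes from the K\"unneth formula applied to the convolution diagram, using properness.

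\textbf{The Tannakian reconstruction.} Once $\mathsf{F}$ is an exact, faithful, monoidal functor to $\bk$-modules, one wants to upgrade it to a \emph{fibre functor} and apply Tannakian reconstruction. The two missing pieces are: (a) the commutativity constraint — the naive constraint is the wrong sign, and one must use the ``fusion product'' of Beilinson--Drinfeld (the interpretation of convolution via the Beilinson--Drinfeld Grassmannian over a curve, where the product becomes manifestly symmetric and the cohomological grading gets identified with a $\mathbb{Z}/2$-grading that fixes the sign); and (b) rigidity and the identification of the automorphism group scheme $\underline{\mathrm{Aut}}^{\otimes}(\mathsf{F})$ with $G^\vee_\bk$. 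For (b), one argues that this affine group scheme is reductive (using that $\Perv_{\sph}$ is semisimple when $\bk$ is replaced by a field of characteristic $0$, and for general $\bk$ using flatness/base change of the MV construction over $\Z$, i.e.\ the Mirkovi\'c--Vilonen cycles give a basis of $\mathsf{F}$ free over $\Z$), that its character lattice is $\bX^\vee$ (read off from the $T$-fixed-point decomposition / weight decomposition of $\mathsf{F}$ under the residual torus action), and that its roots are $\fR^\vee$ (computed from the rank-one subgroups, i.e.\ the $\mathrm{SL}_2$ or $\mathrm{PGL}_2$ attached to each simple coroot via minuscule/quasi-minuscule orbits). This pins down the root datum as $(\bX^\vee,\bX,\fR^\vee,\fR)$, hence the group as $G^\vee_\bk$.

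\textbf{Main obstacle.} The genuinely delicate point — and the one I would expect to consume the most effort — is establishing the correct integral/torsion behaviour: that $\mathsf{F}$ applied to $\IC^\lambda$ is, compatibly in $\lambda$, computed by a $\Z$-form (the Mirkovi\'c--Vilonen cycles) so that the construction commutes with base change from $\Z$ to $\bk$, and hence that one gets the \emph{same} group scheme $G^\vee_\Z$ for all coefficient rings rather than some group depending on $\ell$. This is exactly where \cite{mv} originally had a gap, repaired in \cite{mv-erratum} (and by other arguments in the literature). In the present \'etale setting over $\F$, one must additionally check that all these arguments — exactness of $\mathsf{F}$, the K\"unneth isomorphism, the fusion interpretation over a curve, and the MV basis — go through for \'etale $\bk$-sheaves with $\ell \neq p$; this is routine but must be invoked carefully. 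Since Theorem~\ref{thm:Satake} is cited as a known result, in practice the ``proof'' here is simply a pointer to \cite{mv,mv-erratum} together with the remark that the \'etale variant is standard, and I would present it as such rather than reproducing the full argument.
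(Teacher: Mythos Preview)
Your proposal is correct and matches the paper's treatment: the paper does not give a proof of Theorem~\ref{thm:Satake} at all, but simply cites it as a result of Mirkovi\'c--Vilonen~\cite{mv,mv-erratum}. Your outline accurately sketches the content of those references, and your concluding remark that ``in practice the `proof' here is simply a pointer to \cite{mv,mv-erratum}'' is exactly how the paper handles it.
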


\begin{rmk}\phantomsection
\label{rmk:Satake}
\begin{enumerate}
 \item In fact, the proof of~\cite{mv} gives slightly more than what is stated in Theorem~\ref{thm:Satake}: the authors construct a \emph{canonical} $\bk$-group scheme out of the category $\Perv_{\sph}(\Gr_G,\bk)$, and then check that this group scheme is isomorphic to $G^\vee_\bk$.
 \item
 \label{it:Satake-equiv}
 In addition to the category $\Perv_{\sph}(\Gr_G,\bk)$, one can also consider the category $\Perv_{(L^+G)}(\Gr_G,\bk)$ of $\bk$-perverse sheaves on $\Gr_G$ whose restriction to each $\Gr_G^\lambda$ ($\lambda \in \bX^\vee_+$) has constant cohomology sheaves. We then have a canonical forgetful functor $\Perv_{\sph}(\Gr_G,\bk) \to \Perv_{(L^+G)}(\Gr_G,\bk)$, which by~\cite[Proposition~2.1]{mv} is an equivalence of categories.
\end{enumerate}
\end{rmk}

Once an equivalence as in Theorem~\ref{thm:Satake} is fixed, the constructions in~\cite{mv} provide a canonical embedding $T^\vee_\bk \hookrightarrow G^\vee_\bk$, where $T^\vee_\bk$ is the split $\bk$-torus which is Langlands dual to $T$ (i.e.~whose character lattice is $\bX^\vee$). We will denote by $B^\vee_\bk$ the Borel subgroup of $G^\vee_\bk$ containing (the image of) $T^\vee_\bk$ and whose roots are the negative coroots of $G$. For any $\lambda \in \bX^\vee_+$ we can then consider the ``induced representation''
\[
 \mathsf{N}(\lambda):=\mathsf{Ind}_{B^\vee_\bk}^{G^\vee_\bk}(\lambda).
\]
It is well known that $\mathsf{N}(\lambda)$ contains a unique simple submodule, denoted $\mathsf{L}(\lambda)$, and that the assignment $\lambda \mapsto \mathsf{L}(\lambda)$ induces a bijection between $\bX^\vee_+$ and the set of isomorphism classes of simple $G^\vee_\bk$-modules. It is well known also that for any $\lambda \in \bX^\vee_+$ we have
\begin{equation}
\label{eqn:Satake-simples}
 \Satake(\IC^\lambda) \cong \mathsf{L}(\lambda).
\end{equation}

Below we will use an alternative geometric realization of $\Rep(G^\vee_\bk)$, in terms of the Iwahori--Whittaker derived category of~\S\ref{ss:IW}, which was found in~\cite{bgmrr}.
The same construction as for the convolution product on $\Db_{L^+G}(\Gr_G,\bk)$ defines a right action of the latter monoidal category on $\Db_{\IW}(\Gr_G,\bk)$. The corresponding bifunctor will also be denoted $\star$.

We will assume that there exists (and fix) an element $\varsigma \in \bX^\vee$ such that $\langle \varsigma, \alpha \rangle = 1$ for any $\alpha \in \mathfrak{R}^{\mathrm{s}}$.
Then there exists no orbit in $\overline{\Gr_{G,\varsigma}^+} \smallsetminus \Gr_{G,\varsigma}^+$ which supports a nonzero Iwahori--Whittaker local system. Therefore, the canonical map $\Delta^\IW_\varsigma \to \nabla^\IW_\varsigma$ is an isomorphism, and this object is a simple perverse sheaf.

The following theorem is the main result of~\cite{bgmrr}.

\begin{thm}
\label{thm:bgmrr}
The functor sending $\mathscr{F}$ to $\Delta^\IW_\varsigma \star \scF$ induces an equivalence of abelian categories
\[
\Perv_{\sph}(\Gr_G,\bk) \simto \Perv_{\IW}(\Gr_G,\bk).
\]
\end{thm}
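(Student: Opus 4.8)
The plan is to prove Theorem~\ref{thm:bgmrr} by reducing it, via the theory of parity complexes and highest weight categories, to a statement about Hom-spaces that can be checked on tilting objects. Here is the strategy in outline.

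\textbf{Step 1: Identify both sides as derived categories of tilting objects.} By the general theory of highest weight categories (as recalled around~\eqref{eqn:equiv-Tilt-IW}), the bounded homotopy category of tilting objects maps to the bounded derived category of the abelian category, and this is an equivalence. The same formalism applies to $\Perv_{\sph}(\Gr_G,\bk)$ (which is a highest weight category with weight poset $\bX^\vee_+$, standard objects the standard perverse sheaves on orbit closures, and costandard objects their Verdier duals), and to $\Perv_{\IW}(\Gr_G,\bk)$ (highest weight with poset $\bX^\vee_{++}$, as recalled in~\S\ref{ss:IW}). The first thing I would do is therefore reduce the claim to showing that convolution with $\Delta^\IW_\varsigma$ induces an equivalence at the level of tilting categories, i.e.\ an equivalence $\Tilt_{\sph}(\Gr_G,\bk) \simto \Tilt_\IW(\Gr_G,\bk)$; here $\Tilt_{\sph}$ denotes the tilting objects in the Satake category. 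Once we know this equivalence of additive categories, it passes to homotopy categories and hence to derived categories, and one then checks that the induced exact equivalence of derived categories is t-exact for the two perverse t-structures, hence restricts to the desired equivalence of hearts.

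\textbf{Step 2: Control convolution with $\Delta^\IW_\varsigma$ via parity.} The key technical input is that convolution $-\star-$ of an Iwahori--Whittaker object with an object of the Satake category is exact for the perverse t-structures (this uses stratified semismallness of the convolution morphism, exactly as in the proof of the monoidal structure on $\Perv_{\sph}$) and preserves parity complexes: convolving a parity complex with a parity complex gives a parity complex, by the standard argument of~\cite{jmw} adapted to the $\star$-action. Since, by~\S\ref{ss:IW} and~\eqref{eqn:TIW-EIW}, the indecomposable tilting objects in $\Perv_\IW$ coincide with the indecomposable parity complexes $\scE^\IW_\lambda$, and since by Theorem~\ref{thm:Satake} together with the theory of (spherical) parity sheaves the tilting objects of $\Perv_{\sph}$ are also parity, the functor $\Delta^\IW_\varsigma \star (-)$ sends (spherical) parity complexes to Iwahori--Whittaker parity complexes. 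The heart of the matter is then to show this functor induces a bijection on indecomposables and is fully faithful on the parity subcategories.

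\textbf{Step 3: The Hom-space computation — the main obstacle.} To get full faithfulness on parity objects I would compute $\Hom^\bullet$ between convolutions. For $\scE, \scF$ parity in $\Db_{\sph}(\Gr_G,\bk)$ one wants a canonical isomorphism $\Hom^\bullet(\Delta^\IW_\varsigma \star \scE,\, \Delta^\IW_\varsigma \star \scF) \cong \Hom^\bullet(\scE, \scF)$ (suitably interpreted, e.g.\ after modding out the equivariant parameters, or directly on the non-equivariant categories). This is the analogue in the present étale setting of the comparison results relating the spherical and Iwahori--Whittaker categories. The cleanest route is probably to use the fact that $\Delta^\IW_\varsigma$ is the (unique simple, self-dual) minimal Iwahori--Whittaker object on the orbit $\Gr^+_{G,\varsigma}$, to reduce via adjunction $(\Delta^\IW_\varsigma \star (-))$ against an averaging/convolution functor in the other direction, and to compute the composite endofunctor of $\Db_{\sph}(\Gr_G,\bk)$: the composite of convolving with $\Delta^\IW_\varsigma$ and then averaging back should be (canonically isomorphic to) the identity on parity complexes. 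Concretely one expects $\Delta^\IW_{\varsigma} \star \IC^\lambda$ to be supported on $\overline{\Gr^+_{G,\lambda+\varsigma}}$ with minimal restriction $\LAS^{\lambda+\varsigma}[\dim]$, hence to be (isomorphic to) $\scT^\IW_{\lambda+\varsigma} = \scE^\IW_{\lambda+\varsigma}$ when $\IC^\lambda$ is tilting; matching up supports and leading terms gives the bijection on indecomposables, and a dimension count of the Hom-spaces (using that both categories have the same graded ranks of Hom-spaces between indecomposable parity objects, which follows from the combinatorial description of $\IW$-orbits on $\Gr_G$ in terms of $\bX^\vee_{++} = \varsigma + \bX^\vee_+$) upgrades this to full faithfulness. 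The main obstacle I anticipate is this last point: establishing that the convolution does not "collapse" any morphisms, i.e.\ that the natural map on $\Hom^\bullet$-spaces is an isomorphism and not merely injective or surjective. I would handle it by the adjunction-and-composite argument above, checking on a single orbit (where everything reduces to the statement that $\Delta^\IW_\varsigma \star \ubk_{\Gr^\lambda_G}[\dim]$ restricted to the appropriate $\IW$-orbit is the rank-one local system $\LAS^{\lambda+\varsigma}$), and then bootstrapping along the stratification using the recollement triangles and the parity vanishing of $\Hom$'s between even and odd objects. Finally, t-exactness of the resulting derived equivalence follows because it sends standard (resp.\ costandard) objects to standard (resp.\ costandard) objects — one checks $\Delta^\IW_\varsigma \star \IC^\lambda$ has the right restrictions/corestrictions — and a derived equivalence between hearts of derived categories which is t-exact restricts to an equivalence of hearts.
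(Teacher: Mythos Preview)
The paper does not prove this theorem at all: it is quoted as ``the main result of~\cite{bgmrr}'' and simply cited.  There is therefore no proof in the paper to which your proposal can be compared.

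As for your sketch on its own merits, the overall architecture (reduce to an equivalence on tilting objects, identify tilting with parity, then check Hom-spaces) is plausible but has real gaps.  First, the input you invoke in Step~2, that indecomposable tilting objects in $\Perv_{\sph}(\Gr_G,\bk)$ are parity, is itself a substantial theorem (of Juteau--Mautner--Williamson and Mautner--Riche) and is not a consequence of Theorem~\ref{thm:Satake} alone; you should not treat it as freely available.  Second, the actual argument in~\cite{bgmrr} proceeds differently and more directly: one shows the functor $\Delta^\IW_\varsigma \star (-)$ is t-exact (using that $\Delta^\IW_\varsigma \cong \nabla^\IW_\varsigma$ is clean and that convolution with standards, resp.\ costandards, is right, resp.\ left, t-exact), that it sends simple objects to simple objects bijectively (by a support and leading-term computation), and then establishes full faithfulness via a cohomological/averaging argument rather than a parity-sheaf Hom comparison.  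Your Step~3 adjunction idea is in the right spirit, but the ``composite endofunctor is the identity on parity complexes'' statement needs a precise construction of the adjoint and a genuine computation, which you have not supplied.
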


\begin{rmk}
Theorem~\ref{thm:bgmrr} can also be used to give an alternative proof of Lem\-ma~\ref{lem:IW-Gm-Perv}.
Namely, we see as in the proof of this lemma that our functor is fully faithful. If $\scF$ belongs to $\Perv_{\IW}(\Gr_G,\bk)$, by Theorem~\ref{thm:bgmrr} there exists an object $\scF'$ in $\Perv_{\sph}(\Gr_G,\bk)$ and an isomorphism
\[
\scF \cong \Delta^\IW_\varsigma \star \scF'.
\]
By~\cite[Proposition~2.2]{mv}, the perverse sheaf $\scF'$ is equivariant for the group $\Gm \ltimes L^+G$. Therefore the perverse sheaf $\Delta^\IW_\varsigma \star \scF'$ is the image of an object in $\Perv_{\IW,\Gm}(\Gr_G,\bk)$, and we deduce the same property for $\scF$.
\end{rmk}

\subsection{The linkage principle}
\label{ss:linkage}

We now come back to the setting where $G$ is an arbitrary connected reductive algebraic group over $\F$. Recall the actions $\cdot_\ell$ and $\square_\ell$  of $\Waff$ on $\bX^\vee$ defined in~\S\ref{ss:Waff}. The action which is relevant in Representation Theory is the ``dot action'' defined by
\[
(\st_\lambda v) \bullet_\ell \mu = v(\mu+\rho^\vee)-\rho^\vee + \ell \lambda
\]
for $\lambda \in \Z\mathfrak{R}^\vee$, $v \in \Wf$ and $\mu \in \bX^\vee$, where $\rho^\vee$ is the halfsum of the positive coroots. 
It is clear that for any $w \in \Waff$ and $\mu \in \bX^\vee$ we have
\begin{equation}
\label{eqn:comparison-actions}
w \bullet_\ell \mu = w \square_\ell (\mu+\rho^\vee)-\rho^\vee = w^* \cdot_\ell (\mu+\rho^\vee)-\rho^\vee,
\end{equation}
where $(\st_\lambda v)^*:= \st_{-\lambda} v$ for $\lambda \in \Z\fR^\vee$ and $v \in \Wf$.

The following statement is the first main result of this section.

\begin{thm}
\label{thm:linkage}
 For $\lambda,\mu \in \bX^\vee_+$, if $\Ext^n_{\Perv_{\sph}(\Gr_G,\bk)}(\IC^\lambda,\IC^\mu) \neq 0$ for some $n$, then $\Waff \bullet_\ell \lambda = \Waff \bullet_\ell \mu$.
\end{thm}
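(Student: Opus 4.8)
The plan is to transport the non-vanishing of $\Ext^\bullet$ through the two geometric equivalences established above, and then to use Theorem~\ref{thm:i!*-parities} to conclude. First I would use Theorem~\ref{thm:bgmrr} and the compatibility~\eqref{eqn:Satake-simples} to replace $\IC^\lambda$ and $\IC^\mu$ by their images in $\Perv_\IW(\Gr_G,\bk)$. Since $\Delta^\IW_\varsigma \star (-)$ is an equivalence of abelian categories which moreover is the restriction of a functor on derived categories (being given by convolution with a fixed object), it induces isomorphisms on all $\Ext$-groups; hence $\Ext^n_{\Perv_\IW(\Gr_G,\bk)}(\Delta^\IW_\varsigma \star \IC^\lambda, \Delta^\IW_\varsigma \star \IC^\mu) \neq 0$ for some $n$. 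By the description recalled in~\S\ref{ss:intro-IW} (namely that $\Delta^\IW_\varsigma \star \IC^\mu$ is the Iwahori--Whittaker $\IC$-sheaf attached to $\mu + \varsigma \in \bX^\vee_{++}$, with $\varsigma$ playing the role of $\rho^\vee$), this means the simple objects $\IC^{\IW}_{\lambda+\varsigma}$ and $\IC^{\IW}_{\mu+\varsigma}$ are linked by a nonzero higher extension in $\Perv_\IW(\Gr_G,\bk)$.

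Next I would pass to the tilting side. Using the equivalence $\Kb\Tilt_\IW(\Gr_G,\bk) \simto \Db\Perv_\IW(\Gr_G,\bk)$ from~\eqref{eqn:equiv-Tilt-IW}, a nonzero $\Ext$ between two simple perverse sheaves forces the existence of an indecomposable tilting object $\scT^\IW_\nu \cong \scE^\IW_\nu$ whose support meets the supports of both $\IC^{\IW}_{\lambda+\varsigma}$ and $\IC^{\IW}_{\mu+\varsigma}$; more concretely, both $\lambda+\varsigma$ and $\mu+\varsigma$ appear as weights of the standard (and costandard) filtration of some common indecomposable tilting object, equivalently there is a chain of indecomposable tiltings connecting $\scE^\IW_{\lambda+\varsigma}$ and $\scE^\IW_{\mu+\varsigma}$ in the sense that the relevant $\Hom^\bullet$'s between them are nonzero along the chain. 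Then I would apply Theorem~\ref{thm:i!*-parities}: the fully faithful functor $i^{!*}_{\Gr_G}$ restricted to tilting objects lands in $\Smith^\natural_\IW((\Gr_G)^\varpi,\bk)$, and by~\eqref{eqn:i!*-parities} it sends $\scE^\IW_{\lambda+\varsigma,\Gm}$ to $\scE^{\Smith,\bullet}_{\lambda+\varsigma}$. Since $\scE^{\Smith,\bullet}_{\lambda+\varsigma}$ is an \emph{indecomposable} object of $\Smith_\IW((\Gr_G)^\varpi,\bk)$, its support is connected, hence contained in a single connected component $\Gr_{G,(\gamma)}$ of $(\Gr_G)^\varpi$; by full faithfulness the same holds for every indecomposable tilting object appearing in the connecting chain, so all of $\lambda+\varsigma$, $\mu+\varsigma$ and the intermediate weights lie in $\bX^\vee_{++}$-strata belonging to the same component.

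The final step is purely combinatorial: by Proposition~\ref{prop:fixed-points-Grass} and Remark~\ref{rmk:parameters-orbits-fixed-points}, the connected component of $(\Gr_G)^\varpi$ containing the $\Iw_\ell^+$-orbit labelled by $\eta \in \bX^\vee$ is exactly the one indexed by the $\Waff$-orbit of $\eta$ under $\square_\ell$. Hence $\lambda+\varsigma$ and $\mu+\varsigma$ lying in the same component means $\Waff \square_\ell (\lambda+\varsigma) = \Waff \square_\ell (\mu+\varsigma)$, which by~\eqref{eqn:comparison-actions} (with $\varsigma$ in the role of $\rho^\vee$, legitimate since $\langle\varsigma,\alpha\rangle = 1 = \langle\rho^\vee,\alpha\rangle$ for simple $\alpha$) is equivalent to $\Waff \bullet_\ell \lambda = \Waff \bullet_\ell \mu$. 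I expect the main obstacle to be the bookkeeping in the middle step: carefully justifying that a nonzero $\Ext^n$ between two simples really does produce a single connected ``linking object'' in the Smith category — one must use that $i^{!*}_{\Gr_G}$ is full and faithful on all of $\Perv_\IW$ (not just on tiltings) restricted to standard/costandard-filtered objects, as recorded in the proof of Theorem~\ref{thm:i!*-parities}, together with the fact that $\Ext^\bullet$ in $\Perv_\IW$ can be computed via resolutions by tilting objects, so that a nonzero extension class survives to a nonzero morphism in $\Smith_\IW((\Gr_G)^\varpi,\bk)$ between objects supported on the respective components, which by the parity-vanishing Lemma~\ref{lem:parity-vanishing} can only be nonzero if those components coincide.
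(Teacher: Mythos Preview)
Your overall strategy matches the paper's: transport to $\Perv_\IW(\Gr_G,\bk)$ via Theorem~\ref{thm:bgmrr}, then use Theorem~\ref{thm:i!*-parities} together with the component decomposition of $(\Gr_G)^\varpi$ from Proposition~\ref{prop:fixed-points-Grass}. There are, however, two genuine gaps.

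\textbf{Reduction to adjoint type.} You invoke Theorem~\ref{thm:bgmrr} and the weight $\varsigma$ without first ensuring that such a $\varsigma$ exists; for a general reductive $G$ it need not. The paper handles this by first observing that $(\Gr_G)_{\mathrm{red}} \to (\Gr_{G/Z})_{\mathrm{red}}$ is, on each connected component, an embedding of a component, so that the relevant $\Ext$-groups are preserved; this reduces to the case where $G$ is semisimple of adjoint type, where one may take $\varsigma=\rho^\vee$. Without this step your argument is only valid under an extra hypothesis on $G$.

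\textbf{The middle step.} Your passage from a nonzero $\Ext^n$ between two simples to ``both weights appear in a common indecomposable tilting, or are joined by a chain of tiltings with nonzero $\Hom^\bullet$'' is precisely the content to be proved, and the fix you sketch at the end does not close the gap: full faithfulness of $i^{!*}_{\Gr_G}$ is established only for morphisms from standard-filtered to costandard-filtered objects, and simple perverse sheaves are neither in general, so one cannot simply ``push the extension class through'' to the Smith category. The paper's argument is cleaner and avoids this issue entirely. From Theorem~\ref{thm:i!*-parities} one gets $\Hom(\scT^\IW_\lambda,\scT^\IW_\mu)=0$ whenever $\Waff\cdot_\ell\lambda\neq\Waff\cdot_\ell\mu$, which yields an orthogonal decomposition of $\Tilt_\IW(\Gr_G,\bk)$ into pieces indexed by $(-\overline{\mathbf a}_\ell)\cap\bX^\vee$, with idempotent endofunctors $\Pi_{(\lambda)}$. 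Via the equivalence~\eqref{eqn:equiv-Tilt-IW} these extend to orthogonal idempotents on $\Db\Perv_\IW(\Gr_G,\bk)$. One then checks that each simple $\IC^\IW_\mu$ lies in the expected summand by using the nonzero zigzag $\IC^\IW_\mu \twoheadleftarrow \Delta^\IW_\mu \hookrightarrow \scT^\IW_\mu$, which forces $\Pi_{(\lambda)}(\IC^\IW_\mu)\neq 0$ exactly for the $\lambda$ with $\Waff\cdot_\ell\lambda=\Waff\cdot_\ell\mu$. The desired $\Ext$-vanishing between simples in distinct orbits is then immediate from orthogonality of the $\Pi_{(\lambda)}$.
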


\begin{proof}
Note that if $\Ext^n_{\Perv_{\sph}(\Gr_G,\bk)}(\IC^\lambda,\IC^\mu) \neq 0$ for some $n$, then the orbits $\Gr_G^\lambda$ and $\Gr_G^\mu$ are contained in the same connected component of $\Gr_G$.
If $Z$ denotes the center of $G$, then the natural morphism $(\Gr_G)_{\mathrm{red}} \to (\Gr_{G/Z})_{\mathrm{red}}$ restricts, on each connected component $X$ of $(\Gr_G)_{\mathrm{red}}$, to a universal homeomorphism onto a connected component of $(\Gr_{G/Z})_{\mathrm{red}}$; see e.g.~\cite[Proof of Theorem~8.1]{brr} for references. By~\cite[\href{https://stacks.math.columbia.edu/tag/03SI}{Tag 03SI}]{stacks-project} and Remark~\ref{rmk:Satake}\eqref{it:Satake-equiv} we then have a fully faithful functor $\Perv_{\sph}(X,\bk) \to \Perv_{(G/Z)_{\mathscr{O}}}(\Gr_{G/Z},\bk)$, which reduces the proof to the case $G$ is semisimple of adjoint type, which we assume from now on.

In particular, under this assumption we can take $\varsigma=\rho^\vee$, and apply Theorem~\ref{thm:bgmrr}. This result implies that the simple objects in $\Perv_{\IW}(\Gr_G,\bk)$ are the perverse sheaves
 \[
  \IC^\IW_\lambda := \Delta^\IW_\varsigma \star \IC^{\lambda - \varsigma}
 \]
for $\lambda \in \bX^\vee_{++} = \varsigma + \bX^\vee_+$. In view of~\eqref{eqn:comparison-actions}, this shows that to prove the lemma it suffices to prove that for $\lambda,\mu \in \bX^\vee_{++}$, if $\Ext^n_{\Perv_{\IW}(\Gr_G,\bk)}(\IC^\IW_\lambda,\IC^\IW_\mu) \neq 0$ for some $n$, then $\Waff \cdot_\ell \lambda = \Waff \cdot_\ell \mu$.

In view of Theorem~\ref{thm:i!*-parities} (see also~\eqref{eqn:TIW-EIW} and~\eqref{eqn:i!*-parities}) and the decomposition of $(\Gr_G)^\varpi$ into its connected components (see Proposition~\ref{prop:fixed-points-Grass}), if $\lambda,\mu \in \bX^\vee_{++}$ satisfy $\Waff \cdot_\ell \lambda \neq \Waff \cdot_\ell \mu$ then we have
\[
 \Hom_{\Perv_{\IW}(\Gr_G,\bk)}(\scT^\IW_\lambda, \scT^\IW_\mu)=0.
\]
This implies that the category $\Tilt_{\IW}(\Gr_G,\bk)$ breaks canonically as a direct sum of full subcategories parametrized by orbits of $\Waff$ on $\bX^\vee$ (for the action $\cdot_\ell$), where the subcategory corresponding to a given orbit has as objects the direct sums of indecomposable tilting perverse sheaves labelled by elements in this orbit. Using~\eqref{eqn:equiv-Tilt-IW} we deduce a similar decomposition of $\Db \Perv_{\IW}(\Gr_G,\bk)$ as a direct sum of full subcategories parametrized by $\bX^\vee/(\Waff,\cdot_\ell)$. For any $\lambda \in \bX^\vee$ the object $\IC_\lambda^\IW$ is indecomposable, hence belongs to one of these subcategories; the existence of the nonzero maps
\[
 \IC^\IW_\lambda \twoheadleftarrow \Delta^\IW_\lambda \hookrightarrow \scT^\IW_\lambda
\]
shows that this subcategory is the one attached to $\Waff \cdot_\ell \lambda$, which finishes the proof.
\end{proof}

In view of Theorem~\ref{thm:Satake} and~\eqref{eqn:Satake-simples}, Theorem~\ref{thm:linkage} is equivalent to the statement that if $\Ext^n_{\Rep(G^\vee_\bk)}(\mathsf{L}(\lambda),\mathsf{L}(\mu)) \neq 0$ for some $n \in \Z$, then $\Waff \bullet_\ell \lambda = \Waff \bullet_\ell \mu$. This property is of course well known, and called the \emph{Linkage Principle}, see~\cite[\S II.6]{jantzen}. (This statement was first conjectured by Verma, and proved by Andersen in full generality, after partial results of Humphreys, Kac--Weisfeiler and Carter--Lusztig; see~\cite{andersen} for more details.)



\subsection{Antispherical $\ell$-canonical basis and parity complexes}

Let $\Haff$ be the Hecke algebra of $(\Waff,\Saff)$, and let $\Masph$ be its antispherical module, with ``standard'' basis $(N_w : w \in \fW)$ parametrized by the subset $\fW \subset \Waff$ of elements $w$ which are minimal in $\Wf w$. (Here we follow the conventions of~\cite{soergel}.) Let us consider
\[
\Fl_G^\circ := \Fl^{1,\circ}_{\mathbf{a}_1},
\]
the connected component of the base point in the affine flag variety associated with $LG$. 
We can then define, as for $\Gr_G$, the Iwahori--Whittaker derived category $\Db_\IW(\Fl_G^\circ,\bk)$, and its full subcategory $\Parity_{\IW}(\Fl_G^\circ,\bk)$ of parity complexes. The $\Iwu^+$-orbits on $\Fl_G^\circ$ are naturally parametrized by $\Waff$, and those which support a nonzero Iwahori--Whittaker local system are the ones corresponding to elements in $\fW$; we will denote by $\nabla^\IW_w$ and $\scE^\IW_w$ the costandard perverse sheaf and indecomposable parity complex attached to $w \in \fW$, respectively. (The latter object is characterized by properties similar to those considered in Proposition~\ref{prop:classification-parity}, taking into account the dimension of the corresponding orbit.)
We
then have a canonical isomorphism
\[
 \mathrm{ch}: [\Parity_{\IW}(\Fl_G^\circ,\bk)] \simto \Masph
\]
determined by
\[
 \mathrm{ch}([\scF]) = \sum_{\substack{w \in \fW \\ n \in \Z}} \dim_\bk \Hom_{\Db_\IW(\Fl_G^\circ,\bk)}(\scF,\nabla_w^\IW[n]) \cdot v^n N_w,
\]
where $[\Parity_{\IW}(\Fl_G^\circ,\bk)]$ is the split Grothendieck group of the additive category $\Parity_{\IW}(\Fl_G^\circ,\bk)$.

In terms of this isomorphism,
the \emph{$\ell$-canonical basis} $(\puN_w : w \in \fW)$ of $\Masph$ (see~\cite{rw,ar-survey}) can be characterized by
\begin{equation}
\label{eqn:pcan-basis}
 \puN_w := \mathrm{ch}(\scE^\IW_w).
\end{equation}
The associated \emph{$\ell$-Kazhdan--Lusztig polynomials} $(\ppn_{y,w} : y,w \in \fW)$ are characterized by the equality
\[
 \puN_w = \sum_{y \in \fW} \ppn_{y,w} \cdot N_y.
\]

\begin{rmk}
 It is easily seen that the computation of the $\ell$-canonical basis and $\ell$-Kazhdan--Lusztig polynomials can be reduced to the case $G$ is quasi-simple. In this case, the results of~\cite[Part~III]{rw} show that this basis coincides with the basis with the same name studied in~\cite{jw}, for the Coxeter system $(\Waff,\Saff)$ and the realization considered in~\cite[Remark~10.7.2(2)]{rw}. In particular, these data can be computed algorithmically using the procedure described in~\cite{jw, gjw}.
\end{rmk}

These considerations have been stated for the ind-variety $\Fl^{1,\circ}_{\mathbf{a}_1}$, but in practice we will rather use them for the isomorphic variety $\Fl^{\ell,\circ}_{\mathbf{a}_\ell}$, with respect to the action of $\Iwuell^+$.
More generally we can consider a facet $\mathbf{f} \subset \overline{\mathbf{a}}_\ell$, and the basic connected component in the associated partial affine flag variety $\Fl^{\ell,\circ}_{\mathbf{f}}$; see~\S\ref{ss:loop-flag}. Here again the Iwahori--Whittaker derived category (with respect to the action of $\Iwuell^+$) makes sense, and so does the notion of parity complexes. This derived category will be denoted $\Db_{\IW_\ell}(\Fl^{\ell,\circ}_{\mathbf{f}},\bk)$.

The indecomposable parity objects in $\Db_{\IW_\ell}(\Fl^{\ell,\circ}_{\mathbf{f}},\bk)$ can be described in terms of those on $\Fl^{\ell,\circ}_{\mathbf{a}_\ell}$ as follows.
As usual, the general theory of parity complexes ensures that there exists at most one indecomposable parity complex on $\Fl^{\ell,\circ}_{\mathbf{f}}$ associated with each $\Iwuell^+$-orbit which supports a nonzero Iwahori--Whittaker local system, and that each indecomposable parity complex is isomorphic (up to cohomological shift) to such an object. As explained in Remark~\ref{rmk:fixed-points-Gr}\eqref{it:parameters-orbits-fixed-points}, the $\Iwuell^+$-orbits on $\Fl^{\ell,\circ}_{\mathbf{f}}$ are parametrized in the natural way by the quotient $\Waff / \Waff^{\mathbf{f}}$, or in other words by the elements $w \in \Waff$ which are maximal in $w \Waff^{\mathbf{f}}$. If $w \in \Waff$ is maximal in $w \Waff^{\mathbf{f}}$, if the corresponding orbit supported a nonzero Iwahori--Whittaker local system, and if the associated indecomposable parity complex exists, we will denote it by $\scE^{\IW_\ell}_{\mathbf{f},w}$.

In the following statement, the morphism
\[
\Fl^{\ell,\circ}_{\mathbf{a}_\ell} \to \Fl^{\ell,\circ}_{\mathbf{f}}
\]
induced by~\eqref{eqn:inclusion-parahoric} will be denoted $\pi_{\mathbf{f}}$. We will also denote by $N_{\mathbf{f}}$ the length of the longest element in $\Waff^{\mathbf{f}}$.

\begin{lem}
\label{lem:pi*-IWparity}
If $w \in \Waff$ is maximal in $w \Waff^{\mathbf{f}}$, then the $\Iwuell^+$-orbit on $\Fl^{\ell,\circ}_{\mathbf{f}}$ associated with $w$ supports a nonzero Iwahori--Whittaker local system iff $w$ is minimal in $\Wf w$. Moreover, in this case the indecomposable Iwahori--Whittaker parity complex $\scE^{\IW_\ell}_{\mathbf{f},w}$ on $\Fl^{\ell,\circ}_{\mathbf{f}}$ exists, and we have
\[
 \pi_{\mathbf{f}}^*(\scE^{\IW_\ell}_{\mathbf{f},w})[N_{\mathbf{f}}] \cong \scE^{\IW_\ell}_{\mathbf{a}_\ell,w}.
\]
\end{lem}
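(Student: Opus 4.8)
The plan is to analyse $\pi_{\mathbf{f}}$ one orbit at a time and then to lift the resulting picture to the level of parity complexes using the classification theorem and the projection formula. Fix $w\in\Waff$ maximal in $w\Waff^{\mathbf{f}}$, and write $O_w^{\mathbf{a}}\subset\Fl^{\ell,\circ}_{\mathbf{a}_\ell}$ and $O_w^{\mathbf{f}}\subset\Fl^{\ell,\circ}_{\mathbf{f}}$ for the $\Iwuell^+$-orbits labelled by $w$. First I would record the local structure of $\pi_{\mathbf{f}}$: it is $L_\ell G$-equivariant with fibres isomorphic to the finite-type partial flag variety of the reductive quotient of $P_{\mathbf{f}}$, so $\pi_{\mathbf{f}}^{-1}(O_w^{\mathbf{f}})=\bigsqcup_{u\in w\Waff^{\mathbf{f}}}O_u^{\mathbf{a}}$; maximality of $w$ makes $O_w^{\mathbf{a}}$ the open orbit here, $\pi_{\mathbf{f}}$ restricts to an affine bundle $O_w^{\mathbf{a}}\to O_w^{\mathbf{f}}$ of rank $N_{\mathbf{f}}$, and hence $\pi_{\mathbf{f}}^{-1}(\overline{O_w^{\mathbf{f}}})=\overline{O_w^{\mathbf{a}}}$ and $\dim O_w^{\mathbf{a}}=\dim O_w^{\mathbf{f}}+N_{\mathbf{f}}$. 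All of this follows from the ``big cell'' descriptions recalled in~\S\ref{ss:big-cells} and from~\cite{dchl, pappas-rapoport}.

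For the first assertion I would argue exactly as in the proof of Lemma~\ref{lem:fixed-points-orbits}: an $\Iwuell^+$-orbit supports a nonzero Iwahori--Whittaker local system precisely when $\chi$ is trivial on the stabiliser of one of its points, and that stabiliser is a product of affine root subgroups. The inclusion $O_w^{\mathbf{a}}\hookrightarrow\pi_{\mathbf{f}}^{-1}(O_w^{\mathbf{f}})$ shows that the stabiliser of a point of $O_w^{\mathbf{a}}$ lies inside that of its image in $O_w^{\mathbf{f}}$, the ``extra'' part being the product of the $U_{-w\gamma}$ with $\gamma$ running over the positive roots of the reductive quotient of $P_{\mathbf{f}}$ (viewed as affine roots); maximality of $w$ in $w\Waff^{\mathbf{f}}$ says precisely that every $-w\gamma$ is then a positive affine root. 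Since $\chi$ is nontrivial on $U_\beta$ (for $\beta$ a positive affine root) only when $\beta$ is a finite simple root, and since $w$ minimal in $\Wf w$ forces $w^{-1}(-\delta)<0$ for every $\delta\in\fR^{\mathrm{s}}$ (hence $-w\gamma\notin\fR^{\mathrm{s}}$ for all the $\gamma$ above), the extra part is killed by $\chi$ as soon as $w\in\fW$. As the stabiliser of a point of $O_w^{\mathbf{a}}$ already meets $\chi$ nontrivially when $w\notin\fW$ (the statement for $\Fl^{\ell,\circ}_{\mathbf{a}_\ell}$ recalled before the lemma), putting the two together gives that $O_w^{\mathbf{f}}$ supports a nonzero Iwahori--Whittaker local system if and only if $w\in\fW$.

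Now assume $w\in\fW$ is maximal in $w\Waff^{\mathbf{f}}$. The general theory of parity complexes, which (as noted in~\S\ref{ss:def-parity} and in~\cite{jmw, rw, acr}) applies equally to the partial affine flag variety $\Fl^{\ell,\circ}_{\mathbf{f}}$ via its Bott--Samelson-type resolutions, provides an indecomposable Iwahori--Whittaker parity complex $\scE$ on $\Fl^{\ell,\circ}_{\mathbf{f}}$ supported on $\overline{O_w^{\mathbf{f}}}$ whose restriction to $O_w^{\mathbf{f}}$ is a shift of the rank-one Iwahori--Whittaker local system. Because $\pi_{\mathbf{f}}$ is smooth of relative dimension $N_{\mathbf{f}}$, one has $\pi_{\mathbf{f}}^![-N_{\mathbf{f}}]\cong\pi_{\mathbf{f}}^*[N_{\mathbf{f}}]$, so $\pi_{\mathbf{f}}^*[N_{\mathbf{f}}]$ preserves parity complexes; thus $\pi_{\mathbf{f}}^*[N_{\mathbf{f}}]\scE$ is parity, supported on $\pi_{\mathbf{f}}^{-1}(\overline{O_w^{\mathbf{f}}})=\overline{O_w^{\mathbf{a}}}$, and, using the affine-bundle description of $\pi_{\mathbf{f}}$ over $O_w^{\mathbf{f}}$ together with the uniqueness of rank-one Iwahori--Whittaker local systems, its restriction to the open orbit $O_w^{\mathbf{a}}$ of its support is $\LAS^w[\dim O_w^{\mathbf{f}}+N_{\mathbf{f}}]=\LAS^w[\dim O_w^{\mathbf{a}}]$. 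By the Iwahori--Whittaker version of the classification of parity complexes (\cite[Theorem~2.12]{jmw}, cf.~\S\ref{ss:def-parity}), $\scE^\IW_w$ is therefore a direct summand of $\pi_{\mathbf{f}}^*[N_{\mathbf{f}}]\scE$, with complementary summand supported on $\overline{O_w^{\mathbf{a}}}\smallsetminus O_w^{\mathbf{a}}$.

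It then remains to show that this complement vanishes, i.e.\ that $\pi_{\mathbf{f}}^*[N_{\mathbf{f}}]\scE$ is indecomposable; this is the step I expect to be the main obstacle. Here I would use that $\pi_{\mathbf{f}}$ is a flag-variety bundle, so $R(\pi_{\mathbf{f}})_*\underline{\bk}\cong\bigoplus_{v\in\Waff^{\mathbf{f}}}\underline{\bk}[-2\ell(v)]$, and then the projection formula gives $\End^\bullet(\pi_{\mathbf{f}}^*[N_{\mathbf{f}}]\scE)\cong\bigoplus_{v\in\Waff^{\mathbf{f}}}\End^\bullet(\scE)[-2\ell(v)]$, compatibly with the ring structures, the $v=1$ summand being the image of $\pi_{\mathbf{f}}^*$. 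In degree $0$ this reads $\End^0(\pi_{\mathbf{f}}^*[N_{\mathbf{f}}]\scE)\cong\bigoplus_{v}\Hom(\scE,\scE[-2\ell(v)])$, and since the negative self-extensions of an indecomposable parity complex on a (partial affine) flag variety vanish, only $v=1$ contributes, so $\End^0(\pi_{\mathbf{f}}^*[N_{\mathbf{f}}]\scE)\cong\End(\scE)$, which is local as $\scE$ is indecomposable in the Krull--Schmidt category of parity complexes. Hence $\pi_{\mathbf{f}}^*[N_{\mathbf{f}}]\scE$ is indecomposable and equals $\scE^\IW_w$, which proves the lemma. The one genuinely non-formal input is the vanishing of negative self-extensions just used --- the ``positivity'' of the category of parity complexes (equivalently, of the $\Delta$- and $\nabla$-filtrations of $\scE$, or of the $\ell$-canonical basis) --- for which I would cite~\cite{jmw, rw}; everything else reduces to the equivariant bookkeeping of~\S\ref{ss:big-cells} and standard manipulations of parity complexes.
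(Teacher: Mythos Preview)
Your outline is sound, and you have correctly located the one non-formal step. The paper itself gives no argument here, deferring entirely to~\cite[Appendix~A]{acr}, so there is no line-by-line comparison to make; your strategy is certainly in the same spirit. The first three paragraphs --- the fibre description of $\pi_{\mathbf{f}}$, the stabiliser analysis for the local-system criterion, and the verification that $\pi_{\mathbf{f}}^*[N_{\mathbf{f}}]\scE$ is parity with the correct support and open restriction --- are routine and go through as you describe.

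The problem is that your justification of the ``positivity'' step does not hold up. The vanishing $\Hom(\scE,\scE[-2\ell(v)])=0$ for $\ell(v)>0$ is, by self-duality of $\scE$, equivalent to the statement that the indecomposable Iwahori--Whittaker parity complex on $\Fl^{\ell,\circ}_{\mathbf{f}}$ is \emph{perverse}. That is not a general feature of parity complexes in positive characteristic --- it is well known to fail for ordinary parity sheaves on finite flag varieties in small $\ell$ --- and neither~\cite{jmw} nor~\cite{rw} proves it in the setting you need. Nor can you fall back on the shortcut used for $\Gr_G$ in~\S\ref{ss:def-parity} (relevant orbit dimensions of constant parity, hence parity $=$ tilting): for $G$ of type $A_2$ and $\Waff^{\mathbf{f}}=\langle s_0\rangle$, the relevant orbits attached to $s_0$ and to $s_0s_1s_2s_0$ have dimensions $0$ and $3$. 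So the step you label ``the one genuinely non-formal input'' is genuinely unproved as stated.

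The usual way around this --- and what one finds in~\cite{acr} --- is to argue from the other side: instead of pulling back an unspecified $\scE$ and then trying to prove the result indecomposable via an endomorphism computation, one shows directly that $\scE^{\IW}_w$ on $\Fl^{\ell,\circ}_{\mathbf{a}_\ell}$ lies in the essential image of $\pi_{\mathbf{f}}^*[N_{\mathbf{f}}]$. This is done by descent along the intermediate $\bP^1$-fibrations $\pi_s$ for simple reflections $s$ in $\Waff^{\mathbf{f}}$, using that $ws<w$ for each such $s$; the relevant input is the behaviour of parity complexes under convolution with $\scE_s$ (equivalently under $\pi_s^*(\pi_s)_*$), which is established in~\cite{jmw} and does not require any perversity statement. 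Once $\scE^{\IW}_w\cong\pi_{\mathbf{f}}^*[N_{\mathbf{f}}]\scE$ for some $\scE$, faithfulness of $\pi_{\mathbf{f}}^*$ forces $\scE$ to be indecomposable, and it is then the sought-for parity complex on $\Fl^{\ell,\circ}_{\mathbf{f}}$ by uniqueness. This bypasses the positivity question entirely.
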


\begin{proof}
The proof is similar to that of its counterpart in the setting of Kac--Moody flag varieties considered in~\cite[Appendix~A]{acr}.
\end{proof}

\begin{rmk}
\label{rmk:identification-parity}
 Under the identification of $\Db_\IW(\Fl_G^\circ,\bk)$ with $\Db_{\IW_\ell}(\Fl^{\ell,\circ}_{\mathbf{a}_\ell},\bk)$, the object $\scE^\IW_w$ corresponds to $\scE^{\IW_\ell}_{\mathbf{a}_\ell,w}$.
\end{rmk}

\subsection{The tilting character formula}

Now we return to Representation Theory.
Recall that a $G^\vee_\bk$-module $M$ in $\Rep(G^\vee_\bk)$ is called \emph{tilting} if both $M$ and $M^*$ admit filtrations with subquotients of the form $\mathsf{N}(\lambda)$ with $\lambda \in \bX^\vee_+$. It is well known (see~\cite[\S II.E]{jantzen}) that the indecomposable tilting $G^\vee_\bk$-modules are classified by their highest weight (a dominant weight), and that any tilting module is a direct sum of indecomposable tilting modules. The indecomposable tilting module of highest weight $\lambda \in \bX^\vee_+$ will be denoted $\mathsf{T}(\lambda)$.

The same considerations as in~\S\ref{ss:Waff} show that a fundamental domain for the action of $\Waff$ on $\bX^\vee$ via $\bullet_\ell$ is given by the subset
\[
\overline{C}_\ell := \{\lambda \in \bX^\vee \mid \forall \alpha \in \mathfrak{R}^+, \, 0 \leq \langle \lambda+\rho^\vee,\alpha \rangle \leq \ell\}.
\]
Below we will need to describe the subset $(\Waff \bullet_\ell \lambda) \cap \bX^\vee_+$ more explicitly for $\lambda \in \overline{C}_\ell$. For this we set $I_\lambda := \{s \in \Saff \mid s \bullet_\ell \lambda = \lambda\}$, so that the stabilizer in $\Waff$ of $\lambda$ (for $\bullet_\ell$) is the parabolic subgroup $W_\lambda$ of $\Waff$ generated by $I_\lambda$. We set
\[
 W_\aff^{(\lambda)} := \{w \in \Waff \mid \text{$w$ is maximal in $wW_\lambda$ and minimal in $\Wf w$}\}.
\]
Then it is known that the assignment $w \mapsto w \bullet_\ell \lambda$ induces a bijection
\begin{equation*}
 W_\aff^{(\lambda)} \simto (\Waff \bullet_\ell \lambda) \cap \bX^\vee_+;
\end{equation*}
see~\cite[\S 10.1]{ar} for similar considerations.
In view of these comments,
the following result gives an answer to the question of describing characters of \emph{all} indecomposable tilting $G^\vee_\bk$-modules, without any restriction on $\ell$.

\begin{thm}
\label{thm:characters-tilting}
 Let $\lambda \in \overline{C}_\ell$. Then for any $w \in \Waff^{(\lambda)}$ we have
 \[
  [\mathsf{T}(w \bullet_\ell \lambda)] = \sum_{y \in \Waff^{(\lambda)}} \ppn_{y,w}(1) \cdot [\mathsf{N}(y \bullet_\ell \lambda)]
 \]
 in the Grothendieck group of $\Rep(G^\vee_\bk)$.
\end{thm}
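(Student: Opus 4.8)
The plan is to deduce this character formula from Theorem~\ref{thm:i!*-parities}, Theorem~\ref{thm:bgmrr} and Proposition~\ref{prop:Hom-parity-Q} via a chain of translations between categories, following the same reductions used in the proof of Theorem~\ref{thm:linkage}. First I would reduce to the case where $G$ is semisimple of adjoint type: the passage to $G/Z$ as in the proof of Theorem~\ref{thm:linkage} identifies a connected component of $(\Gr_G)_{\mathrm{red}}$ with a connected component of $(\Gr_{G/Z})_{\mathrm{red}}$, and correspondingly realizes $\Rep_{\gamma}(G^\vee_\bk)$ for the relevant block as a block of representations of the adjoint-type dual group; since tilting modules and their characters are detected blockwise, this is harmless. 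Under this assumption we may take $\varsigma=\rho^\vee$, and Theorem~\ref{thm:bgmrr} together with \eqref{eqn:TIW-EIW} identifies, via $\Satake$ and $\scF \mapsto \Delta^\IW_\varsigma \star \scF$, the indecomposable tilting module $\mathsf{T}(\mu)$ (for $\mu \in \bX^\vee_+$) with the indecomposable Iwahori--Whittaker tilting perverse sheaf $\scT^\IW_{\mu+\rho^\vee} \cong \scE^\IW_{\mu+\rho^\vee}$ on $\Gr_G$. The character of $\mathsf{T}(\mu)$ in terms of the $[\mathsf{N}(\eta)]$ is then read off from the multiplicities of costandard objects in a costandard filtration of $\scE^\IW_{\mu+\rho^\vee}$, i.e.\ from $\sum_n \dim \Hom(\scE^\IW_{\mu+\rho^\vee}, \nabla^\IW_{\eta+\rho^\vee}[n])$ — using that $\IC^\eta \mapsto \IC^\IW_{\eta+\rho^\vee}$ and the Satake-to-Weyl-module dictionary as in~\cite{bgmrr}, so that standard/costandard IW perverse sheaves correspond to Weyl/induced modules.

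Next I would transport the computation of these Hom spaces to the fixed-point locus. By Theorem~\ref{thm:i!*-parities} the Smith restriction functor $i^{!*}_{\Gr_G}$ is fully faithful on $\Tilt_\IW(\Gr_G,\bk)$, sending $\scE^\IW_{\nu,\Gm}$ to $\scE^{\Smith,\mathsf{p}}_\nu$; and since the component $\Gr_{G,(\lambda)}$ corresponding to $\lambda \in (-\overline{\mathbf{a}}_\ell)\cap\bX^\vee$ is the partial affine flag variety $\Fl^{\ell,\circ}_{\mathbf{f}_\lambda}$ (Proposition~\ref{prop:fixed-points-Grass}), the parity objects on it are governed, via Lemma~\ref{lem:pi*-IWparity} and Proposition~\ref{prop:Hom-parity-Q}, by the indecomposable IW parity complexes $\scE^\IW_w$ on $\Fl^{\ell,\circ}_{\mathbf{a}_\ell}$ with $w \in \fW$. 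Chasing the orbit parametrizations: by Remark~\ref{rmk:parameters-orbits-fixed-points} the orbit $(\Gr^+_{G,w\square_\ell\lambda})^\varpi$ inside $\Gr_{G,(\lambda)}$ corresponds to the coset $w\Waff^{\mathbf{f}_\lambda}$, and by \eqref{eqn:comparison-actions} the weight $w\bullet_\ell\lambda' = w^*\cdot_\ell(\lambda'+\rho^\vee)-\rho^\vee$ (for $\lambda' \in \overline{C}_\ell$ the representative of the block with $\lambda'+\rho^\vee$ lying in the $\square_\ell$-fundamental domain up to the sign flip $w \mapsto w^*$). Under these identifications $W_\aff^{(\lambda')}$ parametrizes both the relevant $\Iwuell^+$-orbits supporting a nonzero IW local system in the component and the dominant weights in the block, compatibly. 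Applying the $\mathrm{ch}$ isomorphism and the definition \eqref{eqn:pcan-basis}–\eqref{eqn:pcan-basis} of the $\ell$-Kazhdan--Lusztig polynomials, the multiplicity $\sum_n \dim\Hom(\scE^\IW_w,\nabla^\IW_y[n])$ on $\Fl^{\ell,\circ}_{\mathbf{a}_\ell}$ equals $\ppn_{y,w}(1)$; Proposition~\ref{prop:Hom-parity-Q} (together with Lemma~\ref{lem:Hom-parity-For-fixed-points}, to kill the $\coH^\bullet_\Gm(\pt)$-factor) ensures that passing from $\Fl^{\ell,\circ}_{\mathbf{a}_\ell}$ through $\For^{\Gm}$ and $\mathsf{Q}$ into the Smith category preserves these multiplicities, and then full faithfulness of $i^{!*}_{\Gr_G}$ pulls the equality back up to $\Gr_G$, giving
\[
 \sum_n \dim\Hom_{\Db_\IW(\Gr_G,\bk)}(\scE^\IW_{w\bullet_\ell\lambda+\rho^\vee}, \nabla^\IW_{y\bullet_\ell\lambda+\rho^\vee}[n]) = \ppn_{y,w}(1)
\]
for $y,w \in \Waff^{(\lambda)}$, which is exactly the claimed formula after translating back through $\Satake$ and Theorem~\ref{thm:bgmrr}.

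The main obstacle, I expect, is bookkeeping the parametrizations and the various twists rather than any deep new input: one must carefully match (i) the $\square_\ell$-fundamental domain $(-\overline{\mathbf{a}}_\ell)\cap\bX^\vee$ indexing components of $(\Gr_G)^\varpi$ with the $\bullet_\ell$-fundamental domain $\overline{C}_\ell$ indexing blocks, via the shift by $\rho^\vee$ and the involution $w\mapsto w^*$ of \eqref{eqn:comparison-actions}; (ii) the identification of $W_\aff^{(\lambda)}$ (elements maximal in $w W_\lambda$ and minimal in $\Wf w$) simultaneously with the orbits on $\Fl^{\ell,\circ}_{\mathbf{f}_\lambda}$ supporting an IW local system (via the "maximal in $w\Waff^{\mathbf{f}_\lambda}$, minimal in $\Wf w$" condition of Lemma~\ref{lem:pi*-IWparity}) and with the dominant weights in the block \eqref{eqn:bijection-weights-block}; and (iii) checking that the standard/costandard objects and the Satake-to-representation dictionary ($\IC \leftrightarrow \mathsf{L}$, $\Delta^\IW\leftrightarrow$ Weyl, $\nabla^\IW\leftrightarrow$ induced, $\scT^\IW\leftrightarrow\mathsf{T}$) intertwine correctly with the shift $\scE^\IW_\lambda \leftrightarrow \scE^\IW_{\lambda-\varsigma}$ and with the pullback $\pi_{\mathbf{f}}^*[N_{\mathbf{f}}]$ of Lemma~\ref{lem:pi*-IWparity}, so that the $\ell$-Kazhdan--Lusztig polynomials that appear are genuinely the antispherical ones $\ppn_{y,w}$ indexed as in the statement. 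Once these dictionaries are pinned down, the proof is a formal assembly of the results already established; comparison with the earlier proof in~\cite{rw,amrw} is then recorded as a remark.
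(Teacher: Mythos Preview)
Your proposal is correct and follows essentially the same route as the paper: reduce to adjoint type, pass to the Iwahori--Whittaker model via Theorem~\ref{thm:bgmrr}, apply Smith restriction (Theorem~\ref{thm:i!*-parities}), and compute on the fixed-point partial affine flag variety via Proposition~\ref{prop:Hom-parity-Q} and Lemma~\ref{lem:pi*-IWparity}. The only difference is that the paper computes the pairing $\dim\Hom(\mathsf{T}(w\bullet_\ell\lambda),\mathsf{T}(w'\bullet_\ell\lambda))=\sum_y \ppn_{y,w}(1)\,\ppn_{y,w'}(1)$ between two parity objects (to which Proposition~\ref{prop:Hom-parity-Q} applies directly) and then invokes a standard upper-triangularity argument to recover the multiplicities, whereas you compute $(\scT^\IW:\nabla^\IW)$ directly---which also works, but requires an adjunction to a single orbit before Proposition~\ref{prop:Hom-parity-Q} can be applied, since costandard objects are not themselves parity.
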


\begin{rmk}
\begin{enumerate}
\item
In the special case when $\ell$ is bigger than the Coxeter number $h$ of $G$, we have $W_0=\{1\}$. In this case, the formula in Theorem~\ref{thm:characters-tilting} was conjectured, and proved in the case of the group $G=\mathrm{GL}(n)$, in~\cite{rw}. A proof of this formula (again for $\ell>h$ and $\lambda=0$, but for a general reductive group) was later given in~\cite{amrw}. (Under the assumption that $\ell \geq h$, the formula of Theorem~\ref{thm:characters-tilting} in the special case $\lambda=0$ is sufficient to deduce the formula for all values of $\lambda$; see~\cite[Remark~1.4.4]{rw}. This property does \emph{not} hold for smaller values of $\ell$.) It was noticed in~\cite{rw} that a similar formula could be stated for any block of $\Rep(G^\vee_\bk)$, see~\cite[Conjecture~1.4.3]{rw}. Theorem~\ref{thm:characters-tilting} confirms this formula in full generality.
\item
Theorem~\ref{thm:characters-tilting} allows to make Theorem~\ref{thm:linkage} a bit more precise.\footnote{This remark was suggested to us by E.~Zabeth.} Namely, fix $\lambda \in \overline{C}_\ell$. Our formula implies that if $\mathsf{N}(y \bullet_\ell \lambda)$ occurs in a costandard filtration of $\mathsf{T}(w \bullet_\ell \lambda)$ then $y \leq w$ in the Bruhat order. In view of~\cite[\S 4.3.2]{coulembier}, it follows that if $\mathsf{L}(y \bullet_\ell \lambda)$ occurs as a composition factor of $\mathsf{N}(w \bullet_\ell \lambda)$ then $y \leq w$. (In particular, the block of $\Rep(G^\vee_\bk)$ associated with the orbit of $\lambda$ is a highest weight category with respect to the Bruhat order on $W_\aff^{(\lambda)}$.) Note that the condition that $y \leq w$ implies that $y \bullet_\ell A_0 \uparrow w \bullet_\ell A_0$ in the notation of~\cite[\S II.6.5]{jantzen} (where $A_0$ is the fundamental alcove) by the results discussed in~\cite{humphreys-note}, hence that $y \bullet_\ell \lambda \uparrow w \bullet_\ell \lambda$ by~\cite[Equation~II.6.5(2)]{jantzen}. Hence this claim implies the case $i=0$ (and $w=1$) of the St`s$H$rong Linkage Principle as stated in~\cite[Proposition~II.6.13]{jantzen}.
\end{enumerate}
\end{rmk}

Theorem~\ref{thm:characters-tilting} implies Theorem~\ref{thm:tilting} in the introduction thanks to Weyl's character formula (see~\cite[Corollary~II.5.11]{jantzen}).
The main step of the proof of this theorem is the following claim.

\begin{prop}
\label{prop:morph-tiltings}
Assume that there exists $\varsigma \in \bX^\vee$ such that $\langle \varsigma, \alpha \rangle = 1$ for any $\alpha \in \mathfrak{R}^{\mathrm{s}}$.
 For any $\lambda \in \overline{C}_\ell$ and $w,y \in \Waff^{(\lambda)}$, setting $\mu=\lambda+\varsigma$ we have an isomorphism
 \[
  \Hom_{\Rep(G^\vee_\bk)} \bigl( \mathsf{T}(w \bullet_\ell \lambda), \mathsf{T}(w' \bullet_\ell \lambda) \bigr) \cong \Hom^\bullet_{\Db_{\IW_\ell}(\Fl^{\ell,\circ}_{\mathbf{f}_\mu},\bk)}(\scE^{\IW_\ell}_{\mathbf{f}_\mu,w}, \scE^{\IW_\ell}_{\mathbf{f}_\mu,w'}).
 \]
\end{prop}

\begin{proof}
 By Theorem~\ref{thm:Satake}, Theorem~\ref{thm:bgmrr} and~\eqref{eqn:TIW-EIW} we have
 \begin{multline*}
  \Hom_{\Rep(G^\vee_\bk)} \bigl( \mathsf{T}(w \bullet_\ell \lambda), \mathsf{T}(w' \bullet_\ell \lambda) \bigr) \cong \Hom_{\Perv_\IW(\Gr_G,\bk)}(\scT^{\IW}_{w \square_\ell \mu}, \scT^{\IW}_{w' \square_\ell \mu}) \\
  \cong \Hom_{\Perv_\IW(\Gr_G,\bk)}(\scE^{\IW}_{w \square_\ell \mu}, \scE^{\IW}_{w' \square_\ell \mu}).
 \end{multline*}
Using Theorem~\ref{thm:i!*-parities} and~\eqref{eqn:i!*-parities}, we deduce an isomorphism
 \[
  \Hom_{\Rep(G^\vee_\bk)} \bigl( \mathsf{T}(w \bullet_\ell \lambda), \mathsf{T}(w' \bullet_\ell \lambda) \bigr) \cong \Hom_{\Smith_{\IW}((\Gr_G)^\varpi,\bk)}(\scE^{\Smith, \mathsf{p}(\mu)}_{w \square_\ell \mu}, \scE^{\Smith, \mathsf{p}(\mu)}_{w' \square_\ell \mu}),
 \]
 where we use the notation of~\S\ref{ss:comparison-IW-Sm}.
 The two parity objects in the right-hand side are supported on $\Gr_{G,(\mu)}$, and the corresponding orbits in $\Fl^{\ell,\circ}_{\mathbf{f}_\mu}$ under the identification of Proposition~\ref{prop:fixed-points-Grass} are those corresponding to $w$ and $w'$; see Remark~\ref{rmk:fixed-points-Gr}\eqref{it:parameters-orbits-fixed-points}. In view of Proposition~\ref{prop:Hom-parity-Q} and Corollary~\ref{cor:Q-indec} we deduce an isomorphism
  \begin{multline*}
  \Hom_{\Rep(G^\vee_\bk)} \bigl( \mathsf{T}(w \bullet_\ell \lambda), \mathsf{T}(w' \bullet_\ell \lambda) \bigr) \cong \\
  \Hom^\bullet_{\Db_{\IW_\ell}(\Fl^{\ell,\circ}_{\mathbf{f}_\mu},\bk)}(\scE^{\IW_\ell}_{\mathbf{f}_\mu,w}[-\ell(w)+N_{\mathbf{f}_\mu}-\mathsf{p}(\mu)], \scE^{\IW_\ell}_{\mathbf{f}_\mu,w'}[-\ell(w')+N_{\mathbf{f}_\mu}-\mathsf{p}(\mu)]).
 \end{multline*}
 The desired isomorphism follows.
\end{proof}

\begin{proof}[Proof of Theorem~\ref{thm:characters-tilting}]
Recall that if we denote (as in the proof of Theorem~\ref{thm:linkage}) by $Z$ the center of $G$, then the group $(G/Z)^\vee_\bk$ identifies with the simply-connected cover of the derived subgroup of $G^\vee_\bk$. In view of the results recalled in~\cite[\S II.E.7]{jantzen}, this reduces the proof to the case $G$ is semisimple of adjoint type, which we will assume from now on. In this case we can take $\varsigma=\rho^\vee$ and apply Proposition~\ref{prop:morph-tiltings}.

 Standard arguments show that the formula will follow provided we prove that for any $w,w' \in \Waff^{(\lambda)}$ we have
 \begin{equation}
 \label{eqn:dim-Hom-Til}
  \dim_\bk \Hom_{\Rep(G^\vee_\bk)} \bigl( \mathsf{T}(w \bullet_\ell \lambda), \mathsf{T}(w' \bullet_\ell \lambda) \bigr) = \sum_{y \in \Waff^{(\lambda)}} \ppn_{y,w}(1) \cdot \ppn_{y,w'}(1).
 \end{equation}
 (In fact this formula will allow to prove that the multiplicity of $\mathsf{N}(y \bullet_\ell \lambda)$ in $\mathsf{T}(w \bullet_\ell \lambda)$ is $\ppn_{y,w}(1)$ by induction on $\ell(w)+\ell(y)$ using the arguments in~\cite[\S 6.2]{modrap1}.)
If $\mu=\lambda + \rho^\vee$, in view of Proposition~\ref{prop:morph-tiltings} this amounts to proving that
\[
 \dim_\bk \Hom^\bullet_{\Db_{\IW_\ell}(\Fl^{\ell,\circ}_{\mathbf{f}_\mu},\bk)}(\scE^{\IW_\ell}_{\mathbf{f}_\mu,w}, \scE^{\IW_\ell}_{\mathbf{f}_\mu,w'}) = \sum_{y \in \Waff^{(\lambda)}} \ppn_{y,w}(1) \cdot \ppn_{y,w'}(1).
\]
Now the dimension in the left-hand side
 can be expressed in terms of dimensions of (co)stalks of the involved parity complexes using~\cite[Proposition~2.6]{jmw}. By~\eqref{eqn:pcan-basis}, Lemma~\ref{lem:pi*-IWparity} and Remark~\ref{rmk:identification-parity} these dimensions are computed by $\ell$-Kazhdan--Lusztig polynomials. We deduce the desired formula using the fact that $W_\lambda = \Waff^{\mathbf{f}_\mu}$ (see~\eqref{eqn:comparison-actions}).
 %
 %
\end{proof}



\begin{thebibliography}{BGMRR}

\bibitem[ACR]{acr}
P.~Achar, N.~Cooney, and S.~Riche, \emph{The parabolic exotic t-structure}, 
\'Epijournal G\'eom. Alg\'ebrique \textbf{2}, article 8.

\bibitem[AMRW]{amrw}
P.~Achar, S.~Makisumi, S.~Riche, and G.~Williamson, \emph{Koszul duality for Kac-Moody groups and characters of tilting modules}, 
J. Amer. Math. Soc. \textbf{32} (2019), 261-310.
 

\bibitem[AR1]{modrap1}
P.~Achar and S.~Riche, {\em Modular perverse sheaves on flag varieties I: tilting and parity sheaves} (with a joint appendix with G.~Williamson), Ann. Sci. \'{E}c. Norm. Sup\'{e}r. \textbf{49} (2016), 325--370.


\bibitem[AR2]{ar}
P.~Achar and S.~Riche, \emph{Reductive groups, the loop Grassmannian, and the Springer resolution},  Invent. Math. \textbf{214} (2018), 
289--436.

\bibitem[AR3]{ar-survey}
P.~Achar and S.~Riche, \emph{Dualit\'e de Koszul formelle et th\'eorie des repr\'e\-sentations des groupes alg\'ebriques r\'eductifs en caract\'eristique positive}, in \emph{SMF 2018 : Congr\`es de la Soci\'et\'e Math\'ematique de France} (E. Breuillard, Ed.), 83--150, S\'eminaires et Congr\`es 33, Soci\'et\'e Math\'ematique de France, 2019.


\bibitem[A1]{andersen}
H.~H.~Andersen, \emph{The strong linkage principle},
J. Reine Angew. Math. \textbf{315} (1980), 53--59.

\bibitem[A2]{andersen-filtrations}
H.~H.~Andersen, \emph{Filtrations and tilting modules}, Ann. Sci. {\'E}cole Norm. Sup. \textbf{30} (1997), 353--366.

\bibitem[A3]{andersen2}
H.~H.~Andersen, \emph{Tilting modules for algebraic groups}, in \emph{Algebraic groups and their representations (Cambridge, 1997)}, 25--42,
NATO Adv. Sci. Inst. Ser. C Math. Phys. Sci., 517, Kluwer Acad. Publ., Dordrecht, 1998. 

%
%

\bibitem[BR]{br}
P.~Baumann and S.~Riche, \emph{Notes on the geometric Satake equivalence}, 
in \emph{Relative Aspects in Representation Theory, Langlands Functoriality and Automorphic Forms, CIRM Jean-Morlet Chair, Spring 2016} (V. Heiermann and D. Prasad, Eds.), 1--134, Lecture Notes in Math. 2221, Springer, 2018.

   
\bibitem[BBDG]{bbd}
A.~Be{\u\i}linson, J.~Bernstein, P.~Deligne, and O.~Gabber, \emph{Faisceaux pervers}, in
\emph{Analyse et topologie sur les espaces singuliers, I (Luminy, 1981)}, 5--171,
Ast\'erisque \textbf{100} (1982).

\bibitem[BBM]{bbm}
 A.~Be{\u\i}linson, R.~Bezrukavnikov, and I.~Mirkovi\'c,
 \emph{Tilting exercises},
 Mosc. Math. J. 4 (2004), 
 547--557, 782. 
 
 \bibitem[BD]{bd}
A.~Beilinson and V.~Drinfeld, \emph{Quantization of Hitchin's integrable system and Hecke eigensheaves}, preprint available at \url{http://www.math.uchicago.edu/~mitya/langlands.html}.

\bibitem[BGS]{bgs}
  A.~Be{\u\i}linson, V.~Ginzburg, and W.~Soergel, {\em Koszul duality patterns in
  representation theory}, J.~Amer.~Math.~Soc.~{\bf 9} (1996), 473--527.
  
%
%

\bibitem[BL]{bl}
J.~Bernstein and V.~Lunts, \emph{Equivariant sheaves and functors},
Lecture Notes in Mathematics 1578, Springer-Verlag, 
1994.

%

\bibitem[BGMRR]{bgmrr}
R.~Bezrukavnikov, D.~Gaitsgory, I.~Mirkovi\'c, S.~Riche, and L.~Rider, \emph{An Iwahori--Whittaker model for the Satake category}, J. \'Ec. polytech. Math.~\textbf{6} (2019), 707--735.

\bibitem[BRR]{brr}
R. Bezrukavnikov, S. Riche, and L. Rider, \emph{Modular affine Hecke category and regular unipotent centralizer, I}, preprint~\href{https://arxiv.org/abs/2005.05583}{arXiv:2005.05583}.
  

\bibitem[Bo]{bourbaki}
N.~Bourbaki, \emph{\'El\'ements de math\'ematique. Fasc. XXXIV. Groupes et alg\`ebres de Lie. Chapitre IV: Groupes de Coxeter et syst\`emes de Tits. Chapitre V: Groupes engendr\'es par des r\'eflexions. Chapitre VI: syst\`emes de racines}, Actualit\'es Scientifiques et Industrielles No. 1337, Hermann, 
1968.

\bibitem[Br]{braden}
T.~Braden, \emph{Hyperbolic localization of intersection cohomology},
Transform. Groups \textbf{8} (2003), 
209--216.

%

\bibitem[CYZ]{cyz}
X.-W.~Chen, Y.~Ye, and P.~Zhang, \emph{Algebras of derived dimension zero},
Comm. Algebra \textbf{36} (2008), 
1--10. 

%

\bibitem[Co]{coulembier}
K.~Coulembier, \emph{Some homological properties of ind-completions and highest weight categories}, J. Algebra \textbf{562} (2020), 341--367. 

\bibitem[dCHL]{dchl}
M.~A.~de Cataldo, T.~J.~Haines, and L.~Li, \emph{Frobenius semisimplicity for convolution morphisms},
Math. Z. \textbf{289} (2018), 
119--169. 

\bibitem[DL]{dl}
P.~Deligne and G.~Lusztig, \emph{Representations of reductive groups over finite fields},
Ann. of Math. 
\textbf{103} (1976), 
103--161.

\bibitem[Do]{donkin-blocks}
S.~Donkin, \emph{The blocks of a semisimple algebraic group},
J. Algebra \textbf{67} (1980), 36--53.

%

\bibitem[Fa]{faltings}
G.~Faltings, \emph{Algebraic loop groups and moduli spaces of bundles},
J. Eur. Math. Soc. (JEMS) \textbf{5} (2003), 
41--68. 

%
%
%

\bibitem[GJW]{gjw}
J.~Gibson, L.~T.~Jensen, and G.~Williamson, \emph{Calculating the $p$-canonical basis of Hecke algebras}, preprint~\href{https://arxiv.org/abs/2204.04924}{arXiv:2204.04924}.

\bibitem[Gi]{ginzburg} 
V.~Ginzburg, \emph{Perverse sheaves on a loop group and Langlands' duality}, preprint \href{https://arxiv.org/abs/alg-geom/9511007}{arXiv:alg-geom/9511007} (1995).

\bibitem[GG]{gg}
R.~Gordon and E.~Green, \emph{Graded Artin algebras},
  J. Algebra \textbf{76} (1982), 111--137.
  
  \bibitem[GW]{gw}
  U.~G\"ortz and T.~Wedhorn, \emph{Algebraic geometry I. Schemes -- with examples and exercises},
second edition, Springer Studium Mathematik -- Master, Springer Spektrum, 2020.
  
\bibitem[GK]{gk}
M.~Gros and M.~Kaneda, \emph{Contraction par Frobenius et modules de Steinberg}, Ark. Mat. \textbf{56}, 
(2018), 319--332.
 

\bibitem[H1]{humphreys}
J.~E.~Humphreys, \emph{Modular representations of classical Lie algebras and semisimple groups},
J. Algebra \textbf{19} (1971), 51--79.  

\bibitem[H2]{humphreys-note}
J.~E.~Humphreys, \emph{Partial orderings of an affine Weyl group}, unpublished note available at~\url{https://people.math.umass.edu/~jeh/pub/partial.pdf}.

\bibitem[J1]{jantzen-linkage}
J.~C.~Jantzen, \emph{Darstellungen halbeinfacher Gruppen und
  kontravariante Formen}, 
 J. Reine Angew. Math. \textbf{290} (1977), 117--141.


\bibitem[J2]{jantzen}
J.~C.~Jantzen, \emph{Representations of algebraic groups, Second edition}, Mathematical surveys and monographs 107, Amer. Math. Soc., 2003.

\bibitem[J3]{jantzen-nilp}
J.~C.~Jantzen, \emph{Nilpotent orbits in representation theory}, in \emph{Lie theory}, 1--211,
Progr. Math. 228, Birkh\"auser Boston, 
2004.

\bibitem[JW]{jw}
L.~T.~Jensen and G.~Williamson, \emph{The $p$-canonical basis for Hecke algebras}, in \emph{Categorification and higher representation theory}, 333--361, Contemp. Math. 683, Amer. Math. Soc., 
2017.

\bibitem[JMW]{jmw}
D.~Juteau, C.~Mautner, and G.~Williamson, \emph{Parity sheaves},
J. Amer. Math. Soc. \textbf{27} (2014), 1169--1212.


%

\bibitem[LC]{le-chen}
J.~Le and X.-W.~Chen, \emph{Karoubianness of a triangulated category}, J. Algebra \textbf{310}
(2007), 
452--457.

\bibitem[LL]{leslie-lonergan}
S. Leslie and G.~Lonergan, \emph{Parity sheaves and Smith theory}, 
 J. Reine Angew. Math. \textbf{777} (2021), 49--87.


\bibitem[L1]{lusztig}
G.~Lusztig, \emph{Some problems in the representation theory of finite Chevalley groups}, in \emph{The Santa Cruz Conference on Finite Groups}, pp. 313--317, Proc. Sympos. Pure Math. 37, Amer. Math. Soc., 1980.

\bibitem[L2]{lusztig-patterns}
G.~Lusztig, \emph{Hecke algebras and Jantzen's generic decomposition patterns},
Adv. in Math. \textbf{37} (1980), 
no. 2, 
121--164. 

\bibitem[L3]{lusztig-weight}
G.~Lusztig, 
\emph{Singularities, character formulas, and a $q$-analog of weight multiplicities},
in \emph{Analysis and topology on singular spaces, II, III (Luminy, 1981)}, 208–229,
Ast\'erisque~\textbf{101--102}, Soc. Math. France, 
1983. 

%
%
%

\bibitem[MR]{mr}
C.~Mautner and S.~Riche, {\em Exotic tilting sheaves, parity sheaves on affine Grassmannians, and the Mirkovi{\'c}--Vilonen conjecture}, 
J. Eur. Math. Soc. \textbf{20} (2018), 2259--2332.


\bibitem[MV1]{mv}
I.~Mirkovi{\'c} and K.~Vilonen, {\em Geometric {L}anglands duality and representations of algebraic groups over commutative rings}, Ann. of Math. {\bf 166} (2007), 95--143.

\bibitem[MV2]{mv-erratum}
I.~Mirkovi{\'c} and K.~Vilonen,
\emph{Erratum for ``Geometric Langlands duality and representations of algebraic groups over commutative rings''}, Ann. of Math. \textbf{188} (2018), 
1017--1018. 


\bibitem[PR]{pappas-rapoport}
G.~Pappas and M.~Rapoport, \emph{Twisted loop groups and their affine flag varieties}, with an appendix by T.~Haines and ~M.Rapoport, Adv. Math. \textbf{219} (2008), 
118--198.

\bibitem[Pr]{prasad}
G.~Prasad, \emph{A new approach to unramified descent in Bruhat--Tits theory}, 
 Amer. J. Math. \textbf{142} (2020), 
 215--253.

\bibitem[Rz]{richarz}
T.~Richarz, {\it Basics on affine Grassmannians}, notes available at \href{https://timo-richarz.com/wp-content/uploads/2020/02/BoAG\_02.pdf}{https://timo-richarz.com/wp-content/uploads/2020/02/BoAG\_02.pdf}.

\bibitem[Ri]{riche-hab}
S.~Riche, \emph{Geometric Representation Theory in positive characteristic}, habilitation thesis, available at \url{https://tel.archives-ouvertes.fr/tel-01431526}.

\bibitem[RW1]{rw}
S.~Riche and G.~Williamson, \emph{Tilting modules and the $p$-canonical basis}, 
Ast\'erisque~\textbf{397} (2018).

\bibitem[RW2]{rw-simple}
S.~Riche and G.~Williamson, \emph{A simple character formula}, 
Ann. H. Lebesgue.~\textbf{4} (2021), 503--535.


\bibitem[Se]{serre}
J.~P.~Serre, \emph{Groupes alg\'ebriques et corps de classes}, Publications de l'institut de math\'ematique de l'universit\'e de Nancago VII, Hermann, 
1959.

\bibitem[SGA1]{sga1}
\emph{Rev\^etements \'etales et groupe fondamental (SGA 1)}.
S\'eminaire de g\'eom\'etrie alg\'ebrique du Bois Marie 1960--61. Directed by A. Grothendieck. With two papers by M. Raynaud. Updated and annotated reprint of the 1971 original.
Documents Math\'ematiques 
3, Soci\'et\'e Math\'ematique de France, 
2003.

\bibitem[SGA4]{sga4}
\emph{Th\'eorie des topos et cohomologie \'etale des sch\'emas (SGA 4)}. S\'eminaire de G\'eom\'etrie Alg\'ebrique du Bois-Marie 1963--1964. Directed by M. Artin, A. Grothendieck, and J. L. Verdier, with the collaboration of N. Bourbaki, P. Deligne and B. Saint-Donat. \emph{Tome 1: Th\'eorie des topos}. Lecture Notes in Mathematics 269, Springer-Verlag, 
1972. \emph{Tome 2}. Lecture Notes in Mathematics 270, Springer-Verlag,
1972. \emph{Tome 3}. Lecture Notes in Mathematics 305, Springer-Verlag, 
1973.

\bibitem[SGA4$\frac{1}{2}$]{sga4.5}
P.~Deligne, \emph{Cohomologie \'etale (SGA 4$\frac{1}{2}$)}. S\'eminaire de g\'eom\'etrie alg\'ebrique du Bois-Marie. Lecture Notes in Mathematics 569, Springer-Verlag, 
1977.

\bibitem[Sob]{sobaje}
P.~Sobaje, \emph{On character formulas for simple and tilting modules}, Adv. Math. \textbf{369} (2020), 107172, 8 pp.


\bibitem[Soe]{soergel}
W.~Soergel, \emph{Kazhdan--Lusztig polynomials and a combinatoric[s] for tilting modules}, Represent. Theory \textbf{1} (1997), 83--114.
 
%
%

\bibitem[SP]{stacks-project}
The {Stacks project authors}, \emph{The Stacks project}, \url{https://stacks.math.columbia.edu}, 2019.


\bibitem[Tr]{treumann}
D.~Treumann, \emph{Smith theory and geometric Hecke algebras},
Math. Ann. \textbf{375} (2019), 
595--628. 

\bibitem[Ve]{verma}
D.~N.~Verma, \emph{The r\^ole of affine Weyl groups in the
  representation theory of algebraic Chevalley groups and their Lie
  algebras}, in
\emph{Lie groups and their representations (Proc. Summer School, Bolyai
J\'anos Math. Soc., Budapest, 1971)}, 653--705, Halsted, 
1975.



\bibitem[W1]{williamson-takagi}
G.~Williamson, \emph{Algebraic representations and constructible sheaves}, 
 Jpn. J. Math. \textbf{12} (2017), 
 211--259.

\bibitem[W2]{williamson-cdm}
G.~Williamson, \emph{Modular representations and reflection
  subgroups}, in
\emph{Current Developments in Mathematics, 2019} (D.~Jerison, M.~Kisin, P.~Seidel, R.~Stanley, H.~T.~Yau, and S.~T.~Yau, Eds.), Int. Press. Boston, 2019.

%

\end{thebibliography}
\end{document}